\pgfplotsset{compat=1.18}
\title{
Nonparametric Inference on Unlabeled Histograms
}
\author{Yun Ma and 
Pengkun Yang\thanks{Y.\ Ma and P.\ Yang are with Department of Statistics and Data Science, Tsinghua University. 
P.\ Yang is supported in part by the National Key R\&D Program of China 2024YFA1015800, the NSFC Grant 12101353, and Tsinghua University Dushi Program 2025Z11DSZ001. 
}}
\begin{document}
\maketitle

\begin{abstract}

Statistical inference on histograms and frequency counts plays a central role in categorical data analysis.
Moving beyond classical methods that directly analyze labeled frequencies, we introduce a framework that models  the multiset of unlabeled histograms via a mixture distribution to better capture unseen domain elements in large-alphabet regime.
We study the nonparametric maximum likelihood estimator (NPMLE) under this framework, and establish its optimal convergence rate under the Poisson setting.
The NPMLE also immediately  yields flexible and  efficient plug-in estimators for functional estimation problems, where a localized variant further achieves the optimal sample complexity for a wide range of symmetric functionals.
Extensive experiments on  synthetic, real-world datasets, and large language models highlight the practical benefits of the proposed method.

\end{abstract}
\tableofcontents
\section{Introduction}
Histograms appear ubiquitously in real-world applications and have long been a key focus of frequentist inference, which arise  from partitioning numerical data into discrete bins. 
They also serve as natural summary statistics of nominal observations, where the data are typically collected as labels from a large population, such as species in ecology \cite{Corbet1941}, words in linguistics \cite{ET76}, and tokens in large language models~\cite{Vaswani2017attention}.

Statistical inference on histogram data is typically carried out under an underlying statistical model. For categorical data, a natural  approach, referred to as \textit{$P$-modeling}, aims to assign a probability mass to each category. Specifically, the observations \( X=(X_1,\dots,X_n) \) are modeled as independently and identically distributed (\iid) according to the distribution \( P = (p_1, p_2, \ldots) \). The frequency counts \( N 
= (N_1,N_2,\ldots) \) are then obtained by enumerating the occurrences of each category, where \( N_j = \sum_{i=1}^{n} \indc{X_i = j} \).

A major challenge of the $P$-modeling approach arises when many categories remain \textit{unseen}, as in large word corpora, genotype data, or species catalogs. 
Despite the inaccessibility of the unseen labels, the properties of the overall distribution can be inferred from the seen categories.
A long-standing problem in this context is estimating the number of unseen categories, with seminal work by Fisher in ecology \cite{fisher1943relation}, classical methods of the Good–Turing estimator \cite{GT53,GT56}, applications to vocabulary diversity \cite{ET76,thisted1987shakespeare}, and recent advances in large language models \cite{Kalai2024llmunseen,li2025evaluatingunseen}.
Clearly, methods directly based on the $P$-model such as the maximum likelihood estimator fail to detect unseen categories.
More generally, estimating the number of the unseen falls under \textit{symmetric functional estimation}, where the target remains invariant under permutations of category labels. 
A notable example is the Shannon entropy, which originates from Shannon’s seminal contributions \cite{shannon1948,Shannon1951} and widely studied in neuroscience \cite{strong1998entropy}, physics \cite{de2022entropy}, and large language models \cite{farquhar2024detecting}.
Other symmetric functionals, including distance to uniformity \cite{Batu2001Test,canonne2020survey} and R\'enyi entropy \cite{Acharya2017renyi,wang2024renyi}, have also been extensively studied.
Nevertheless, applying the $P$-modeling approach to symmetric functional estimation is reported to suffer from severe bias and can even be inconsistent (see, e.g., \cite{Efr82,Pan03}).

Another approach aims to fit an equivalent class of the \( P \)-model without labeling the categories \cite{OSVZ04}. To illustrate, consider the multiset $\{N_i\}_{i\in\naturals}$ of the frequency counts in $N$.
The likelihood of observing the multiset is given by 
\begin{align}
    \label{eq:PML}
    P(\{N_i\}) = \sum_{N': N'\sim N} P(N'),
\end{align}
where $N'\sim N$ represents that $N'$ and $N$ correspond to the same multiset, and \( P(N') \) denotes the likelihood of \( N' \) under the \( P \)-model.  
However, this method is encountered with significant computational challenge due to the combinatorial structure. Computing the likelihood  requires evaluating a matrix permanent (see \cite[Eq.~(15)]{PJW17}), which is known to be a \#P-complete problem \cite{VALIANT1979189}.  
As a result, maximizing the likelihood over all distributions, referred to as the \textit{profile maximum likelihood} (PML) \cite{OSVZ04}, is highly challenging and requires sophisticated algorithms for approximate computation \cite{PJW17,ACSS21}.  
In fact, even exactly solving the PML for a frequency sequence of length 10–20 is non-trivial \cite{P12}. 
In addition to the likelihood-based approach, other algorithms model $\{N_i\}_{i \in \mathbb{N}}$ based on the method of moments \cite{vv17,LMM,HS21}. However, they often rely on delicate moment-matching programs with performance sensitive to numerous tuning parameters, and the estimation of high-order moments could suffer from large variance.

In this paper, we introduce a novel framework that addresses both the statistical and computational challenges.  
We model the multiset of unlabeled histograms via a mixture formulation, which naturally leads to a maximum likelihood estimation procedure based on the \textit{nonparametric maximum likelihood estimator} (NPMLE).  
The formulation depends solely on the histogram without dependency on category labels. Moreover, the NPMLE is computationally tractable due to its convex structure.  
The resulting estimator is then applied to symmetric functional estimation using a plug-in approach, which is compatible with 
the classical \( P \)-model. Further methodological details are provided in the following subsections.

\subsection{Model and methodology}
\label{sec:intro-model}

In this subsection, we propose a mixture model for analyzing the frequency counts. 
Let \(q_n(\cdot, r)\) denote a prescribed distribution of the frequency counts with parameter $r\in [0,1]$.
For instance, under the $P$-model, the frequency count of a category with occurrence probability \( r \) across \( n \) \iid\ observations follows the binomial distribution   
\(\bin(x, n, r) \triangleq \binom{n}{x} r^x(1-r)^{n-x}\).
Another common choice is the Poisson distribution  
\( \poi(x, nr) \triangleq \frac{(nr)^x}{x!}e^{-nr} \), which applies when the sample size further follows a Poisson distribution with mean \( n \) (see, \eg, \cite[Sec.~2]{WY16}).

Apart from the $P$-model, we propose the $\pi$-modeling approach, in which each frequency count $N_i$ follows a mixture distribution:
\begin{align}
\label{eq:N_mixture}
    f_{\pi^\star}(\cdot)\triangleq \int q_n(\cdot, r) \diff \pi^\star(r),
\end{align}
where \(\pi^\star\) is a mixing distribution supported on \([0,1]\).
In particular, we refer to \eqref{eq:N_mixture} as the \textit{Poisson mixture} or \textit{binomial mixture} when $q_n$ is the Poisson or binomial distribution, respectively.

Given a multiset of frequency counts $N=\{N_1,N_2,\cdots\}$, the nonparametric maximum likelihood estimator is given by \cite{KW56}:
\begin{align}
     \label{eq:npmle-def}
    \hat{\pi} \in \argmax_{\pi \in \calP([0,1])} \  L(\pi;N),
\end{align}
where $\calP([0,1])$ denotes the set of all distributions supported on $[0,1]$, and the likelihood function \(L(\pi;N) \) is 
\begin{align}
\label{eq:Ni-likelihood}
    L(\pi;N) \triangleq \sum_{i} \log f_{\pi} (N_i). 
\end{align}
We will discuss other variations of the program in Sections~\ref{sec:theory} and~\ref{sec:discussion}.

The proposed model \eqref{eq:N_mixture} offers both statistical and computational advantages.
Under the $\pi$-model, the sequence of frequency counts \( (N_1, N_2, \dots) \) is \textit{exchangeable}, \ie, the joint distribution remains invariant under any permutation of indices.
This property is essential for tasks that are invariant to specific category labels and captures the potentially unseen elements. 
In contrast, under the $P$-model, each frequency \( N_i \) is associated with its own probability \( p_i \).
Moreover, the mixture formulation also benefits from computationally efficient procedures, which is facilitated by many recent advancements~\cite{KM13,rebayes17,Zhang2022NPMLE}.

To illustrate the advantage of our model in fitting the frequency multiset, we provide an example based on the butterfly dataset collected in the early 1940s by the naturalist Corbet \cite{Corbet1941}.  Table~\ref{tab:butterfly} shows the number of occurrence of each observed frequency count -- no specimens were observed for 304 species, 118 species were observed exactly once, and so on.
We fit both the Poisson mixture model (via~\eqref{eq:npmle-def} with $q_n(\cdot, r)=\poi(\cdot, nr)$) and the $P$-model (via the empirical distribution) using the frequency counts at most \(T\), and then perform the \(\chi^2\) goodness-of-fit test on the two models by calculating the testing statistic
$$\sum_{j=0}^T \frac{(\varphi_j - \Expect[\varphi_j])^2}{\Expect[\varphi_j]},$$
where \(\varphi_j \triangleq \sum_{i} \indc{N_i = j}\).
Under the null hypothesis that the model is correctly specified, it approximately follows the chi-squared distribution with $T$ degrees of freedom.
Figure~\ref{fig:butterfly} plots the histogram of observed frequency counts (from 0 to 30) along with the values of $\Expect[\varphi_j]$ under the fitted mixture, showing that the mixture model provides a good fit to the data\footnote{These $\Expect[\phi_j]$'s are computed from the conditional distribution of \eqref{eq:N_mixture} on \(\{0,1,\ldots, T\}\).}. 
Table~\ref{tab:p_values} displays the $p$-values under different levels $T$, which consistently reject the $P$-model and fail to reject the Poisson mixture model.

\begin{table}[ht]
\centering
\begin{tabular}{|c|*{16}{c|}}
\hline
\textbf{$j$} & 0 & 1 & 2 & 3 & 4 & 5 & 6 & 7 & 8 & 9 & 10 & 11 & 12 & 13 & 14 & 15 \\
\textbf{$\varphi_j$} & 304 & 118 & 74 & 44 & 24 & 29 & 22 & 20 & 19 & 20 & 15 & 12 & 14 & 6 & 12 & 6 \\
\hline
\textbf{$j$} & 16 & 17 & 18 & 19 & 20 & 21 & 22 & 23 & 24 & 25 & 26 & 27 & 28 & 29 & 30 & $>$30 \\
\textbf{$\varphi_j$} & 9 & 9 & 6 & 10 & 10 & 11 & 5 & 3 & 3 & 5 & 4 & 8 & 3 & 3 & 2 & 94 \\
\hline
\end{tabular}
\caption{Histogram of the frequency counts in the Corbet butterfly dataset \cite{Corbet1941}.}
\label{tab:butterfly}
\end{table}

\noindent
\begin{minipage}[h]{0.53\textwidth}
    \vspace{0pt} 
    \centering
    \includegraphics[width=\linewidth]{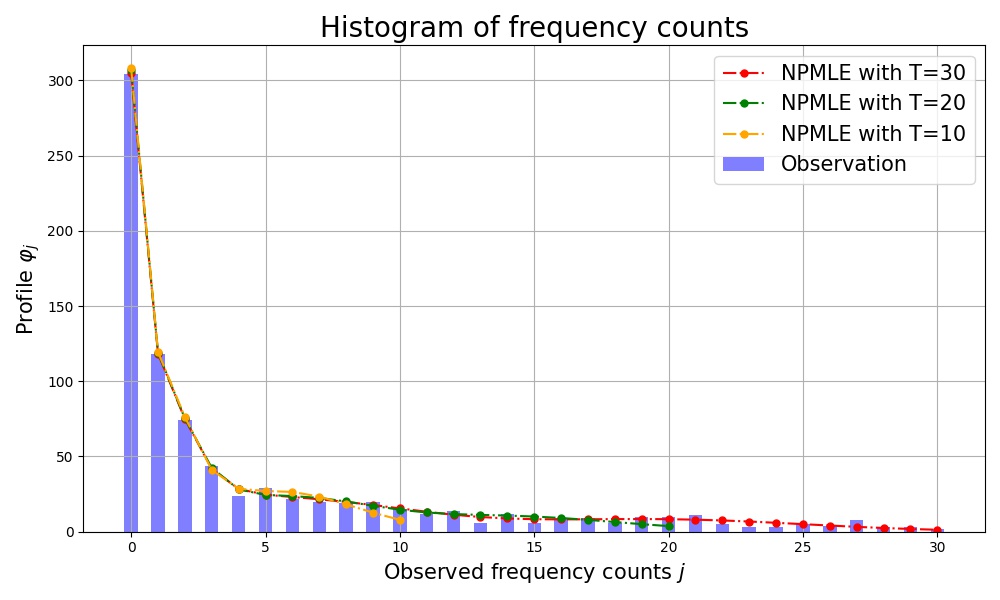}
    \captionof{figure}{Histogram and the NPMLE-fitted model.}
    \label{fig:butterfly}
\end{minipage}%
\hfill
\begin{minipage}[h]{0.4\textwidth}
    \vspace{0pt} 
    \centering
\begin{tabular}{c|c c}
  & Mixture & P-model \\
\hline
$T=10$ & 0.2215 & 1.86e-07 \\
$T=15$ & 0.6544 & 1.46e-05 \\
$T=20$ & 0.2954 & 9.57e-06 \\
$T=25$ & 0.9209 & 2.31e-04 \\
$T=30$ & 0.8778 & 6.28e-04 \\
\end{tabular}
\captionof{table}{$p$-values of the $\chi^2$ goodness of fit test at various models and truncation levels.}
    \label{tab:p_values}
\end{minipage}

\subsection{NPMLE under the P-model} 
\label{sec:mismatch}
In this subsection, we discuss the application of our methodology under the $P$-model.
Consider a $k$-atomic distribution $P=(p_1,\ldots,p_k)$\footnote{Here we focus on discrete distributions with support size no more than $k$. 
See Section~\ref{sec:pen-npmle} for further discussion on more general settings.}, and $N_i \sim q_n(\cdot,p_i)$.
The goal is to estimate the \textit{histogram distribution} of $P$ defined as
$$\pi_P \triangleq \frac{1}{k} \sum_{i=1}^k \delta_{p_i},$$ 
where $\delta_{x}$ denotes the Dirac measure at $x$.
From a Bayesian perspective, in the mixture formulation \eqref{eq:N_mixture}, each $p_i$ is then treated as an independent random effect drawn from the prior $\pi^\star$ without the normalization $\sum_i p_i = 1$.
Similar problem has been studied under the Gaussian sequence model, where the NPMLE is applied to estimate the density $f_{\pi_P}$ \cite{Zhang08}.

In this paper, we aim to show that the NPMLE in~\eqref{eq:npmle-def} yields reasonable estimates of $\pi_P$. 
For a quick insight, consider the expectation of the likelihood function
\begin{align}
\label{eq:npmle_opt_exp}
 \Expect  L(\pi;N) = 
 \sum_{i=1}^k \sum_{j=0}^\infty q_n(j,p_i) \cdot \log f_{\pi}(j) =    k \cdot \sum_{j=0}^\infty f_{\pi_P}(j) \log f_\pi(j),
\end{align}
which corresponds to the negative cross-entropy of \( f_\pi \) relative to \( f_{\pi_P} \) with unique maximizer \( f_\pi = f_{\pi_P}\). 
As the maximizer of $L(\pi; N)$, the NPMLE $\hat\pi$ is intuitively expected to be close to the maximizer of the expected log-likelihood $\Expect  L(\pi; N)$, which is $\pi_P$.
In Section~\ref{sec:theory}, we formalize this intuition via a different approach, and establish rigorous convergence guarantees for the Poisson NPMLE.

From the optimization perspective, the NPMLE also proves beneficial for fitting the $P$-model.
Due to the non-convex nature of the space of true underlying histograms $\pi_P$, where each probability mass is restricted to a multiple of $\frac{1}{k}$, directly optimizing the log-likelihood objective over this space is computationally challenging. In contrast, the NPMLE \eqref{eq:npmle-def} naturally provides a continuous relaxation into a convex program, allowing outputting a ``fractional histogram distribution'' whose probability masses can take continuous values. Also, the NPMLE  removes the normalization constraint that each histogram distribution has an expectation of \( \frac{1}{k} \) since the $p_i$'s sum to 1. In fact, the NPMLE is approximately self-normalized with its expectation converging to that of $\pi_P$ (see Theorem~\ref{thm:npmle-global}).

\subsection{Applications to functional estimation}
\label{sec:app_func}
We further explore the downstream task of symmetric functional estimation based on the observed multiset of frequencies.
Let $\calP$ be a family of distributions, and $G:\calP \mapsto \reals$ be a functional on it. $G$ is said to be Lipschitz continuous in $\calP$ under a metric $d$, if 
$$
|G(P) - G(P')| \leq d(P, P'), \quad \forall P, P' \in \mathcal{P}.
$$
This paper focuses on the following linear functional on $\calP([0,1])$:
\begin{align}
\label{eq:def_func}
    G(\pi) = \int g \, \diff\pi, \quad \pi \in \calP([0,1]),
\end{align}
where \( g: [0,1] \to \mathbb{R} \) is a given measurable function.
It follows that any such linear functional $G(\pi)$ is Lipschitz continuous under any \textit{integral probability metric} (IPM) with respect to any function class containing $g$; see Appendix~\ref{sec:ipm} for details. 

Particularly, set $\pi=\pi_P$ under the $P$-model. The resulting functional is a \textit{symmetric additive functional} of $P$, taking the form  
\begin{align}
\label{eq:def_func_P}
    G(P) = \sum_{i=1}^k g(p_i) = k \cdot \int g \, \diff\pi_P.
\end{align}
Typical examples include the  Shannon entropy $H(P) = \sum_{i=1}^k p_{i} \log \frac{1}{p_{i}}$, power-sum $F_\alpha(P) =  \sum_{i=1}^k p_{i}^\alpha$, $\alpha \in (0,1)$, and the support size $S(P) =  |\{i \in[k] \mid p_i > 0\}|$,  
which correspond to the functions $h(x)=-x\log x$,  $f_\alpha(x)=x^\alpha$, and $s(x)=\indc{x>0}$, respectively. See Section~\ref{sec:truncated_npmle} for further discussions.

A widely used strategy for estimating the symmetric additive functional~\eqref{eq:def_func_P} is the so-called \textit{plug-in approach}, which first obtains a histogram estimator $\hat{\pi}$, and then substitutes it into the functional to construct the desired estimator
\begin{align}
    \label{eq:F_hat}
    \hat G= k \cdot \int g \ \diff \hat\pi.
\end{align}
We consider the NPMLE plug-in estimator, where $\hat{\pi}$ is defined in \eqref{eq:npmle-def}, and demonstrate that it exhibits strong theoretical and practical performance in various functional estimation problems, particularly in the large-alphabet regime where $k$ grows with $n$. We preview that our NPMLE plug-in estimator has the following advantages: 
\begin{itemize}
    \item \textit{Flexibility}: Building on the general advantages of plug-in estimators, our approach provides a unified framework for statistical tasks, including estimating a broad class of symmetric functionals and characterizing the species discovery curve for determining the number of unseen species -- see Sections~\ref{sec:truncated_npmle} and~\ref{sec:exp_real} for details.
    \item \textit{Computational tractability}: The maximum likelihood estimation \eqref{eq:npmle-def} only involves a convex optimization program. As detailed in Section~\ref{sec:exp}, the NPMLE can be efficiently solved using standard convex optimization methods and softwares.
    \item \textit{Statistical efficiency}: According to classical asymptotic theory \cite{vdv00}, likelihood-based approaches typically achieve higher statistical efficiency compared to moment-based methods, such as polynomial  approximation methods and other expansion-based bias correction techniques.
\end{itemize}

The rest of the paper is organized as follows. Section~\ref{sec:preliminaries} introduces the Poisson model and key statistical properties of the NPMLE. Section~\ref{sec:theory} presents theoretical results on the convergence of the Poisson NPMLE and its variants, including localized and penalized formulations. Section~\ref{sec:exp} reports experiment results on synthetic data, real datasets, and large language models, demonstrating the accuracy and robustness of NPMLE-based estimators. Section~\ref{sec:discussion} concludes with extensions to broader settings. Additional details on background, proofs, and experimental are provided in the appendices.

\subsection{Related work}
\paragraph{Functional estimation}
Functional estimation plays a crucial role in statistics, computer science, and information theory, with broad applications across various disciplines. Entropy estimation, for instance, has been extensively applied in neuroscience \cite{strong1998entropy}, 
 physics \cite{de2022entropy},
and telecommunications \cite{plotkin1996entropy}.
See also \cite{PainskyEntropy2021} for a comprehensive review.
The problem of estimating support size and support coverage dates back to Fisher's seminal work \cite{fisher1943relation} on estimating the number of unseen species. Since then, it has  been explored in ecology \cite{chao1984nonparametric,bunge1993estimating,OSW16}, linguistics \cite{ET76,thisted1987shakespeare}, and database management \cite{li2022sampling}.
The $L_1$ distance between probability distributions is closely related to distribution testing problems \cite{Batu2001Test,canonne2020survey}.
More recently, a series of studies relate functional estimation problems to the analysis of language models for understanding their capacity and robustness \cite{farquhar2024detecting,nasr2025scalable,li2025evaluatingunseen}.

\paragraph{Plug-in and non-plug-in estimators}
The empirical distribution is the most commonly used choice for plug-in estimation. In the large-sample regime, its asymptotic efficiency and consistency are established in \cite{vdv00,AK01} under mild conditions on the functional. Basic refinements include first-order bias correction \cite{miller1955note}, the jackknife estimator \cite{BURNHAM1978}, the Laplace estimator \cite{SG96entropy}, and the James-Stein type estimator \cite{hausser09a}. More advanced methods model the histogram distribution for symmetric functional estimation, such as the fingerprint-based algorithm \cite{vv17}, moment matching program \cite{LMM,HJW20}, and the profile maximum likelihood (PML)  estimator \cite{OSVZ04}. The PML plug-in estimator is shown to achieve optimal sample complexity for various symmetric functionals in \cite{ADOS16}, with the result further refined in \cite{HO19,HS21}. Efficient convex relaxation algorithms for approximate PML computation are then explored in \cite{ACSS21,CSS22}.

We also briefly review several \textit{non-plug-in approaches} that aim to estimate the functional directly, without explicitly recovering the underlying distribution. A prominent example is the polynomial approximation method, which approximates the target functional by its best polynomial surrogate and constructs unbiased estimators for the resulting polynomial expression. This technique has been widely adopted to obtain minimax-optimal rates for a variety of functionals, including entropy \cite{Pan03, JVHW15, WY16}, support size \cite{WY15unseen}, power sum \cite{JVHW15}, support coverage \cite{OSW16}, total variation distance \cite{JHW18_L1}, and other Lipschitz functionals \cite{hao2019unified}, etc.\
Despite their strong theoretical guarantees, non-plug-in methods typically require functional-specific constructions that may limit their general applicability.
Also, the practical performance can be sensitive to hyperparameter choices (e.g., polynomial degree), and higher-order approximations often incur greater computational costs and risk of overfitting.
Other alternatives include Bayesian methods, which place a prior over the discrete distribution and compute the posterior distribution of the functional \cite{NSB01, archer2014bayesian}. More recently, neural network-based estimators have also been proposed for learning complex functionals from data \cite{shalev2022neural}.

\paragraph{Nonparametric maximum likelihood} 

Originally introduced by \cite{KW56}, the nonparametric maximum likelihood estimate (NPMLE) has been extensively studied during the past decades. Fundamental results on existence, uniqueness, and the discreteness of its support have been established in a series of works \cite{L78,L83,lindsay1993uniqueness}. Under the mixture model, \cite{KW56,Chen16Consistency} establish the consistency of NPMLE, and the asymptotic normality of functional plug-in estimators has been analyzed in \cite{sara1999applications}. See also \cite{L95} for a comprehensive review.
The convergence rate of NPMLE has also been extensively studied.  The Hellinger rate for density estimation has been developed for Gaussian mixtures \cite{GV01,Zhang08,MWY25} and for Poisson mixtures \cite{SW22,JPW25}. 
\cite{VKVK19,FP21} establish the minimax optimality of the NPMLEs for the Poisson and binomial mixtures under the 1-Wasserstein distance.

Various kinds of algorithms has been proposed for computing the NPMLE. The expectation-maximization (EM) algorithm is first proposed by \cite{L78} and further applied in \cite{JZ09}. Convex optimization algorithms are then considered, such as the interior point method \cite{KM13}  implemented by the \texttt{R} package \texttt{REBayes}  \cite{rebayes17},
and the minimum distance estimator \cite{JPW25} designed for Poisson NPMLE.
Delicate high-order optimization algorithms have also been developed for computing the NPMLE, including sequential quadratic programming (SQP) \cite{KIM2020NPMLE}, cubic regularization of Newton’s method \cite{wang2023nonparametric}, and the augmented Lagrangian method \cite{Zhang2022NPMLE}. These approaches demonstrate the ability to handle larger data sizes and broader value ranges while achieving higher accuracy compared to first-order methods. 
More recently, advanced techniques based on Wasserstein gradient flows are also developed \cite{yan2024flow}.

\subsection{Notation}
Let $[k] \triangleq \{1, \dots , k\}$ for $k \in \naturals$. 
Let $\Delta_{k-1}$ denote the collection of all probability measures with support size at most $k$.
For $x, y \in \reals$, $x \vee y \triangleq \max\{x, y\}$ and 
$x \wedge y \triangleq \min\{x, y\}$. 
Let $|I|$ denote the cardinality of $I$ if $I$ is countable, and the Lebesgue measure of $I$ if $I$ is uncountable.
Define $\calP(I)$ as the collection of all probability distributions that is supported on $I$. 
Let $\Bern(p)$, $\Bin(n,p)$, $\Poi(\lambda)$, and $\operatorname{Multi}(n,P)$ denote the Bernoulli distribution with
mean $p$, the binomial distribution with parameter $n,p$, the Poisson distribution with mean $\lambda$, and the multinomial distribution with parameters $n,P$, respectively. 
For a function \( g\) defined on \([0,1]\), we define the \( L_q \) norm as \(\|g\|_q \triangleq (\int_{[0,1]} |g(x)|^q \, \mathrm{d}x )^{1/q}\) for \( 1 \leq q < \infty \), the \( L_\infty \) norm as
\(\|g\|_{\infty} \triangleq \sup_{x \in [0,1]} |g(x)|\), and the truncated \( L_\infty \) norm on a subset $I\subseteq [0,1]$ as
\(\|g\|_{\infty, I} \triangleq \sup_{x \in I} |g(x)|\).
For two positive sequences ${a_n}$ and ${b_n}$, write $a_n \lesssim b_n$ or $a_n = O(b_n)$ when $a_n \leq Cb_n$ for some absolute constant $C > 0$, $a_n \gtrsim b_n$ or $a_n = \Omega(b_n)$ if $b_n \lesssim a_n$, and $a_n \asymp b_n$ or $a_n =\Theta (b_n)$ if both $b_n \gtrsim a_n$ and $a_n \gtrsim b_n$ hold. 
We write $a_n=O_\alpha(b_n)$ and $a_n \lesssim_\alpha b_n$ if $C$ may depend on parameter $\alpha$.

\section{The Poisson regime}
\label{sec:preliminaries}
In this section, we introduce the Poisson model, a widely used framework for analyzing frequency counts. We then study the corresponding Poisson NPMLE and present its key properties, including optimality conditions and statistical guarantees.

\subsection{Counting with Poisson processes}
\label{sec:poisson_npmle}
The Poisson model, also known as Poisson sampling, is a widely used framework for modeling frequency counts in scenarios such as customer arrivals \cite{Hall2007} and animal trapping \cite{fisher1943relation,OSW16}. 
When covariates are available, Poisson regression models the event rate as a function of these variables for purposes such as prediction or smoothing. In contrast, counting processes model events over time when only counts and event times are observed, among which the Poisson process assumes that the number of events in a given time interval follows a Poisson distribution.
Let $P=(p_1,\dots,p_k)$ be the normalized intensities of $k$ categories with $\sum_{i=1}^k p_i=1$ such that there is on average one arrival per unit time. 
Then, the frequency counts over $n$ units of time are distributed as
\begin{align}
    \label{eq:poisson-sampling}
    N_i \inddistr \Poi(n p_i), \quad \forall i \in [k].
\end{align}

Conditioned on the total number of counts $n'=\sum_{i} N_i$, the vector $N =(N_1,\dots,N_k)$ follows the $\operatorname{Multi}(n',P)$ distribution, which is equivalent to the i.i.d.\ sampling model from $P$.
The minimax risk under Poisson sampling is provably close to that under a fixed sample size across a wide range of distributional and functional estimation problems (see, e.g., \cite{JVHW15,WY16,LMM,HS21}).
In this paper, we refer to $n$ as the \textit{sample size} and $k$ as the \textit{alphabet size}.

\subsection{Basic properties of the Poisson NPMLE}
\label{sec:npmle-basic}
Under the Poisson model with $q_n(x,r)=\poi(x, nr)$ in \eqref{eq:N_mixture}, we consider the Poisson NPMLE 
\begin{align}
     \label{eq:pois-npmle-def}
    \hat{\pi} \in \argmax_{\pi \in \calP([0,1])} \sum_{i=1}^k \log f_{\pi} (N_i),
\end{align}
where $f_{\pi}$ is defined in \eqref{eq:N_mixture} with $q_n(\cdot, r)=\poi(\cdot, nr)$.

\paragraph{Existence and uniqueness}
Poisson NPMLE enjoys favorable properties such as the uniqueness
of the solution, whereas in other mixture models (\eg, the binomial mixture), the mixing distribution $\pi$ is not necessarily identifiable from $f_\pi$ \cite{Teicher63}.
The following proposition that is implied by {\cite[Theorem 1]{JPW25}} establishes the existence and uniqueness of the Poisson NPMLE. 
This result builds upon earlier work by \cite{L83} and involves a detailed analysis of the Poisson probability mass function.
\begin{proposition}
\label{prop:npmle-1}
     Let $\hat p_i\triangleq N_i/n$. The solution $\hat{\pi}$ in \eqref{eq:pois-npmle-def} exists uniquely and is a discrete distribution with support size no more than the number of distinct elements in $\{N_i\}_{i=1}^k$.
     In addition, $\hat{\pi}$ is supported on $[1 \wedge \min_{i\in [k]} \hat p_i, 1 \wedge \max_{i\in [k]} \hat p_i]$.
\end{proposition}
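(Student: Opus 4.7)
The plan is to decompose Proposition~\ref{prop:npmle-1} into existence, support containment, discreteness with cardinality bound, and uniqueness, and attack them in that order on top of a single first-order description of the optimum.

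\emph{Existence and first-order condition.} I would begin by noting that $\calP([0,1])$ is weakly compact, the map $\pi\mapsto f_\pi(j)$ is weakly continuous and strictly positive for every $j$, so $L(\pi;N)=\sum_{i}\log f_\pi(N_i)$ is continuous and the maximum is attained. Since the objective is concave (the linear map $\pi\mapsto f_\pi$ composed with the concave $\log$), $\hat\pi$ is optimal iff the score function
\[
D(r)\triangleq \sum_{i=1}^k \frac{\poi(N_i,nr)}{f_{\hat\pi}(N_i)}
\]
satisfies $D(r)\le k$ on $[0,1]$ with equality on $\mathrm{supp}(\hat\pi)$. Let $\mathcal{J}$ denote the set of distinct values in $\{N_i\}$ with multiplicities $c_j$; multiplying by $e^{nr}$ recasts the condition as
\[
\Phi(r)\triangleq k\,e^{nr}-\sum_{j\in\mathcal{J}}\frac{c_j(nr)^j}{j!\,f_{\hat\pi}(j)}\ \ge\ 0,
\]
an entire function of order $1$ whose zero set in $[0,1]$ contains $\mathrm{supp}(\hat\pi)$.

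\emph{Support containment.} Differentiating $\log\poi(N_i,nr)$ in $r$ yields $N_i/r-n$: for $r>\max_i N_i/n$ every summand of $D$ is strictly decreasing, and for $0<r<\min_i N_i/n$ (when $\min_i N_i>0$) every summand is strictly increasing. Hence $D$ has no maximizer outside $[\min_i\hat p_i\wedge 1,\max_i\hat p_i\wedge 1]$, and the interval claim follows (the lower endpoint is trivially $0$ if some $N_i=0$).

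\emph{Discreteness and cardinality.} Since $\Phi\ge 0$, each $r^\star\in\mathrm{supp}(\hat\pi)$ is not only a zero of $\Phi$ but a local minimum, hence a zero of multiplicity at least two. The Taylor coefficients of $\Phi$ about $0$ are $kn^m/m!$ when $m\notin\mathcal{J}$ and $(k-c_m/f_{\hat\pi}(m))\,n^m/m!$ when $m\in\mathcal{J}$, producing at most $2|\mathcal{J}|$ sign changes. By Laguerre's generalization of Descartes' rule to entire functions of finite order, the number of positive real zeros of $\Phi$ counted with multiplicity is at most $2|\mathcal{J}|$, so the number of zeros of multiplicity $\ge 2$ is at most $|\mathcal{J}|$, yielding both discreteness and $|\mathrm{supp}(\hat\pi)|\le|\mathcal{J}|$.

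\emph{Uniqueness.} Because $L(\pi;N)$ depends on $\pi$ only through $(f_\pi(j))_{j\in\mathcal{J}}$, strict concavity of $\log$ applied at the midpoint of any two maximizers forces them to agree on this vector; consequently $D$ and $\Phi$ are identical for the two maximizers, and their supports live inside the common finite zero set $\{r_1,\dots,r_\ell\}$ with $\ell\le|\mathcal{J}|$. The atom weights then satisfy the linear system $f_{\hat\pi}(j)=\sum_s w_s\,\poi(j,nr_s)$ for $j\in\mathcal{J}$ with coefficient matrix $M=(\poi(j,nr_s))$; the strict total positivity of the Poisson kernel in $(j,r)$, used classically in \cite{lindsay1993uniqueness}, gives that every $\ell\times\ell$ minor of $M$ is positive, so $M$ has full column rank and the weights are uniquely determined, forcing $\hat\pi$ to be unique. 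The hardest step will be the sparse-polynomial count that turns the first-order inequality $\Phi\ge 0$ and the multiplicity-$\ge 2$ observation into the bound $|\mathcal{J}|$ via Laguerre's theorem; the uniqueness argument is similarly tied to the strict total positivity of the Poisson kernel, precisely the property whose failure for the binomial mixture is noted earlier in the paper.
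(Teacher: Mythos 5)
The paper never writes out a proof of this proposition: it is quoted as a consequence of \cite[Theorem 1]{JPW25}, which in turn builds on Lindsay's theory \cite{L83,lindsay1993uniqueness}. Your proposal is therefore a self-contained reconstruction of exactly that classical route (gradient characterization of the optimum, sign-change counting for the Poisson gradient function, total positivity for identifiability of the weights), and the overall architecture is sound. Two small inaccuracies are worth noting but harmless: $f_\pi(j)$ is not strictly positive for every $\pi$ (take $\pi=\delta_0$ and $j\ge 1$), so $L(\pi;N)$ is only upper semicontinuous with values in $[-\infty,0)$; existence still follows from upper semicontinuity plus weak compactness, but not from the continuity you assert.

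The one step you should tighten is the multiplicity count. Your claim that every support point of $\hat\pi$ is a zero of $\Phi$ of multiplicity at least two uses that it is an interior local minimum of $\Phi\ge 0$, and this fails at the endpoints of the feasible set: a support point at $r=1$ (which occurs, e.g., when all $N_i\ge n$, so that $\hat\pi=\delta_1$) need only be a simple zero, and a support point at $r=0$ (possible whenever some $N_i=0$) is not counted at all by a Descartes/Laguerre count of zeros in $(0,\infty)$ — moreover in that case the constant coefficient of $\Phi$ vanishes rather than changes sign. The bound $|\mathrm{supp}(\hat\pi)|\le|\calJ|$ does survive, but only after a short case analysis: if $0\in\mathrm{supp}(\hat\pi)$ then $0\in\calJ$ and the coefficient at $m=0$ is zero, so the number of sign changes drops to at most $2(|\calJ|-1)$, and combining $2\,\#\{\text{interior support points}\}+\indc{1\in\mathrm{supp}(\hat\pi)}\le V$ in each case yields the stated bound; as written, your argument would give $|\calJ|+1$ in the worst case. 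A similar boundary caveat affects the uniqueness step: when $0$ is a candidate support point the column $\poi(\cdot,0)=\indc{\cdot=0}$ makes some minors of $M$ vanish, so ``every $\ell\times\ell$ minor is positive'' is too strong; what you actually need, and what still holds (expand along the $e_0$ column and apply strict total positivity of the Poisson kernel at the remaining positive atoms), is that $M$ has full column rank. With these boundary cases patched, your proof is correct and is essentially the argument underlying the cited result.
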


\paragraph{Optimality conditions}
By definition of $\hat\pi$ in~\eqref{eq:pois-npmle-def}, for any feasible $Q\in\calP([0,1])$, we have
\begin{equation}
    \label{eq:0th_opt}
    \sum_{i=1}^k \log \frac{f_{\hat \pi} (N_i)}{f_{Q} (N_i)}\ge 0.
\end{equation}
Letting $\pi_N \triangleq \frac{1}{k}\sum_{i=1}^k \delta_{N_i}$, we rewrite the likelihood function \eqref{eq:Ni-likelihood} as $L(\pi,N)= -k ( H(\pi_N) + \KL(\pi_N\|f_\pi) )$,
where $H(p)\triangleq \Expect_p \log \frac{1}{p}$ denotes the Shannon entropy and $\KL(p\|q) \triangleq \Expect_p \log \frac{p}{q}$ denotes the Kullback–Leibler (KL) divergence. 
Then, the NPMLE can be equivalently formulated as 
\begin{align}
     \label{eq:npmle-KL}
    \hat{\pi} \in \argmin_{\pi \in \calP([0,1])} \ \KL(\pi_N\|f_\pi).
\end{align}
This also provides a minimum-distance interpretation of the NPMLE, which aligns the empirical histogram $\pi_N$ with a smoothed density $f_\pi$ of bandwidth $O(\tfrac{1}{\sqrt{n}})$ under the $\KL$ divergence.

Next, we turn to the first-order optimality conditions.
For any $Q\in \calP([0,1])$, it follows from the zeroth-order optimality~\eqref{eq:0th_opt} that the directional derivative of the log-likelihood function at $\hat\pi$ in the direction of $Q$ is always non-positive:
\begin{align}
    \label{eq:1st_opt}
    D_{\hat \pi}(Q) \triangleq \lim_{\epsilon\to 0_+} \frac{L((1-\epsilon)\hat \pi + \epsilon Q)-L(\hat \pi) }{\epsilon} 
    = \frac{1}{k} \sum_{i=1}^k  \frac{ f_{Q}(N_i) }{ f_{\hat \pi}(N_i) }  -1 \leq 0.
\end{align}
Another useful necessary condition is that the NPMLE $\hat\pi$ is always an ascending direction:
\begin{align}
     \label{eq:1st_opt_2}
    D_Q(\hat \pi) 
    &= \frac{1}{k}\sum_{i=1}^k \frac{f_{\hat \pi}(N_i)}{f_{Q}(N_i)} -1 
    \stepa{\geq} \pth{\prod_{i=1}^k \frac{f_{\hat \pi}(N_i)}{f_{Q}(N_i)}}^{1/k} -1 
    \stepb{\geq} 0,
\end{align}
where (a) uses the AM-GM inequality,
and (b) follows from~\eqref{eq:0th_opt}.

\paragraph{Statistical properties}
In the following, we establish statistical properties for the Poisson NPMLE based on its optimality conditions, which  play a key role in proving the main results in Section~\ref{sec:theory}.
To begin with, let $r:[0,1] \mapsto [0,\infty)$ be a  nonnegative function. 
For a set $S \subseteq [0,1]$, define the \textit{$r$-fattening} of $S$ as
$$S_{r}\triangleq \bigcup_{x \in S} [x - r(x), x + r(x)].$$
In particular, if $r(x) \equiv r$ is constant, this reduces to the standard notion of fattening using a fixed radius $r$.

\begin{definition}[$r$-separation]
Two sets $S,S'\subseteq[0,1]$ are said to be \textit{$r$-separated} if $S_r \cap S'_r = \emptyset$. 
In particular, we define the \textit{$r$-complement} of $S$ as $S^{c,r} \triangleq \cup_{S'\subseteq [0,1]: S_r \cap S'_r = \emptyset} S'$, which is the largest subset of $[0,1]$ that is $r$-separated from $S$.
\end{definition}

We say that the radius function $r$ is \textit{$t$-large} if
\begin{equation}
    \label{eq:r_condition}
    \inf_{x\in [0,1]} \frac{r^2(x)}{x} \wedge r(x) \geq t.
\end{equation}
Equivalently, if the function $r$ is $t$-large, then $r(x)\ge t\vee \sqrt{tx}$ and thus $r(x)\ge \sqrt{t(x \vee r(x))}$ for all $x\in[0,1]$. 
Under this condition, $S_r$ characterizes the high-probability region of the Poisson distribution with parameter in $S$ (see Lemma~\ref{lem:pois_rn}).
The following proposition controls the support and probability mass of the Poisson NPMLE.

\begin{proposition}
\label{prop:weight_remainder_asymp}
There exist universal constants $C, c, c_0>0$ such that, for any $t$-large function $r:[0,1] \mapsto [0,\infty)$ with $t\geq C\frac{\log k}{n}$,     
with probability at least $1 - 2k\exp(-c_0nt)$, the following holds for all measurable sets $S\subseteq [0,1]$:
\begin{enumerate}
\item[(a)] $\hat \pi(S_r) \geq \pi_P(S)/(1+\exp(-cnt)) $;
\item[(b)] $\hat \pi(S_r) \leq 1-\pi_P(S^{c,r}) + \exp(-cnt)$;
\item[(c)] $\hat \pi (S_r ) = 1$ if $\pi_P(S) = 1$.
\end{enumerate}
\end{proposition}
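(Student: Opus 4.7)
The plan is to prove part (a) by combining a high-probability concentration event for the empirical frequencies $\hat p_i=N_i/n$ with the first-order optimality conditions~\eqref{eq:1st_opt} for the NPMLE, and then deduce (b) and (c) as consequences. First, I would set up the good event $\mathcal{E}=\{|\hat p_i-p_i|\le r(p_i)/2 \text{ for all } i\in[k]\}$. Since $r$ is $t$-large, a Chernoff-type Poisson tail bound (which is the content of Lemma~\ref{lem:pois_rn}) gives $\Pr[|\hat p_i-p_i|>r(p_i)/2]\le 2\exp(-c_0 n t)$, and a union bound yields $\Pr[\mathcal{E}]\ge 1-2k\exp(-c_0 n t)$. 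The remainder of the argument is deterministic on $\mathcal{E}$.

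The key technical input is a Poisson separation estimate: for each $i$ with $p_i\in S$ and every $\lambda\in S_r^c$, $\poi(N_i,n\lambda)\le \exp(-cnt)\,\poi(N_i,n\hat p_i)$. On $\mathcal{E}$ we have $\hat p_i\in\{p_i\}_{r/2}\subseteq S_r$ while $\lambda\notin S_r$ forces $|\lambda-p_i|>r(p_i)$, hence $|\lambda-\hat p_i|>r(p_i)/2$. Expanding
\[
\log\frac{\poi(N_i,n\lambda)}{\poi(N_i,n\hat p_i)}=-n(\lambda-\hat p_i)+N_i\log(\lambda/\hat p_i),
\]
the identity $N_i=n\hat p_i$ exactly cancels the linear term in $\Delta=\lambda-\hat p_i$, leaving a leading quadratic $-n\Delta^2/(2\hat p_i)\lesssim -nt$ by $t$-largeness of $r$; a direct Chernoff bound handles the complementary regime $|\Delta|\gtrsim \hat p_i$ in which the Taylor expansion breaks down. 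I expect this cancellation to be the most delicate step: without evaluating at $\hat p_i$ rather than $p_i$ one pays an extra first-order contribution of size $n|\Delta|\cdot|\hat p_i-p_i|/p_i$ that can overwhelm the $nt$ budget near the boundary of $S_r$.

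With the separation lemma in hand, (a) follows from the integrated form of the first-order condition. Setting $\phi(\lambda)=\frac{1}{k}\sum_i\poi(N_i,n\lambda)/f_{\hat\pi}(N_i)$, we have $\phi\le 1$ on $[0,1]$ and $\phi=1$ on $\operatorname{supp}(\hat\pi)$, which yields
\[
\hat\pi(S_r)=\int_{S_r}\phi\,d\hat\pi=\frac{1}{k}\sum_{i=1}^k\frac{A_i}{A_i+B_i},\quad A_i\triangleq\int_{S_r}\poi(N_i,n\lambda)\,d\hat\pi,\ B_i\triangleq\int_{S_r^c}\poi(N_i,n\lambda)\,d\hat\pi.
\]
For $i\in I_S\triangleq\{i:p_i\in S\}$, the separation estimate gives $B_i\le \exp(-cnt)\,\poi(N_i,n\hat p_i)$, while the pointwise optimality $\phi(\hat p_i)\le 1$, restricted to the $j=i$ term, gives $A_i+B_i\ge \poi(N_i,n\hat p_i)/k$. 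Combining these under the hypothesis $nt\ge C\log k$ (which absorbs the spurious factor of $k$) yields $B_i/A_i\le \exp(-c'nt)$, and restricting the sum to $I_S$ (of size $k\pi_P(S)$) produces $\hat\pi(S_r)\ge \pi_P(S)/(1+\exp(-c'nt))$, which is exactly (a).

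Finally, (b) is obtained by applying (a) with $S$ replaced by $S^{c,r}$: by $r$-separation $(S^{c,r})_r\subseteq S_r^c$, so $\hat\pi(S_r^c)\ge \hat\pi((S^{c,r})_r)\ge \pi_P(S^{c,r})/(1+\exp(-cnt))$, and rearranging gives (b). For (c), when $\pi_P(S)=1$ every $\hat p_i$ lies in $S_r$ on $\mathcal{E}$; applying the separation lemma to each $j$-th summand of $\phi(\lambda^*)$ for any candidate $\lambda^*\in S_r^c$ and using $\poi(N_j,n\hat p_j)\le k\,f_{\hat\pi}(N_j)$ gives $\phi(\lambda^*)\le k\exp(-cnt)<1$ under $nt\ge C\log k$. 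This contradicts the condition $\phi=1$ on $\operatorname{supp}(\hat\pi)$, forcing $\operatorname{supp}(\hat\pi)\subseteq S_r$ and hence $\hat\pi(S_r)=1$.
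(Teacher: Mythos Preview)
Your plan is correct and, for part~(a), arguably cleaner than the paper's own argument. The paper proves~(a) by contradiction: it considers an arbitrary $\pi$ with $\pi(S_r)(1+\varepsilon)<\pi_P(S)$, constructs an auxiliary NPMLE $\hat\mu$ on the subsample $\{N_i:p_i\in S\}$, and shows that the first-order condition~\eqref{eq:1st_opt} fails for $Q=\hat\mu$ by decomposing $f_\pi$ into its conditional pieces on $S_r$ and $(S_r)^c$ and invoking the reverse optimality condition~\eqref{eq:1st_opt_2} for $\hat\mu$. Your argument bypasses the auxiliary NPMLE entirely: the KKT identity $\phi\equiv 1$ on $\operatorname{supp}(\hat\pi)$ (which follows by integrating $\phi\le 1$ against $\hat\pi$) directly yields the exact formula $\hat\pi(S_r)=\frac{1}{k}\sum_i A_i/(A_i+B_i)$, after which the same two inputs the paper uses---the Poisson separation bound for $B_i$ and the pointwise bound $f_{\hat\pi}(N_i)\ge \poi(N_i,n\hat p_i)/k$ from $\phi(\hat p_i)\le 1$---finish the job termwise. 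The tradeoff is that the paper's indirect route makes the role of both optimality conditions~\eqref{eq:1st_opt}--\eqref{eq:1st_opt_2} more visible and reuses the subset-NPMLE machinery that appears later (Lemma~\ref{lem:approximate-np}), whereas your route is shorter and uses only~\eqref{eq:1st_opt}. Your derivations of~(b) and~(c) match the paper's essentially verbatim.
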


Proposition~\ref{prop:weight_remainder_asymp} characterizes both the local and global behavior of the Poisson NPMLE. 
Part (a) shows that the NPMLE assigns at least $\pi_P(S)$ up to an exponentially small error term within a neighborhood $S_r$ of $S$. 
Conversely, part (b) upper bounds $\hat\pi(S_r)$ by the probability mass of $\pi_P$ on a larger set $ (S^{c,r})^c \supseteq S_r$ up to an error term.
Combining (a) and (b) implies that $\hat\pi$ nearly matches the mass of $\pi_P$ around $S$.
Finally, part (c) strengthens this result  by removing the error terms in the special case where $S$ is the full support, showing that $\hat\pi$ concentrates around the support of $\pi_P$ with high probability.
The proof constructs a high probability event on which these statistical properties are necessary to satisfy the optimality conditions \eqref{eq:1st_opt} and \eqref{eq:1st_opt_2} for all testing distributions $Q$.
The full proof is deferred to Appendix~\ref{app:npmle-weight}.

\begin{remark}
While Proposition~\ref{prop:weight_remainder_asymp} holds for $\hat \pi$ under \eqref{eq:poisson-sampling}, it fails under the mixture model $N_i \iiddistr \int \mathrm{Poi}(n\theta) \diff \pi^\star(\theta)$ when $\pi_P$ is replaced by $\pi^\star$.
To illustrate, consider $k=2$ and $\pi_P = \pi^\star = \tfrac{1}{2}\delta_{1/3} + \tfrac{1}{2}\delta_{2/3}$.
Under the mixture model, both $N_1$ and $N_2$ are drawn from the same component with probability $0.5$, in which case the NPMLE concentrates near a single point by Proposition~\ref{prop:npmle-1} and thus deviates substantially from $\pi^\star$.
\end{remark}

The next proposition provides an upper bound on the Hellinger distance between the mixture densities of the Poisson NPMLE, which directly follows from the density estimation result in \cite[Proposition 27]{SW22}.
\begin{proposition}
\label{prop:pois-hellinger-rate}
Let $\{N_i\}_{i=1}^k$ be drawn from the Poisson model \eqref{eq:poisson-sampling}, and
\begin{align*}
    \epsilon_{n,k}^2= \frac{n^\frac{1}{3} \log^8k }{k} \wedge 1.
\end{align*}
Then, there exist constant $s^\star>0$ such that for any $s \geq s^\star$,  
\begin{equation}
    \label{eq:npmle-rate}
        \pbb[H(f_{\hat{\pi}}, f_{\pi_P} )\geq s\epsilon_{n,k}]\leq 2\exp\pth{-s^2\log^2 k /8}.
\end{equation}
\end{proposition}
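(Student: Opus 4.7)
The statement is advertised as a direct consequence of \cite[Proposition 27]{SW22}, so the plan is to verify that the hypotheses of that density-estimation result are met in our Poisson sampling setup, where the $N_i$'s are independent but \emph{not} identically distributed (each $N_i\sim\mathrm{Poi}(np_i)$). The crucial observation is that every quantity the proof in \cite{SW22} controls -- namely the log-likelihood ratio process $\sum_{i}\log(f_{\hat\pi}/f_{\pi_P})(N_i)$ and its moments -- depends on the joint distribution of $(N_1,\dots,N_k)$ only through (i) mutual independence of the coordinates and (ii) the equality
\[
\Expect \sum_{i=1}^k g(N_i) \;=\; k\int g \, \diff f_{\pi_P}
\]
for any $g$. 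Property (i) is immediate under \eqref{eq:poisson-sampling}, and property (ii) is exactly the computation carried out in \eqref{eq:npmle_opt_exp}. Thus Poisson sampling and i.i.d.\ sampling from $f_{\pi_P}$ yield identical first- and second-moment control of the likelihood-ratio process, which is all the proof uses.

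\paragraph{Main steps.}
First, I would invoke the zeroth-order optimality \eqref{eq:0th_opt} with test distribution $Q=\pi_P$ and rewrite it in the standard ``basic inequality'' form
\[
\sum_{i=1}^k \log \frac{(f_{\hat\pi}+f_{\pi_P})(N_i)}{2f_{\pi_P}(N_i)} \;\ge\; \tfrac12 \sum_{i=1}^k \log \frac{f_{\hat\pi}(N_i)}{f_{\pi_P}(N_i)} \;\ge\; 0,
\]
which couples the squared Hellinger distance $H^2(f_{\hat\pi},f_{\pi_P})$ to an empirical process indexed by the Hellinger-smoothed class $\{\log \tfrac{f_\pi+f_{\pi_P}}{2f_{\pi_P}}:\pi\in\calP([0,1])\}$. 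Second, using the expectation identity above, the centered empirical process is controlled exactly as in the i.i.d.\ case by Bernstein-type concentration plus chaining. Third, one imports from \cite{SW22} the bracketing-entropy estimate for the class of Poisson mixtures on $[0,1]$, which, after Dudley's integral and a standard peeling argument, produces the critical radius $\epsilon_{n,k}^2 \asymp (n^{1/3}\log^8 k)/k \wedge 1$ and the subgaussian-type tail in \eqref{eq:npmle-rate}.

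\paragraph{Main obstacle.}
The genuinely technical step -- the bracketing-entropy bound for Poisson mixtures that drives the $n^{1/3}$ factor and the $\log^8 k$ polylog -- is not reproduced here; it is the substantive content of \cite{SW22} and is cited as a black box. The only thing one must verify by hand is the reduction from independent-but-nonidentical Poisson observations to the i.i.d.\ setting treated there. This is slightly delicate because one cannot literally appeal to an i.i.d.\ concentration inequality, but it becomes routine once one notes that the variance of $\sum_i\log\tfrac{(f_{\hat\pi}+f_{\pi_P})(N_i)}{2f_{\pi_P}(N_i)}$ decomposes as a sum of independent terms whose aggregate equals $k$ times the variance under $f_{\pi_P}$, so every Bernstein/Bennett bound used in \cite{SW22} applies verbatim with the same constants.
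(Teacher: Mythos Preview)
Your proposal is correct and essentially matches the paper's approach: the paper does not give a self-contained proof of this proposition but simply cites \cite[Proposition~27]{SW22}, and you have correctly isolated the one verification needed to transport that result to the present setting---namely that under independent (non-i.i.d.) Poisson sampling the aggregate moments of the log-likelihood ratio coincide with those under i.i.d.\ sampling from $f_{\pi_P}$, via the identity \eqref{eq:npmle_opt_exp}. This is exactly the reduction the paper uses implicitly (and makes explicit in the closely related Lemma~\ref{lem:pois-hellinger-rate-c}, where the same averaging identity $\tfrac{1}{n}\sum_i \Expect[\cdot(X_i)] = \Expect_{X\sim f_{\pi^\star}}[\cdot(X)]$ appears).
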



The (squared) Hellinger risk is a commonly used measure in density estimation problem.
The proof follows the classical metric entropy approach for analyzing $M$-estimators, which is applied to the NPMLE by constructing finite mixture approximation \cite{Zhang08,MWY25}.
In our setting, the alphabet size $k$ corresponds to the number of input counts in the NPMLE, while the sample size $n$ serves as a bandwidth scaling parameter of the Poisson distributions. 
Accordingly, in Proposition~\ref{prop:pois-hellinger-rate}, the Hellinger risk decreases with $k$ but grows with $n$.
The non-vanishing error in the fixed-$k$, large-$n$ regime is not an artifact of the analysis. 
Indeed, Proposition~\ref{prop:pi_p_minimax_lb} establishes a minimax lower bound on the Hellinger risk, which remains bounded away from zero for constant $k$ even as $n \to \infty$.
Intuitively, this is because the standard deviation of each Poisson distribution in the mixture model is of the same order as the estimation error of the mean parameters.

\begin{proposition}
\label{prop:pi_p_minimax_lb}
There exist universal constants $c,C>0$ such that for any $n\ge C \log k$,
\begin{align*}
 \inf_{\hat f}\sup_{P\in \Delta_{k-1}} \Expect H^2 (\hat f, f_{\pi_P}) \geq \frac{c}{k},
\end{align*}
where the infimum is over all $\hat f$ measurable with respect to $\{N_1,\ldots,N_k\}$.
\end{proposition}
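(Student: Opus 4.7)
The plan is to establish the lower bound via Le~Cam's two-point method with an exchangeable prior that places the perturbed coordinates at uniformly random positions; the motivation is that a deterministic one-coordinate perturbation of $P$ is easily detected from $N$, but when the perturbations are permuted uniformly, the observations $(N_1,\ldots,N_k)$ become nearly indistinguishable from the unperturbed baseline even though the target $\pi_P$---which only sees the multiset of $p_i$'s---still changes by a nontrivial amount. Concretely, take $P_0=(1/k,\ldots,1/k)$ as one hypothesis, and let $\mu_1$ be uniform over all $P\in\Delta_{k-1}$ with $k/2$ entries equal to $1/k+\delta$ and $k/2$ equal to $1/k-\delta$, for some $\delta>0$ to be tuned. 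Since $\pi_P$ depends only on the multiset, the mixing distribution is almost surely deterministic under each prior: $\bar\pi_0=\delta_{1/k}$ and $\bar\pi_1=\tfrac12(\delta_{1/k+\delta}+\delta_{1/k-\delta})$. It then suffices to show that (i)~$H^2(f_{\bar\pi_0},f_{\bar\pi_1})\gtrsim 1/k$ and (ii)~$\mathrm{TV}(\pbb^{(0)},\pbb^{(1)})\le 1/2$, where $\pbb^{(j)}$ denotes the joint law of $(N_1,\ldots,N_k)$ induced by $\mu_j$.

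For (i), a Taylor expansion gives $f_{\bar\pi_1}-f_{\bar\pi_0}\approx\tfrac12(n\delta)^2\partial_\lambda^2\poi(\cdot,n/k)$, and the Poisson moment identity $\sum_x(\partial_\lambda^2\poi(x,\lambda))^2/\poi(x,\lambda)=2/\lambda^2$ (which follows from a direct computation using factorial moments) yields $\chi^2(f_{\bar\pi_1},f_{\bar\pi_0})\asymp n^2k^2\delta^4$, hence $H^2\asymp n^2k^2\delta^4$. For (ii), write the mixture likelihood ratio $\pbb^{(1)}/\pbb^{(0)}(N)=\Expect_{S_+}\prod_{i\in S_+}r_+(N_i)\prod_{i\notin S_+}r_-(N_i)$ with $r_\pm(x)=\poi(x,n/k\pm n\delta)/\poi(x,n/k)$. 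Using independence of the $N_i$ under $\pbb^{(0)}$ together with the Poisson identities $\Expect^{(0)}[r_\pm^2]=e^{nk\delta^2}$ and $\Expect^{(0)}[r_+r_-]=e^{-nk\delta^2}$, the second $\chi^2$ moment collapses to $\Expect_{S_+,S_+'}\exp(u(k-2|S_+\triangle S_+'|))$ with $u=nk\delta^2$. Since $|S_+\triangle S_+'|$ is hypergeometric with mean $k/2$ and variance of order $k$, the leading $uk$ contribution is cancelled by the averaging, and the Gaussian-type fluctuations produce $\chi^2=\Theta(u^2k)=\Theta(n^2k^3\delta^4)$. Setting $\delta^4=\Theta(1/(n^2k^3))$ with a sufficiently small constant therefore makes (i) and (ii) hold simultaneously.

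Combining the two estimates via Le~Cam's inequality for the $H^2$ loss---which satisfies $H^2(x,f_0)+H^2(x,f_1)\ge\tfrac12H^2(f_0,f_1)$ by the Hellinger triangle inequality and AM--GM---yields $\inf_{\hat f}\sup_P\Expect H^2(\hat f,f_{\pi_P})\gtrsim H^2(f_{\bar\pi_0},f_{\bar\pi_1})\cdot(1-\mathrm{TV}(\pbb^{(0)},\pbb^{(1)}))\gtrsim 1/k$. The hard part is the $\chi^2$ computation in (ii): the cancellation of the $O(k)$ leading-order term in the exponent through averaging over the second random subset is what makes the argument nontrivial, and must be handled via the hypergeometric moment generating function with enough precision to extract the $u^2k$ correction; this is precisely what pins down the perturbation scale $\delta$ in terms of $n$ and $k$. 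A secondary technical point is that the Taylor expansion in (i) requires $n\delta\ll\sqrt{n/k}$, valid for $n\gtrsim\sqrt k$; in the remaining regime $C\log k\le n\lesssim\sqrt k$ one must use a variant construction (e.g.\ perturbing between atoms $\{\epsilon,2/k-\epsilon\}$ with small $\epsilon$) that accommodates near-zero Poisson parameters, but the same Le~Cam strategy applies.
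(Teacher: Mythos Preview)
Your approach is substantially more elaborate than the paper's, and this extra machinery creates a genuine gap. The paper uses a plain two-point argument with \emph{deterministic} distributions: $Q=(\tfrac13,\tfrac23,0,\ldots,0)$ and $P=(\tfrac{1-\epsilon}{3},\tfrac{2+\epsilon}{3},0,\ldots,0)$ with $\epsilon\asymp n^{-1/2}$. The Poisson KL formula gives $\KL(\otimes_i\Poi(np_i)\,\|\,\otimes_i\Poi(nq_i))=\Theta(n\epsilon^2)=\Theta(1)$, and since the atoms $\tfrac13$ and $\tfrac23$ are well separated on the Poisson scale, the cross terms $\sum_j\sqrt{\poi(j,np_i)\poi(j,nq_{i'})}$ for $i\ne i'$ are $\exp(-\Theta(n))$, so $H^2(f_{\pi_P},f_{\pi_Q})\gtrsim\tfrac1k H^2(\Poi(np_1),\Poi(nq_1))\gtrsim 1/k$ directly. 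No random priors, no hypergeometric MGFs, and no case split in $n$.

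Your premise---that a deterministic perturbation is ``easily detected''---is exactly what the paper's construction refutes: a two-coordinate perturbation of order $n^{-1/2}$ at constant-size atoms is \emph{not} detectable from $N$. The randomization over $S_+$ is unnecessary. More importantly, your construction forces $1/k-\delta\ge 0$, i.e.\ $\delta\le 1/k$, which with $\delta\asymp n^{-1/2}k^{-3/4}$ requires $n\gtrsim\sqrt{k}$; this is the real source of the regime restriction (not the Taylor remainder, which is actually $O(k^{-1/4})$ uniformly in $n$). For $C\log k\le n\lesssim\sqrt{k}$ you only sketch ``perturbing between atoms $\{\epsilon,2/k-\epsilon\}$'', which is not a proof; that regime remains unhandled. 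The paper's construction avoids the issue entirely by placing the two active atoms at $\Theta(1)$ rather than at $\Theta(1/k)$, so the same pair $(P,Q)$ works for every $n\ge C\log k$.
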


Nevertheless, as we will show in the remaining sections, the NPMLE $\hat\pi$ remains meaningful despite the impossibility of consistent density estimation.
Figure~\ref{fig:cdf_asymp} provides a quick insight that $\hat\pi$ closely estimates $\pi_P$ in the sense of the cumulative distribution function (CDF) when $k$ is fixed and the sample size $n$ is large. 
In Section~\ref{sec:theory}, we establish theoretical guarantees under the 1-Wasserstein distance between mixing distributions.
The results can be further extended to the $p$-Wasserstein distance and the general \textit{integral probability metric} (IPM) \cite{muller1997integral}, which serves as a foundation for many functional estimation problems.

\begin{figure}[H]
    \centering
    \includegraphics[width=0.5\linewidth]{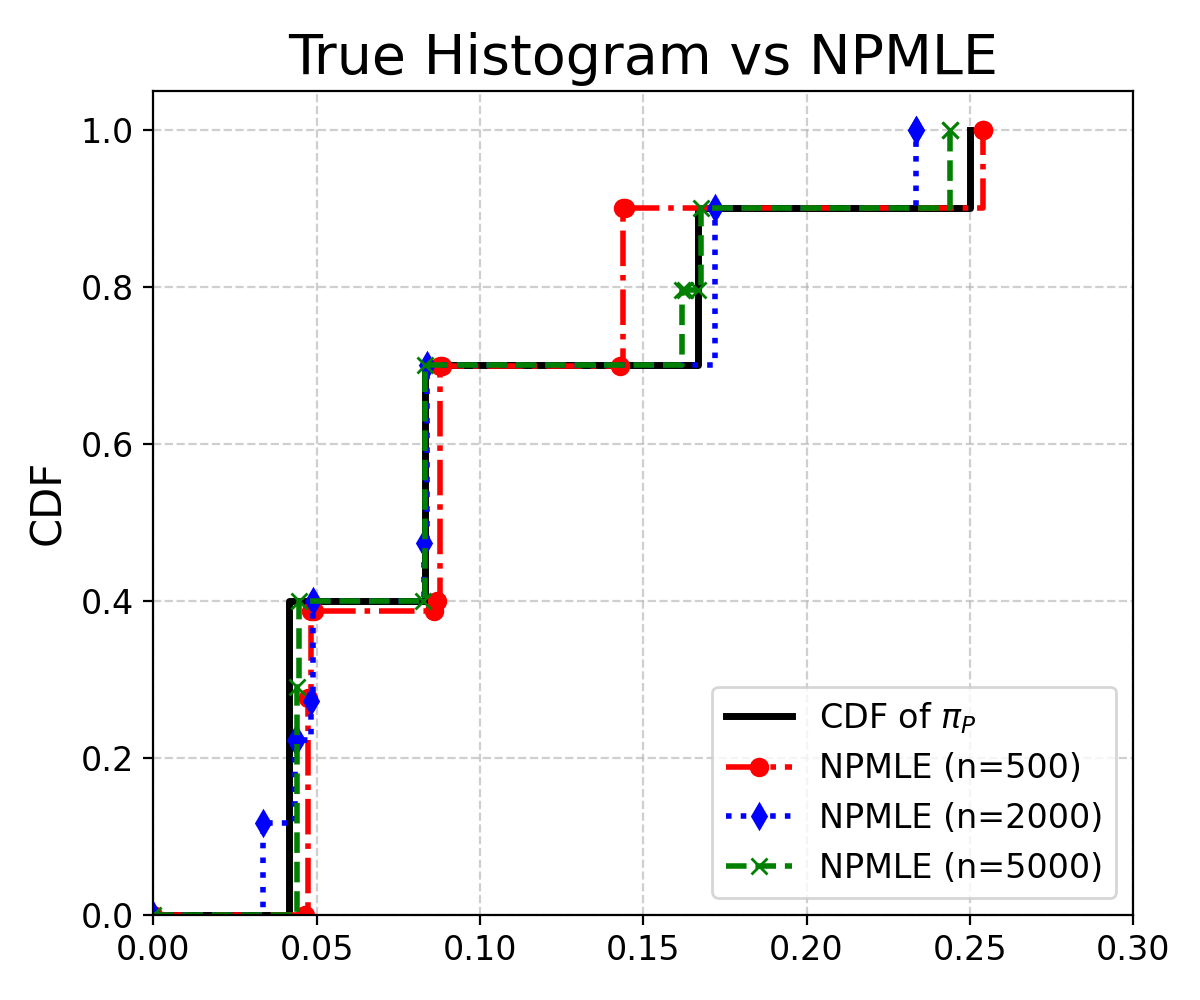}
    \caption{CDFs of the underlying distribution $\pi_P=\frac{1}{10}(4\delta_{\frac{1}{24}}+3\delta_{\frac{1}{12}}+2\delta_{\frac{1}{6}}+\delta_{\frac{1}{4}})$ with $k=10$ and the NPMLE fitted with $n=500, 2000, 5000$. The figure illustrates that the NPMLE assigns nearly the same probability mass as $\pi_P$ around each of its atom.
    }
    \label{fig:cdf_asymp}
\end{figure}

\section{Theoretical guarantees of the Poisson NPMLE}
\label{sec:theory}
In this section, we establish theoretical guarantees for the Poisson NPMLE \eqref{eq:pois-npmle-def}, building on the properties developed in the preceding discussion. We first show that the estimator achieves the parametric rate of asymptotic convergence under the $1$-Wasserstein distance.
Section~\ref{sec:theory-main} then investigates the non-asymptotic regime where the alphabet size grows with the sample size, showing that the NPMLE attains the minimax optimal rate. Section~\ref{sec:truncated_npmle} then addresses the estimation of symmetric functionals, where we combine the NPMLE plug-in estimator with a tailored bias-correction scheme to construct minimax rate-optimal estimators for specific functionals.
Finally, Section~\ref{sec:pen-npmle} develops a penalized version of the NPMLE to handle the case of unknown support size.

\subsection{Asymptotic rate of convergence} 
\label{sec:theory-asymp}
To begin with, we consider the regime where $P=(p_1,\dots,p_k)$ is fixed and establish an asymptotic guarantee for the Poisson NPMLE \eqref{eq:pois-npmle-def}. In particular, we focus on convergence in the $1$-Wasserstein distance \cite[Chapter~1]{villani03}, defined by
$$ 
W_{1}(P, P')
\triangleq \inf \{ \Expect |X-Y|: X\sim P, Y \sim P' \}. 
$$
 The next theorem shows that the estimator converges to the true histogram at the standard parametric rate under the $W_1$ distance.
\begin{theorem}
\label{thm:npmle-asymp}
    Fix $P=(p_1,\ldots,p_k)\in \Delta_{k-1}$. Let $\hat{\pi}$ be the NPMLE in \eqref{eq:pois-npmle-def}.
    Then, as $n\to\infty$,
    $$
    W_1(\hat\pi,\pi_P) = O_p\pth{\sqrt{\frac{1}{n}}}.
    $$
\end{theorem}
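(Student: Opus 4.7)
My plan is to convert Proposition~\ref{prop:weight_remainder_asymp} into a $W_1$ bound by trapping the support of the NPMLE in disjoint $O(1/\sqrt n)$--windows around the distinct atoms of $\pi_P$, and matching the mass on each window to the true multiplicity. Let $q_1,\ldots,q_m$ denote the distinct values in $\{p_1,\ldots,p_k\}$ with multiplicities $n_1,\ldots,n_m$, and write $\delta\triangleq\min_{j\ne j'}|q_j-q_{j'}|$, a positive constant since $P$ is fixed.

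I would then pick $t=t_n$ slightly above the threshold $C\log k/n$ required by Proposition~\ref{prop:weight_remainder_asymp}, together with the radius $r(x)=\sqrt{t_n(x\vee t_n)}$, which is $t_n$-large. For $n$ sufficiently large, $r(q_j)\le\sqrt{t_n}<\delta/2$, so the windows $B_j\triangleq[q_j-r(q_j),q_j+r(q_j)]$ are pairwise disjoint. Part~(c) of Proposition~\ref{prop:weight_remainder_asymp} applied to $S=\{q_1,\ldots,q_m\}$ then forces $\hat\pi(\cup_j B_j)=1$, while parts~(a) and~(b) applied to each singleton $S=\{q_j\}$, combined with $\sum_j\hat\pi(B_j)=1$, yield $|\hat\pi(B_j)-n_j/k|\le\exp(-c\,n t_n)$ uniformly in $j$, all on a single event of probability at least $1-2k\exp(-c_0 n t_n)$.

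On this event, a coupling that first transports each unit of mass in $B_j$ onto $q_j$ (cost at most $r(q_j)\,\hat\pi(B_j)$) and then rebalances any residual weight mismatch across the $q_j$'s (cost at most the total-variation discrepancy times $\mathrm{diam}[0,1]$) produces
\[
W_1(\hat\pi,\pi_P)\;\le\;\max_j r(q_j)\;+\;\sum_j|\hat\pi(B_j)-n_j/k|\;\lesssim\;\sqrt{t_n}\;+\;m\exp(-c\,n t_n).
\]
The main obstacle will be the delicate balancing of $t_n$: naively optimizing the right-hand side gives $t_n\asymp\log n/n$ and only an $O_p(\sqrt{\log n/n})$ rate, since $\sqrt{t_n}$ and $\exp(-c\,nt_n)$ cannot simultaneously be forced below $1/\sqrt n$. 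To close the residual $\sqrt{\log n}$ gap, I would complement Proposition~\ref{prop:weight_remainder_asymp} with a localized parametric analysis: once $\hat\pi$ has been trapped in the windows $B_j$, the Poisson components at the distinct $q_j$'s become asymptotically non-overlapping, and the first-order optimality conditions~\eqref{eq:1st_opt}--\eqref{eq:1st_opt_2} restricted to each $B_j$ reduce to ordinary Poisson MLE conditions. Standard finite-dimensional LAN asymptotics for this fixed-$m$ problem should sharpen the weight bound to $|\hat\pi(B_j)-n_j/k|=O_p(1/\sqrt n)$ and analogously tighten the location estimates, delivering the claimed parametric $W_1$ rate via the dual representation with any $1$-Lipschitz test function.
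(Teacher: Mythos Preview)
Your trapping-plus-coupling scheme matches the paper's opening move, and you correctly diagnose the tension: a single $t_n$ cannot make both $\sqrt{t_n}$ and $\exp(-cnt_n)$ of order $1/\sqrt n$. Where you go astray is in the proposed fix. Invoking ``LAN asymptotics'' for the weights $\hat\pi(B_j)$ is problematic: even after trapping, the NPMLE within each window is still a nonparametric mixture problem, and the first-order conditions~\eqref{eq:1st_opt}--\eqref{eq:1st_opt_2} do not collapse to a finite-dimensional likelihood whose local expansion you could read off. At best this is a heuristic whose details you have not supplied.

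The actual fix is much simpler and stays entirely within Proposition~\ref{prop:weight_remainder_asymp}: apply it \emph{twice}, at two different scales. For the support trapping (part~(c)), use the shrinking $t=\epsilon_1\triangleq\frac{C}{n}\log\frac{2k}{\delta}$, so each window has radius $r_1(q_\ell)=\sqrt{q_\ell\epsilon_1}+\epsilon_1=O_\delta(1/\sqrt n)$. For the mass bound (part~(b)), use a \emph{fixed} constant $t=\epsilon_2\triangleq\bigl(\tfrac14\min_{i\ne j}|q_i-q_j|\bigr)^2$, determined solely by the separation of the fixed $P$; since $\epsilon_2$ does not shrink, the mass error on each larger window $I_{\ell,2}$ is $\exp(-cn\epsilon_2)=e^{-\Theta(n)}$. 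For $n$ large enough that $\epsilon_1<\epsilon_2$, the small windows nest inside the large ones, so $\hat\pi(B_j)\le\hat\pi(I_{j,2})\le\pi_P(\{q_j\})+e^{-\Theta(n)}$; combined with $\sum_j\hat\pi(B_j)=1$ this pins every weight to within $e^{-\Theta(n)}$. Your own coupling bound then reads $\max_j r_1(q_j)+m\,e^{-\Theta(n)}=O_\delta(1/\sqrt n)$ with probability at least $1-\delta-o(1)$, which is exactly $W_1=O_p(1/\sqrt n)$, with no parametric asymptotics needed.
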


The proof of Theorem~\ref{thm:npmle-asymp} applies the quantile coupling formula \cite[Eq.\ (2.52)]{villani03} that expresses the Wasserstein distance in terms of differences between the quantile functions:
\[
W_{1}(\hat\pi,\pi_P)= \int_{0}^{1}\left|\hat Q (u)-Q_P(u)\right| \diff u,
\]
where $\hat Q$ and $Q_P$ denote the quantile functions of $\hat\pi$ and $\pi_P$, respectively.
The difference is then bounded by applying Proposition~\ref{prop:weight_remainder_asymp}, which implies that around each atom of $\pi_P$ the NPMLE assigns nearly the same probability mass within a neighborhood of length $\Theta(1/\sqrt{n})$ with high probability, as illustrated in Figure~\ref{fig:cdf_asymp}. 
The complete proof is provided in the Appendix~\ref{app:prf_thm_npmle-asymp}.
Using a similar quantile coupling formula, Theorem~\ref{thm:npmle-asymp} further extends to the $q$-Wasserstein distance defined as 
    $$ 
    W_{q}(P, P')
\triangleq \inf \{ (\Expect |X-Y|^q)^{1/q}: X\sim P, Y \sim P' \}, \quad q\ge 1.
    $$
Following the proof of Theorem~\ref{thm:npmle-asymp}, we obtain that $W_{q}(\hat\pi,\pi_P) \leq O_p(1/\sqrt{n})$, \ie, the same convergence rate  applies to the stricter $W_q$ distance for any constant $q \geq 1$.

\subsection{Non-asymptotic rate of convergence on large alphabet}
\label{sec:theory-main}

In this subsection, we move beyond the large-sample regime and consider the setting where the alphabet size $k$ grows with $n$. 
We focus on scenarios where categories can be grouped into subclusters in which occurrence probabilities (and thus frequency counts) are closely aligned. 
Such structures commonly arise in real-world datasets. 
For example, in species databases, abundances often follow hierarchical patterns reflecting positions in the food web \cite{Cohen2003food}; similarly, in statistics, co-citation and co-authorship networks \cite{Ji2022dataset} form multi-level hierarchical communities, where individuals at different levels exhibit distinct citation and collaboration counts.

Motivated by the subgroup structure, we investigate the performance of NPMLE under the specific  assumption of the underlying distribution. 
Consider the radius function 
\[
r_t^\star(x) \triangleq \sqrt{t x}+t,
\]
which by definition is $t$-large.
By Proposition~\ref{prop:weight_remainder_asymp}, the $r_t^\star$-fattening set captures the high-probability region of the Poisson model.  
Specifically, any two $r_t^\star$-separated points 
$p,p'\in(0,1)$  are \textit{heterogeneous} in the sense that $\TV(\Poi(np),\Poi(np'))\geq 1-\exp(-\Omega(nt))$. 
In contrast, they are \textit{homogeneous} when the probability masses are close (\eg, $p'\in \{p\}_{r_t^\star}$ with $nt\lesssim 1$), since the high-probability regions of $\Poi(np)$ and $\Poi(np')$ largely overlap.
To capture the subgroup structure, we consider the following assumption:
\begin{assumption}
\label{ass:separation}
There exists $q_1,\dots,q_L \in [0,1]$ that are distinct and pairwise $r^\star_t$-separated such that $\pi_P$ is supported on $\cup_{\ell=1}^L \{q_\ell\}_{r^\star_s}$ for some $t>s>0$.
\end{assumption}

Under Assumption~\ref{ass:separation}, the support of $P$ is partitioned into $L$ subgroups with the cluster centroid $q_\ell$ for each. 
Notably, Assumption~\ref{ass:separation} captures the emergence of categories with vanishingly small masses, a phenomenon that poses fundamental challenges for various large-alphabet problems. 
In particular, probability masses below $O(\tfrac{\log n}{n})$ often correspond to unseen categories with limited sample size, and thus constitute the hard instances in functional estimation \cite{WY16,JHW18_L1,WY15unseen} and histogram estimation \cite{vv17,LMM}. 
Addressing these problems typically requires tailored techniques, such as polynomial approximation and carefully designed linear programs. This regime is explicitly covered by Assumption~\ref{ass:separation}, where such small masses are covered by the subgroup with $q=0$ and $s,t \asymp \tfrac{\log n}{n}$.

\begin{theorem}
\label{thm:npmle-global}
Suppose that $n \geq \Omega(\tfrac{k}{\log k})$. 
There exist universal constants $C,C',c_0$ such that, for any $P\in\Delta_{k-1}$ satisfying Assumption~\ref{ass:separation} with 
$s=\tfrac{c_0\log n}{n}$ and $t = \tfrac{C\log n}{n}$, 
\begin{align}
    \label{eq:rate_global_W1_2}
     \Expect W_{1}(\hat\pi,\pi_{P}) 
     \leq C' 
     \sqrt{\frac{\log n}{kn}}\frac{1}{\log_+(\frac{k/\log^{3}n}{L\wedge n^{1/3}} ) },
\end{align}
where $\log_+(x)\triangleq 1\vee \log x$.
\end{theorem}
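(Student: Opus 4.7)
The plan is to combine the local concentration guarantees of Proposition~\ref{prop:weight_remainder_asymp} with the Hellinger density rate of Proposition~\ref{prop:pois-hellinger-rate}, and to convert density-level closeness into Wasserstein closeness of mixing distributions via a polynomial approximation (moment-matching) argument whose degree is tuned cluster by cluster. First I would restrict to a good event $\mathcal{E}$ on which (i) the conclusions of Proposition~\ref{prop:weight_remainder_asymp} with radius $r=r^\star_t$ hold---taking the constant $C$ in $t=C\log n/n$ large enough that $\exp(-c_0 nt)\le n^{-10}$---and (ii) the Hellinger bound $H(f_{\hat\pi},f_{\pi_P})\lesssim \epsilon_{n,k}\sqrt{\log n}$ from Proposition~\ref{prop:pois-hellinger-rate} holds. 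A union bound yields $\Pr[\mathcal{E}^c]\le n^{-c}$, so the crude bound $W_1\le 1$ off $\mathcal{E}$ contributes negligibly to $\Expect W_1(\hat\pi,\pi_P)$.

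On $\mathcal{E}$, Assumption~\ref{ass:separation} gives $L$ intervals $I_\ell\triangleq\{q_\ell\}_{r^\star_s}$, and their $r^\star_t$-fattenings $J_\ell\triangleq\{q_\ell\}_{r^\star_t}$ are pairwise disjoint. Proposition~\ref{prop:weight_remainder_asymp}(a)(b)(c) then forces an essentially disjoint decomposition $\hat\pi=\sum_{\ell}\hat\pi_\ell$ with $\hat\pi_\ell$ supported on $J_\ell$ and $|\hat\pi_\ell(J_\ell)-\pi_P(I_\ell)|\le e^{-\Omega(\log n)}$. By Kantorovich duality, for any $1$-Lipschitz test function $\varphi$,
\begin{equation*}
\Bigl|\int\varphi\,\diff(\hat\pi-\pi_P)\Bigr|
\;\le\;\sum_{\ell=1}^L\Bigl|\int_{J_\ell}\!\bigl(\varphi-\varphi(q_\ell)\bigr)\diff(\hat\pi-\pi_P)\Bigr|+e^{-\Omega(\log n)},
\end{equation*}
since the constant part $\varphi(q_\ell)$ is absorbed by the matching cluster masses.

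The key local step is to control each cluster integral by combining approximation theory with the Hellinger rate. Within $J_\ell$, which has diameter $O(r^\star_t(q_\ell))$, I would approximate $\varphi-\varphi(q_\ell)$ by a polynomial $p_\ell$ of degree $d_\ell$; Jackson's theorem gives $\|\varphi-\varphi(q_\ell)-p_\ell\|_{\infty,J_\ell}\lesssim r^\star_t(q_\ell)/d_\ell$, so the non-polynomial residual contributes at most $r^\star_t(q_\ell)\pi_P(I_\ell)/d_\ell$ to that cluster. For the polynomial part, the identity $\int \theta^j\,\diff\pi=n^{-j}\sum_m m^{\underline{j}}f_\pi(m)$ rewrites $\int p_\ell\,\diff(\hat\pi-\pi_P)$ as a linear functional of $f_{\hat\pi}-f_{\pi_P}$ concentrated on the Poisson-tail indices near $nq_\ell$; Cauchy--Schwarz against $\chi^2\lesssim H^2$ and the Hellinger bound from Step~1 then yield a contribution of order $r^\star_t(q_\ell)(Cd_\ell)^{d_\ell}\epsilon_{n,k}\sqrt{\log n}/\sqrt{k}$ up to lower-order factors. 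Choosing $d_\ell\asymp\log_+\!\bigl(k/(\log^3 n\cdot(L\wedge n^{1/3}))\bigr)$ balances these two errors; summing and applying Cauchy--Schwarz via $\sum_\ell\pi_P(I_\ell)\sqrt{q_\ell}\le\sqrt{\sum_i p_i}=1$ yields
$\sum_\ell \pi_P(I_\ell)r^\star_t(q_\ell)\lesssim \sqrt{t/k}+t\asymp \sqrt{\log n/(kn)}$, which after division by $d_\ell$ gives the claimed rate.

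The main obstacle is Step~3: obtaining the logarithmic improvement in the denominator requires carefully tracking how the coefficients of $p_\ell$ expressed in the falling-factorial basis scale with the degree $d_\ell$ and with the cluster location $q_\ell$, so that growth in $(Cd_\ell)^{d_\ell}$-type constants is dominated by the smallness of $\epsilon_{n,k}$ up to $d_\ell\asymp \log_+(\cdot)$. This is essentially a quantitative deconvolution inequality for Poisson mixtures restricted to a cluster, and it is what determines the exponents of $\log n$ appearing inside $\log_+$ as well as the ceiling $n^{1/3}$ inherited from Proposition~\ref{prop:pois-hellinger-rate}.
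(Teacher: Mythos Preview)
Your high-level architecture---Kantorovich duality, cluster decomposition via Proposition~\ref{prop:weight_remainder_asymp}, polynomial approximation on each cluster, and conversion to a linear functional of the mixture density---matches the paper's proof. But there is a genuine gap in Step~3 that would make the argument fail as written.

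The issue is your use of the \emph{global} Hellinger bound from Proposition~\ref{prop:pois-hellinger-rate}, whose rate $\epsilon_{n,k}^2\asymp n^{1/3}\log^8 k/k$ carries an unavoidable $n^{1/3}$ factor reflecting the full support width of the mixing distribution. Plugging this into your cluster sum gives, at best, a bound of order $c^D\,\epsilon_{n,k}\sum_\ell w_\ell^\star|I_\ell|$, which is off from the target $\sqrt{\log n/(kn)}$ by a polynomial-in-$n$ factor (and in particular never recovers the parametric $n^{-1/2}$ rate for bounded $k$). The extra $1/\sqrt{k}$ you write in the displayed contribution has no source in the global estimate; it does not arise from restricting to indices near $nq_\ell$, because $f_{\hat\pi}$ and $f_{\pi_P}$ at those indices receive contributions from all clusters simultaneously.

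What the paper actually does is establish a \emph{local} Hellinger rate on each cluster. Writing $\hat\pi_\ell$ and $\pi_{P,\ell}$ for the conditional distributions on $I_\ell$, it shows $H^2(f_{\hat\pi_\ell},f_{\pi_{P,\ell}})\lesssim \log^3 n/k_\ell$, with no polynomial dependence on $n$. This works because inside a single cluster the Poisson means $np_i$ lie in an interval of width $O(\sqrt{nq_\ell\log n}+\log n)$, so the metric entropy is only polylogarithmic. Getting this local rate requires a new lemma (Lemma~\ref{lem:approximate-np} in the appendix): the restriction $\hat\pi_\ell$ of the global NPMLE is not itself an NPMLE for the sub-sample $\{N_i:p_i\in I_\ell\}$, but the first-order optimality conditions force it to be an \emph{approximate} NPMLE with slack $k\exp(-cnt)$, which suffices to invoke the constrained Hellinger rate (Lemma~\ref{lem:pois-hellinger-rate-c}). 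The sum $\sum_\ell w_\ell^\star|I_\ell|\cdot\sqrt{\log^3 n/k_\ell}=\frac{\log^{3/2}n}{k}\sum_\ell|I_\ell|\sqrt{k_\ell}$ is then controlled by a separate combinatorial lemma (Lemma~\ref{lem:pip_sum_weights}).

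A secondary concern: you anticipate coefficient growth of order $(Cd_\ell)^{d_\ell}$, which with $d_\ell\asymp\log(\cdot)$ would be super-polynomial and would swamp everything. The paper obtains growth $c_0^D$ with a constant base $c_0$ via Chebyshev-type coefficient bounds (Lemma~\ref{lem:poly-coeff-bound}) and a Charlier-polynomial representation (Lemmas~\ref{lem:charlier-bound}--\ref{lem:pois-poly-approx}); this is what allows $D$ to grow as large as a constant times $\log n$.
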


Theorem~\ref{thm:npmle-global} shows that NPMLE $\hat\pi$ attains the minimax lower bound (see \cite[Theorem 23]{LMM}) of estimating $\pi_P$ under the $W_1$ distance, where the worst-case distributions are covered by Assumption~\ref{ass:separation}.
Specifically, we consider the following regimes:
\begin{itemize}
    \item \textit{Large-sample and large-cluster-count regime.} When $k \lesssim (L\wedge n^{1/3}) \log^{3}n$,  \eqref{eq:rate_global_W1_2} provides an upper bound of $O(\sqrt{\tfrac{\log n}{kn}})$, which is optimal up to a logarithmic factor in $n$ compared with the minimax rate and the asymptotic rate $O(n^{-1/2})$ in Theorem~\ref{thm:npmle-asymp}. 
    
    \item \textit{Large-alphabet regime.}  The logarithmic factor in \eqref{eq:rate_global_W1_2} becomes effective as $k$ exceeds $(L \wedge n^{1/3}) \log^{3}n$. In particular, if $n\log n \gtrsim k \gtrsim (L \wedge n^{1/3})n^{\epsilon}$ for some $\epsilon>0$, the optimal rate $\Theta(\sqrt{\tfrac{1}{kn\log n}})$ is achieved, which improves upon the empirical histogram $\pi_{\hat P}\triangleq \tfrac{1}{k}\sum_{i=1}^k \delta_{\hat p_i}$ satisfying 
    \[
    \Expect W_1(\pi_{\hat P}, \pi_P)\le \frac{\Expect \Norm{\hat P-P}_1}{k} \le \sqrt{\frac{1}{kn}}.
    \]
    Hence, the empirical histogram is rate optimal only when all the $p_i$'s are heterogeneous and the underlying probability masses can be grouped into $L \approx k$ subclusters. 
    
    \item \textit{Trivial regime.} 
    Note that $W_1(\pi_P,\pi_Q) \leq \|P-Q\|_1/k \leq 1/k$ via the naive coupling between $\pi_P$ and $\pi_Q$.
    When $n \leq o(k/\log k)$, no estimator can achieve an error of $o(1/k)$. 
    Theorem~\ref{thm:npmle-global}  recovers the optimal sample complexity $\Theta(\tfrac{k}{\log k})$.
\end{itemize}

The proof of Theorem~\ref{thm:npmle-global} proceeds as follows.
First, by the dual representation of $W_1$ distance \cite[Theorem 1.14]{villani03}, 
it suffices to uniformly upper bound the plug-in estimation error of the NPMLE for 1-Lipschitz functions: 
\begin{align}
\label{eq:W1_dual}
W_1(\hat\pi,\pi_{P})
=\sup_{g \in\calL_1}\Expect_{\hat\pi} g- \Expect_{\pi_{P}} g,
\end{align}
where $\calL_1$ denotes the class of 1-Lipschitz functions.
We employ a Poisson deconvolution and construct a Poisson approximation taking form $\hat g (x) = a + \sum_{j} b_{j} \poi(j, n x)$, and decompose the error as
\begin{align*}
  \Expect_{\pi_{P}} g - \Expect_{\hat\pi} g = \int \hat g (\diff\pi_{P}-\diff {\hat\pi}) +  \int (g-\hat g)(\diff\pi_{P}-\diff {\hat\pi}) .   
\end{align*}
The first term is at most 
\begin{align*}
    \abs{\int \hat g (\diff \pi_{P}-\diff \hat\pi)} \leq \sum_{j=0}^{\infty} \abs{ b_{j}  (f_{\pi_{P}}(j)-f_{\hat\pi}(j)) } \leq \max _{j}\abs{ b_{j} } \| f_{\pi_{P}}-f_{\hat\pi}\|_1 \leq \max _{j}\abs{ b_{j} }  2H(f_{\pi_{P}},f_{\hat\pi}),
\end{align*}
where the density estimation error $H(f_{\pi_{P}},f_{\hat\pi})$ can be derived similar to Proposition~\ref{prop:pois-hellinger-rate} in each subgroup. 
Similar Poisson deconvolution has been used in \cite{VKVK19,FP21}, while our framework further reveals an interesting connection between density estimation and the estimation of $\pi_P$.
The $\log^3 n$ term in \eqref{eq:rate_global_W1_2} arises from the logarithmic factor in the Hellinger rate (see Lemma~\ref{lem:pois-hellinger-rate-c}) and is not optimized.
In particular, while $f_{\hat\pi}$ is fundamentally inconsistent for constant $k$, the error of $\hat\pi$ is weighted by $|b_j|$ that is proportional to the subgroup width. 
Moreover, in contrast to approximation-based approaches such as \cite{LMM,HS21}, which explicitly incorporate polynomial approximations and requires estimating higher-order moments, we apply polynomial approximation only implicitly through the analysis. 
The complete proof is presented in Appendix~\ref{app:prf_thm_npmle-global}.

By allowing $g$ to range over a functional class $\calF$ rather than the $\calL_1$ class in \eqref{eq:W1_dual}, the analysis naturally generalizes to distance measures in the integral probability metric (IPM) family \cite{muller1997integral} (see Appendix~\ref{sec:ipm}). This allows us to extend the histogram estimation guarantees to functional estimation problems, which is the central focus of Section~\ref{sec:truncated_npmle}.

\begin{remark}
To remove Assumption~\ref{ass:separation} and obtain theoretical guarantees for general distributions, one idea is to apply the localization argument: 
1) localize the subgroup of each probability mass using an independent sample;
2) solve the local NPMLE using the frequency counts in each subgroup;
3) analyze the local NPMLE and aggregate the estimators.
Similar ideas have been used to construct rate-optimal estimators through localized linear programs \cite{LMM} and piecewise polynomial approximation \cite{HO19}. 
In Section~\ref{sec:truncated_npmle}, we adopt localization for small masses in functional estimation. 
In practice, however, the performance of the localized methods depends on the tuning of additional parameters. 
A unified theory for the vanilla NPMLE without the separation condition is left for future work.
\end{remark}

\subsection{Symmetric functional estimation via the localized NPMLE}
\label{sec:truncated_npmle}
In this subsection, we focus on the problem of symmetric functional estimation introduced in Section~\ref{sec:app_func}, aiming at estimating the target functional
in \eqref{eq:def_func_P}:
$$G(P) = \sum_{i=1}^k g(p_i) = k \cdot \int g \, \diff\pi_P .$$
In the large-alphabet regime with many small probability masses, a major challenge in functional estimation arises when the target functional is non-smooth or even singular near zero.
To address this, we introduce a localized NPMLE plug-in estimator.
The proposed estimator consists of two parts.
For small probability masses, we solve the Poisson NPMLE \eqref{eq:pois-npmle-def} using only the subgroup with small frequency counts, and then construct the corresponding plug-in estimator as in \eqref{eq:F_hat}. 
For large frequency counts, we employ the empirical distribution with a bias correction. 
As we will show next, the localized NPMLE plug-in estimator attains minimax optimal rate for estimating a broad class of functionals.

To begin with, suppose that we observe two independent samples of frequency counts \( N = (N_1, \ldots, N_k) \) and \( N' = (N_1', \ldots, N_k') \) with \( N_i, N_i' \iiddistr \Poi(n p_i) \).
Following the formulation in Section~\ref{sec:poisson_npmle}, the two samples can be obtained via the thinning property (see, \eg, \cite[Sec.~3.7.2]{durrett2019probability}) of the Poisson process with observations over $2n$ units of time.

\paragraph{Localized NPMLE}
Consider the subgroup $I \triangleq \{0\}_{r_t^\star}=[0,t]$ with $t=C \tfrac{\log n}{n}$. 
The set $I$ corresponds the region of small probability masses that account for the unseen domain elements.
The second independent sample $N'$ is used to localize the masses based on a Poisson tail bound (see Lemma~\ref{lem:pois_rn}).
The localized NPMLE on $I$ is then estimated using the first sample $N$:
\begin{align}
     \label{eq:npmle-trunc-def}
    \hat{\pi}_I  = \argmax_{\pi\in \calP([0,1]) } & \sum_{i\in\calJ} \log f_\pi(N_i),
    \qquad 
    \calJ= \{i:\hat p_i' \in I\}.
\end{align}
By independence, conditioning on $N'$, the convergence of $\hat{\pi}_I$ to $\pi_{P,I}\triangleq \frac{1}{|\calJ|}\sum_{i\in \calJ}\delta_{p_i}$ following an analysis analogous to that of Theorem~\ref{thm:npmle-global}. 
Theorem~\ref{thm:npmle-truncated} in Appendix~\ref{app:prf_truncated_np} further establishes an upper bound on the general integral probability metric between $\pi_{P,I}$ and $\hat{\pi}_I$.

\paragraph{Bias-corrected estimator}
For large frequency counts, we apply the empirical plug-in estimator with first-order bias correction. 
Intuitively, for a smooth function $g:[0,\infty) \mapsto \reals$, the Taylor expansion at $p_i$ implies that
\begin{align*}
    \Expect g(\hat p_i)- g(p_i) = \frac{\var[\hat p_i]}{2}g''(p_i) + O(n^{-2}) = \frac{p_i}{2n}g''(p_i) + O(n^{-2}).
\end{align*}
 The bias-corrected estimator of $g$ is defined as 
\begin{align}
\label{eq:g_debias}
    \tilde g(x) = 
    \begin{cases}
        g(x)- \frac{x}{2n}g''(x), & x >0,\\
        g(0), & x=0.
    \end{cases}
\end{align}
For instance, when $g= x\log \frac{1}{x}$, $\tilde g=g+\frac{1}{2n}$ is the Poisson analogue of the well-known Miller-Madow estimator \cite{miller1955note}. 

\paragraph{Combine the estimators}
Given the index set $\calJ$, we can partition the functional $G$ as
\begin{align*}
    G(P)= \sum_{i\in \calJ} g(p_i) +  \sum_{i\in[k]\setminus\calJ} g(p_i) \triangleq G_1(P) + G_2(P).
\end{align*}
We apply the NPMLE to the frequency counts with indices \( i \in \calJ \) to estimate \( G_1(P) \), and use the bias-corrected plug-in estimator for the remaining indices to estimate \( G_2(P) \):
\begin{align}
\label{eq:G-combined}    
    \tilde G \triangleq |\calJ| \cdot \Expect_{\hat\pi_I} g + \sum_{i\in[k]\setminus\calJ} \tilde g(\hat p_i).
\end{align}
When the additional knowledge that $G(P)$ takes value in $[\underline{G}, \overline{G}]$ is available, the final estimator is then defined as $\hat{G} \triangleq (\tilde G \wedge \Bar{G})  \vee \underline{G}.$
This two-part structure aligns with the design of various approximation-based estimators (e.g., \cite{CL11, WY16, JHW18_L1}), where small frequency counts are handled by polynomial-based estimators. In contrast, our use of the NPMLE improves both stability and flexibility without the need for explicit high-order polynomial constructions.

Next, we apply the proposed estimator for specific symmetric functionals.
Consider the Shannon entropy $H(P) = \sum_{i=1}^k  h(p_i)\in [0,\log k]$ with $g(x)=h(x)\triangleq x \log \frac{1}{x}$, and the estimator 
$\hat H =  (\tilde H \wedge \log k) \vee 0$ with $\tilde H=\tilde G$ defined in \eqref{eq:G-combined}.

\begin{theorem}
\label{thm:np_trunc_H_estimate}
Suppose that $\log n \geq \Omega(\log k)$.
There exist a universal constants $C'$ such that, for any $P\in\Delta_{k-1}$,
\begin{align*}
\Expect |\hat{H}-H(P)| &\leq C'\pth{ \frac{k}{n\log n} + \frac{\log n}{\sqrt{n}}}.
\end{align*}
\end{theorem}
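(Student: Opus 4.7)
Since $H(P)\in[0,\log k]$, the truncation gives $|\hat H - H(P)|\le |\tilde H - H(P)|$, so it suffices to bound $\Expect|\tilde H - H(P)|$. Using $G_1(P)=|\calJ|\Expect_{\pi_{P,I}} h$, I would decompose
\begin{equation*}
\tilde H - H(P) \;=\; |\calJ|\,\pth{\Expect_{\hat\pi_I} h - \Expect_{\pi_{P,I}} h} \;+\; \sum_{i\notin\calJ}\pth{\tilde h(\hat p_i) - h(p_i)} \;\triangleq\; A + B,
\end{equation*}
and bound each piece separately, aiming at $\Expect|A|\lesssim k/(n\log n)$ and $\Expect|B|\lesssim \log n/\sqrt{n}$. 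This split mirrors the standard small-mass/large-mass dichotomy in entropy estimation: the NPMLE handles the singular region $I=[0,t]$ where plug-in fails, and the bias-corrected empirical estimator handles the smooth region where $h$ is well behaved.

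For the plug-in term $B$, the first step is a localization event: by the Poisson tail bound applied to $N'$ and the independence of $N$ and $N'$, with probability $1-n^{-\Omega(1)}$ every $i\notin\calJ$ satisfies $p_i\gtrsim t/2$. Conditioning on this event and running a fourth-order Taylor expansion of $h(\hat p_i)$ around $p_i$ -- using the Poisson central moments $\Expect(\hat p_i-p_i)^2=p_i/n$, $\Expect(\hat p_i-p_i)^3=p_i/n^2$ together with $\tilde h=h+1/(2n)$ -- yields $|\Expect\tilde h(\hat p_i)-h(p_i)|=O(1/(n^2 p_i))$ per term. Summing over the at most $2/t$ large indices produces total bias $O(1/(n^2 t^2))=O(1/\log^2 n)$, which is dominated by $\log n/\sqrt{n}$. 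The variance is handled by the delta-method estimate $\var(\tilde h(\hat p_i))\lesssim (\log p_i)^2 p_i/n$; summing and using $(\log p_i)^2 \le \log^2 n$ on $[t/2,1]$ together with $\sum_i p_i\le 1$ gives $\var(B)\lesssim \log^2 n/n$, hence $\Expect|B|\lesssim \log n/\sqrt{n}$.

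For the NPMLE term $A$, I would condition on $N'$ so that $\calJ$ and $\pi_{P,I}$ become deterministic, and invoke Theorem~\ref{thm:npmle-truncated}, the IPM-type generalization of Theorem~\ref{thm:npmle-global} for the localized NPMLE, applied to a function class containing $h$ restricted to $I$. The key analytical ingredient is a Poisson-deconvolution approximation of $h$ on $I=[0,t]$: construct
\begin{equation*}
\hat h(x)\;=\;a\;+\;\sum_{j=0}^{J} b_j\,\poi(j,nx),\qquad J\asymp\log n,
\end{equation*}
with $\|h-\hat h\|_{\infty,I}\lesssim 1/(n\log n)$ and $\max_j|b_j|=\mathrm{polylog}(n)$. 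Such a representation is produced by starting from the best degree-$J$ polynomial approximation of $x\log(1/x)$ on $[0,t]$, whose sup-norm error is $\Theta(t/J^2)=\Theta(1/(n\log n))$, and translating monomial coefficients into the Poisson basis via the falling-factorial identity $\Expect_\pi x^j=n^{-j}\Expect N^{(j)}$, in the spirit of \cite{WY16}. Then
\begin{equation*}
\abs{\Expect_{\hat\pi_I} h-\Expect_{\pi_{P,I}} h}\;\le\;2\|h-\hat h\|_{\infty,I}\;+\;\max_j|b_j|\cdot\|f_{\hat\pi_I}-f_{\pi_{P,I}}\|_1,
\end{equation*}
and $\|f_{\hat\pi_I}-f_{\pi_{P,I}}\|_1\le 2\sqrt{2}\,H(f_{\hat\pi_I},f_{\pi_{P,I}})$ is controlled by the localized analog of Proposition~\ref{prop:pois-hellinger-rate} applied with effective sample size $|\calJ|$. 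Multiplying through by $|\calJ|\le k$ produces $\Expect|A|\lesssim k/(n\log n)$.

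The hardest step is the joint control of approximation error and coefficient norm in the Poisson basis: naive substitution of a monomial polynomial would introduce an $e^{-nx}$ factor that blows up $\max_j|b_j|$, so a Chebyshev-type construction tailored to $[0,t]$ is needed and must be traded off carefully against the Hellinger rate, which itself depends on the random subsample size $|\calJ|$. Secondary technicalities -- the rare events $\hat p_i=0$ for $i\notin\calJ$ and misclassification of boundary masses with $p_i\approx t$ -- each occur with probability $n^{-\Omega(1)}$ and contribute negligibly once combined with $|h|\le \log^2 n/n$ on $I$ and the hypothesis $\log n\gtrsim\log k$, which keeps $\log k$ absorbable into polylog factors throughout.
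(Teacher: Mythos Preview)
Your strategy---split by $\calJ$, handle small masses via Poisson deconvolution of the localized NPMLE, and large masses via Taylor expansion of the bias-corrected plug-in---is exactly the paper's. However, two quantitative steps in your sketch break down.

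For the bias of $B$: the assertion that the total bias $O(1/\log^2 n)$ is ``dominated by $\log n/\sqrt{n}$'' is false, since $1/\log^2 n$ decays far slower than $\log n/\sqrt{n}$. The fix is to count the large-mass indices by $k$ rather than $2/t$: under the good event each $i\notin\calJ$ has $p_i\gtrsim t$, so the per-term bias $O(1/(n^2 p_i))\le O(1/(n\log n))$, and summing over at most $k$ terms gives bias $O(k/(n\log n))$, which is the first term of the target rate (this is how the paper, citing \cite{WY16}, obtains $\Expect\calE_2^2\lesssim (k/(n\log n))^2+\log^2 n/n$).

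For the NPMLE term $A$: the bound $\max_j|b_j|=\polylog(n)$ is far too weak to conclude $\Expect|A|\lesssim k/(n\log n)$. With the localized Hellinger rate $H(f_{\hat\pi_I},f_{\pi_{P,I}})\lesssim\sqrt{\log^3 n/|\calJ|}$, the contribution $|\calJ|\cdot\polylog(n)\cdot H$ is $\polylog(n)\sqrt{k}$, which dwarfs the target. The essential bound (Lemma~\ref{lem:pois-poly-approx} in the paper) is $\max_j|b_j|\lesssim c_0^D\, M(h,[0,t])$, where the diameter $M(h,[0,t])\asymp t\log(1/t)\asymp \log^2 n/n$ supplies a crucial factor of $1/n$; combined with $c_0^D=n^{\epsilon}$ for $D\asymp\epsilon\log n$ this gives $|b_j|\lesssim n^{\epsilon-1}\log^2 n$. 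Even so, the resulting contribution $|\calJ|\cdot|b_j|\cdot H\lesssim n^{\epsilon-1}\sqrt{k}\,\polylog(n)$ is \emph{not} $\lesssim k/(n\log n)$ by itself; the paper absorbs this cross term into the \emph{combined} rate via
\[
\pth{\frac{n^{\epsilon}\sqrt{k}\log n}{n}}^{2}\;\lesssim\;\frac{k}{n\log n}\cdot\frac{\log n}{\sqrt{n}},
\]
so $A$ and $B$ must be bounded together rather than separately.
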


The approach that combines a histogram-based plug-in estimator for small probability masses with an empirical plug-in estimator for large probability masses was proposed in \cite{vv17}, which achieves an additive error of $\epsilon$ with a sample size of $\Theta(\frac{k}{\epsilon^2\log k})$.
In comparison, Theorem~\ref{thm:np_trunc_H_estimate} shows that combining the NPMLE plug-in estimator with a bias-corrected estimator attains the optimal sample complexity $\Theta(\frac{k}{\epsilon\log k})$.

To sketch the proof, we first decompose the estimation error of $\tilde H$ as 
\begin{align*}
    \tilde H - H(P) = |\calJ| (\Expect_{\hat\pi_I} h- \Expect_{\pi_{P,I}} h)  + \sum_{i\in[k]\setminus\calJ} \pth{\tilde h(\hat p_i) - h(p_i)},
\end{align*}
where $\tilde h$ is defined in \eqref{eq:g_debias} with $g = h$. Conditioning on $\calJ$, we control the first term using the uniform bound of the integral probability metric (see Theorem~\ref{thm:npmle-truncated}), and the second term by the bias-correction design. 
It turns out that  $|\tilde H - H|$ can be bounded at the desired rate with exponentially small failure probability. 
Finally, the bound on the mean absolute error follows from an additional truncation step applied in $\hat H$.

Similarly, \eqref{eq:G-combined} can also be applied to estimate other symmetric functionals including the power-sum $F_\alpha(P) =  \sum_{i=1}^k p_{i}^\alpha$, $\alpha \in (0,1)$ and the support size $S(P) =  |\{i \in[k] : p_i > 0\}|$, and attains the optimal sample complexity and the minimax rates of the mean absolute error established in \cite{JVHW15,WY15unseen}. 
The  precise results are provided in Appendix~\ref{app:prf_truncated_np}.

\subsection{Penalized NPMLE for unknown support size}
\label{sec:pen-npmle}
In the above discussions, the NPMLE program assumes knowledge of the true support size $k$. However, in practical sampling scenarios, we often have access only to the nonzero frequency counts, where the observed frequencies cover merely a fraction of the true support with many categories remaining unobserved.
A natural remedy is to augment the observed frequencies with zeros to an appropriate length, where the prescribed support size is selected through a data-driven procedure.
To address this issue, we develop a penalized variant of the NPMLE program that introduces a regularization term for support size selection, allowing joint optimization over both the histogram and the support size parameter.

Suppose that we have observed a multiset of $k$ non-zero frequency counts $N=\{N_i\}_{i=1}^k$ with $N_i\ge 1$.
We add zeros onto $N$ to length $k'\geq k$ to an extended multiset $N'=\{N_i\}_{i=1}^{k'}$ with $N_{k+1}=\ldots =N_{k'}=0$.
Let $H(p)=p\log \frac{1}{p} +(1-p)\log \frac{1}{1-p}$ denote the binary entropy function.
Consider the following penalized likelihood function
\begin{align}
    L(\pi;N,k') 
    & = \sum_{i=1}^{k} \log f_{\pi} (N_i) + (k' - k)\log f_{\pi}(0) + k'H(\frac{k}{k'}),
    \label{eq:Ni-pen-likelihood}
\end{align}
which avoids discrete optimization by accommodates non-integer $k'$.
Note that \eqref{eq:Ni-pen-likelihood} is concave in $\pi$ for any given $k'\geq k$.
Moreover, for any fixed $\pi$, the likelihood term
$\sum_{i=1}^{k} \log f_{\pi}(N_i) + (k' - k)\log f_{\pi}(0)$ decreases as $k'$ increases, while the regularization term
$k'H(\tfrac{k}{k'}) = -(k\log \tfrac{k}{k'} + (k'-k) \log \tfrac{k'-k}{k'})$ is strictly concave and grows with $k'$, thereby inducing a trade-off for support size selection.
The penalized NPMLE is then given by
\begin{align}
\label{eq:pen-NPMLE-k}
    \hat k, \hat \pi \in \argmax_{k'\geq k, \pi \in \calP([0,1])} L(\pi;N,k').
\end{align}

Next, we investigate the optimality conditions of \eqref{eq:pen-NPMLE-k}.
Let $\pi_{N}=\tfrac{1}{k}\sum_{i=1}^{k} \delta_{N_i}$  and $\pi_{N'} =  \frac{k}{k'}\pi_{N} + \frac{k'-k}{k'} \delta_0$. By definition, we have
\begin{align}
\label{eq:0th-opt-pen}
   L(\pi;N,k') - \sum_{i=1}^{k}  \log {\pi_{N}(N_i)}  & = -\sum_{i=1}^{k'}  \log \frac{\pi_{N'}(N_i)}{f_{\pi} (N_i)} 
    = -k' \KL(\pi_{N'}\| f_\pi).
\end{align}
Hence, the penalized NPMLE can be interpreted as minimizing the scaled $\KL$ divergence, where the regularization term naturally arises from this formulation.

For the first-order optimality, if $\hat k>k$, we have
$$\frac{\partial L(\pi;N,k')}{\partial k'} \mid_{k'=\hat k} = \log f_{\pi} (0) - \log \frac{\hat k - k}{\hat k}=0,$$ which implies that $f_{\hat \pi}(0)= \frac{\hat k - k}{\hat k}$ if $\hat k$ exists.
Hence, the regularization aligns the zero-probability mass of the optimized Poisson mixture with that of the empirical histogram.
For $k'\geq k$, let $\hat\pi_{k'} \triangleq \argmax_{\pi \in \calP([0,1])} L(\pi;N,k')$ denote the NPMLE with a fixed $k'$.
Similar to \eqref{eq:1st_opt}, we have for any $Q\in \calP([0,1])$,
\begin{align*}
     \sum_{i=1}^k  \frac{ f_{Q}(N_i) }{ f_{\hat\pi_{k'}}(N_i) } + (k'-k) \frac{ f_{Q}(0) }{ f_{\hat\pi_{k'}}(0) }  \leq k'.
\end{align*}
Particularly, if $\hat k>k$, we have with $\hat \pi= \hat\pi_{\hat k}$,
\begin{align}
\label{eq:1st-opt-pen-pi}
    \sum_{i=1}^k  \frac{ f_{Q}(N_i) }{ f_{\hat\pi}(N_i) } + \hat k f_{Q}(0) = \sum_{i=1}^k  \frac{ f_{Q}(N_i) }{ f_{\hat\pi}(N_i) } + (\hat k-k) \frac{ f_{Q}(0) }{ f_{\hat\pi}(0) }  \leq \hat k.
\end{align}

\begin{proposition}
\label{prop:pen-npmle}
For any given $\{N_i\}_{i=1}^k$, $N_i>0$, the following holds:
\begin{enumerate}
    \item[(i)] $L(\hat\pi_{k'};N,k')$ is monotone non-decreasing with respect to $k'$ over  $[k,\infty)$.
    \item[(ii)] 
    Suppose that $k<\hat k<\infty$. Then, $L(\hat \pi_{k'};N,k') = L(\hat \pi;N,\hat k)$ for any $k'\geq \hat k$. Moreover, 
    $\hat \pi_{k'} = \frac{\hat k}{k'}\hat\pi + (1-\frac{\hat k}{k'}) \delta_0$ if $k'\in\naturals$ and $k'\geq \hat k$. 
\end{enumerate}
\end{proposition}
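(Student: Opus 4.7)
The key fact behind both parts is the inequality $f_{\hat\pi_{k'}}(0)\ge (k'-k)/k'$, which comes from specializing the first-order optimality condition for $\hat\pi_{k'}$ (the penalized analogue of \eqref{eq:1st_opt}) to the test distribution $Q=\delta_0$: because every $N_i>0$, we have $f_{\delta_0}(N_i)=0$, and the condition collapses to $(k'-k)/f_{\hat\pi_{k'}}(0)\le k'$. The rest is a careful algebraic manipulation of \eqref{eq:Ni-pen-likelihood}.

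For part (i), I would give a direct comparison. Fix $k''>k'\ge k$ and take the test measure $\pi=\alpha\hat\pi_{k'}+(1-\alpha)\delta_0\in\calP([0,1])$ with $\alpha=k'/k''\in(0,1)$. Substituting into \eqref{eq:Ni-pen-likelihood} and using $f_\pi(N_i)=\alpha f_{\hat\pi_{k'}}(N_i)$ for $N_i>0$ together with $f_\pi(0)=\alpha p_0+1-\alpha$, where $p_0\triangleq f_{\hat\pi_{k'}}(0)$, the expansions of $k'H(k/k')$ and $k''H(k/k'')$ produce a telescoping of the $\log k'$ and $\log k''$ cross terms and leave
\begin{align*}
L(\pi;N,k'')-L(\hat\pi_{k'};N,k')=c\log\!\bigl(1+\tfrac{\delta}{c}\bigr)-b\log\!\bigl(1+\tfrac{\delta}{b}\bigr),
\end{align*}
where $b=k'-k\ge 0$, $c=k''-k>b$, and $\delta=k'p_0-(k'-k)\ge 0$ by the key inequality. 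The map $x\mapsto x\log(1+\delta/x)$ is non-decreasing on $(0,\infty)$ for fixed $\delta\ge 0$: its derivative equals $g(u)\triangleq\log(1+u)-u/(1+u)$ under the substitution $u=\delta/x$, and $g(0)=0$ with $g'(u)=u/(1+u)^2\ge 0$. Hence the right-hand side is non-negative and $L(\hat\pi_{k''};N,k'')\ge L(\pi;N,k'')\ge L(\hat\pi_{k'};N,k')$, giving monotonicity. The boundary $b=0$ (i.e., $k'=k$) is handled by reading $b\log(1+\delta/b)$ as its limit $0$.

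For part (ii), let $V(k')\triangleq L(\hat\pi_{k'};N,k')$. The definition of $(\hat\pi,\hat k)$ as the global maximizer gives $V(k')\le V(\hat k)$ for every $k'\ge k$, while (i) gives $V(k')\ge V(\hat k)$ for $k'\ge\hat k$, so $V\equiv V(\hat k)=L(\hat\pi;N,\hat k)$ on $[\hat k,\infty)$. Using the identity $f_{\hat\pi}(0)=(\hat k-k)/\hat k$ (derived in the paragraph preceding \eqref{eq:1st-opt-pen-pi}), set $\pi^\star\triangleq(\hat k/k')\hat\pi+(1-\hat k/k')\delta_0\in\calP([0,1])$ for $k'\ge\hat k$. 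Then $f_{\pi^\star}(N_i)=(\hat k/k')f_{\hat\pi}(N_i)$ for $N_i>0$, and
\begin{align*}
f_{\pi^\star}(0)=\tfrac{\hat k}{k'}\cdot\tfrac{\hat k-k}{\hat k}+\tfrac{k'-\hat k}{k'}=\tfrac{k'-k}{k'}.
\end{align*}
Substituting into \eqref{eq:Ni-pen-likelihood} and expanding $k'H(k/k')=-k\log(k/k')-(k'-k)\log((k'-k)/k')$ reduces both $L(\pi^\star;N,k')$ and $L(\hat\pi;N,\hat k)$ to the common value $\sum_{i=1}^{k}\log f_{\hat\pi}(N_i)+k\log(\hat k/k)$, so $\pi^\star$ is a maximizer at $k'$. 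To conclude $\pi^\star=\hat\pi_{k'}$ for $k'\in\naturals$ I would appeal to Proposition~\ref{prop:npmle-1}: for integer $k'$, the $\pi$-dependent part of \eqref{eq:Ni-pen-likelihood} is exactly the vanilla Poisson log-likelihood for the augmented multiset $(N_1,\ldots,N_k,\underbrace{0,\ldots,0}_{k'-k})$, whose NPMLE is unique.

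The main obstacle is this final identification step: the augmented-data reformulation makes sense only for integer $k'$, which is precisely why the explicit form in (ii) is restricted to $k'\in\naturals$. For non-integer $k'$, a dedicated argument (e.g., strict concavity of $L(\cdot;N,k')$ in $\pi$ tailored to the penalized objective, or a direct uniqueness proof using the Poisson identifiability on the finite set $\{0,N_1,\ldots,N_k\}$) would be needed to upgrade ``$\pi^\star$ is a maximizer'' into ``$\pi^\star=\hat\pi_{k'}$''. A secondary subtlety is the telescoping in (i): the cleanest way to verify it is to expand $p_0=(k'-k)/k'+\delta/k'$ so that the $(k'-k)\log((k'-k)/k')$ pieces from both entropy terms and the pure penalty cancel before $\delta$ is introduced.
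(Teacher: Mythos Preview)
Your proof is correct and tracks the paper's argument closely: both parts use the same test measure $\alpha\,\hat\pi_{k'}+(1-\alpha)\delta_0$ (with $\alpha=k'/k''$ in (i) and $\alpha=\hat k/k'$ in (ii)), reduce to the same algebraic identity, and for integer $k'$ invoke uniqueness via Proposition~\ref{prop:npmle-1}. Two minor deviations are worth noting. In (i), the paper does not use your ``key inequality'' $\delta\ge 0$: it instead shows that $f(y)=(k_2-k)\log\frac{k_1y+k_2-k_1}{k_2-k}-(k_1-k)\log\frac{k_1y}{k_1-k}$ is minimized over all $y\in(0,1]$ at $y=(k_1-k)/k_1$ with value $0$, so non-negativity holds regardless of where $p_0$ sits (and indeed your monotonicity argument for $x\mapsto x\log(1+\delta/x)$ also works for $\delta<0$ on the relevant domain, so $\delta\ge 0$ is not truly needed). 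In (ii), rather than your direct value computation $L(\pi^\star;N,k')=L(\hat\pi;N,\hat k)$, the paper verifies that $\pi^\star$ satisfies the first-order condition \eqref{eq:1st_opt} for the augmented data by scaling \eqref{eq:1st-opt-pen-pi}, and then appeals to uniqueness; your route is slightly more elementary, theirs slightly slicker.
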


Proposition~\ref{prop:pen-npmle} characterizes the convergence behavior of the penalized NPMLE. First, convergence is guaranteed since the penalized likelihood is non-decreasing and uniformly bounded above. Second, if $\hat k$ exists, increasing the support size beyond $\hat k$ only adds extra zeros to the NPMLE without increasing the penalized likelihood. 
Consequently, $\hat k$ can be chosen at the phase-transition point, that is, the smallest $k$ at which the penalized likelihood reaches its maximum or shows negligible increase with further growth. 
Figure~\ref{fig:K-times-KL} provides an illustration of the support size selection based on the scaled $\KL$ divergence under the uniform $P$-model; see Section~\ref{sec:simulation} for further numerical simulations.
The proof of Proposition~\ref{prop:pen-npmle} follows from the optimality conditions and is deferred to Appendix~\ref{app:prf_pen_np}.
\begin{figure}[htbp]
    \centering    \includegraphics[width=0.5\linewidth]{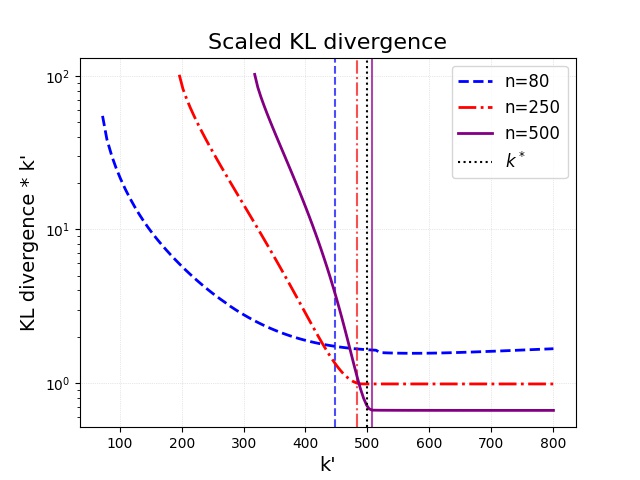}
    \caption{The scaled $\KL$ divergence $k' \cdot \KL(\pi_{N'}\| f_{\hat\pi_{k'}})$ under the uniform distribution with true support size $k^\star = 500$ and varying sample sizes $n$. Each curve starts at the number of observed non-zero counts $k$, and the vertical colored line indicates the selected $\hat k$ value.}
    \label{fig:K-times-KL}
\end{figure}

\begin{remark}[Countable support set]
Another relevant setting is the Poisson model with a countable support set,  where $N_i \inddistr \Poi(n p_i)$ with $P=(p_1,p_2,\ldots)$. Notably, such a model can be made statistically indistinguishable 
from a finite-support model \eqref{eq:poisson-sampling} by aggregating categories with sufficiently small probability masses\footnote{Let $\tilde p=(p_1,p_2,\ldots,p_k,\tilde p_{k+1},0,0,\ldots)$, where $k$ is chosen such that $\tilde p_{k+1} \triangleq \sum_{j=k+1}^\infty p_j \leq o(n^{-1})$. 
Applying the identity $1-\tfrac{1}{2}H^2(\otimes_{i} \Poi(np_i),\otimes_{i} \Poi(n\tilde p_i)) = \prod_{i} (1-\tfrac{1}{2}H^2(\Poi(np_i),\Poi(n\tilde p_i))) = \exp (- \tfrac{n}{2}\sum_{i=k+1}^\infty (\sqrt{p_i}-\sqrt{\tilde p_i})^2)$ \cite[Sec.~2.4]{Tsybakov2009IntroductionTN}
yields that $H^2(\otimes_{i} \Poi(np_i),\otimes_{i} \Poi(n\tilde p_i))\leq o(1)$, indicating the statistical indistinguishability.}.
Given the existence of the finite-support surrogate, the proposed method is then able to adaptively determine the effective support size.
\end{remark}

\begin{remark}[Model selection]
    An alternative perspective for selecting the support size is through model selection: each $k'$  defines a distribution family $\calM_{k'}$ in a nested sequence $\{\calM_{k'}\}_{k'\in\naturals}$ with $\calM_{k'}\subseteq \calM_{k'+1}$. For the Poisson model, the observation sequence can be expressed as $(N_1,N_2,\ldots,N_k,0,0,\ldots)$, and the model is $\bigotimes_{i=1}^{k'} \poi(N_i,np_i) \bigotimes_{i=k'+1}^{\infty} \delta_0$ for $k'\ge k$ and $P\in \Delta_{k'-1}$. As $k'$ increases, the gain in maximum likelihood within $\calM_{k'}$ can be controlled by the complexity (e.g., bracketing entropy) of the nested models, and a penalty can be added to ensure strong consistency of $\hat k$. This approach is used in \cite{GH12} for location mixture models with an \iid\ sample; a rigorous theoretical analysis for our model is left for future work.
\end{remark}

\section{Numerical experiments}
\label{sec:exp}
\subsection{Numerical simulation} 
\label{sec:simulation}
To begin with, we introduce the implementation of the Poisson NPMLE \eqref{eq:pois-npmle-def}.
Although the NPMLE program is convex in the mixing distribution, the primary challenge stems from its inherently infinite-dimensional formulation.
Following the approach of \cite{KM13,KIM2020NPMLE,Zhang2022NPMLE}, a standard strategy is to approximate the infinite-dimensional problem by restricting the mixing distribution $\pi$ to a finite grid $\{r_j\}_{j=1}^m$ and optimizing its weights over the simplex $\Delta_{m-1}$. While the previous works 
construct the grid $\{r_j\}_{j=1}^m$ using equally spaced support points, we adopt a data-dependent truncated scheme that pay more attention in small probability values that is crucial in large-alphabet estimation.
Then, we optimize the dual formulation of the NPMLE program  as suggested by \cite{KM13}.
The localized NPMLE and penalized NPMLE also follow from this procedure, where for localized NPMLE we set $N'=N$ in \eqref{eq:npmle-trunc-def} without using a second sample.
We implement the NPMLE-based estimators in Python using the commercial optimization software MOSEK \cite{andersen2000mosek}. 
See Appendix~\ref{app:exp_algo} for more implementation details.

In the following, we present experimental results on synthetic data.
We evaluate the performance of the proposed methods for entropy estimation. 
We let $n$ range from $10^2$ to $10^5$ and consider both the \textit{large-sample regime} with $k=10^2$ and the \textit{large-alphabet regime} with $k=10^5$, respectively.
Given each sample size $n$ and alphabet size $k$, we generate frequency counts via \iid\ sampling $N \sim \operatorname{Multi}(n, P)$\footnote{The \iid\ and Poisson sampling schemes resemble each other; see Section~\ref{sec:binomial-npmle} for further discussions.}.
The underlying distribution $P\in\Delta_{k-1}$ is selected 
among the follows to capture varying heterogeneity conditions: the \textit{uniform} distribution $p_i = k^{-1},\ i\in[k]$; the \textit{spike-and-uniform}  distribution $p_i = \frac{1}{2(k-3)}$ for $i \in [k-3]$, and $p_{k-2}= p_{k-1} = \frac{1}{8}, p_{k}=\frac{1}{4}$; and the \textit{Zipf(1)} distribution $p_i\propto i^{-1}$. 
See Appendix~\ref{app:add_exp} for additional experiments with other choices of $P$.
The NPMLE-based estimators, including the NPMLE plug-in estimator \eqref{eq:F_hat} (NP) and the localized NPMLE estimator (NP-L), are compared with several existing methods: the empirical distribution (EMP), the Miller–Madow (MM) estimators \cite{miller1955note}, the polynomial-based estimators (JVHW and WY) \cite{JVHW15,WY16}, Valiant and Valiant’s histogram plug-in estimator (VV) \cite{vv17}, and the PML plug-in estimator (PML) implemented by \cite{ACSS21}. 
For each $(n, k, P)$ and estimator, we conduct 50 independent trials and compute the root mean squared error (RMSE) of the estimates.

Figure~\ref{fig:fig-ent-1} presents the results of entropy estimation\footnote{For visualization clarity on the logarithmic scale, we cap the relative length of error bars at 50\% of the corresponding estimate.}.
Among the baseline methods, the classical EMP and MM estimators perform well in the large-sample regime but deteriorate significantly when the alphabet size grows. Advanced methods such as JVHW, WY, VV, and PML generally achieve better accuracy in large-alphabet scenarios, but their performance could be unstable as the underlying distribution or the ratio between $k$ and $n$ varies, since these methods often rely on linear programs or high-order polynomials with many tuning hyperparameters.
In comparison, NP and NP-L demonstrate fast and stable convergence across both regimes, achieving low RMSE in most cases. 
Moreover, NP and NP-L exhibit similar performance in practice.
Additional results for other functionals including the support size and R\'enyi entropy are presented in Appendix~\ref{app:add_exp}, showing the broad advantages of the NPMLE-based estimators.

\begin{figure}[H]
    \centering
    \subfigure[Uniform]{
    \includegraphics[width=0.3\linewidth]{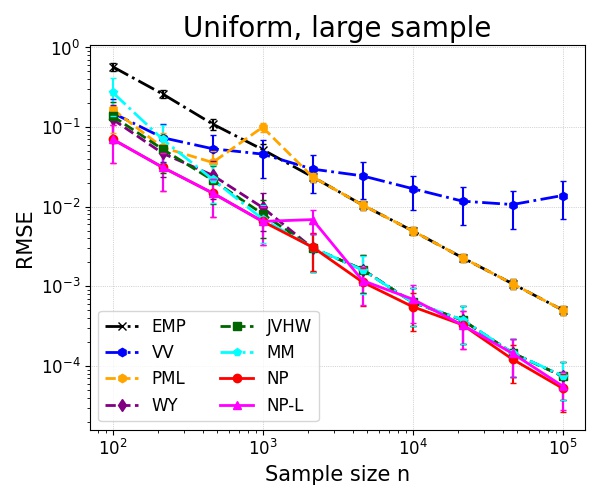}
    }
    \subfigure[Spike-and-uniform]{
    \includegraphics[width=0.3\linewidth]{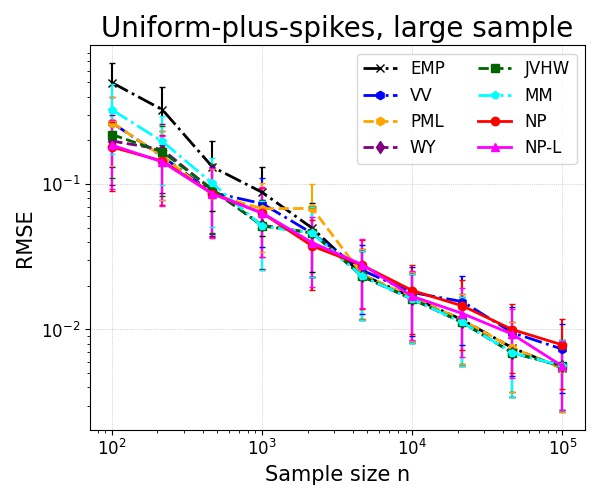}
    }
    \subfigure[Zipf(1)]{
    \includegraphics[width=0.3\linewidth]{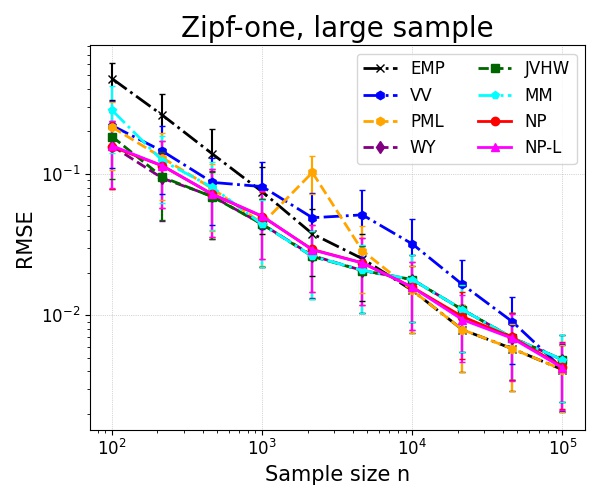}
    }
    \subfigure[Uniform]{
    \includegraphics[width=0.3\linewidth]{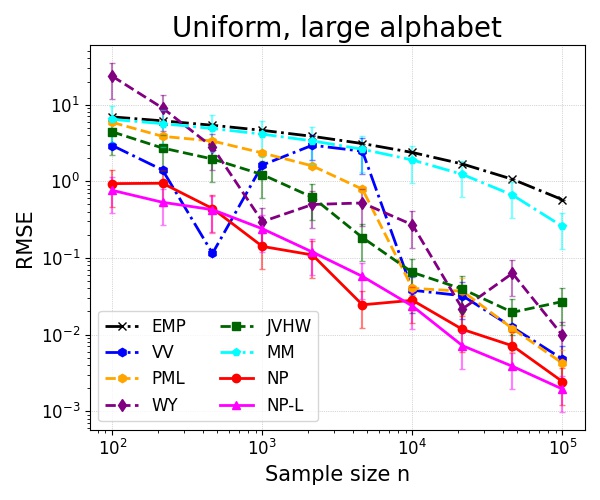}
    }
    \subfigure[Spike-and-uniform]{
    \includegraphics[width=0.3\linewidth]{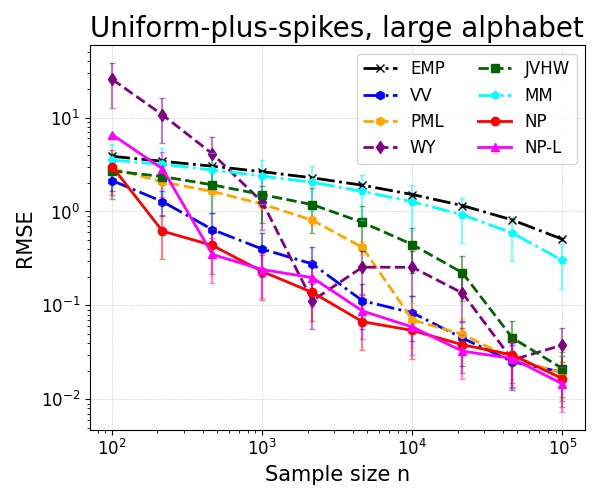}
    }
    \subfigure[Zipf(1)]{
    \includegraphics[width=0.3\linewidth]{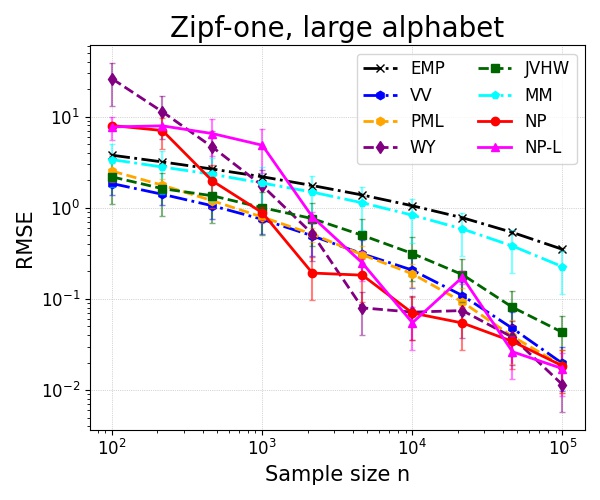}
    }
    
    \hspace{.0in}
    \hspace{.0in}
    \caption{Shannon entropy estimation: Panels (a)–(c) plot the RMSE of the large-sample regime, while panels (d)–(f) show the results of the large-alphabet regime.}
    \label{fig:fig-ent-1}
\end{figure}

Next, we apply the penalized NPMLE~\eqref{eq:pen-NPMLE-k} to the task of entropy estimation.
In Figure~\ref{fig:fig-reg-NP}, the plug-in estimator based on the penalized NPMLE (orange) is compared with that of the standard NPMLE using only the observed non-zero frequency counts (blue), as well as with the oracle estimator that uses the complete frequency counts with $k^\star = 500$.
All estimators are implemented with a grid size of $m = 500$ and evaluated over 50 independent trials.
Figures~\ref{fig:fig-reg-NP}(a)–(b) show that, when only non-zero counts are available, the penalized NPMLE markedly outperforms the standard version and achieves performance close to the oracle estimator across different underlying distributions.
Moreover, the boxplots of $\hat{k}$ in Figures~\ref{fig:fig-reg-NP}(c)–(d) indicates that the estimated support size $\hat{k}$ converges to $k^\star$ as $n$ increases.

\begin{figure}[htbp]
    \centering
    \subfigure[Uniform, entropy]{
    \includegraphics[width=0.33\linewidth]{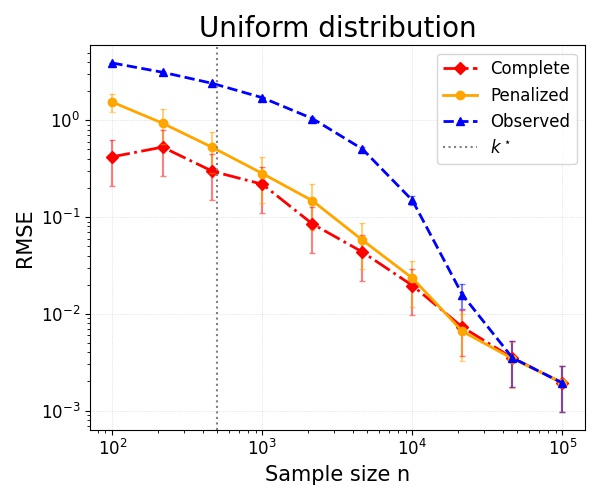}
    }
    \subfigure[Zipf(1), entropy]{
    \includegraphics[width=0.33\linewidth]{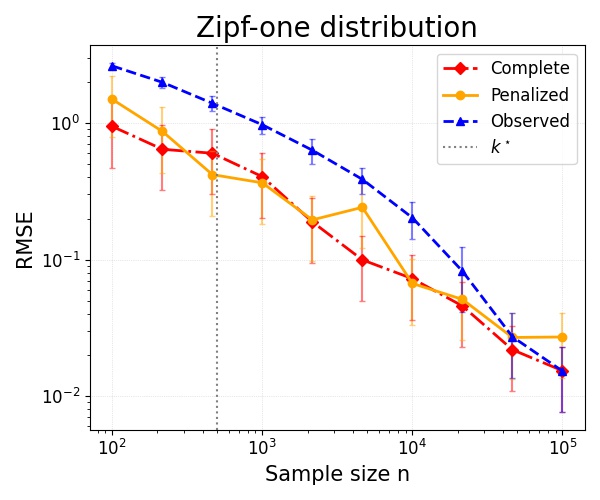}
    }
    \hspace{.0in}
    \subfigure[Uniform, $\hat k$]{
    \includegraphics[width=0.33\linewidth]{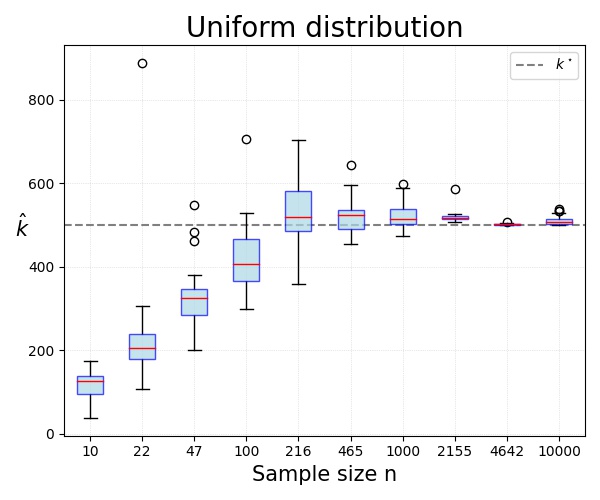}
    }
    \subfigure[Zipf(1), $\hat k$]{
    \includegraphics[width=0.33\linewidth]{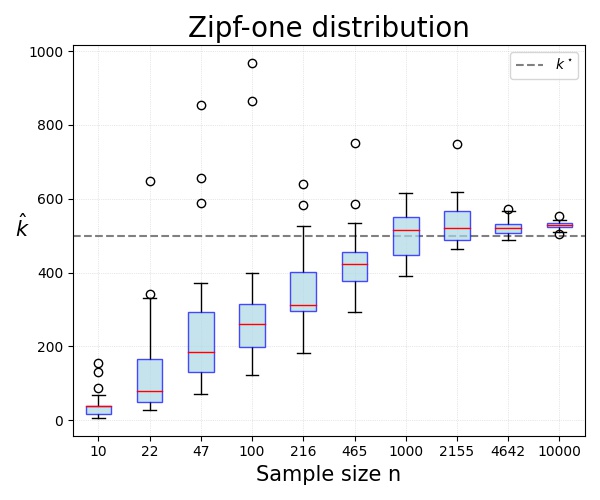}
    }
    \hspace{.0in}
    \caption{Performance of the penalized NPMLE.}
    \label{fig:fig-reg-NP}
\end{figure}

\subsection{Real-world data experiments} 
\label{sec:exp_real}
In this section, we evaluate the performance of the NPMLE plug-in estimator on the  application scenarios of computational linguistics and neuroscience.

\paragraph{Entropy estimation on linguistic corpus}
We begin by estimating the entropy per word in the novel \textit{Moby Dick} by Herman Melville. The text contains $n_{total}=210321$ words, with a total of $k = 16509$ distinct words.
In each of the 50 trials, we randomly sample $n$ words from the text without replacement and estimate the entropy based on the observed frequency counts.
\paragraph{Quantifying information content in neuronal signals} 
Entropy estimation on neural spike train data help assess how much information neurons convey about external stimuli or internal states \cite{strong1998entropy}.
We apply the dataset collected by \cite{UC2003spike}, which contains spike recordings from 2 ON and 2 OFF primate retinal ganglion cells responding to binary white noise stimuli. 
The spike times from the 4 neurons are grouped into time bins matching the stimulus frame rate (120 Hz) in the original data. We then combine them into 5-frame windows and encode the neuron spike counts for entropy estimation.

\paragraph{Estimating the number of unseen}
We revisit the problem of estimating the number of words Shakespeare likely knew but never used, a question explored in \cite{ET76, thisted1987shakespeare}.
This falls under the class of unseen-species estimation problems originally proposed by Fisher \cite{fisher1943relation}.
Under the Poisson scheme, the quantity of interest is the expected number of categories that have zero occurrences during the first $n$ units of time, but occur at least once during an additional $tn$ units of time for some $t > 0$.
With $g(x)=e^{-nx} (1-e^{-tnx})$, the target symmetric additive functional is
\begin{align}
\label{eq:unseen-def}
    G = \sum_{i} g(p_i)
    =\sum_{i} e^{-np_i} \cdot\left(1-e^{-tnp_i}\right) \triangleq \sum_{i} g(p_i).
\end{align}
We apply the NPMLE plug-in estimator \eqref{eq:F_hat} to this problem.
We use the corpus of Shakespeare’s 154 sonnets (14-line poems) for evaluation. In each of the trials, we randomly select 60 sonnets to form the observed sample of $n$ words, and then sample $nt$ additional words from the remaining sonnets using a range of values for $t$.
Other baseline estimators include the PML plug-in estimator, the Good–Toulmin (GT) estimator \cite{GT56}, and the smoothed Good–Toulmin (SGT) estimator \cite{OSW16}. For comparison, we apply SGT estimators with Poisson and binomial smoothing distributions, as suggested by \cite[Theorem 1]{OSW16}.

\begin{figure}[htbp]
    \centering
    \subfigure[Moby Dick]{
    \includegraphics[width=0.3\linewidth]{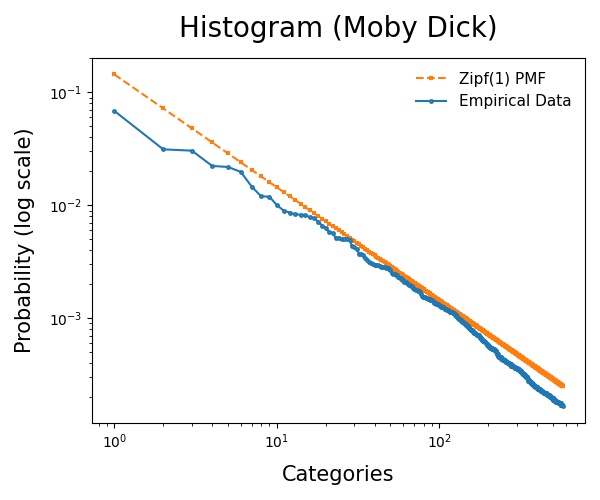}
    }
    \subfigure[Neural spike train]{
    \includegraphics[width=0.3\linewidth]{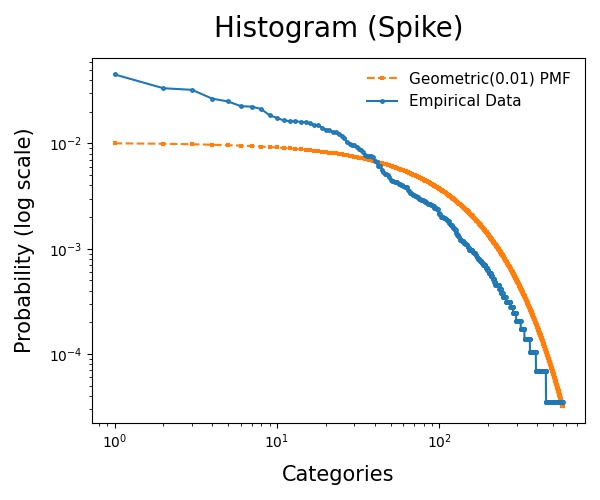}
    }
    \subfigure[Sonnets]{
    \includegraphics[width=0.3\linewidth]{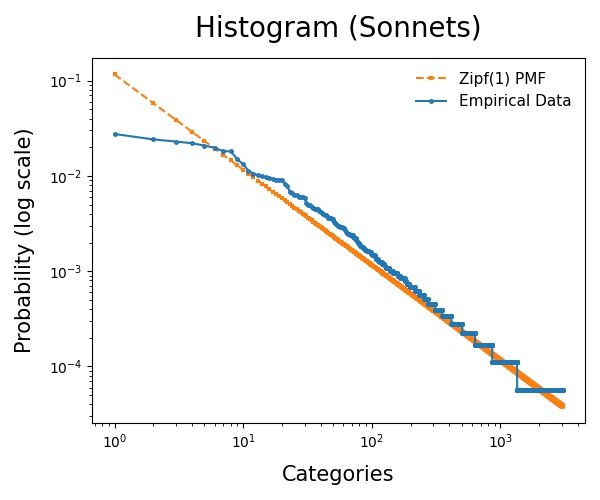}
    }
    \hspace{.0in}
    \subfigure[Moby Dick]{
    \includegraphics[width=0.3\linewidth]{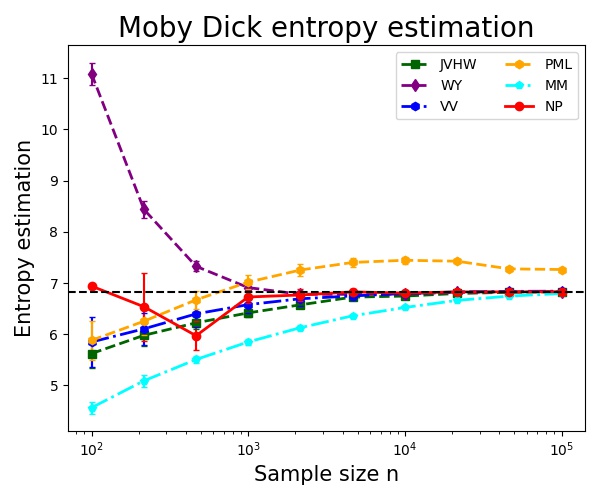}
    }
    \subfigure[Neural spike train]{
    \includegraphics[width=0.3\linewidth]{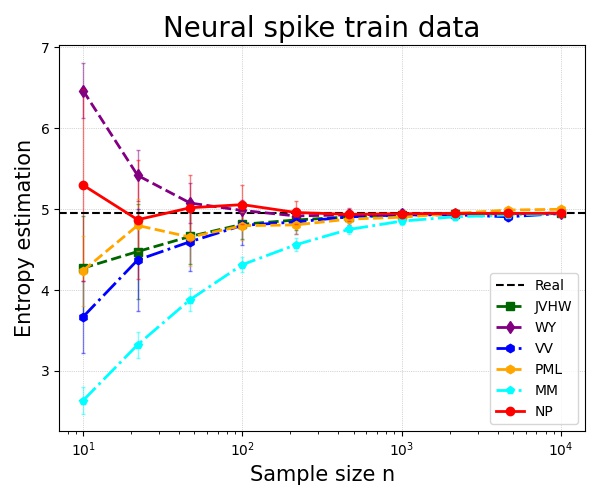}
    }
    \subfigure[Sonnets]{
    \includegraphics[width=0.3\linewidth]{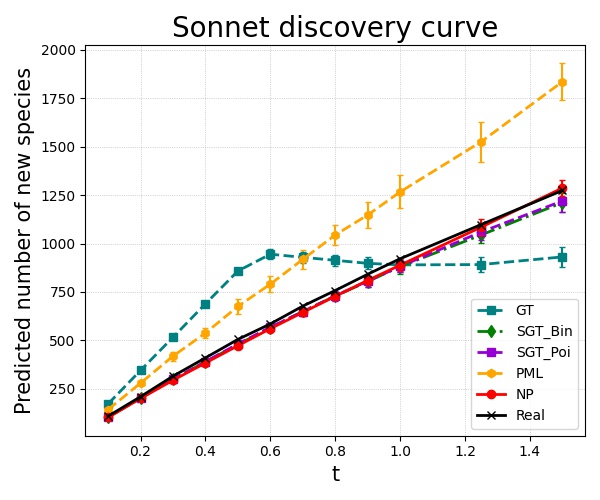}
    }
    \hspace{.0in}
    \caption{Experiment on real-world datasets.}
    \label{fig:fig-real-data}
\end{figure}

Figure~\ref{fig:fig-real-data} summarizes the results for the experiments. (a)–(c) show the histograms of the datasets  with a respective reference distribution: the linguistic datasets, Moby Dick and Sonnets, exhibit power-law tails, while the neural spike train data displays lighter tails resembling a geometric distribution.
(d) and (e) illustrate the convergence of the estimators as the sample size $n$ increases on the novel \textit{Moby Dick} and neural spike train data. For both datasets, we set $k$ to the number of distinct elements (words or firing patterns) observed in the entire dataset. The black dashed line marks the empirical entropy computed from the full dataset.
The NPMLE-based estimators achieve high accuracy especially when the sample size $n$ is relatively small ($10^1$–$10^3$) compared with the support size $k$ ranging from $10^4$ to $10^5$, while other estimators suffer from larger error in this regime.
Panel (f) plots the discovery curve of the predicted newly observed categories as $t$ varies, averaged over 50 trials.
The NPMLE-based estimator is implemented with support size $k=66534$, following the estimate of Shakespeare’s total vocabulary size in \cite{ET76}.
The actual number of newly discovered words is shown as a gray line. The results indicate that the NPMLE plug-in estimator aligns most closely with the true values, particularly in cases where  $t\geq 1$.

\subsection{Application on large language model evaluation} 
\label{sec:llm}

Large language models (LLMs) have demonstrated remarkable capabilities in recent years, making their evaluation increasingly essential for reliability, accuracy, and safe deployment in real-world applications. Nevertheless, their strong generalization ability results in an extremely large output space, posing substantial challenges for reliable assessment.
A simple yet effective approach is to characterize key properties of the output distribution through a certain functional, which enables the application of functional estimation based on a sample from repeated queries.
For instance, the model hallucination in terms of semantic consistency is characterized by  uncertainty measures such as entropy \cite{farquhar2024detecting,nikitin2024kernel}, and the number of unseen serves as quantifier the model’s capability unobserved by the outputs \cite{nasr2025scalable,li2025evaluatingunseen}. To this end, the NPMLE plug-in estimator serves as a competitive candidate due to its superior performance in large-alphabet settings, as typically encountered in LLM outputs.

\begin{figure}
    \centering
    \includegraphics[width=0.7\linewidth]{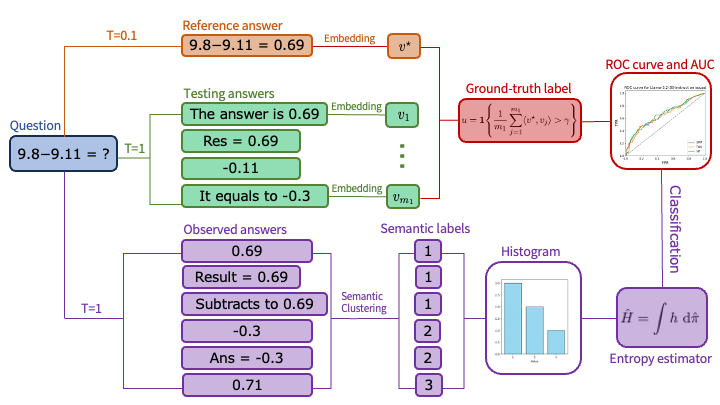}
    \caption{Diagram of the experiment on LLMs.}
    \label{fig:llm-diagram}
\end{figure}

We consider the detection problem of LLM hallucinations, defined as outputs that are nonsensical or unfaithful to the source. In particular, we focus on \textit{confabulations}, where models fluently generate unsubstantiated answers that are both incorrect and sensitive to randomness. 
Semantic entropy \cite{farquhar2024detecting} is an effective approach to capture hallucination by entropy estimate of  model outputs at the semantic level, giving improved performance to the naive lexical approaches. Following the framework of semantic entropy estimation, we evaluate the following LLMs: \texttt{Llama-3.2-3B-Instruct} \cite{grattafiori2024llama3}, \texttt{Mistral-7B-Instruct-v0.3} \cite{jiang2023mistral7b}, \texttt{Qwen3-4B-Instruct-2507} \cite{yang2025qwen3technicalreport}, and \texttt{DeepSeek-R1-Distill-Llama-8B} \cite{deepseekai2025deepseekr1}
across datasets from diverse domains, including general knowledge \texttt{SQuAD} \cite{rajpurkar2018squad}, biology and medicine \texttt{BioASQ} \cite{krithara2023bioasq}, and open-domain \texttt{NQ} \cite{kwiatkowski2019nq}. 
For each (model, dataset) pair, we randomly select a given number of  questions and generate multiple answers at different temperatures to form reference, testing, and observation sets. Binary ground-truth hallucination labels are then constructed via embedding the reference and testing answers and computing the cosine similarity.
Finally, semantic clustering and entropy estimation are applied to the observed answers, and model performance is evaluated using the receiver operating characteristic (ROC) curve and the area under the ROC curve (AUC) (see, \eg, \cite[Sec.~9.2.5]{hastie2009elements}) for the binary event that hallucination occurs across all questions. Figure~\ref{fig:llm-diagram} summarizes the overall procedure, and more experimental details are provided in Appendix~\ref{app:llm}.

\begin{table}[htbp]
\centering
\setlength{\tabcolsep}{3pt} 
\renewcommand{\arraystretch}{1.15} 
\scriptsize
\begin{tabularx}{0.95\textwidth}{l *{4}{ccc}}
\toprule
 & \multicolumn{3}{c}{\textbf{Llama-3.2}} 
 & \multicolumn{3}{c}{\textbf{Mistral-v0.3}} 
 & \multicolumn{3}{c}{\textbf{Qwen3}} 
 & \multicolumn{3}{c}{\textbf{DeepSeek-R1}} \\
\cmidrule(lr){2-4} \cmidrule(lr){5-7} \cmidrule(lr){8-10} \cmidrule(lr){11-13}
 & \texttt{SQuAD} & \texttt{BioASQ} & \texttt{NQ} & \texttt{SQuAD} & \texttt{BioASQ} & \texttt{NQ} & \texttt{SQuAD} & \texttt{BioASQ} & \texttt{NQ} & \texttt{SQuAD} & \texttt{BioASQ} & \texttt{NQ} \\
\midrule
EMP     & 0.6538 & 0.7522 & 0.8657 &  0.7664 & 0.7298  & 0.8493  & \textbf{0.8182} & \textbf{0.8389} & 0.8843  &  0.7082 & 0.5462 & 0.8086 \\
TOK & 0.6465 & 0.7536 & 0.8665 & 0.7494 & \textbf{0.7333} & 0.8488 & 0.8142 &0.8158 &0.8748   &  0.7123 & 0.5439 & 0.8069 \\
NP      & \textbf{0.6623} & \textbf{0.7551} & \textbf{0.8706} &  \textbf{0.7808}  & 0.7241 & \textbf{0.8502} & 0.8147 &0.8383 &\textbf{0.8899} & \textbf{0.7190}  & \textbf{0.5500} & \textbf{0.8154}  \\
\bottomrule
\end{tabularx}
\caption{AUC values across different models and datasets.}
\label{tab:llm_AUCs}
\end{table}
Table~\ref{tab:llm_AUCs} summarizes the results. The NPMLE plug-in estimator \eqref{eq:F_hat} (NP) is applied for entropy estimation given the semantic labels of the observed answers.
As baselines, we include the empirical estimator (EMP), also referred to as the discrete semantic entropy in \cite{farquhar2024detecting}, and the Shannon entropy computed from the normalized token log-probabilities from the model outputs (TOK). Compared with TOK, NP relies only on the model outputs themselves rather than token-level logit probabilities,  which may be inaccessible for black-box LLMs (e.g., GPT-4 and Claude). 
The NPMLE estimator achieves higher AUC values across most settings, implying more accurate and robust detection of hallucinations. 
While the improvement is moderate given the limited number of observations constrained by the cost of semantic clustering, the flexibility and strong performance of the NPMLE-based estimator suggest promising potential for scaling to larger models and broader evaluation tasks.

\section{Discussion}
\label{sec:discussion}

\subsection{Modeling with binomial mixtures}
\label{sec:binomial-npmle}
So far, we have focused on Poisson mixtures with $q_n(x, r) = \poi(x, nr)$ in \eqref{eq:N_mixture}.  A natural question arises: can alternative mixture models also effectively capture frequency count behavior? One such example is the binomial mixture with $q_n(x, r) = \bin(x, n, r)$,  which is directly motivated by the \iid\ sampling scheme $N \sim \operatorname{Multi}(n, P)$
with marginals $\pbb[N_i = j] = \bin(j, n, p_i)$.
In this case, the histogram distribution can be estimated via the binomial NPMLE:
$$
\hat{\pi} = \argmax_{\pi \in \mathcal{P}([0,1])} \sum_{i=1}^k \int \bin(N_i, n, r)\, \mathrm{d}\pi(r).
$$
Given the close connection between the Poisson and multinomial models (see Section~\ref{sec:poisson_npmle}), it is reasonable to expect comparable performance under the both settings.
Figure~\ref{fig:fig-models-NP} compares the entropy plug-in estimators under the binomial and Poisson settings, where the frequency counts are generated either from the Poisson or multinomial model and fitted using the Poisson or Binomial NPMLE. The results show that the two sampling schemes exhibit similar behavior, and both mixture models achieve nearly identical performance given the same input. 

\begin{figure}[htbp]
    \centering
    \subfigure[Uniform]{
    \includegraphics[width=0.3\linewidth]{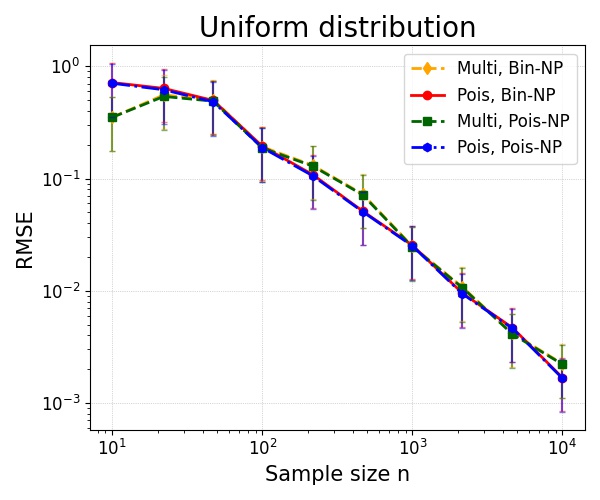}
    }
    \subfigure[Spike-and-uniform]{
    \includegraphics[width=0.3\linewidth]{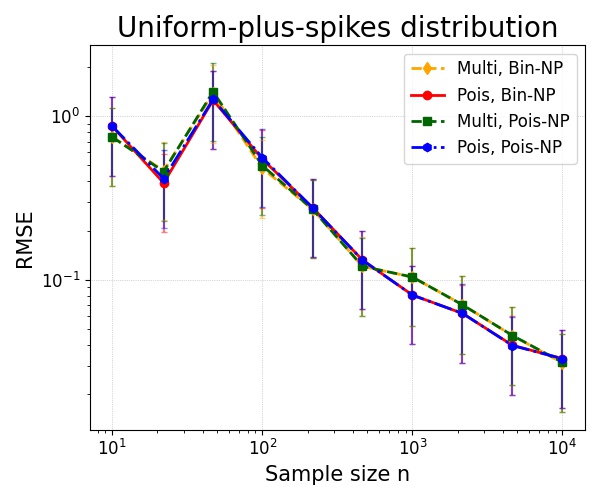}
    }
    \subfigure[Zipf(1)]{
    \includegraphics[width=0.3\linewidth]{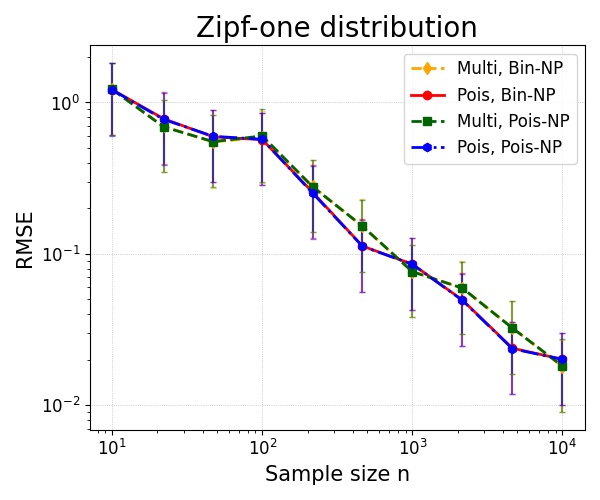}
    }  
    \hspace{.0in}
    \caption{Comparison between Binomial and Poisson settings for Shannon entropy estimation. ``Multi'' and ``Pois'' denote data generated from multinomial and Poisson sampling with $k = 1000$, while ``Bin-NP'' and ``Pois-NP'' correspond to the Binomial and Poisson NPMLE fits, respectively.}
    \label{fig:fig-models-NP}
\end{figure}

\subsection{Extension to continuous observations}
The framework of mixture modeling with NPMLE fitting can also be extended to continuous observations.
For example, consider the  Gaussian sequence model
\begin{align*}
    Y_i \inddistr N(\theta_i,1),
\end{align*}
where $\theta = (\theta_1,\dots,\theta_n)$ are unknown parameters.
In spirit of the proposed framework, we apply the Gaussian NPMLE 
\begin{equation*}
\hat{\pi}=\argmax_{\pi}\sum_{i=1}^n\log \int \frac{1}{\sqrt{2\pi}} \exp\pth{-\frac{(Y_i-\theta)^2}{2}} \diff \pi(\theta),
\end{equation*}
which serves as an estimate of the empirical mixing distribution $\pi_\theta = \frac{1}{n} \sum_{i=1}^n \delta_{\theta_i}$.
For the density estimation problem, \cite{Zhang08} establishes bounds on the Hellinger risk $H(f_{\pi_\theta}, f_{\hat{\pi}})$ under bounded support or tail conditions on $\pi_\theta$.
Then, following the analysis of Section~\ref{sec:theory-main},
the IPM between $\pi_\theta$ and $\hat \pi$ can be controlled via a similar deconvolution argument.
Consequently, for the downstream task of functional estimation, the NPMLE plug-in estimator $\hat G = n \cdot \Expect_{\hat \pi} g$ can be employed to estimate the target functional $G(\theta)=\sum_{i=1}^n g(\theta_i)$ (\eg, the power of the $L_q$-norm studied in \cite{CL11,CollierAOS15}). 

Moreover, a particular interest in the Gaussian sequence model is the sparse regime, \ie, $\|\theta\|_0 \triangleq \sum_{i=1}^n \indc{\theta_i\neq 0}\leq s$ for some $s\in[n]$.
When $s$ is known, a natural approach is to impose a sparsity constraint by solving
\begin{equation*}
\max_{\pi: \pi(0) \geq 1 - \frac{s}{n}}\sum_{i=1}^n\log \int \frac{1}{\sqrt{2\pi}} \exp\pth{-\frac{(Y_i-\theta)^2}{2}} \diff \pi(\theta).
\end{equation*}
which explicitly enforces the desired sparsity structure in the estimated mixing measure.
\cite[Section~7.2.4]{L95} provides guarantees for the convexity of the program and the existence of solutions.
A rigorous theoretical analysis of this extension is left for future work.

\bibliography{sample}
\bibliographystyle{alpha}
\appendix
\newpage
\section{Preliminaries}
\label{app:preliminaries}

\subsection{Polynomial and Poisson approximations}
We introduce some basic notations and results from approximation theory that will be used to establish the main results.
Let \( \Poly_D \) denote the set of all polynomials of degree at most \( D \). 
For a function \( f \) defined on a set \( I \), the best uniform approximation error by \( \Poly_D\) is defined as  
\[
E_D(f, I) \triangleq \inf_{p \in \Poly_D} \sup_{x \in I} |f(x) - p(x)|.
\]
Denote the maximum deviation (diameter) of $f$ over $I$ as 
\begin{align}
    \label{eq:M_g}
    M(f, I) \triangleq \sup_{x, y \in I} |f(x) - f(y)|.
\end{align}
We have $E_D(f, I) \leq M(f, I)$ by approximating $f$ with a constant function.

We provide further details on characterizing the approximation error in terms of the following modulus of smoothness \cite{DT87}. 
Define the first-order difference operator as 
$$\Delta_{h}^1(f, x)\triangleq\left\{\begin{array}{ll}
f(x+h / 2)-f(x-h / 2), & x \pm h / 2 \in[0,1], \\
0, & \text {otherwise,}
\end{array}\right.$$
and let $\Delta_{h}^{r}\triangleq\Delta\pth{ \Delta_{h}^{r-1}}$ for $r \in\naturals$.
Let $\varphi(x) = \sqrt{x(1 - x)}.$
For  $t\in[0,1]$, the $r\Th$ Ditzian-Totik moduli of smoothness of order $r$ is defined as 
$$\omega^{r}_{\varphi}(f, t)\triangleq\sup _{h \in[0, t]}\left\|\Delta_{h \varphi(\cdot)}^{r}(f, \cdot)\right\|_{\infty}.$$
The following lemmas relate $\omega^{r}_{\varphi}$ to the best approximation error.

\begin{lemma}[{\cite[Theorem 7.2.1]{DT87}}]
\label{lem:poly-DT}
    For any $r\in \naturals$ and $f \in L_{\infty}[0,1]$, there exists a constant $C=C(r)$  independent of  $D>r$  and $f$ such that
\begin{align*}
    E_D(f,[0,1]) \leq C \omega_{\varphi}^{r}\pth{ g, D^{-1}}, \quad D>r.
\end{align*}
\end{lemma}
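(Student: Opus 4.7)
This statement is a classical Ditzian-Totik direct (Jackson-type) inequality for weighted polynomial approximation on $[0,1]$. The plan is the standard two-step route via a Peetre K-functional, exploiting the fact that the weight $\varphi(x) = \sqrt{x(1-x)}$ is precisely the one under which algebraic polynomials on $[0,1]$ inherit the resolution that trigonometric polynomials enjoy on the circle.

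Introduce the weighted K-functional
$$K_{r,\varphi}(f, s) \triangleq \inf_{g} \bigl\{ \|f - g\|_\infty + s \|\varphi^r g^{(r)}\|_\infty \bigr\},$$
where the infimum runs over $g$ with locally absolutely continuous $(r-1)$-st derivative and $\varphi^r g^{(r)} \in L_\infty[0,1]$. The first step is the equivalence $\omega_\varphi^r(f, t) \asymp K_{r,\varphi}(f, t^r)$ uniformly in $f \in L_\infty[0,1]$ and small $t > 0$. The upper bound follows by splitting $f = (f-g) + g$, estimating $\Delta^r_{h\varphi}(f-g, x)$ trivially by $2^r\|f-g\|_\infty$, and expressing $\Delta^r_{h\varphi}(g, x)$ as an $r$-fold nested integral of $g^{(r)}$ over an interval where $\varphi$ stays comparable to $\varphi(x)$, so the weight exchange produces the needed $h^r$ factor. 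The reverse bound is obtained by constructing an explicit mollification of $f$ (iterated Steklov averages with variable step-size proportional to $\varphi$) whose approximation error is controlled by $\omega_\varphi^r(f, t)$ and whose $r$-th weighted derivative is controlled by $t^{-r}\omega_\varphi^r(f, t)$.

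The second step is the weighted Jackson inequality $E_D(g, [0,1]) \lesssim D^{-r} \|\varphi^r g^{(r)}\|_\infty$ for smooth $g$. The natural device is the Chebyshev substitution $x = (1 + \cos\theta)/2$, which bijects $[0,\pi] \to [0,1]$, maps algebraic polynomials of degree $D$ in $x$ to even trigonometric polynomials of degree $D$ in $\theta$, and aligns the weight via $\varphi(x) = \tfrac{1}{2}|dx/d\theta|$. Setting $G(\theta) \triangleq g(x(\theta))$, a Fa\`a di Bruno-type computation bounds $\|G^{(r)}\|_\infty$ by a combination of $\|\varphi^j g^{(j)}\|_\infty$ for $j \le r$; the classical trigonometric Jackson theorem then furnishes an even trigonometric approximant of degree $D$, which pulls back to an algebraic polynomial of degree $D$ in $x$. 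Combining the two steps with $t = D^{-1}$ and optimizing over $g$ in the K-functional yields the claimed bound.

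The main obstacle is the chain-rule bookkeeping in the second step: Fa\`a di Bruno applied to $G = g\circ x$ produces not only the clean leading term $g^{(r)}(x)(x'(\theta))^r = (-1)^r \varphi^r(x) g^{(r)}(x)$ but also mixed terms involving $g^{(j)}(x)$ for $j < r$ paired with higher derivatives of the substitution, and these lower-order weighted derivatives are not a priori controlled by $\|\varphi^r g^{(r)}\|_\infty$ alone. The standard resolution is either to enlarge the K-functional so that all intermediate weighted derivatives are controlled simultaneously and prove the enriched equivalence $\omega_\varphi^r(f, t) \asymp \inf_g\{\|f-g\|_\infty + \sum_{j\le r} t^j \|\varphi^j g^{(j)}\|_\infty\}$, or to bypass the chain rule by deploying an explicit polynomial operator (such as a modified Bernstein-Durrmeyer with localization scale $\varphi/\sqrt{D}$) that reproduces polynomials of degree $r-1$ exactly and thereby kills the cross-terms at the source.
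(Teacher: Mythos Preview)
The paper does not prove this lemma: it is stated as a direct citation of \cite[Theorem 7.2.1]{DT87} and used as a black box. Your proposal is therefore not comparable to any argument in the paper, but it is a faithful high-level sketch of the original Ditzian--Totik proof strategy (K-functional equivalence followed by a weighted Jackson inequality via the Chebyshev substitution), and you correctly identify the main technical wrinkle---controlling the lower-order weighted derivatives that appear in the chain rule---together with the two standard remedies (enriching the K-functional or using a polynomial-reproducing operator). As a self-contained proof this would still require substantial work to make rigorous, but as a plan it is sound.
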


\begin{lemma}[{\cite[Theorem 2.1.1]{DT87}}]
\label{lem:omega-K-DT}
Suppose that $f \in L_{\infty}(0,1)$ is $r$-times continuously differentiable for some $r\in\naturals$. 
Then, we have for some constants  $M>0$  and  $t_{0}>0$,
$$\omega_{\varphi}^{r}(f, t) \leq M t^{r}\left\|\varphi^{r} f^{(r)}\right\|_{\infty,(0,1)} , \quad 0<t \leq t_{0}.$$
\end{lemma}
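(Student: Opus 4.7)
The strategy is to reduce the bound on $\omega_\varphi^r(f,t)$ to the classical integral representation of the symmetric $r$-th difference. The starting point is the identity
\[
\Delta_h^r(f,x) \;=\; \int_{[-h/2,h/2]^r} f^{(r)}\bigl(x + u_1 + \cdots + u_r\bigr)\,du_1\cdots du_r,
\]
valid whenever $[x-rh/2,x+rh/2]\subseteq(0,1)$; this follows by induction on $r$ using $\Delta_h^r=\Delta_h\circ\Delta_h^{r-1}$ and the fundamental theorem of calculus. When the inclusion fails, the definition in the excerpt gives $\Delta_h^r(f,x)=0$, so the bound is vacuous. I would apply this identity with $h$ replaced by $h\varphi(x)$, the effective step size dictated by the weight.

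Combining the integral representation with the pointwise bound $|f^{(r)}(z)|\le \|\varphi^r f^{(r)}\|_{\infty,(0,1)}\,\varphi(z)^{-r}$ and the rescaling $u_i=h\varphi(x)v_i$ reduces the task to proving
\[
J(x,h) \;\triangleq\; \varphi(x)^r\int_{[-1/2,1/2]^r}\varphi\Bigl(x+h\varphi(x)\sum_{i=1}^r v_i\Bigr)^{-r} dv \;\le\; M
\]
uniformly in $h\le t_0$ and admissible $x$. Once $J(x,h)\le M$ is established, multiplying by $h^r\|\varphi^r f^{(r)}\|_\infty$ gives $|\Delta_{h\varphi(x)}^r(f,x)|\le M h^r\|\varphi^r f^{(r)}\|_\infty$, and taking suprema over $x$ and over $h\le t$ delivers the claim.

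The main obstacle is verifying $J(x,h)\le M$ near the endpoints of $[0,1]$, where the integration point $z=x+h\varphi(x)\sum_i v_i$ can approach $0$ or $1$, so $\varphi(x)/\varphi(z)$ is pointwise unbounded even though $\varphi(z)^{-r}$ remains integrable. I would split by symmetry about $1/2$ and then by two regimes. In the \emph{interior regime} $h\varphi(x)\le c\,x$, the integration range lies in $[x/2,3x/2]$, $\varphi$ is slowly varying there, and the comparability $\varphi(z)\asymp\varphi(x)$ yields $J(x,h)\lesssim 1$ immediately. In the \emph{boundary regime} where the constraint $x\ge rh\varphi(x)/2$ is nearly tight, one has $\varphi(x)\asymp rh$ and $z$ may reach $0$; the remaining task is to show $\int_{I_x^r}\varphi(z)^{-r}du\lesssim h^r$ after the factor $\varphi(x)^r$ cancels, a Hardy-type estimate that exploits the smoothing provided by the $r$-fold convolution structure of $\sum_i v_i$ to offset the endpoint singularity $\varphi(z)^{-r}\asymp z^{-r/2}$. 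Choosing $t_0$ small enough that these two regimes exhaust all admissible $(x,h)$ then gives the uniform constant $M$ and completes the proof.
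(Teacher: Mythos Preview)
The paper does not give its own proof of this lemma; it is quoted verbatim as Theorem~2.1.1 of Ditzian--Totik and used as a black box. So there is no paper proof to compare against.

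Your sketch is the standard Ditzian--Totik argument and is essentially correct. The integral representation of $\Delta_h^r$, the pointwise bound $|f^{(r)}(z)|\le\|\varphi^r f^{(r)}\|_\infty\,\varphi(z)^{-r}$, and the reduction to a uniform bound on $J(x,h)$ are all right. In the boundary regime your intuition is accurate: with $x\asymp (rh)^2$ one has $\varphi(x)\asymp h$, and writing $z=h\varphi(x)w$ with $w=\sum_i v_i+r/2$, the density of $w$ vanishes like $w^{r-1}$ near $0$, so $\int w^{r-1}\cdot w^{-r/2}\,dw$ converges and yields $J(x,h)\lesssim_r 1$. That is exactly the ``smoothing'' you allude to.

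One small slip: with the paper's recursive definition $\Delta_h^r=\Delta_h^1(\Delta_h^{r-1})$, the $r$th difference is \emph{not} automatically zero whenever $[x-rh/2,x+rh/2]\not\subseteq[0,1]$; you can get nonzero ``partial'' differences when only some of the evaluation points fall outside. In Ditzian--Totik the convention is to set $\Delta_h^r(f,x)=0$ whenever $x\pm rh/2\notin[0,1]$, which is what you implicitly used. Under the paper's definition one needs a short extra argument (or simply adopt the DT convention), but this does not affect the substance of the proof.
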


For the special case of $1$-Lipschitz functions, the following simplified bound holds:
\begin{lemma}[Jackson's theorem]
\label{lem:approx-Jackson}
Let $D\in\naturals$. Given any $f\in\calL_1$ on a bounded interval $[a, b] \subseteq \mathbb{R}$, there exists a polynomial $p\in \Poly_D$ such that for some universal constant $C > 0$,
\[
|f(x) - p(x)| \leq \frac{C \sqrt{(b - a)(x - a)}}{D} \leq \frac{C(b - a)}{D}, \quad \forall x \in [a, b].
\]
\end{lemma}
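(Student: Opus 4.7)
The plan is to reduce to the unit interval by rescaling, apply the Ditzian-Totik modulus-of-smoothness machinery from Lemmas~\ref{lem:poly-DT} and~\ref{lem:omega-K-DT} to obtain the uniform bound $C(b-a)/D$, and then invoke a symmetrization substitution to extract the sharper pointwise estimate.

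First, I would normalize the interval by setting $\tilde f(u) = f(a + (b-a) u)$ for $u \in [0,1]$, which is $(b-a)$-Lipschitz. Any approximating polynomial $\tilde p \in \Poly_D$ yields $p(x) = \tilde p((x-a)/(b-a)) \in \Poly_D$ with the same pointwise errors, so it suffices to prove the claim on $[0,1]$. Applying Lemma~\ref{lem:poly-DT} with $r=1$ gives $E_D(\tilde f,[0,1]) \le C\,\omega_\varphi^{1}(\tilde f, 1/D)$, and Lemma~\ref{lem:omega-K-DT} with $r=1$ yields $\omega_\varphi^1(\tilde f, 1/D) \le M D^{-1} \|\varphi \tilde f'\|_{\infty,(0,1)} \le M(b-a)/(2D)$, using $|\tilde f'| \le b-a$ almost everywhere and $\|\varphi\|_\infty = 1/2$. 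This establishes the second inequality $|f - p| \le C(b-a)/D$.

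For the sharper pointwise bound $\sqrt{(b-a)(x-a)}/D$, which vanishes as $x \to a$, I would use the substitution $x = a + (b-a) y^2$ for $y \in [-1,1]$ and consider $g(y) \triangleq f(a + (b-a) y^2)$. Then $g$ is even on $[-1,1]$ and satisfies $|g(y) - g(y')| \le (b-a)|y^2 - (y')^2| \le 2(b-a)|y - y'|$, so is $2(b-a)$-Lipschitz; moreover, its a.e.\ derivative $g'(y) = 2(b-a) y\, f'(a + (b-a) y^2)$ has magnitude $O((b-a)|y|)$ and vanishes at the origin. Applying Lemmas~\ref{lem:poly-DT}–\ref{lem:omega-K-DT} on $[-1,1]$ produces a polynomial $q$ of degree $\le 2D$ with uniform error $C(b-a)/D$, and averaging $q(y)$ with $q(-y)$ preserves the error while enforcing evenness, so $q(y) = \tilde q(y^2)$ for some $\tilde q \in \Poly_D$. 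Setting $p(x) = \tilde q((x-a)/(b-a))$ and noting $|y| = \sqrt{(x-a)/(b-a)}$, the desired bound reduces to the pointwise estimate $|g(y) - q(y)| \le C(b-a)|y|/D$. This pointwise refinement is extracted from the fact that $|\varphi(y) g'(y)| \lesssim (b-a) |y|$ vanishes at $y = 0$, which through a Timan-Dzjadyk-type pointwise Jackson inequality converts the local smoothness of $g$ near the origin into pointwise decay of the approximation error.

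The main obstacle is precisely this pointwise refinement: Lemmas~\ref{lem:poly-DT}–\ref{lem:omega-K-DT} only provide uniform Jackson estimates via the Ditzian-Totik modulus, whereas the desired bound must capture the vanishing of $|f - p|$ as $x \to a$. The $y^2$-substitution is the essential device because it simultaneously linearizes the $\sqrt{x-a}$ scale and exposes the vanishing of $g'$ at $y=0$, which is the quantitative source of the error decay; a minor secondary subtlety is that this substitution doubles the polynomial degree, so one must apply Jackson with degree $2D$ in $y$ and exploit evenness to return to a polynomial of degree $D$ in $x$.
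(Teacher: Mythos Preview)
The paper does not prove this lemma; it is stated as a classical result without proof, so there is no paper-proof to match. Your derivation of the uniform bound via Lemmas~\ref{lem:poly-DT}--\ref{lem:omega-K-DT} is correct, and the $y^2$-substitution is the right structural device for the pointwise estimate. The gap you flag, however, is not filled by the mechanism you suggest.

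You propose to obtain $|g(y)-q(y)|\le C(b-a)|y|/D$ from a ``Timan--Dzjadyk-type pointwise Jackson inequality,'' arguing that $\varphi g'$ vanishes at $y=0$. This is misdirected: Timan--Dzjadyk theorems sharpen the approximation rate near the \emph{endpoints} of the interval (where $\varphi$ vanishes), not at an interior point such as $y=0$. The vanishing of $g'(0)$ is an interior phenomenon and is invisible to that machinery. The fix is elementary once you see it: since $|g'(y)|\le 2(b-a)|y|$ one has $|g(y)-g(0)|\le (b-a)y^2$, and a direct estimate shows that $h(y)\triangleq (g(y)-g(0))/y$ (with $h(0)=0$) is odd and $3(b-a)$-Lipschitz on $[-1,1]$. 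Applying the uniform Jackson bound to $h$ yields an odd polynomial $r$ of degree $2D-1$ with $\|h-r\|_\infty\le C(b-a)/D$; then $q(y)=g(0)+y\,r(y)$ is even of degree $2D$ and satisfies $|g(y)-q(y)|=|y|\cdot|h(y)-r(y)|\le C(b-a)|y|/D$, which is exactly what you need. Alternatively, the entire lemma is an immediate consequence of Gopengauz's pointwise Jackson theorem, which for Lipschitz $f$ gives $|f(x)-p(x)|\le C\sqrt{(x-a)(b-x)}/D\le C\sqrt{(b-a)(x-a)}/D$ with $p$ interpolating $f$ at both endpoints.
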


The following lemma establishes upper bounds on the coefficients based on the Chebyshev's celebrated equioscillation theorem. We present a simplified version from  \cite[Lemma 11]{HS21}, which is a corollary of \cite[Sec.~2.9.12]{Tim63}.

\begin{lemma}
\label{lem:poly-coeff-bound}
Let  $p_{n}(x)=\sum_{\nu=0}^{n} a_{\nu} x^{\nu} \in \Poly_n$ such that  $\abs{ p_{n}(x)} \leq A$  for  $x \in[a, b]$. Then
\begin{enumerate}
\item[(a)] If  $a+b \neq 0$ , then
$$\abs{ a_{\nu}} \leq 2^{7 n / 2} A\abs{ \frac{a+b}{2}}^{-\nu}\pth{ \abs{ \frac{b+a}{b-a}}^{n}+1}, \quad \nu=0, \cdots, n.$$
\item[(b)] If  $a+b=0$, then
$$\abs{ a_{\nu}} \leq A b^{-\nu}(\sqrt{2}+1)^{n}, \quad \nu=0, \cdots, n.$$
\end{enumerate}
\end{lemma}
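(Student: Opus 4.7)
The plan is to reduce both parts to the classical coefficient bound for polynomials bounded on $[-1,1]$: if $q\in\Poly_n$ with $\|q\|_{\infty,[-1,1]}\le 1$, then each monomial coefficient of $q$ is at most $(\sqrt{2}+1)^n$ in absolute value. This base estimate is standard and follows by expanding $q$ in the Chebyshev basis and using that the Chebyshev coefficients have absolute value at most $2$ while the monomial expansion of $T_j$ is controlled explicitly.

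Part (b) follows directly: the interval $[-b,b]$ is symmetric about zero, so substituting $x = bt$ yields $q(t)\triangleq A^{-1}p_n(bt)=A^{-1}\sum_\nu a_\nu b^\nu t^\nu$ with $\|q\|_{\infty,[-1,1]}\le 1$, and applying the base estimate to the $\nu$-th coefficient $A^{-1}a_\nu b^\nu$ gives $|a_\nu|\le A\,b^{-\nu}(\sqrt{2}+1)^n$.

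For part (a), set $c\triangleq(a+b)/2\ne 0$ and $d\triangleq(b-a)/2>0$. To produce the $|c|^{-\nu}$ factor in the target bound, I would first rescale via $x=cu$, obtaining $q(u)\triangleq p_n(cu)=\sum_\nu a_\nu c^\nu u^\nu$ bounded by $A$ on $[1-\rho,1+\rho]$ where $\rho\triangleq d/|c|$ (swapping endpoints if $c<0$). Then translate and scale via $u=1+\rho t$ to get $\tilde q(t)\triangleq q(1+\rho t)$ bounded by $A$ on $[-1,1]$, so by the base estimate its monomial coefficients $b_k$ satisfy $|b_k|\le A(\sqrt{2}+1)^n$. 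Inverting $t=(u-1)/\rho$ and expanding by the binomial theorem gives the identity
\[
a_\nu c^\nu \;=\; \sum_{k=\nu}^n b_k\,\rho^{-k}\,\binom{k}{\nu}(-1)^{k-\nu},
\]
and the triangle inequality then yields
\[
|a_\nu| \;\le\; A(\sqrt{2}+1)^n\,|c|^{-\nu}\sum_{k=\nu}^n \binom{k}{\nu}\left(\tfrac{|c|}{d}\right)^{k}.
\]
The tail sum is then controlled case-by-case: if $|c|/d\le 1$ one bounds it by $\sum_k\binom{k}{\nu}\le 2^{n+1}$, while if $|c|/d>1$ every summand is at most $(|c|/d)^n$, giving $\le 2^{n+1}(|c|/d)^n$. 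Combining the two regimes produces the $(1+|\tfrac{a+b}{b-a}|^n)$ factor, and absorbing the Chebyshev constant via $(\sqrt{2}+1)^n<2^{3n/2}$ together with the combinatorial factor $2^{n+1}$ yields a prefactor bounded by $2^{7n/2}$.

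The main obstacle is the combinatorial and case-analysis bookkeeping of the final tail sum, but no new ideas are required once the two-step substitution $x=cu$ followed by $u=1+\rho t$ is set up correctly; the remaining derivation is routine and is carried out explicitly in \cite[\S 2.9.12]{Tim63}.
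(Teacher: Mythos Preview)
The proposal is correct and follows the same route the paper relies on: the paper does not supply its own proof but cites the lemma as a simplified version of \cite[Lemma~11]{HS21}, itself a corollary of \cite[\S2.9.12]{Tim63}, and your sketch is precisely the affine-reduction-to-$[-1,1]$ argument carried out there. Your two-step substitution $x=cu$, $u=1+\rho t$ and the case split on $|c|/d\lessgtr 1$ reproduce that derivation, so there is nothing to add.
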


Next, we turn to the Poisson approximation, which aims to approximate a given function using Poisson mass functions. 
The following two lemmas control the Poisson approximation error and upper bound the corresponding coefficients.

\begin{lemma}
\label{lem:charlier-bound}
A polynomial $p(x)=\sum_{d=0}^D a_d (x-x_0)^d \in \Poly_D$ admits the representation $p(x) = a_0 + \sum_{j=0}^{\infty} b_j \poi(j, n x)$ with coefficients $\{b_j\}$ satisfying
\begin{align*}
    \abs{ b_{j}} &  \leq  \sum_{d=1}^{D} |a_d| \pth{2\max\sth{\abs{\frac{j}{n}-x_0}, \sqrt{\frac{4jD}{n^2}} }}^d.
\end{align*}
\end{lemma}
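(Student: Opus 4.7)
The plan decomposes the argument into two main ingredients: a Poisson representation of each monomial $(x-x_0)^d$, and a contour-integral bound on the resulting coefficients.

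\emph{Step 1 (Poisson representation).} For $Y\sim\Poi(nx)$, the factorial-moment identity $\Expect[(Y)_\ell]=(nx)^\ell$ gives $x^\ell=\sum_{j\ge 0}(j)_\ell n^{-\ell}\poi(j,nx)$. Substituting into the binomial expansion of $(x-x_0)^d$ yields
\[(x-x_0)^d \;=\; \sum_{j\ge 0}Q_d(j)\,\poi(j,nx),\qquad Q_d(j)\triangleq\sum_{\ell=0}^{d}\binom{d}{\ell}(-x_0)^{d-\ell}\frac{(j)_\ell}{n^\ell},\]
where absolute convergence is automatic since $|Q_d(j)|$ grows only polynomially in $j$ while $\poi(j,nx)$ decays super-exponentially. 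Since $\sum_j\poi(j,nx)=1$, the constant $a_0$ separates cleanly to leave $b_j=\sum_{d=1}^{D}a_d Q_d(j)$, and the triangle inequality reduces the lemma to a uniform bound on each $|Q_d(j)|$.

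\emph{Step 2 (Cauchy representation and contour optimization).} The generating-function identity $e^{xz}=\sum_j\poi(j,nx)(1+z/n)^j$, combined with $(x-x_0)^d=d!\,[z^d]\,e^{(x-x_0)z}$, gives the Cauchy representation
\[Q_d(j) \;=\; \frac{d!}{2\pi i}\oint_{|z|=r}\frac{e^{-x_0 z}(1+z/n)^j}{z^{d+1}}\,dz,\qquad r>0.\]
Setting $y=j/n$ and using the expansion $\log(1+z/n)=z/n-z^2/(2n^2)+\cdots$, the real part of the exponent $-x_0z+j\log(1+z/n)$ on $|z|=r$ is of the form $(y-x_0)r\cos\theta-jr^2\cos(2\theta)/(2n^2)+O(jr^3/n^3)$; since the linear and quadratic contributions achieve their maxima at different angles, one obtains (after bounding the cubic tail)
\[\bigl|e^{-x_0 z}(1+z/n)^j\bigr|\;\le\;\exp\!\bigl(|y-x_0|\,r+jr^2/(2n^2)\bigr),\]
and hence $|Q_d(j)|\le d!\,r^{-d}\exp(|y-x_0|r+jr^2/(2n^2))$. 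Minimizing the exponent $-d\log r+|y-x_0|r+jr^2/(2n^2)$ in $r>0$ splits into two regimes. When $|y-x_0|\ge 2\sqrt{jD}/n$, the linear term dominates at $r\asymp d/|y-x_0|$, yielding $|Q_d(j)|\lesssim(c|y-x_0|)^d$. When $|y-x_0|<2\sqrt{jD}/n$, the quadratic term dominates at $r\asymp n\sqrt{d/j}$, yielding $|Q_d(j)|\lesssim(c\sqrt{jd}/n)^d\le(c\sqrt{jD}/n)^d$. Verifying that $c=2$ works in both regimes and combining yields $|Q_d(j)|\le(2\max\{|y-x_0|,2\sqrt{jD}/n\})^d$, after which summing over $d$ with the triangle inequality gives the stated bound on $|b_j|$.

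\emph{Main obstacle.} The delicate point is pinning down the constants in Step 2 so that the clean factor of $2$ in the $\max$-based bound is achieved. A direct contour optimization produces polynomial-in-$d$ prefactors from Stirling's formula ($d!/d^d\approx e^{-d}\sqrt{d}$) that must be absorbed into $2^d$; this is harmless for $d\ge 2$ but requires a separate verification at $d=1$, where $Q_1(j)=y-x_0$ trivially satisfies the bound. An alternative, more algebraic route bypasses the contour altogether: expand $(j)_\ell/n^\ell=\prod_{k=0}^{\ell-1}\bigl((y-x_0)+(x_0-k/n)\bigr)$, so that $Q_d(j)$ becomes a double sum involving elementary symmetric polynomials in the shifts $\{x_0-k/n\}_{k=0}^{\ell-1}$, and then control each coefficient of $(y-x_0)^m$ separately; this yields the $2^d$ constant without Stirling at the cost of more combinatorial bookkeeping.
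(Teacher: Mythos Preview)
Your Step 1 coincides with the paper's argument: the factorial-moment identity gives the Poisson expansion and $b_j=\sum_{d=1}^D a_d Q_d(j)$, reducing the lemma to bounding $|Q_d(j)|$ (written $g_d$ in the paper). For that bound the paper does nothing further than invoke \cite[Lemma~30]{LMM} and apply the triangle inequality, so your Step 2 is an attempt to re-prove the cited lemma from scratch.

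The contour-integral route is valid for obtaining $(C\max\{\cdots\})^d$ with \emph{some} absolute $C$, but getting $C=2$ needs more care than you indicate. In the second regime your choice $r\asymp n\sqrt{d/j}$ makes the linear term $|y-x_0|\,r$ as large as $2\sqrt{Dd}$, which for $d\ll D$ is $\gg d$ and drives the bound above $(4\sqrt{jD}/n)^d$; one must either split at $2\sqrt{jd}/n$ rather than $2\sqrt{jD}/n$, or let the radius depend on $D/d$. Even after this fix, Stirling leaves $d=1,2$ (not only $d=1$) to be verified by hand --- both are easy, since $Q_1(j)=y-x_0$ and $Q_2(j)=(y-x_0)^2-y/n$. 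The combinatorial alternative you outline at the end, expanding $(j)_\ell/n^\ell=\prod_{k<\ell}\bigl((y-x_0)+(x_0-k/n)\bigr)$ and controlling symmetric functions, is closer to how the cited lemma is proved and gets the constant without any Stirling loss. As a minor simplification, the ``cubic tail'' is unnecessary: since $j$ is a nonnegative integer, applying $\log(1+u)\le u$ directly to $|1+z/n|^2=1+2r\cos\theta/n+r^2/n^2$ yields your bound $\exp(|y-x_0|r+jr^2/(2n^2))$ exactly.
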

\begin{proof}
Note that
\[
    \sum_{j=d }^{\infty} \frac{j!}{(j-d )!n^{d }} \cdot \poi(j, n x)
    = \sum_{j=d }^{\infty}  \frac{(nx)^{j-d }}{(j-d )!} x^d  e^{-nx} =x^{d }.
\]
Then, we have 
\begin{align*}
p(x) 
& = a_0 + \sum_{d=1}^{D} a_{d} \sum_{d^{\prime}=0}^{d}\binom{d}{d^{\prime}}\pth{-x_0}^{d-d^{\prime}} x^{d^{\prime}} \\
& = a_0 + \sum_{d=1}^{D} a_{d} \underbrace{\sum_{d^{\prime}=0}^{d}\binom{d}{d^{\prime}}\pth{-x_0}^{d-d^{\prime}} \sum_{j=0}^{\infty} \frac{j!}{\pth{ j-d^{\prime}}!n^{d^{\prime}}}}_{\triangleq g_d} \cdot \poi(j, n x) 
\end{align*}
Applying \cite[Lemma 30]{LMM} yields that $|g_d| \leq  (2 (|\frac{j}{n}-x_0| \vee \sqrt{4jD/n^2} ) )^d$.
Applying the triangle inequality, the desired result follows.
\end{proof}

\begin{lemma}
\label{lem:pois-poly-approx}
Let $p\in\Poly_D$, $S\subseteq [0,1]$ an interval, and $r$ a $t$-large function (defined in~\eqref{eq:r_condition}).
There exist universal constants $C,c_0,c_1$ such that, if $t\geq \frac{CD}{n}$, one can construct a function of the form $g (x) = a + \sum_{j=0}^{\infty} b_{j} \poi(j,nx)$ satisfying 
\begin{align}
\label{eq:pois-poly-approx-1}
    \|p- g\|_{\infty,S_r} 
    \leq  2 M(p,S_r) \cdot n\exp\pth{-c_1 nt},
\end{align}
with $b_{j}=0$ for $j/n \notin S_{2r} $ and $\max _{j}\abs{ b_{j}} \leq c_0^D M(p,S_r)$.
\end{lemma}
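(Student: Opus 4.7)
The plan is to construct $g$ from an exact Charlier (Poisson) representation of $p$, truncated to those indices $j$ with $j/n \in S_{2r}$, and to control the truncation error through Poisson concentration guaranteed by the $t$-large property of $r$. Let $x_0$ denote the midpoint of $S_r$ (which is an interval, since $S$ is), and let $\rho$ be its half-length. Replacing $p$ by $p-p(x_0)$ only alters $a$ in the representation and does not change $M(p,S_r)$, so I may assume $p(x_0)=0$ and hence $|p(x)|\le M(p,S_r)$ for all $x\in S_r$. Writing $p(x)=\sum_{d=1}^D a_d(x-x_0)^d$ and applying Lemma~\ref{lem:poly-coeff-bound}(b) to the polynomial $y\mapsto p(x_0+y)$ on the symmetric interval $[-\rho,\rho]$ gives
$$|a_d|\le (\sqrt{2}+1)^D M(p,S_r)\,\rho^{-d}, \qquad d=1,\dots,D.$$

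Next, Lemma~\ref{lem:charlier-bound} yields an exact representation $p(x)=\sum_{j\ge 0} b_j\,\poi(j,nx)$ with $|b_j|\le \sum_{d=1}^D |a_d|\bigl(2\max\{|j/n-x_0|,\sqrt{4jD/n^2}\}\bigr)^d$. I then set $g(x)\triangleq \sum_{j:j/n\in S_{2r}} b_j\,\poi(j,nx)$, which satisfies $b_j=0$ outside $S_{2r}$ by construction. For those $j$ with $j/n\in S_{2r}$ I claim both quantities in the maximum are $O(\rho)$: the first by the quasi-constancy of $r$ on the scale $\rho$ (a consequence of $r$ being $t$-large with $r(x)\ge \sqrt{tx}\vee t$), which ensures $\mathrm{diam}(S_{2r})\lesssim \rho$; the second because $j/n\le x_0+O(\rho)$ together with $\rho\ge r(x_0)\ge \sqrt{tx_0}\vee t$ and $t\ge CD/n$ yields $\sqrt{4jD/n^2}\lesssim \rho$. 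Substituting,
$$|b_j|\le \sum_{d=1}^D (\sqrt{2}+1)^D M(p,S_r)\rho^{-d}(C_1\rho)^d \le c_0^D\, M(p,S_r),$$
for a suitable universal $c_0$, which is the claimed coefficient bound.

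For the truncation error, fix $x\in S_r$ and write
$$|p(x)-g(x)| \le \max_j |b_j|\cdot \pbb\bigl(\Poi(nx)/n\notin S_{2r}\bigr).$$
Any $j/n\notin S_{2r}$ satisfies $|j/n-x|\ge r(x)$ (up to the quasi-constancy of $r$), so the Poisson Chernoff bound, combined with $r(x)^2\ge tx$ and $r(x)\ge t$, gives $\pbb(\Poi(nx)/n\notin S_{2r})\le 2\exp(-c'nt)$; this is the same tail control as Lemma~\ref{lem:pois_rn}, now extended from $S$ to $S_r$. The combinatorial factor $c_0^D$ is absorbed into the exponential: under $t\ge CD/n$ with $C$ large enough (specifically $C\ge 2\log c_0/c'$), one has $c_0^D\le \exp(c'nt/2)$, and the remaining $n$ factor in the lemma statement leaves ample slack.

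The main obstacle is the tight coupling between the variable radius $r$ and the Charlier coefficient bounds. Concretely, for the bound $|b_j|\lesssim c_0^D M(p,S_r)$ to hold uniformly on $j/n\in S_{2r}$, one needs $\sqrt{4jD/n^2}\lesssim \rho$ on the retained indices, which is exactly where the hypothesis $t\ge CD/n$ and the two-sided $t$-largeness ($r(x)\ge \sqrt{tx}$ and $r(x)\ge t$) are both needed; dropping either estimate would make the Charlier coefficients blow up in the small-$x_0$ regime. The second delicate point is transferring Poisson concentration from $x\in S$ (the setting of Lemma~\ref{lem:pois_rn}) to $x\in S_r$, which again rests on the quasi-constancy of $r$ on the scale of its own value, a property built into the $t$-large condition.
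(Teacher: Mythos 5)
Your construction (exact Charlier expansion of $p$ around the midpoint of $S_r$, coefficient bounds via Lemma~\ref{lem:poly-coeff-bound}(b) and Lemma~\ref{lem:charlier-bound}, truncation to indices $j/n\in S_{2r}$) is exactly the paper's construction, and your treatment of the retained coefficients is essentially the paper's. But there is a genuine gap in your bound on the truncation error. You write
$|p(x)-g(x)|\le \max_j|b_j|\cdot \pbb\bigl(\Poi(nx)/n\notin S_{2r}\bigr)$
and then absorb a factor $c_0^D$ into the exponential. This presumes that the coefficients of the \emph{discarded} terms are bounded by $c_0^D M(p,S_r)$, which is false: the bound $|b_j|\le c_0^D M(p,S_r)$ was only established (and can only hold) for $j/n\in S_{2r}$. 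For $j/n\notin S_{2r}$, Lemma~\ref{lem:charlier-bound} gives $|b_j|\lesssim M(p,S_r)\bigl(c\,|j/n-x_0|/\rho\bigr)^D$ (up to the $\sqrt{jD}/n$ term), which grows like $j^D$ and is unbounded as $j\to\infty$; in the small-$x_0$ regime with $\rho\asymp t\asymp D/n$ the discarded coefficients are already of size $(1/t)^D$ at moderate $j$. So $\sup_{j\notin S_{2r}}|b_j|$ is infinite and the displayed inequality is either invalid (if the max is over retained indices) or vacuous (if over all indices); it cannot be rescued by a uniform coefficient bound times a tail probability.

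The missing idea is to trade the polynomial growth of the discarded coefficients against the pointwise exponential decay of the Poisson mass outside $S_{2r}$: for $x\in S_r$ and $j/n\notin S_{2r}$ one has $|b_j|\lesssim M(p,S_r)\,D\,(2c\,|j-nx|/(nr(x')))^D$ while $\poi(j,nx)\le \exp\bigl(-\tfrac{t}{6}\,|j-nx|/r(x')\bigr)$ (via Lemma~\ref{lem:pois-ratio-point}), and then the hypothesis $D\le nt/C$ guarantees that the exponential dominates the degree-$D$ growth when you sum (or integrate) over $|j-nx|\ge nr(x')$, producing the factor $n\exp(-c_1nt)$ in~\eqref{eq:pois-poly-approx-1}. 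This term-by-term balancing over the tail is exactly what the paper's proof carries out and what your argument omits; without it the claimed error bound does not follow. Your coefficient bound for the retained indices and the tail probability estimate via Lemma~\ref{lem:pois_rn} are fine, but the latter simply is not the quantity that controls $\sum_{j/n\notin S_{2r}}|b_j|\poi(j,nx)$.
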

\begin{proof}
Without loss of generality, let $S_r=[x_0-L,x_0+L]$ and $p(x)=\sum_{d=0}^D a_d (x-x_0)^d$.
Let $M_0\triangleq M(p,S_r)$.
Applying Lemma~\ref{lem:poly-coeff-bound}(b) yields $|a_d|\le M_0 c_2^D L^{-d}$ for all $d\in [D]$, where $c_2=\sqrt{2}+1$.
It follows from Lemma~\ref{lem:charlier-bound} that $p(x)=a_0 + \sum_{j=0}^{\infty} b_j' \poi(j, n x)$, where 
\begin{align}
\label{eq:prf-pois-poly-approx-2}
    \abs{ b_{j}'} 
    \leq M_0 c_2^D \sum_{d=1}^{D} \pth{2 L^{-1}\max\sth{\abs{\frac{j}{n}-x_0}, \sqrt{\frac{4jD}{n^2}} }}^d.
\end{align}
We construction the approximation function $g$ as
\[
g(x) = a_0 + \sum_{j/n \in S_{2r}} b_{j}' \poi(j,nx).
\]

We first upper bound the coefficients $|b_j'|$ for $j/n\in S_{2r}$. 
By definition, there exists $x'\in S$ such that $|j/n-x'|\leq 2r(x')$.
Note that $r(x')\le L$. 
It follows that
\[
\abs{\frac{j}{n}-x_0}\leq \abs{\frac{j}{n}-x'} + |x'-x_0| \leq 2r(x') + L \le 3L.
\]
Since $t \ge CD/n$, we have
\[
\sqrt{\frac{D}{n}\frac{j}{n}}
\leq \sqrt{\frac{t}{C}\frac{j}{n}} 
\leq \sqrt{\frac{t}{C}(x'+2r(x'))}
\le \sqrt{\frac{t}{C}3(x'\vee r(x'))}
\stepa{\le} \sqrt{\frac{3}{C}} r(x')
\leq \sqrt{\frac{3}{C}} L,
\]
where (a) applies the $t$-large condition~\eqref{eq:r_condition}.
Hence, \eqref{eq:prf-pois-poly-approx-2} implies $|b_{j}'| \leq M_0 c_0^D$ for $j/n\in S_{2r}$.

Next we upper bound the approximation error $|p(x)- g (x)| = |\sum_{j\notin S_{2r}} b_j' \poi(j,nx)|$ for $x\in S_r$.
Consider the coefficients $|b_j'|$ for $j/n\not\in S_{2r}$. 
By definition, there exists $x'\in S$ such that $|x-x'|\leq r(x')$.
If $j/n\le 2x$, then
\[
\sqrt{\frac{D}{n}\frac{j}{n}}
\le \sqrt{\frac{t}{C}2(x'+r(x'))}
\le \sqrt{\frac{4}{C}} r(x')
\leq \sqrt{\frac{4}{C}} L.
\]
Since $|j/n - x'|\ge 2r(x')$ for $j/n \notin S_{2r}$, by triangle inequality, $|j/n - x| \ge r(x')$.
If $j/n \ge 2x$, then $j/n \ge r(x') \ge t \ge C D/n$, and $2(j/n - x)\ge j/n$.
We get
\[
\sqrt{\frac{D}{n}\frac{j}{n}}
\le \sqrt{\frac{1}{C}}\frac{j}{n}
\le 2\sqrt{\frac{1}{C}}\abs{\frac{j}{n} - x}.
\]
Since $|j/n - x_0|\le |j/n - x| + L$ by triangle inequality, we obtain 
$|j/n-x_0|\vee \sqrt{4jD/n^2}\lesssim |j/n-x|+L$.
Then, it follows from \eqref{eq:prf-pois-poly-approx-2} that, for some constant $c_4>0$,
\begin{align*}
\abs{ b_{j}'} 
\leq M_0D c_4^D  \pth{ 1 + \frac{\abs{ j/n- x}}{ L} }^{D}\leq  M_0D \pth{ 2c_4\frac{\abs{ j/n- x}}{ r(x')} }^{D},
\quad j/n \notin S_{2r},
\end{align*}
where the last inequality holds by $1\vee \frac{\abs{ j/n- x}}{ L} \leq \frac{\abs{ j/n- x}}{ r(x')}$.

Furthermore, for $j/n \notin S_{2r}$,
\begin{align*}
\poi(j,nx)  
\stepa{\leq} &\exp \qth{-\frac{1}{3} \pth{\frac{(j-nx)^{2}}{nx} \wedge |j-nx| }}  \\
\stepb{\leq} & \exp \qth{ -\frac{1}{3} \frac{\abs{j-nx}}{r(x')} \pth{\frac{r^2(x')}{x'+r(x')} \wedge r(x') } } \\
\stepc{\leq} & \exp \qth{ -\frac{t}{6} \frac{\abs{j-nx}}{r(x')} },
\end{align*}
where (a) follows from Lemma~\ref{lem:pois-ratio-point} and the fact that $\frac{\delta^2}{2}\geq \frac{\delta^2 \wedge \delta}{3}$;
(b) uses $|x-x'|\leq r(x') \leq \abs{j/n-x}$;
and (c) applies the $t$-large condition \eqref{eq:r_condition}.
Denote $y_j=\abs{j-nx}\geq nr(x')$. 
It follows that
\begin{align*}
    \sum_{j\notin S_{2r}} \abs{ b_j' \poi(j,nx) } &\leq  M_0D \sum_{j\notin S_{2r}} \exp \pth{ -\frac{ty_j }{6r(x')} + D\log \frac{2c_4y_j }{nr(x')} } \\
    &\stepa{\leq}  M_0D \sum_{j\notin S_{2r}} \exp \qth{ - nt \pth{\frac{y_j}{6nr(x')}-\frac{1}{C}\log \frac{2c_4y_j }{nr(x')}} }\\
    &\leq  2M_0D \int_{nr(x')-1}^\infty \exp \qth{ - nt \pth{\frac{y}{6nr(x')}-\frac{1}{C}\log \frac{2c_4y }{nr(x')}} } \diff y \\
    & \stepb{\leq} 2M_0 n \exp \pth{ - c_1 nt},
\end{align*}
where (a) applies $D\le nt /C$, and (b) holds 
since $nr(x')\geq nt\geq CD$ with a large universal constants $C>0$. 
\end{proof}

\subsection{Integral probability metric}
\label{sec:ipm}

Let $\calF$ be a class of real-valued measurable functions. 
The \textit{integral probability metric} (IPM) \cite{muller1997integral} between two probability measures \( P, P' \) with respect to \( \calF \) is defined as 
\begin{align*}
    d_{\calF}(P, P')=\sup _{g \in \calF}\abs{ \Expect_{P} [g]-\Expect_{P'} [g] }.
\end{align*}
The class \( \calF \) can be chosen to represent various commonly-used discrepancies between probability measures. As a typical example, for the class of 1-Lipschitz functions $\calL_1$, $d_{\calL_1}$ is the 1-Wasserstein distance as in \eqref{eq:W1_dual} according to the Kantorovich–Rubinstein theorem \cite[Theorem 1.14]{villani03}. 
Other examples include the total variation distance when $\calF=\{g: \|g\|_{\infty}\leq \frac{1}{2}\}$, and the maximum mean discrepancy when 
$\calF$ is the unit ball of a reproducing kernel Hilbert space.

Regarding our problem of interest, the integral probability metric provides a unified criterion for evaluating the performance of plug-in functional estimators.
Let $G(P)$ and $\hat G$ be the symmetric additive functional and the plug-in estimator
based on the histogram estimate $\hat\pi$ as defined in \eqref{eq:def_func_P} and \eqref{eq:F_hat}, respectively.
By definition, we have
\begin{align*}
    |\hat G- G(P)| \leq k d_{\calF}(\hat \pi, \pi_P), \quad g\in\calF.
\end{align*}

Particularly, for $s\in\naturals,\gamma > 0$, $\boldsymbol{\eta}= (\eta_0,\ldots,\eta_s ) \in \reals_{\geq 0}^{s+1}$, and $C_s=s!$,
consider the following class of continuous functions on $[0,1]$: 
\begin{align}
\label{eq:calF_s}
    \calF_{s,\gamma,\boldsymbol{\eta}} \triangleq \left\{ f : f(x) = x \ell(x),\; \abs{ x^r \ell^{(r)}(x)} \leq C_s x^{\gamma-1}\log^{\eta_r}\pth{1+\frac{1}{x}},\ r = 0,1,\ldots,s \right\},
\end{align}
This family broadly encompasses functions whose derivatives are non-smooth in a neighborhood of zero, including the target functions discussed in Section~\ref{sec:truncated_npmle}.
For instance, $h(x) = -x \log x\in\calF_{2,1,(1,0,0)}$, while $f_\alpha(x) = x^\alpha$, $\alpha \geq 0$ lies in $\calF_{s,\alpha,\boldsymbol{0}}$ for any $s \in \mathbb{N}$. 
Moreover, the integral probability metric associated with \( \calF_{1,1,\boldsymbol{0}} \) is studied as the \textit{relative earthmover distance} in \cite{vv17}.
Specifically, the following lemma holds:
\begin{lemma}
\label{lem:calF_s}
For $f\in \calF_{s,\gamma,\boldsymbol{\eta}}$, the following statements hold: 
\begin{enumerate}
    \item[(i)] $\abs{x^{r}f^{(r)}(x)} \lesssim x^{\gamma}\log^{\eta_r\vee \eta_{r-1}}\pth{1+\frac{1}{x}}$ for $x\in[0,1]$ and $r=0,1,\ldots,s$, where $\eta_{-1}\triangleq 0$.
    \item[(ii)] 
    $M(f,[0,\beta])\lesssim  \|x^\gamma \log^{\eta_0} \pth{1+\frac{1}{x}}\|_{\infty,[0,\beta]}$ for $\beta\in(0,1]$.
    \item[(iii)]  $E_D(f,[0,\beta]) \lesssim
   D^{-r}\beta^{\frac{r}{2}} \|x^{\gamma-\frac{r}{2}} \log^{\eta_r\vee \eta_{r-1}} \pth{1+\frac{1}{x}}\|_{\infty,[0,\beta]}$ for any $D>r$ and $\beta\in (0,1]$.    
\end{enumerate}
\end{lemma}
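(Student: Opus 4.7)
\medskip

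My plan is to handle the three parts in sequence, with (i) as the workhorse that feeds directly into (ii) and (iii). For (i), I would apply the Leibniz rule to $f(x)=x\ell(x)$, which gives $f^{(r)}(x)=x\ell^{(r)}(x)+r\ell^{(r-1)}(x)$ for $r\geq 1$. Multiplying by $x^r$ yields
\begin{equation*}
x^r f^{(r)}(x) = x \cdot x^r \ell^{(r)}(x) + r x \cdot x^{r-1}\ell^{(r-1)}(x),
\end{equation*}
and the two defining bounds $|x^r\ell^{(r)}(x)|\leq C_s x^{\gamma-1}\log^{\eta_r}(1+1/x)$ and $|x^{r-1}\ell^{(r-1)}(x)|\leq C_s x^{\gamma-1}\log^{\eta_{r-1}}(1+1/x)$ control each summand by $C_s x^{\gamma}\log^{\eta_r\vee\eta_{r-1}}(1+1/x)$. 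The triangle inequality then yields (i), while the $r=0$ case is immediate from the definition upon setting $\eta_{-1}=0$.

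For (ii), the $r=0$ instance of (i) gives $|f(x)|\lesssim x^{\gamma}\log^{\eta_0}(1+1/x)$, which tends to $0$ as $x\to 0^+$ for $\gamma>0$, so $f$ extends continuously with $f(0)=0$. Consequently
\begin{equation*}
M(f,[0,\beta]) \leq 2\sup_{x\in[0,\beta]}|f(x)| \lesssim \Norm{x^{\gamma}\log^{\eta_0}(1+1/x)}_{\infty,[0,\beta]},
\end{equation*}
which is (ii).

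For (iii), my plan is to rescale to $[0,1]$ and invoke the Ditzian--Totik machinery of Lemmas~\ref{lem:poly-DT} and~\ref{lem:omega-K-DT}. Set $\tilde f(y)\triangleq f(\beta y)$ on $[0,1]$; since polynomial approximation is invariant under the affine change of variables $y=x/\beta$, we have $E_D(f,[0,\beta])=E_D(\tilde f,[0,1])$. Using $\tilde f^{(r)}(y)=\beta^r f^{(r)}(\beta y)$ and $\varphi(y)=\sqrt{y(1-y)}\leq \sqrt y$, I estimate
\begin{equation*}
|\varphi(y)^r \tilde f^{(r)}(y)| \leq y^{r/2}\beta^r|f^{(r)}(\beta y)| = \beta^{r/2}\cdot (\beta y)^{r/2}|f^{(r)}(\beta y)|.
\end{equation*}
Writing $x=\beta y$ and applying (i) in the form $x^{r/2}|f^{(r)}(x)|\lesssim x^{\gamma-r/2}\log^{\eta_r\vee\eta_{r-1}}(1+1/x)$ gives
\begin{equation*}
\Norm{\varphi^r \tilde f^{(r)}}_{\infty,(0,1)} \lesssim \beta^{r/2}\Norm{x^{\gamma-r/2}\log^{\eta_r\vee\eta_{r-1}}(1+1/x)}_{\infty,[0,\beta]}.
\end{equation*}
Lemma~\ref{lem:omega-K-DT} then bounds $\omega_\varphi^r(\tilde f,1/D)$ by $D^{-r}$ times the right-hand side above, and Lemma~\ref{lem:poly-DT} converts this into a bound on $E_D(\tilde f,[0,1])$, giving (iii) for all sufficiently large $D$.

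The main obstacle I anticipate is the low-$D$ regime, since Lemma~\ref{lem:omega-K-DT} only applies when $1/D\leq t_0$ for some absolute $t_0$, and (iii) is required for every $D>r$. My remedy is to bound $E_D(f,[0,\beta])\leq M(f,[0,\beta])$ trivially and invoke (ii), then check that the resulting bound is absorbed into the right-hand side of (iii) up to a constant depending only on $r$ (which is permissible because $r$ is treated as a fixed order of smoothness). A second minor point is that $\ell$ itself may blow up at $0$ when $\gamma<1$; however, the defining inequalities control only products of the form $x^r \ell^{(r)}(x)$, and the Leibniz expansion above only ever uses such products, so no separate regularity assumption on $\ell$ near zero is needed.
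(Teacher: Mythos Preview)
Your proposal is correct and follows essentially the same route as the paper: Leibniz rule for (i), the trivial $M\le 2\|f\|_\infty$ plus (i) at $r=0$ for (ii), and the rescaling $f_\beta(y)=f(\beta y)$ combined with Lemmas~\ref{lem:poly-DT}--\ref{lem:omega-K-DT} for (iii). You are in fact more careful than the paper in flagging the low-$D$ regime and the behavior of $\ell$ near $0$; the paper simply applies Lemma~\ref{lem:omega-K-DT} without comment (in the Ditzian--Totik theory the constants in that lemma depend only on $r$, so the restriction $t\le t_0$ is not a real obstruction here).
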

\begin{proof}
(i) holds by the Leibniz rule $f^{(r)}(x)=x \ell^{(r)}(x) + r \ell^{(r-1)}(x)$ and the definition in \eqref{eq:calF_s}.
(ii) follows from $M(f,[0,\beta])\leq 2\|f\|_{\infty,[0,\beta]}$ and  applying (ii) with $r=0$.
To prove (iii), denote $f_\beta(x)=f(\beta x)$.  Applying Lemma~\ref{lem:poly-DT}, we have
\begin{align}
\label{eq:best-poly-approx-F2-1}
    E_D(f,[0,\beta]) = E_D(f_\beta,[0,1])\lesssim {\omega^{r}_{\varphi}(f_\beta, D^{-1})}.
\end{align}
Also, note that
\begin{align*}
         \omega^{r}_{\varphi}(f_\beta, D^{-1}) &\stepa{\leq} D^{-r} \|\varphi^r f^{(r)}_\beta \|_{\infty,[0,1]} \\
         &\stepb{\leq} D^{-r}\beta^{r/2} \sup_{x\in[0,1]} \abs{(\beta x)^{r/2} f^{(r)}(\beta x)} \\ 
         &\stepc{\lesssim} D^{-r}\beta^{r/2}\sup_{y\in[0,\beta]} \abs{y^{r/2-(r-\gamma)} \log^{\eta_r\vee \eta_{r-1}} \pth{1+\frac{1}{y}} },
\end{align*}
where (a) follows from Lemma~\ref{lem:omega-K-DT}, (b) holds by the definition of $\varphi$, and (c) applies (ii). Finally, the desired result holds. 
\end{proof}

In Appendix~\ref{app:prf_truncated_np}, we upper bound the IPM for  $\calF_{s,\gamma,\boldsymbol{\eta}}$ and establish convergence guarantees for the corresponding functional estimation problems.

\subsection{Tail of Poisson distributions}
\begin{lemma}[{\cite[Theorem 5.4]{MU2017}}]
\label{lem:pois-ratio-tail}
Let $X \sim \Poi(\lambda)$. 
For $\delta > 0$, 
    $$\pbb(X \geq(1+\delta) \lambda) \leq\pth{ \frac{e^{\delta}}{(1+\delta)^{1+\delta}}}^{\lambda} \leq \exp \pth{ -\frac{\pth{ \delta^{2} \wedge \delta} \lambda}{3}};
    $$
For $0<\delta<1$, 
$$
    \pbb(X \leq(1-\delta) \lambda) \leq\pth{ \frac{e^{-\delta}}{(1-\delta)^{1-\delta}}}^{\lambda} \leq \exp \pth{ -\frac{\delta^{2} \lambda}{2}}.
$$
\end{lemma}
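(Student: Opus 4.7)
The plan is to apply the classical Chernoff method based on the moment generating function of the Poisson distribution, namely $\Expect[e^{tX}] = \exp(\lambda(e^t - 1))$ for $X\sim \Poi(\lambda)$ and any $t\in\reals$. For the upper tail, I would invoke Markov's inequality on the variable $e^{tX}$ with $t>0$:
\begin{equation*}
\pbb(X\ge (1+\delta)\lambda) \le \frac{\Expect[e^{tX}]}{e^{t(1+\delta)\lambda}} = \exp\pth{\lambda(e^t - 1) - t(1+\delta)\lambda}.
\end{equation*}
Minimizing the right-hand side in $t$ gives the stationary point $t^\star = \log(1+\delta)>0$, and substituting back yields exactly the first inequality $\big(e^\delta/(1+\delta)^{1+\delta}\big)^\lambda$. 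The lower-tail bound is symmetric: Markov on $e^{-tX}$ with $t>0$ optimized at $t^\star = -\log(1-\delta)$ produces $\big(e^{-\delta}/(1-\delta)^{1-\delta}\big)^\lambda$.

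It then remains to verify the two simplified exponential bounds. For the upper tail, set $\psi_+(\delta)\triangleq (1+\delta)\log(1+\delta)-\delta$. A second-order Taylor expansion together with standard convexity estimates yields $\psi_+(\delta)\ge \delta^2/3$ for $\delta\in(0,1]$, while for $\delta\ge 1$ one checks directly that $\psi_+(\delta)\ge \delta/3$; combining the two ranges gives $\psi_+(\delta)\ge (\delta^2\wedge\delta)/3$, which is the claimed bound. For the lower tail, set $\psi_-(\delta)\triangleq \delta + (1-\delta)\log(1-\delta)$, expand as a Taylor series $\psi_-(\delta)=\sum_{k\ge 2}\frac{\delta^k}{k(k-1)}\ge \delta^2/2$ for $\delta\in(0,1)$, giving the second claim.

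The calculation is entirely routine, and in fact these are the classical Chernoff bounds stated verbatim in \cite[Theorem~5.4]{MU2017}, so the cleanest presentation is simply to cite the textbook. The only mildly delicate point would be a careful verification of the piecewise inequality $\psi_+(\delta)\ge(\delta^2\wedge\delta)/3$ across the breakpoint $\delta=1$, which can be handled by showing that both the function and its derivative satisfy the appropriate dominance at $\delta=1$ and then invoking monotonicity of the ratio $\psi_+(\delta)/\delta$ for $\delta\ge 1$. I do not anticipate any substantive obstacle.
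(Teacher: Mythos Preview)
Your proposal is correct and matches the standard Chernoff argument underlying the cited textbook result; the paper itself gives no proof for this lemma and simply relies on the citation to \cite[Theorem~5.4]{MU2017}, so there is nothing further to compare.
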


\begin{lemma}
\label{lem:pois-ratio-point}
    For $\delta > 0$, 
    $$\sup_{x \geq(1+\delta) \lambda} \sqrt{2\pi x}  \cdot \poi(x,\lambda) \leq \pth{ \frac{e^{\delta}}{(1+\delta)^{1+\delta}}}^{\lambda} \leq  \exp \pth{ -\frac{\pth{ \delta^{2} \wedge \delta} \lambda}{3}};
    $$
    For $0<\delta<1$, 
    $$
    \sup_{0\leq x  \leq(1-\delta) \lambda} (\sqrt{2\pi x} \vee 1)\cdot\poi(x,\lambda) 
    \leq  \pth{ \frac{e^{-\delta}}{(1-\delta)^{1-\delta}}}^{\lambda} \leq  \exp \pth{ -\frac{\delta^{2} \lambda}{2}}.
    $$
\end{lemma}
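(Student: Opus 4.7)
The plan is to derive both the upper-tail and lower-tail pointwise bounds from a single Stirling-type estimate on the Poisson pmf, followed by an elementary maximization of the resulting envelope. Since the second inequality in each display is exactly the Chernoff-type bound already established in Lemma~\ref{lem:pois-ratio-tail}, only the first inequality in each case requires new work.

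The key step is to apply the Stirling lower bound $x! \ge \sqrt{2\pi x}(x/e)^x$, valid for every integer $x \ge 1$, which yields
\[
\sqrt{2\pi x}\cdot \poi(x,\lambda) \;=\; \frac{\sqrt{2\pi x}\,\lambda^x e^{-\lambda}}{x!} \;\le\; \pth{\frac{e\lambda}{x}}^{x} e^{-\lambda} \;\triangleq\; \phi(x).
\]
The envelope $\phi$ extends naturally to all real $x>0$, and a direct computation gives $\frac{d}{dx}\log \phi(x) = -\log(x/\lambda)$, so $\phi$ is strictly increasing on $(0,\lambda)$ and strictly decreasing on $(\lambda,\infty)$. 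Consequently, $\sup_{x\ge (1+\delta)\lambda}\phi(x)=\phi((1+\delta)\lambda)=(e^\delta/(1+\delta)^{1+\delta})^\lambda$, and analogously $\sup_{1\le x\le (1-\delta)\lambda}\phi(x)=\phi((1-\delta)\lambda)=(e^{-\delta}/(1-\delta)^{1-\delta})^\lambda$. This immediately gives the first inequality of the upper-tail statement and settles all integer $x\ge 1$ in the lower tail.

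The remaining piece is the $x=0$ boundary of the lower tail, which is precisely why the $\vee 1$ appears in the supremum: here Stirling does not apply, and one must check directly that $1\cdot \poi(0,\lambda)=e^{-\lambda}\le (e^{-\delta}/(1-\delta)^{1-\delta})^\lambda$. Taking $\lambda$-th roots and logarithms, this reduces to $(1-\delta)\log(1-\delta)\le 1-\delta$, which holds trivially for $\delta\in(0,1)$ since $\log(1-\delta)\le 0 \le 1-\delta$. Combining with the envelope supremum completes the first inequality of the lower tail, and Lemma~\ref{lem:pois-ratio-tail} supplies the two exponential simplifications.

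No step is genuinely difficult here; the only mild subtlety, and the main place one could slip up, is the $x=0$ edge case that motivates the $\vee 1$ correction in the lower-tail display.
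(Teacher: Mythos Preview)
Your proposal is correct and takes essentially the same approach as the paper. Both proofs apply the Stirling lower bound $x!\ge \sqrt{2\pi x}(x/e)^x$ to obtain an envelope that is unimodal with peak at $x=\lambda$ (the paper parametrizes it via $g(t)=t-(1+t)\log(1+t)$, you via $\phi(x)=(e\lambda/x)^xe^{-\lambda}$, which are equivalent since $\phi((1+t)\lambda)=\exp(\lambda g(t))$), read off the supremum at the boundary $x=(1\pm\delta)\lambda$, treat $x=0$ directly, and invoke Lemma~\ref{lem:pois-ratio-tail} for the final exponential simplifications.
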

\begin{proof}
Define $g(t) \triangleq t - (1+t)\log (1+t)$ for $t>-1$. 
The function $g $ is increasing in $(-1,0]$ and decreasing in $[0,\infty)$. 
For any $x \geq(1+\delta) \lambda$, let $x = (1+\delta') \lambda$ with $\delta'\geq \delta$.
Then, applying Stirling's formula \cite{robbins55stirling} $\sqrt{2 \pi n}\left(\frac{n}{e}\right)^{n} <n!$ yields that
\[
\poi(x ,\lambda)
= e^{-\lambda}\frac{\lambda^{x}}{x!} 
\leq  \frac{\pth{\lambda/x}^{x}}{\sqrt{2\pi  x}} e^{x-\lambda} 
= \frac{\exp\pth{\lambda g(\delta')}}{\sqrt{2\pi  x}}
\leq \frac{\exp\pth{\lambda g(\delta)}}{\sqrt{2\pi  x}} 
\leq \frac{1}{\sqrt{2\pi x}} \exp \pth{ -\frac{(\delta^{2} \wedge \delta)\lambda}{3}},
\]
where the last inequality uses Lemma~\ref{lem:pois-ratio-tail}.

Likewise, for any $0< x  \leq(1-\delta) \lambda$ with $\delta\in(0,1)$, there exists $\delta\leq \delta' <1$ such that $x = (1-\delta') \lambda$. Then, we have
\begin{align*}
    \poi(x ,\lambda) & \leq\frac{\exp\pth{\lambda g(-\delta')}}{\sqrt{2\pi  x}}\leq \frac{\exp\pth{\lambda g(-\delta)}}{\sqrt{2\pi  x}} \leq \frac{1}{\sqrt{2\pi x}} \exp \pth{ -\frac{\delta^{2} \lambda}{2}}.
\end{align*}
Finally, if $x=0$, we have $\poi(0,\lambda) = \lim_{\delta'\to 1}\exp\pth{\lambda g(-\delta')} \leq  \exp\pth{\lambda g(-\delta)} \leq \exp ( -\frac{\delta^{2} \lambda}{2}).$
Combining the upper bounds, the desired result follows.
\end{proof}

\begin{lemma}
    \label{lem:pois_rn}
    Let $N\sim \Poi(np)$, $\hat p=N/n$, $p\in[0,1]$, and $r:[0,1] \mapsto [0,\infty)$.
    For any interval $S\subseteq [0,1]$ and $c_2>c_1\geq 0$, there exists a constant $c>0$ depending on $c_1,c_2$ such that
    \begin{align*}
       \sup_{ p \in S_{c_1r}} \pbb\qth{ \hat p \notin S_{c_2r}   } \vee \sup_{ p \notin  S_{c_2r}} \pbb\qth{ \hat p \in S_{c_1r}  } \leq 2\exp\pth{-cn\inf_{x\in [0,1]} \frac{r^2(x)}{x} \wedge r(x)}.
    \end{align*}
\end{lemma}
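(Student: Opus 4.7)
The plan is to apply the Poisson Chernoff bounds in Lemma~\ref{lem:pois-ratio-tail} to $|\hat p - p|$, choosing the gap so that the resulting exponent dominates $n\Lambda$, where we abbreviate $\Lambda \triangleq \inf_{x\in[0,1]}\bigl(r^2(x)/x\wedge r(x)\bigr)$. I handle the two suprema in the lemma separately.

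For the inside-to-inside direction ($p\in S_{c_1 r}$), pick a witness $x_0\in S$ with $|p-x_0|\leq c_1 r(x_0)$; by the triangle inequality, the event $|\hat p-p|\leq (c_2-c_1)r(x_0)$ already forces $\hat p\in[x_0-c_2 r(x_0), x_0+c_2 r(x_0)]\subseteq S_{c_2 r}$. Both Poisson tails then give
\[
\pbb[\hat p\notin S_{c_2 r}] \leq \pbb\bigl[|\hat p-p|>(c_2-c_1)r(x_0)\bigr] \leq 2\exp\bigl(-n(\eta^2/p\wedge\eta)/3\bigr),
\]
with $\eta=(c_2-c_1)r(x_0)$. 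The bound $p\leq (1+c_1)(x_0\vee r(x_0))$ (which follows from $p\in S_{c_1 r}$) combined with the two consequences $\Lambda\leq r^2(x_0)/x_0$ and $\Lambda\leq r(x_0)$ (split on whether $r(x_0)\leq x_0$) yields $\eta^2/p\wedge\eta \gtrsim_{c_1,c_2}\Lambda$.

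For the outside-to-outside direction ($p\notin S_{c_2 r}$), I first note that since $S$ is an interval, each fattening $S_{c_j r}$ is itself a connected interval $[a_j,b_j]$ with $b_j=\sup_{y\in S}(y+c_j r(y))$ and $a_j=\inf_{y\in S}(y-c_j r(y))$, and $[a_1,b_1]\subseteq [a_2,b_2]$. Suppose $p>b_2$ (the case $p<a_2$ is symmetric); then $\hat p\in S_{c_1 r}$ forces $\hat p\leq b_1$, and the lower-tail Chernoff bound gives $\pbb[\hat p\leq b_1]\leq \exp(-(p-b_1)^2 n/(2p))$. Selecting a sup-witness $y^\star\in S$ with $b_1=y^\star+c_1\rho$ where $\rho\triangleq r(y^\star)$, the hypothesis $p>y^\star+c_2\rho$ yields the key inequality $p-b_1>(c_2-c_1)\rho$.

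The main obstacle is to verify $(p-b_1)^2/p\gtrsim_{c_1,c_2}\Lambda$ in all regimes of $p$ and $\rho$, since the Poisson lower tail offers only $\delta^2\lambda$ in the exponent (without the upper tail's $\min(\delta,\delta^2)$ improvement). I split into three regimes: (i) $p\leq 2c_2\rho$, where $\rho/p\gtrsim 1$ combined with $\rho\geq\Lambda$ gives $(c_2-c_1)^2\rho^2/p\gtrsim \rho\geq\Lambda$ directly; (ii) $p>2c_2\rho$ with $y^\star\leq p/2$, which forces $b_1\leq (c_1+c_2)p/(2c_2)$ so that $\delta=1-b_1/p\gtrsim_{c_1,c_2}1$ and then $\delta^2 np\gtrsim np\geq c_2 n\Lambda$ via $p>c_2\rho\geq c_2\Lambda$; and (iii) the delicate case $p>2c_2\rho$ with $y^\star>p/2$, where I invoke the infimum definition $\Lambda\leq r^2(y^\star)/y^\star$ to obtain $\rho^2\geq\Lambda y^\star>\Lambda p/2$, upgrading $(c_2-c_1)^2\rho^2/p$ into the required lower bound. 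Taking the smallest of the three case constants produces the universal $c=c(c_1,c_2)>0$.
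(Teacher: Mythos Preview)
Your argument is correct and follows the same strategy as the paper: apply the Poisson tails of Lemma~\ref{lem:pois-ratio-tail} after extracting a witness $x\in S$ that guarantees a gap of order $(c_2-c_1)r(x)$, and then convert this into a lower bound on the Chernoff exponent via the defining inequality $\Lambda\le r^2(x)/x\wedge r(x)$. The inside-to-inside direction is handled identically in both.

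The only substantive difference is in the $p>\sup S_{c_2r}$ sub-case of the second supremum. You fix the endpoint witness $y^\star$ for $b_1$ (using that $S_{c_jr}$ is an interval) and then split into three regimes on the relative sizes of $p$, $\rho$, and $y^\star$. The paper instead works with an arbitrary $y\in S_{c_1r}$ and its own witness $x$, and uses the monotonicity of $p\mapsto (p-y)^2/p$ on $(y,\infty)$ to replace $p$ by the smaller value $x+c_2r(x)$, yielding directly $(p-y)^2/p\ge (c_2-c_1)^2r^2(x)/(x+c_2r(x))\gtrsim\Lambda$ without any case split. Your route is more elementary at the cost of bookkeeping; the paper's is shorter but leaves the monotonicity step implicit. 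One minor technical point: your witness $y^\star$ attaining $b_1=\sup_{y\in S}(y+c_1 r(y))$ need not exist when $r$ is not continuous, but an $\epsilon$-approximate choice works throughout and the final constant is unaffected after letting $\epsilon\to 0$.
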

\begin{proof}
Firstly, fix any $p \in S_{c_1 r}$ and let $\delta = \inf_{y \notin S_{c_2r}} |\frac{y}{p}-1|$. For any $y \notin S_{c_2r}$, there exists $x\in S$ satisfying $|x-p|\leq c_1r(x)$ and $|x-y|> c_2r(x)$, which implies $|p-y|> (c_2-c_1) r(x)$. Letting 
$t \triangleq \inf_{x\in [0,1]} \frac{r^2(x)}{x} \wedge r(x)$, it follows that 
\begin{align*}
\pth{ \delta^{2} \wedge \delta} p \geq \inf_{x\in S} \frac{(c_2-c_1)^2r^2(x)}{x+c_1r(x)} \wedge (c_2-c_1) r(x) \geq \pth{\frac{(c_2-c_1)^2}{2(1+c_1)} \wedge (c_2-c_1)}t.
\end{align*}
Applying Lemma~\ref{lem:pois-ratio-tail} with the fact $\frac{\delta^2}{2}\geq \frac{\delta^2 \wedge \delta}{3}$  yields that for some $c'>0$,
\begin{align*}
 \sup_{ p \in S_{c_1r}} \pbb\qth{ \hat p \notin S_{c_2r}} \leq \pbb\qth{ |\hat p -p| \geq \delta p } \leq 2\exp\pth{-c'nt}.
\end{align*}

Next, fix any $p \notin  S_{c_2r}$.
Let $\delta = \inf_{y \in S_{c_1r}} |\frac{y}{p}-1|$ . 
For any $y \in S_{c_1r}$, there exists $x\in S$ satisfying $|y-x|\leq c_1r(x)$ and $|p-x|> c_2 r(x)$.
If $p\geq \sup_{x'\in S_{c_2r} }x'$, then $y\leq x+c_1r(x)\leq x+c_2r(x)\leq p$, implying that $\delta^2 p \geq \inf_{x\in S}\frac{(c_2-c_1)^2r^2(x)}{x+c_2r(x)}\gtrsim t$.
Otherwise, we have $p\leq \inf_{x'\in S_{c_2r} }x'$ and  $y\geq x-c_1r(x)\geq x-c_2r(x)\geq p$. 
Since $y-p \geq (c_2-c_1)r(x)$ for $y> 2p$ and $\frac{(y-p)^2}{p}\geq \frac{(c_2-c_1)^2r^2(x)}{x}$ for $p\leq y\leq 2p$,
we have  $(\delta^2\wedge\delta)p\gtrsim t$.
Applying Lemma~\ref{lem:pois-ratio-tail}, the upper bound for $\sup_{ p \notin  S_{c_2r}} \pbb\qth{ \hat p \in S_{c_1r}}$ is likewise obtained.
\end{proof}

\subsection{Approximation by finite Poisson mixtures}
\label{app:finite-approx}
Consider the Poisson mixture $f_{P}(\cdot) \triangleq \int \poi(\cdot, \theta) \diff P(\theta)$. 
Let $d(f,g)$ be a function that measures the approximation error of $g$ by $f$, and $\calP_m$ the set of distributions supported on at most $m$ atoms.
Define
\begin{equation*}
  \comp (\epsilon,P,d)\triangleq \min\{m\in\naturals: \exists P_m\in \calP_m, d(f_{P_m},f_P) \leq  \epsilon\},
\end{equation*}
\ie, the smallest order of a finite mixture that approximates a given mixture $f_P$ within a prescribed accuracy $\epsilon$. 
For uniform approximation over a distribution family $\calP$, define
\begin{equation*}
  \comp (\epsilon,\calP,d) \triangleq \sup_{P\in\calP} \comp (\epsilon,P,d),
\end{equation*}

\begin{lemma}
\label{lem:pois-finite-approx}
For $\epsilon \in (0,1/2)$ and $b>a>0$, 
\begin{align*}
    \comp (\epsilon,\calP([a,b]), L_{\infty}) \lesssim 
    (\sqrt{b} - \sqrt{a}) \log^{3/2} \frac{1}{\epsilon} + \log^{2} \frac{1}{\epsilon}. 
\end{align*}
\end{lemma}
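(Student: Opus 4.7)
Let me outline a plan for proving this bound on the approximation complexity of Poisson mixtures in the $L_\infty$ metric.

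Set $\sigma\triangleq \log(1/\epsilon)$. The plan is to construct an explicit finite-support approximation $P_m$ by a partition-and-moment-matching scheme, with a two-scale partition of $[a,b]$ designed so that each piece admits a cheap polynomial approximation of $\theta\mapsto \poi(j,\theta)$ in the relevant range of $j$.

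First, I would localize the error to a window of integers $j$. By Lemma~\ref{lem:pois-ratio-point}, there is a universal constant $C_0$ such that $\poi(j,\theta)\le \epsilon$ for every $\theta\in[a,b]$ whenever $|j-\theta|>C_0(\sqrt{\theta\sigma}+\sigma)$. Letting $J^\star\triangleq\{j\in\naturals:\exists\,\theta\in[a,b],\ |j-\theta|\le C_0(\sqrt{\theta\sigma}+\sigma)\}$, any mixing measure $P_m$ supported on $[a,b]$ satisfies $|f_P(j)-f_{P_m}(j)|\le 2\epsilon$ automatically for $j\notin J^\star$, so it suffices to control the error on $j\in J^\star$.

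Second, I would split $[a,b]$ into a low regime $[a,\sigma]\cap[a,b]$ and a high regime $[\sigma\vee a,b]$. On the low regime use a uniform $\theta$-partition at scale $\asymp 1$, yielding $K_{\mathrm{lo}}\lesssim \sigma$ pieces; on the high regime uniformly partition $[\sqrt{\sigma\vee a},\sqrt{b}]$ at scale $\asymp 1/\sqrt{\sigma}$, and lift back to a $\theta$-partition with $K_{\mathrm{hi}}\lesssim (\sqrt{b}-\sqrt{a})\sqrt{\sigma}$ pieces of $\theta$-width $\Delta_i\asymp \sqrt{c_i/\sigma}$ (where $c_i$ is the midpoint of $I_i$). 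On each piece $I_i$, Gauss--Christoffel quadrature produces a discrete measure supported on at most $D$ atoms matching the first $2D-1$ moments of $P|_{I_i}$; define $P_m$ as the concatenation of these measures.

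Third, the moment-matching identity combined with standard duality gives, for every $j$,
\[
|f_P(j)-f_{P_m}(j)|\le \sum_i 2P(I_i)\cdot E_{2D-1}(\poi(j,\cdot),I_i).
\]
Since $\theta\mapsto \poi(j,\theta)$ is entire, Chebyshev--Bernstein estimates on the Bernstein ellipse of parameter $\rho>1$ around $I_i$ yield
\[
E_{2D-1}(\poi(j,\cdot),I_i)\le C\,\rho^{-2D}\sup_{|z-c_i|\le \rho\Delta_i}|\poi(j,z)|,
\]
and a direct computation bounds the supremum by $\poi(j,c_i)\exp\bigl(C\rho\Delta_i(1+j/c_i)\bigr)$. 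By our choice of partition, the exponent $\Delta_i(1+j/c_i)$ is a universal constant for every $j\in J^\star$ in both regimes (in the low regime $\Delta_i\asymp 1$ and $j/c_i\lesssim 1$; in the high regime $\Delta_i\asymp \sqrt{c_i/\sigma}$ while $|j-c_i|/c_i\lesssim \sqrt{\sigma/c_i}$, making the product $O(1)$). Summing over $i$ and using $\sum_i P(I_i)\poi(j,c_i)\le 1$, we get $|f_P(j)-f_{P_m}(j)|\lesssim \rho^{-2D}$ for all $j\in J^\star$.

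Finally, taking $D\asymp \sigma$ with $\rho$ a universal constant $>1$ makes the error at most $\epsilon$, and the total atom count is
\[
m\le (K_{\mathrm{lo}}+K_{\mathrm{hi}})\cdot D\lesssim \sigma^2+(\sqrt{b}-\sqrt{a})\sigma^{3/2},
\]
which is the claim. The main obstacle is the calibration of the partition: one must keep the Bernstein growth factor $\Delta_i(1+j/c_i)$ uniformly bounded over $j\in J^\star$, and it is precisely the different behavior of this quantity for small versus moderate/large $\theta$ that forces the two-scale partition and produces both the $(\sqrt{b}-\sqrt{a})\sigma^{3/2}$ and the additive $\sigma^2$ term.
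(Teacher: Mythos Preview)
Your moment-matching-on-a-partition skeleton matches the paper's, but the Bernstein-ellipse step has a real gap. The assertion that $\Delta_i(1+j/c_i)$ is a universal constant is false in the high regime: for $j$ near $c_i$ one has $1+j/c_i\approx 2$, while $\Delta_i\asymp\sqrt{c_i/\sigma}$, so the product is $\asymp\sqrt{c_i/\sigma}\to\infty$. Your parenthetical justification (``$|j-c_i|/c_i\lesssim\sqrt{\sigma/c_i}$, making the product $O(1)$'') conflates $1+j/c_i$ with $(j-c_i)/c_i$. With the bound as written, the error estimate becomes $\rho^{-2D}\exp\bigl(C\rho\sqrt{c_i/\sigma}\bigr)$, which exceeds $\epsilon$ once $c_i\gtrsim\sigma^3$. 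The low regime has an analogous issue: for $c_i\ll\sigma$ the relevant $j$ range up to $O(\sigma)$, so $j/c_i$ is not $O(1)$ either.

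The defect is fixable, but not with the crude bound you wrote. Writing $z=c_i(1+u)$ and expanding $j\log(1+u)-c_iu=(j-c_i)u-\tfrac{j}{2}u^2+\cdots$, the linear term is governed by $|j-c_i|$, not $j+c_i$; for $|j-c_i|\lesssim\sqrt{c_i\sigma}$ and $|u|\le\rho\Delta_i/c_i=\rho/\sqrt{c_i\sigma}$ this contributes $O(\rho)$, and the quadratic term contributes $O(\rho^2/\sigma)$, both bounded. Carrying this through requires first localizing each piece to its own $j$-window (handling distant $j$ by the trivial bound $E_D\le\|\poi(j,\cdot)\|_{\infty,I_i}\le\epsilon$), which you have not done. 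The paper sidesteps Bernstein entirely: it uses \emph{wider} pieces of width $\asymp\sqrt{c_i\sigma}$ (hence only $\asymp(\sqrt b-\sqrt a)/\sqrt\sigma$ of them), localizes to $j\in\tilde I_i$, and for large $i$ constructs an explicit degree-$\sigma^2$ polynomial approximant to $\theta\mapsto\poi(j,\theta)$ by composing truncated Taylor series of $\log(1+\cdot)$ and $e^{-(\cdot)}$.
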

\begin{proof}
We construct an approximation of $f_G$ for $G\in \calP([a,b])$. 
Let $\gamma \triangleq C\log(1/\epsilon)$ with a constant $C>0$ to be chosen. 
Define $i_a \triangleq \Floor{\sqrt{a/\gamma}}$ and $i_b \triangleq \Floor{\sqrt{b/ \gamma}}$.
Consider the following partition of $[a,b]$: 
\[
I_i \triangleq \big[a \vee i^{2} \gamma, (i+1)^{2}\gamma \big), \quad i_a \leq i <i_b, 
\]
and $I_{i_b} \triangleq [a \vee i_b^{2} \gamma, b]$.
Let $G_{i}$ be the conditional distribution of $G$ on $I_{i}$.
By Carathéodory theorem, there exists a discrete distribution $G'_i$ supported on $L_{i}$ atoms in $I_{i}$ such that
\begin{equation}
\label{eq:matching-Gi}
\int u^{k} G_{i}(du) = \int u^{k} G'_i(du),
\quad \forall k=1,\ldots,L_{i},
\end{equation}
where $L_i$ is a sequence to be specified.
Define $w_i \triangleq G(I_i)$ and $G' \triangleq \sum_{i=0}^{N} w_{i} G'_i$ that is supported on $m = \sum_{i=1}^N L_i$ atoms. 
Then,
\begin{align*}
    \|f_G-f_{G'}\|_\infty 
    \leq \sum_{i=i_a}^{i_b} w_i \|f_{G_{i}} - f_{G'_i} \|_\infty 
    \le \max_{i\in [i_a,i_b]}\sup_{j} |f_{G_{i}}(j) - f_{G'_i}(j)|.
\end{align*}

Define $r(x)= \frac{1}{2}(\sqrt{\gamma x} + \gamma)$. 
For each $i$, define $\tilde I_{i} \triangleq I_{i-1} \cup I_{i} \cup I_{i+1}$, where $I_{i_a-1}\triangleq[a-r(a),a]$ and $I_{i_b+1}\triangleq [b,b+r(b)]$.
By definition, $(I_i)_r\subseteq \tilde I_{i}$.
Applying Lemma~\ref{lem:pois_rn}, for $j \notin \tilde I_{i}$,
\begin{align*}
    |f_{G_{i}}(j) - f_{G'_i}(j)| 
    \leq \sup_{j \notin (I_i)_r} f_{G_{i}}(j) + f_{G'_i}(j) \leq 4\exp(-c'\gamma)\le \epsilon.
\end{align*}

For $j\in \tilde I_{i}$, let $\poi_j(x)\triangleq x^j e^{-x}/j!$ and $P_{L_i,j} \in \Poly_{L_i}$ be the best polynomial such that $\Norm{\poi_j-P_{L_i,j}}_{\infty,I_{i}}=E_{L_i}(\poi_j,I_i)$. 
By~\eqref{eq:matching-Gi}, 
$\Expect_{G_i} [P_{L_i,j}] = \Expect_{G_i'}[P_{L_i,j}]$. 
Therefore,
\begin{align}
\label{eq:pois-pmf-E-0}
|f_{G_{i}}(j) - f_{G'_i}(j)|
&=\big| \Expect_{G_i}  [\poi_j] - \Expect_{G_i'}[\poi_j] \big| \nonumber\\
  &\leq \big|\Expect_{G_i} [\poi_j - P_{L_i,j}]\big| + \big|\Expect_{G_i'}[\poi_j - P_{L_i,j}]\big|
  \leq  2E_{L_i}(\poi_j,I_i). 
\end{align}
Next, we derive upper bounds on $E_{L_i}(\poi_j,I_i)$ with $j \in \tilde I_{i}$. 

\paragraph{Case 1: $i\leq  \sqrt{\gamma} $.} 
Using the Chebyshev interpolation polynomial (see \cite[Eq.\  (4.7.28)]{Atkinson1989}), we obtain
\begin{align*}
    E_{L_i}(\poi_j,I_i) 
    \leq \frac{\sup_{x \in I_{i}} |\poi_j^{(L_i+1)}(x)| }{2^{L_i}(L_i+1)!}\left(\frac{|I_{i}|}{2}\right)^{L_i+1}.
\end{align*}
When $j \leq L_i+1$, \cite[Eq.\ (3.23)]{WY20Book} shows that $|\poi_j^{(L_i+1)}(x)| \leq e^{-x/2}\binom{L_i+1}{j}$. 
Then, 
\[
E_{L_i}(\poi_j,I_i) \leq 
\frac{\binom{L_i+1}{j}}{2^{L_i}(L_i+1)!}\left(\frac{(2i+1)\gamma}{2}\right)^{L_i+1} \leq \pth{\frac{C_1(i+1)\gamma}{L_i}}^{L_i+1}, 
\]
with some constant $C_1>0$.
For $j \in \tilde I_{i}$, we have $j\le (i+2)^2 \gamma$.
Choosing $L_i= C_2(i+1)^2\gamma \ge j$ for some large constant $C_2$, we obtain  that $E_{L_i}(\poi_j,I_i)\le \epsilon/2$ with $L_i\lesssim \gamma^2$.

\paragraph{Case 2: $i > \sqrt{\gamma}$.}
Note that $I_i \subseteq [j-K\sqrt{j}, j+K\sqrt{j}]$ for $j \in \tilde I_{i}$ with $K \triangleq 3 \sqrt{\gamma}$.
Denote $g_j(x) =  \poi_j(x+j) = c_j \exp(\tilde g_j(x))$ with $\tilde g_j(x) \triangleq j \log (1+ x / j)-x$ and $c_j \triangleq (j / e)^j / j!\leq 1$ by Stirling approximation. 
It follows that
\begin{align}
\label{eq:pois-pmf-E-1}
    E_{L_i}(\poi_j,I_i) \leq E_{L}(\poi_j,[j-K \sqrt{j}, j+K \sqrt{j}]) = E_{L_i}(g_j,[-K \sqrt{j}, K \sqrt{j}]).
\end{align}
We upper bound \eqref{eq:pois-pmf-E-1} by constructing an explicit polynomial approximation. 
Let $k \triangleq \ceil{c'\gamma}$ with $c'>0$ to be chosen and $D=\Floor{L_i/k}$.
Let $G(x) \triangleq \sum_{\ell=0}^D (-x)^\ell/\ell!$ be a degree-$D$ polynomial and $H_k(x) \triangleq \sum_{\ell=1}^k (-1)^{\ell+1} x^\ell/\ell$ be a degree-$k$ polynomial. 
Define 
\[
p(x) \triangleq c_j G(- \tilde g_{j,k}(x)),
\qquad
\tilde g_{j,k}(x) \triangleq j H_k(x / j)-x.
\]
Then $p$ is a polynomial of degree no more than $L_i$.
For the approximation error over $|x|\le K \sqrt{j}$, we have
\begin{align}
|g_j(x) - p(x)|
& \le |g_j(x) - g_{j,k}(x)| + | g_{j,k}(x) - p(x)| \nonumber\\
& \le |e^{\tilde g_j(x)}-e^{\tilde g_{j,k}(x)}|   
+ |e^{-(-\tilde g_{j,k}(x))} - G(-\tilde g_{j,k}(x))|, \label{eq:poi-approx-error}
\end{align}
where $g_{j,k}(x) \triangleq c_j \exp(\tilde g_{j,k}(x))$.

For the first term on the right-side of \eqref{eq:poi-approx-error}, it follows from Taylor's theorem that $|\log (1+x)-H_k(x)| \leq|x|^{k+1}$ for $|x|\le 1/2$, which implies $|\tilde g_{j}(x)-\tilde g_{j,k}(x)| \leq j|x/j|^{k+1}$ for $|x|\le j/2$.
Since $j \geq (i-1)^2 \gamma \geq 36 \gamma = 4K^2$, we have $ |x|\le K \sqrt{j} \le j/2$.
Then, $|\tilde g_{j}(x)-\tilde g_{j,k}(x)| \le j (K\sqrt{j}/j)^{k+1} = K^2 (K/\sqrt{j})^{k-1}\leq K^2 2^{-k+1} \lesssim \epsilon$.
Consequently,
\[
\abs{e^{\tilde g_j(x)}-e^{\tilde g_{j,k}(x)} }
\stepa{\leq} e |\tilde g_{j}(x)-\tilde g_{j,k}(x)|  
\leq \epsilon/2,
\]
where (a) uses $|e^{x}-e^{y}|\leq e^{x\vee y}|x-y|$ and $\tilde g_j(x)\leq 0$.

For the second term on the right-side of \eqref{eq:poi-approx-error}, note that $H_k(y)\leq y$ and $|H_k(y)-y| \leq |H_k(y)-\log(1+y)|+ |y-\log(1+y)| \leq C_3 y^2$ for $|y|\leq \tfrac{1}{2}$ for a constant $C_3$. 
Then, for $|x|\le j/2$, we have $\tilde g_{j,k}(x)\leq 0$ and $\left|\tilde g_{j,k}(x) \right| \leq C_3x^2/j \leq C_3K^2$.
Then,
\[
    \abs{e^{-(-\tilde g_{j,k}(x))} - G(-\tilde g_{j,k}(x))}
    \le \max_{x\in[0,C_3K^2]} \abs{\sum_{j=0}^D \frac{(-x)^j}{j!}-e^{-x}}
    \stepa{\leq} \frac{(C_3K^2)^{D+1}}{(D+1)!}
    \stepb{\le} \epsilon/2,
\]
where (a) uses Taylor's theorem; 
(b) follows by setting $L_i =C_4\gamma^2$ with a large constant $C_4$.
Combining \eqref{eq:pois-pmf-E-1} and \eqref{eq:poi-approx-error} yields $E_{L_i}(\poi_j,I_i)  \leq \epsilon.$

Consequently, under both cases, $G'\in\calP([a,b])$ assigns at most $L_i\leq O(\gamma^2)$ atoms in each subinterval
$I_i$, with the overall $L_\infty$ approximation error at most $\epsilon$.
Hence, we have 
$$\comp (\epsilon,\calP([a,b]), L_{\infty}) \lesssim (i_b-i_a+1) \gamma^2 \lesssim (\sqrt{b}-\sqrt{a}) \gamma^{3/2} + \gamma^{2},$$
which completes the proof.
\end{proof}

For $\epsilon>0$, an $\epsilon$-net of a set $\calF$ with respect to a metric $d$ is a set $\calN$ such that for all $f\in\calF$, there exists $g\in\calN$ such that $d(g,f)\le \epsilon$. 
The minimum cardinality of $\epsilon$-nets is denoted by $N(\epsilon,\calF,d)$.
Define $\calF([a,b]) \triangleq \{f_P:P([a,b])=1\}$ for $a\leq b$.

\begin{lemma}
    \label{lem:npmle-metric-entropy}
   There exists a universal constant $C>0$ such that for $\epsilon \in (0,1/2)$ and $0\leq a\leq b$,
    \[
        \log N(\epsilon,\calF([a,b]),L_{\infty}) \le  C\comp(\epsilon,\calP([a,b]),L_\infty)  \log \frac{b-a+1}{\epsilon^2}.
    \]
\end{lemma}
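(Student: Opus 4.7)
The plan is to reduce the metric entropy of infinite Poisson mixtures to that of finite mixtures via Lemma~\ref{lem:pois-finite-approx}, and then discretize the parameter space of the finite mixtures.
Concretely, let $m \triangleq \comp(\epsilon/3, \calP([a,b]), L_\infty)$.
For every $P \in \calP([a,b])$, Lemma~\ref{lem:pois-finite-approx} (together with the fact that the constructed $G'$ in its proof is supported in $\cup_i I_i \subseteq [a,b]$) yields a discrete $P_m \in \calP_m([a,b])$ with at most $m$ atoms such that $\|f_P - f_{P_m}\|_\infty \le \epsilon/3$.
Hence it suffices to construct an $(2\epsilon/3)$-net for
$\calF_m([a,b]) \triangleq \{f_{P_m} : P_m = \sum_{i=1}^m w_i \delta_{\theta_i},\ (w,\theta) \in \Delta_{m-1}\times [a,b]^m\}$,
which after a union with the finite-mixture approximations above produces an $\epsilon$-net of $\calF([a,b])$.

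The next step is a Lipschitz bound on the map $(w,\theta) \mapsto f_{P_m}$ under $L_\infty$.
Using $|\poi(j,\theta)|\le 1$ for the weight perturbation and the derivative bound
$|\partial_\theta \poi(j,\theta)| = |\poi(j-1,\theta) - \poi(j,\theta)| \le 2$ for the location perturbation, the triangle inequality gives, uniformly in $j\in\naturals$,
\begin{equation*}
    \bigl|f_{P_m}(j;w,\theta) - f_{P_m}(j;w',\theta')\bigr|
    \le \|w-w'\|_1 + 2\max_{i\in[m]} |\theta_i-\theta'_i|.
\end{equation*}
Since this bound is independent of $j$, it transfers directly to $\|\cdot\|_\infty$.

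It remains to discretize the parameter space so that both terms on the right are at most $\epsilon/3$.
Take a uniform $\epsilon/6$-grid on $[a,b]$, giving at most $N_\theta \le (6(b-a)/\epsilon + 1)^m$ configurations for $\theta\in[a,b]^m$, and a standard $\epsilon/3$-net of $\Delta_{m-1}$ under $\ell_1$ of size at most $N_w \le (9/\epsilon)^{m-1}$.
The product of the two nets is a $(2\epsilon/3)$-net of $\calF_m([a,b])$ by the Lipschitz bound, so
\begin{equation*}
    \log N(\epsilon, \calF([a,b]), L_\infty)
    \le \log(N_\theta N_w)
    \lesssim m \log \frac{b-a+1}{\epsilon} + m \log \frac{1}{\epsilon}
    \lesssim m \log \frac{b-a+1}{\epsilon^2},
\end{equation*}
which is exactly the claimed bound.

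The main obstacle is simply verifying that the Lipschitz constant in $\theta$ is uniform over all $j\in\naturals$; this is not a priori obvious because $L_\infty$ is a supremum over a countable set, but the uniform pointwise bound on $\partial_\theta \poi(j,\theta)$ circumvents this concern cleanly. Everything else is a routine parameter count, with care taken only in splitting the total error budget $\epsilon$ into the three components (finite-mixture approximation, location discretization, weight discretization), which only affects the implicit constant $C$.
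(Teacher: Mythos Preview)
Your proof is correct and follows essentially the same route as the paper's: approximate by a finite mixture with $m$ atoms (definition of $\comp$), discretize atom locations via an $\epsilon$-grid on $[a,b]$ and weights via an $\ell_1$-net of the simplex, and combine via the Lipschitz bound $|\partial_\theta\poi(j,\theta)|\le 1$. The only cosmetic differences are that the paper counts ordered atom configurations (a multiset bound $\binom{m+|\calL|-1}{m}$ rather than your product bound) and that both arguments, after the final rescaling, literally yield $\comp(\epsilon/3,\ldots)$ rather than $\comp(\epsilon,\ldots)$---a harmless slack once $\comp$ is replaced by the explicit polylogarithmic bound of Lemma~\ref{lem:pois-finite-approx}.
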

\begin{proof}
Let $m=\comp(\epsilon,\calP([a,b]),L_\infty)$.
Let $\calN_m\subseteq \Delta_{m-1}$ be an $\epsilon$-net of $\Delta_{m-1}$ under the $L_1$-distance with cardinality $|\calN_m| \leq 2m\left(1+\frac{1}{\epsilon}\right)^{m-1}$ (see, e.g., \cite[Corollary 27.4]{PW-it}).
    Define $\calL\triangleq\{\ceil{\frac{a}{\epsilon}}\epsilon,(\ceil{\frac{a}{\epsilon}}+1)\epsilon,\dots, \floor{\frac{b}{\epsilon}}\epsilon\}$.
    Define the following set of finite mixture densities 
    \begin{align*}
    \calC \triangleq \Bigg\{ \sum_{j=1}^m w_j \Poi(\theta_j) :  (w_1,\dots,w_m) \in \calN_m, \theta_1 \leq \dots \leq \theta_m, \{\theta_j\}_{j=1}^m \subseteq \calL 
    \Bigg\}.
\end{align*}
By applying $\binom{n}{m}\leq (\frac{en}{m})^m$, the cardinality of $\calC$ is upper bounded by 
\begin{align*} 
    |\calC| 
    \leq \binom{m+|\calL|-1}{m}|\calN_m|
    \le \exp\pth{ C m \log \pth{\frac{b-a+1}{\epsilon^2}}}.
\end{align*}

    Next, we prove $\calC$ is an $\epsilon$-net. 
    By definition of $m^*$, for any $P\in\calP([a,b])$, there exists $P_m=\sum_{j=1}^m w_j\delta_{\theta_j}$ with $a\leq \theta_1 \le \dots \le \theta_m\leq b$ such that $\|f_{P_m}-f_P\|_{\infty}\leq \epsilon$.
    Let $\theta_j^\prime \triangleq \theta_j \frac{\floor{|\theta_j|/\epsilon}}{|\theta_j|/\epsilon} \in \calL$ and choose $w^\prime\in\calN_m$ so such $\Norm{w-w^\prime}_1 \le \epsilon$.
    Define $P_m' \triangleq \sum_{j=1}^m w_j\delta_{\theta_j'}$ and $P_m''=\sum_{j=1}^m w^\prime_j\delta_{\theta_j^\prime}\in\calC$.
    Then,
    \[
    \|f_P - f_{P_m^{\prime\prime}}\|_{\infty}
    \le \|f_P - f_{P_m}\|_{\infty}
    +\|f_{P_m}-f_{P_m^\prime}\|_{\infty}
    +\|f_{P_m^{\prime}}-f_{P_m^{\prime\prime}}\|_{\infty}.
    \]
    Note that $\poi_i(\cdot)$ is 1-Lipschitz by $ |\poi'_i(x)| = |\poi_{i-1}(x)- \poi_i(x)| \leq 1$.
    Applying triangle inequality, we obtain $\|f_{P_m}-f_{P_m^\prime}\|_{\infty}\le \sup_{j}|\theta_j-\theta'_j| \leq \epsilon.$
    By triangle inequality, $\|f_{P_m^{\prime}}-f_{P_m^{\prime\prime}}\|_{\infty}\leq \Norm{w-w^\prime}_1 \leq \epsilon.$
    Hence, $\calC$ is a $3\epsilon$-net of $\calF([a,b])$ under the $L_\infty$ distance.
    Replacing $3\epsilon$ with $\epsilon$ yields the desired result.
\end{proof}

\section{Proofs in Section~\ref{sec:npmle-basic} }
\label{app:prf_basic_props}

\subsection{Proof of Proposition~\ref{prop:weight_remainder_asymp}}
\label{app:npmle-weight}
Define the following event
\begin{align}
    \label{eq:A'}
    A= \{  | \hat p_i - p_i | \leq r(p_i)/2,\ \forall i\in [k]\}.
\end{align}
Applying Lemma~\ref{lem:pois_rn}, there exists a universal $c_0>0$ such that $P[A^c]\leq 2k\exp(-c_0nt)$.
In the following, we prove that (a)--(c) hold that under the condition that $A$ occurs.

First, we prove (a).  
Let $\varepsilon \triangleq \exp(-cnt)$ for some $c$ to be specified.
It suffices to show that, under the event $A$, any distribution in $\Pi \triangleq \{ \pi\in \calP([0,1]) : \pi_P(S) > \hat \pi(S_r) (1+\varepsilon) \}$ is suboptimal. 
In particular, we show that~\eqref{eq:1st_opt} cannot simultaneously hold for all $Q\in\calP([0,1])$.
The condition~\eqref{eq:1st_opt} with $Q=\delta_{\hat p_i}$ for $i\in I_S \triangleq \{i\in [k]: p_i\in S\}$ yields
\begin{align*}
    k \geq \sum_{j=1}^k \frac{f_Q(N_j)}{f_\pi(N_j)} \geq \frac{f_Q(N_i)}{f_\pi(N_i)} = \frac{\poi(N_i,N_i)}{f_\pi(N_i)}.
\end{align*}
If $N_i=0$, then $\poi(N_i,N_i)=1$; If $N_i\ge 1$, it follows from the Stirling's formula \cite{robbins55stirling} that $\poi(N_i,N_i)\ge c'/\sqrt{N_i} $ for some constant $c'$.
Then, 
\begin{align}
\label{eq:npmle-remainder-2}
    f_{\pi}(N_i) \geq \frac{1}{k}\poi(N_i,N_i) \geq \frac{c'}{k(\sqrt{N_i} \vee 1)}.
\end{align}

Let $\hat \mu$ denote the NPMLE \eqref{eq:pois-npmle-def} given a subset of frequencies $\{N_i: i\in I_S\}$.
Next, we show that \eqref{eq:1st_opt} fails to hold for $Q=\hat \mu$.
Define $w\triangleq\pi (S_r)$ and $w^\star\triangleq\pi_P(S)$.
Denote $\pi|_S$ as the conditional distribution of $\pi$ on a given measurable set $S$.  
By definition, $f_{\pi} = w f_{\pi|_{S_r } } + (1-w) f_{\pi|_{(S_r)^c}}.$
Then, 
\begin{align}
\label{eq:npmle-decomp}
    \sum_{i=1}^k \frac{f_{\hat \mu}(N_i)}{ f_{\pi}(N_i)} \geq \sum_{i\in I_S} \frac{  f_{\hat \mu}(N_i)}{ f_{\pi}(N_i)} = \sum_{i\in I_S}\frac{ f_{\hat \mu}(N_i)}{ wf_{\pi|_{S_r }}(N_i)} - \sum_{i\in I_S}
    \frac{f_{\hat \mu}(N_i) }{f_{\pi}(N_i)}
    \pth{\frac{f_{\pi}(N_i)}{ wf_{\pi|_{S_r }}(N_i)}-1 }.
\end{align}
By the optimality of $\hat\mu$, we obtain from~\eqref{eq:1st_opt_2} that
$$
\sum_{i\in I_S} \frac{ f_{\hat \mu}(N_i)}{ wf_{\pi|_{S_r }} (N_i)} \geq \frac{|I_S|}{w}
= \frac{kw^\star}{w}
> k(1+\varepsilon).
$$

Next we upper bound the second term on the right-hand side of~\eqref{eq:npmle-decomp}.
Note that
\begin{equation}
\label{eq:outside-bound}
\frac{f_{\pi}(N_i)}{ wf_{\pi|_{S_r }}(N_i)}-1
=\frac{(1-w)f_{\pi|_{(S_r)^c}}(N_i)}{f_{\pi}(N_i) - (1-w)f_{\pi|_{(S_r)^c}}(N_i) }
\le \frac{\sup_{\theta\in (S_r)^c } \poi(N_i,n\theta)}{f_{\pi}(N_i) - \sup_{\theta\in (S_r)^c } \poi(N_i,n\theta) }.
\end{equation}
Note that $\hat p_i \in S_{r/2}$ for $i\in I_S$ under the event $A$.
For $\theta\in (S_r)^c$, define $\delta\triangleq |\frac{\hat p_i}{\theta}-1|$.
By definition, there exists $x\in S$ satisfying $|\hat p_i-x|\leq r(x)/2$ and $|\theta-x|> r(x)$.
If $x \leq \theta$, then $\hat p_i\leq x+r(x)/2\leq x+r(x)\leq \theta$, implying that $\delta^2 \theta = (1-\frac{\hat p_i}{\theta})^2 \theta \geq \frac{r^2(x)}{4(x+r(x))}\geq \frac{t}{8}$.
If $x > \theta$, we have $\hat p_i\geq x-r(x)/2\geq x-r(x)\geq \theta$. 
Then, for $\hat p_i> 2\theta$, $(\delta^2\wedge\delta)\theta=\hat p_i-\theta \geq \frac{r(x)}{2}\geq \frac{t}{2}$;
For $\theta\leq \hat p_i\leq 2\theta$,
$(\delta^2\wedge\delta)\theta=\frac{(\hat p_i-\theta)^2}{\theta}\geq \frac{(r(x)/2)^2}{x} \geq \frac{t}{4}$.
Applying Lemma~\ref{lem:pois-ratio-point} yields that, for some universal constant $c_1>0$, 
\begin{align}
\label{eq:npmle-remainder-1}
    \sup_{\theta\in (S_r)^c } \poi(N_i,n\theta)
    \leq \frac{1}{\sqrt{2\pi N_i} \vee 1}\exp(-c_1nt).
\end{align}
Therefore, combining \eqref{eq:outside-bound} and \eqref{eq:npmle-remainder-1}, for $i\in I_S$, 
\begin{align}
\label{eq:npmle-remainder-3}
    \frac{f_{\pi}(N_i)}{ wf_{\pi|_{S_r }}(N_i)}-1
    \stepa{\leq} \frac{ \frac{1}{\sqrt{2\pi N_i} \vee 1} 2\exp(-c_1nt)  }{\frac{c'}{k(\sqrt{N_i} \vee 1)}-\frac{1}{\sqrt{2\pi N_i} \vee 1}2\exp(-c_1nt) } 
    \stepb{\leq} \exp\pth{-c_2nt},
\end{align}
where (a) follows from \eqref{eq:npmle-remainder-2} and \eqref{eq:npmle-remainder-1}, and (b) holds for some constant $c_2$ for $t>C\frac{\log k}{n}$ with a large constant $C>0$. 
Letting $c=c_2$, by \eqref{eq:npmle-decomp},
\begin{align*}
    \sum_{i=1}^k \frac{f_{\hat \mu}(N_i)}{ f_{\pi}(N_i)} \geq  \sum_{i\in I_S} \frac{  f_{\hat \mu}(N_i)}{ f_{\pi}(N_i)} 
    \geq \frac{1}{1+\epsilon}
    \sum_{i\in I_S}\frac{ f_{\hat \mu}(N_i)}{ wf_{\pi|_{S_r }}(N_i)} >  \frac{k(1+\varepsilon)}{1+\varepsilon} =  k.
\end{align*}
Consequently, (a) follows.

Then, we prove (b). Let $S'=(S^{c,r})^c$ satisfy $S_r \cap S'_r =\emptyset$. Denote 
 $v=\pi(S'_r)$, $v^\star=\pi_P(S')$, and $u^\star=\pi_P((S\cup S')^c)$.
 By definition, $w+v\leq 1$ and $u^\star+v^\star+w^\star=1$.
 Applying (a) to $S'$ yields that
 $ v^\star - v \leq \frac{\epsilon}{1+\epsilon} v^\star  \leq \epsilon v^\star \leq \epsilon$. 
Then, we have 
\begin{align*}
 w \leq 1- v = (1- v^\star) + (v^\star - v) \leq w^\star + u^\star
  + \epsilon = \pi_P(S'^c) + \epsilon ,
\end{align*}
which gives the result.

Finally, we prove (c).
We show that under the event $A$, \eqref{eq:1st_opt} and~\eqref{eq:1st_opt_2} cannot simultaneously hold for any distribution in $\Pi' \triangleq \{ \pi\in \calP([0,1]) \mid  \pi (S_r ) < 1 \}$.
Suppose that $A$ occurs, and \eqref{eq:1st_opt} holds for some $\pi \in \Pi'$.  
Applying~\eqref{eq:npmle-remainder-2} and~\eqref{eq:npmle-remainder-1}, for each $i\in [k]$, 
\begin{align*}
\frac{f_{\pi|_{(S_r)^c}}(N_i)}{f_{\pi|_{S_r }}(N_i)} & \leq \frac{ f_{\pi|_{(S_r)^c}}(N_i) }{f_{\pi}(N_i)-f_{\pi|_{(S_r)^c}}(N_i)} \leq \frac{ 2\exp(-c_1nt)  }{c'k^{-1}-2\exp(-c_1nt) } < 1.
\end{align*}
where the last inequality holds since $t>C\frac{\log k}{n}$ with a large constant $C>0$.
Since $w<1$, we have 
\begin{align*}
\sum_{i=1}^k \frac{f_{\pi}(N_i)}{f_{\pi|_{S_r }}(N_i)} 
= kw + (1-w)  \sum_{i=1}^k \frac{f_{\pi|_{(S_r)^c}}(N_i)}{f_{\pi|_{S_r }}(N_i)}  < k,
\end{align*}
which violates the optimality condition \eqref{eq:1st_opt_2} with $Q=\pi|_{S_r}$.
Consequently, (c) holds.

\subsection{Proof of Proposition~\ref{prop:pi_p_minimax_lb}}
By Le Cam’s two-point method (see, \eg, \cite[Sec.~2.4.2]{Tsybakov2009IntroductionTN}), for $P,Q \in \Delta_{k-1}$,
\begin{align}
\label{eq:2-point-lb}
    \inf_{\hat f}\sup_{P\in \Delta_{k-1}} \Expect H^2 (\hat f, f_{\pi_P}) \geq \frac{H^2(f_{\pi_P},f_{\pi_Q})}{4} \exp (- \KL( \otimes_{i=1}^k \Poi(np_i) \| \otimes_{i=1}^k \Poi(nq_i)) ).
\end{align}
Set $P=(p_1,p_2,\ldots,p_k)=(\frac{1-\epsilon}{3},\frac{2+\epsilon}{3},0,\ldots,0)$ and $Q=(q_1,q_2,\ldots,q_k)=(\frac{1}{3},\frac{2}{3},0,\ldots,0)$, where $\epsilon = c_0n^{-1/2}$ for some $c_0$ to be chosen. We have 
\begin{align*}
    \KL( \otimes_{i=1}^k \Poi(np_i) \| \otimes_{i=1}^k \Poi(nq_i)) 
    \stepa{=} \sum_{i=1}^2 n\pth{p_i\log\frac{p_i}{q_i} - p_i + q_i} 
    = \frac{n}{4} (\epsilon^2+O(\epsilon^3)) \asymp 1.
\end{align*}
where (a) uses the identity $\KL\left(\Poi(\lambda_1) \| \Poi(\lambda_2)\right)=\lambda_1 \log \frac{\lambda_1}{\lambda_2}-\lambda_1+\lambda_2$.
Moreover, by letting $w_1=w_2=\frac{1}{k}$ and $w_3=\frac{k-2}{k}$, we get
\begin{align*}
    \frac{1}{2}H^2(f_{\pi_P},f_{\pi_Q}) 
    &= 1-\sum_{j=0}^\infty 
    \sqrt{ 
    \pth{\sum_{i=1}^3 w_i\poi(j,np_i) } \pth{\sum_{i'=1}^3 w_{i'} \poi(j,nq_{i'}) } 
    } \\
    &\geq 1 - \qth{ \sum_{i,i'=1}^3 \sum_{j=0}^\infty  \sqrt{ w_i w_{i'} \poi(j,np_i) \poi(j,nq_{i'})} }    \\
    &= \frac{1}{k} \sum_{i=1}^2 \frac{1}{2}H^2(\Poi(np_i),\Poi(nq_i))-\sum_{i\neq i'} \sum_{j=0}^\infty \sqrt{  w_i w_{i'} \poi(j,np_i) \poi(j,nq_{i'})}.
\end{align*}
Applying the identity $\frac{1}{2}H^2 (\operatorname{Poi}(\lambda_1), \operatorname{Poi}(\lambda_2))=1-\exp (- \frac{(\sqrt{\lambda_1}-\sqrt{\lambda_2})^2}{2})$ yields
\begin{align*}
    \frac{1}{2}H^2(\Poi(np_i),\Poi(nq_i))]
    &= 1-\exp \left(- \frac{(n\epsilon/3)^2}{2 (\sqrt{np_i}+\sqrt{nq_i})^2}\right) \geq 1-\exp\pth{-c_1n\epsilon^2}, \quad i=1,2; \\
    \sum_{j=0}^\infty \sqrt{  \poi(j,np_i) \poi(j,nq_{i'})} &= \exp \left(- \frac{(\sqrt{np_i}-\sqrt{nq_{i'}})^2}{2}\right) \leq \exp\pth{-c_2n}, \quad i,i'\in [3],\ i\neq i',
\end{align*}
for some universal constants $c_1,c_2$. 
Hence, there exist $c_0>0$ such that
$H^2(f_{\pi_P},f_{\pi_Q}) \gtrsim 1/k$ when $n\gtrsim \log k$.
Applying \eqref{eq:2-point-lb}, the desired result follows.

\section{Proofs in Section~\ref{sec:theory}}
\label{app:prf_theory}

\subsection{Proofs in Section~\ref{sec:theory-asymp}}
\label{app:prf_thm_npmle-asymp}

\begin{proof}[Proof of Theorem~\ref{thm:npmle-asymp}]
Let $\hat F$ and $F^{\star}$ denote the cumulative distribution function (CDF) of $\hat\pi$ and $\pi_P$, respectively. 
The quantile coupling formula \cite[Eq.~(2.52)]{villani03} yields that
\begin{align}
    W_{1}(\hat\pi,\pi_P)= \int_{0}^{1}\left|\hat F^{-1} (u)-{F^{\star}}^{-1}(u)\right| \diff u,
\end{align}
where $F^{-1}(u) \triangleq \inf\{t : F(t) \geq u\}$ for $ u \in (0, 1)$ is the quantile function of a CDF $F$.

Let $0\leq q_1<\ldots< q_L\leq 1$ be all distinct values in $(p_1,\ldots,p_k)$.
For any $\delta\in(0,1)$, let
\[
\epsilon_1\triangleq \frac{C}{n} \log \frac{2k}{\delta},
\qquad 
\epsilon_2 \triangleq \pth{\frac{1}{4} \min_{i\neq j} |q_i-q_j|}^2.
\]
Define $r_j(x)\triangleq \sqrt{x\epsilon_j} + \epsilon_j$ that satisfies $\inf_{x\in [0,1]} \frac{r_j^2(x)}{x} \wedge r_j(x) \geq \epsilon_j$, and let $I_{\ell,j}= [q_\ell-r_j(q_\ell),q_\ell+r_j(q_\ell)] \triangleq [q_{\ell,j}^{\mathrm{L}}, q_{\ell,j}^{\mathrm{U}}]$. 
Applying Proposition~\ref{prop:weight_remainder_asymp}(c) with $r_1$ yields that, with probability $1-\delta$,
\begin{align}
\label{eq:prf-npmle-asymp-0}
    \hat \pi(\cup_{\ell=1}^L I_{\ell,1}) = 1.
\end{align}
For any $q_i>q_j$, we have $q_i-r_2(q_i)-r_2(q_j) > q_i-4\sqrt{\epsilon_2} \geq q_j$, which implies that the intervals $I_{\ell,2}$ are disjoint.
Then $q_j \in I_{\ell,2}^{c,r_2}$ for all $j\neq \ell$.
Applying Proposition~\ref{prop:weight_remainder_asymp}(b) with $r_2$ yields that, with probability $1-2k\exp(-c_1n)$,
\begin{align}
\label{eq:prf-npmle-asymp-1}
    \hat \pi(I_{\ell,2}) \leq \pi_P((I_{\ell,2}^{c,r_2})^c) + \delta' = \pi_P(q_\ell) + \delta', \quad \forall \ell\in [L],
\end{align} 
where $\delta'\triangleq \exp(-c_2n)$.

Next, we upper bound the difference $|\hat F^{-1} (u)-{F^{\star}}^{-1}(u)|$ under the events~\eqref{eq:prf-npmle-asymp-0} and \eqref{eq:prf-npmle-asymp-1} that occur with probability $1-\delta-2k\exp(-c_1n)$.
There exists $N=N_\delta$ such that $\epsilon_1<\epsilon_2$ for all $n\ge N$. 
Then, $\hat \pi(I_{\ell,1}) \leq \hat \pi(I_{\ell,2})\leq \pi_P(q_\ell) + \delta'$.  
With the notation $u_0^\star = 0$ and $u_\ell^\star = F^{\star}(q_{\ell})$, $\ell\in[L]$, we have 
\begin{align*}
    \hat F(q_{\ell,1}^{\mathrm{L}}) &= 
    \hat \pi \pth{ \cup_{j=1}^{\ell-1} I_{\ell,1}}
    = \sum_{j=1}^{\ell-1} \hat \pi \pth{I_{\ell,1}}
    \leq \sum_{j=1}^{\ell-1} (\pi^{\star}(q_\ell) + \delta') \leq u_{\ell-1}^\star + k \delta', \\
    \hat F(q_{\ell,1}^{\mathrm{U}}) &= 
    1 - \hat \pi \pth{ \cup_{j=\ell+1}^{L} I_{\ell,1} }   \geq 1 - \sum_{j=\ell+1}^{L} (\pi^{\star}(q_\ell) + \delta')  \geq u_\ell^\star - k \delta'.
\end{align*}
Then, for $u\in (u_{\ell-1}^\star+k\delta',u_\ell^\star-k\delta')$, we have $\hat F^{-1}(u) \in [q_{\ell,1}^{\mathrm{L}}, q_{\ell,1}^{\mathrm{U}}]$ and ${F^{\star}}^{-1}(u)=q_\ell$.
Hence, 
\begin{align*}
W_{1}(\hat\pi,\pi_P) 
&=\sum_{\ell = 1}^{L} \int_{  (u_{\ell-1}^\star,u_\ell^\star] }\left|\hat F^{-1} (u)-{F^{\star}}^{-1}(u)\right| \diff u  \\
&\le \sum_{\ell = 1}^{L} \pth{ \int_{  (u_{\ell-1}^\star+k\delta',u_\ell^\star-k\delta') }\left|\hat F^{-1} (u)-{F^{\star}}^{-1}(u)\right| \diff u +2k\delta'}  \\
&\leq \sum_{\ell = 1}^{L} (u_\ell^\star-u_{\ell-1}^\star)\cdot r_1(q_\ell)+2k^2\delta'  \\
&\leq \pth{\sqrt{\frac{C}{n}\log \frac{2k}{\delta}} + \frac{C}{n}\log \frac{2k}{\delta}} + 2k^2\delta',
\end{align*}
which completes the proof.
\end{proof}

\subsection{Proofs in Section~\ref{sec:theory-main}}
\label{app:prf_thm_npmle-global}

\begin{lemma}[Hellinger rate for constrained approximate NPMLE]
\label{lem:pois-hellinger-rate-c}
Suppose that $X_i \inddistr \Poi(\theta_i)$ for $i\in [n]$, and  $\{\theta_i\}_{i=1}^n \subseteq [a,b]$.
Let $\pi^\star=\frac{1}{n}\sum_{i=1}^{n}\delta_{\theta_i}$ and 
\[
    \epsilon_n^2
    = \frac{ ( \sqrt{b}-\sqrt{a}+\sqrt{\log (n(b+1))} ) \log^{\frac{5}{2}} (n(b+1))}{n} \vee 1.
\]
There exist constants $s^\star,c^\prime>0$ such that for any $s \geq s^\star$,\footnote{Here we adopt a slight abuse of notation by letting  $f_{P}(\cdot) \triangleq \int \poi(\cdot, \theta) \diff P(\theta)$ in the statement and proof of Lemma~\ref{lem:pois-hellinger-rate-c}, in contrast to the definition $f_\pi = \int \poi(\cdot, nr) \diff \pi(r)$ as in~\eqref{eq:pois-npmle-def} throughout the paper.} 
\[
\sth{ \pi\in \calP([a,b]): \frac{1}{n}\sum_{i=1}^n \log \frac{f_{\pi}}{f_{\pi^\star}}(X_i) \geq  - c_0 \epsilon_n^2 } 
\subseteq \sth{ \pi \in \calP([a,b]): H(f_\pi, f_{\pi^\star})< s\epsilon_n },
\]
under an event that occurs with probability $1-n^{-c^\prime s^2}$. 
\end{lemma}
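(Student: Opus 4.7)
The plan is to adapt the standard metric-entropy analysis for $M$-estimators (as in \cite{Zhang08,SW22}), carefully tracking the dependence on the support endpoints $a,b$. The two inputs are Lemma~\ref{lem:pois-finite-approx}, which bounds the finite-mixture approximation complexity $\comp(\epsilon,\calP([a,b]),L_\infty) \lesssim (\sqrt{b}-\sqrt{a})\log^{3/2}(1/\epsilon) + \log^{2}(1/\epsilon)$, and Lemma~\ref{lem:npmle-metric-entropy}, which promotes this to a metric-entropy bound for $\calF([a,b])$ in $L_\infty$. Setting $\epsilon \asymp 1/(n(b+1))^2$ and $M \asymp b + \log(n(b+1))$, an $\epsilon$-net $\calN_\epsilon$ of the mixture class then has log-cardinality $H_\epsilon \lesssim n\epsilon_n^2$ for exactly the stated $\epsilon_n^2$; this is the bookkeeping that forces the $\log^{5/2}$ factor, since an additional $\log(1/\epsilon)$ multiplies the $\log^{3/2}$ appearing in the mixture complexity.

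Next, I would introduce the reference tail function $f_0(y) \triangleq \epsilon\indc{y \leq M} + \epsilon M^2 y^{-2}\indc{y > M}$, which has the twin virtues that every $f_\pi$ lying above $g \in \calN_\epsilon$ only up to error $\epsilon$ on $[0,M]$ satisfies $f_\pi \leq g + f_0$ on $[0,M]$, and that $\log (1/f_0)$ grows only logarithmically in $y$. I would split
\[
\sum_{i=1}^n \log \frac{f_\pi}{f_{\pi^\star}}(X_i) \;\leq\; \sum_{i:\,X_i \leq M}\log \frac{g+f_0}{f_{\pi^\star}}(X_i) \;+\; \sum_{i:\,X_i > M}\log \frac{1}{f_0(X_i)},
\]
and define events $E_1 \triangleq \{\max_{g\in\calN_\epsilon}\tfrac{1}{n}\sum_i \log\tfrac{g+f_0}{f_{\pi^\star}}(X_i) \leq -(\kappa+c_0)\epsilon_n^2\}$ and $E_2 \triangleq \{\tfrac{1}{n}\sum_{X_i>M}\log\tfrac{1}{f_0(X_i)} \leq \kappa\epsilon_n^2\}$. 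On $E_1\cap E_2$ the approximate-optimality inequality is violated for every $\pi$ with $H(f_\pi,f_{\pi^\star}) \geq s\epsilon_n$, so it suffices to bound $\Prob[E_1^c] + \Prob[E_2^c]$.

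For $E_1$, I would apply a Chernoff bound in tandem with Jensen's inequality and the Hellinger-affinity identity $\Expect_{f_{\pi^\star}}\sqrt{g/f_{\pi^\star}} = 1 - \tfrac{1}{2}H^2(g,f_{\pi^\star})$, noting that $H(g,f_{\pi^\star}) \gtrsim s\epsilon_n$ whenever $g$ is $\epsilon$-close to some bad $f_\pi$ (for $s$ above a universal threshold). Together with the Cauchy--Schwarz control $\sum_j \sqrt{f_0(j)f_{\pi^\star}(j)} \lesssim \sqrt{\epsilon M}$ and a union bound over $\calN_\epsilon$, this yields $\Prob[E_1^c] \leq \exp(H_\epsilon - \Omega(s^2 n\epsilon_n^2)) \leq n^{-\Omega(s^2)}$. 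For $E_2$, I would use a fractional-moment Markov argument with $\lambda \asymp 1/\log n$, together with the standard bound on $\Expect X_i^\lambda \indc{X_i > M}$ for $X_i \sim \Poi(\theta_i)$ and $\theta_i \leq b$, to show $\Prob[E_2^c] \leq n^{-\Omega(s^2)}$ as well.

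The main obstacle is the joint calibration of $(\epsilon,M,\kappa)$ so that simultaneously (i) the tail mass $\sqrt{\epsilon M}$ and the net approximation error are both $o(\epsilon_n^2)$; (ii) the metric-entropy payment $H_\epsilon$ matches the Hellinger gain $s^2 n\epsilon_n^2$ up to a factor that large $s$ can absorb; and (iii) the fractional-moment tail bound on $E_2^c$ remains polynomial even after the $\log(1/f_0)$ penalty on excursions beyond $M$. The proof is otherwise routine once these constants are pinned down.
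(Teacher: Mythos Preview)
Your plan is correct in spirit and would go through with careful bookkeeping, but it takes a more elaborate route than the paper's actual proof. The tail-truncation machinery you propose---the auxiliary function $f_0$, the cutoff $M$, and the separate event $E_2$ handled by a fractional-moment argument---is the template for the \emph{unconstrained} NPMLE (where $\pi$ ranges over all of $\calP([0,\infty))$). In the constrained setting of Lemma~\ref{lem:pois-hellinger-rate-c}, the paper exploits a simplification you overlooked: the $L_\infty$ covering furnished by Lemma~\ref{lem:npmle-metric-entropy} is \emph{global}, so for any $f_\pi$ with $\pi\in\calP([a,b])$ there is some $g\in\calN_\epsilon$ with $f_\pi(x)\le g(x)+\epsilon$ for \emph{every} $x\in\naturals_0$, not only for $x\le M$. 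Consequently the paper works with a single event
\[
E=\sth{\max_{g\in\calN_\epsilon}\tfrac{1}{n}\sum_{i=1}^n \log\tfrac{g+\epsilon}{f_{\pi^\star}}(X_i) < -c_0\epsilon_n^2},
\]
with $\epsilon=n^{-2}(b+1)^{-1}$, and the ``tail'' contribution in the Chernoff--affinity step reduces to $\sum_{j\ge 0}\sqrt{\epsilon f_{\pi^\star}(j)} \le \sqrt{\epsilon}\sum_{j\ge 0}\sqrt{f_{\pi^\star}(j)}\lesssim \sqrt{\epsilon(b+1)}=1/n\le\epsilon_n^2$, where the bound $\sum_j\sqrt{f_{\pi^\star}(j)}\lesssim\sqrt{b+1}$ follows from a short direct computation splitting at $j=b$. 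No second event is needed, and the $s^2$ dependence in the failure probability emerges cleanly from the single Chernoff bound.

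What each approach buys: your two-event decomposition is more robust---it is what one must do when the mixing support is unbounded---but here it introduces two extra calibration burdens that the paper avoids. First, your proposed $M\asymp b+\log(n(b+1))$ does not give a uniformly small Poisson tail $\Prob[X_i>M]$ when $b$ is large relative to $\log n$ (the correct scale would involve $\sqrt{b\log(n(b+1))}$), so the $E_2$ analysis would need more care than you indicate. Second, obtaining the $n^{-c's^2}$ decay from $E_2^c$ is not automatic, since the fractional-moment bound naturally produces an exponent proportional to $\kappa$ rather than $s^2$. Both issues are fixable, but the paper's route sidesteps them entirely.
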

\begin{proof}
It suffices to consider the case $(\sqrt{b}-\sqrt{a}+\sqrt{\log n(b+1)} ) \frac{\log^{\frac{5}{2}} n}{n} \lesssim 1$.
Define 
$$
\calF \triangleq 
\{f_\pi: \pi \in \calP([a,b]), H(f_\pi, f_{\pi^\star})\ge s\epsilon_n \}.
$$
Let $\epsilon = n^{-2} (b+1)^{-1}$. 
By Lemmas \ref{lem:pois-finite-approx} and \ref{lem:npmle-metric-entropy}, there exists an $\epsilon$-net $\calN_\epsilon$ of $\calF$ under the $L_\infty$-norm of cardinality $H_\epsilon\triangleq \log |\calN_\epsilon|\lesssim \comp\log (n(b+1)) \lesssim n \epsilon_n^2$, where $m^\star\triangleq m^\star(\epsilon,\calP([a,b]),L_\infty)$.
Consider the following event
\[
E\triangleq \sth{\max_{g\in\calN_\epsilon}\frac{1}{n}\sum_{i=1}^n \log \frac{g+\epsilon }{f_{\pi^\star}}(X_i) < - c_0 \epsilon_n^2  }.
\]
For any $\pi \in \calP([a,b])$ such that $f_\pi\in\calF$, there exists $g\in\calN_\epsilon$ such that $f_{\pi}(x)\le g(x)+\epsilon$ for all $x\in\reals$.
However, under the event $E$, we have
\[
\frac{1}{n}\sum_{i=1}^n \log \frac{f_{\pi}(X_i)}{f_{\pi^\star}(X_i)}   
\le \max_{g\in\calN_\epsilon}\frac{1}{n}\sum_{i=1}^n \log \frac{g+\epsilon }{f_{\pi^\star}}(X_i)
< - c_0 \epsilon_n^2.
\]

It remains to upper bound $\Prob[E^c]$. 
For a fixed function $g\in\calN_\epsilon$, applying the Chernoff bound yields that 
\[
\prob{\frac{1}{n}\sum_{i=1}^n \log \frac{g+\epsilon }{f_{\pi^\star}}(X_i)\ge - c_0 \epsilon_n^2}
\le \exp\pth{ \frac{c_0 n \epsilon_n^2}{2} + \sum_{i=1}^n \log \Expect\sqrt{\frac{g+\epsilon }{f_{\pi^\star}} (X_i)} }.
\]
Note that
\begin{align*}
\frac{1}{n}\sum_{i=1}^n \log \Expect\sqrt{\frac{g + \epsilon }{f_{\pi^\star}} (X_i)} &\leq 
 \frac{1}{n}\sum_{i=1}^n \Expect\sqrt{\frac{g + \epsilon }{f_{\pi^\star}} (X_i)} - 1 = \Expect_{X\sim f_{\pi^\star}}\sqrt{\frac{g+\epsilon }{f_{\pi^\star}} (X)} - 1
 \\
 &\leq \Expect_{X\sim f_{\pi^\star}}\sqrt{\frac{g}{f_{\pi^\star}} (X)} - 1 + \Expect_{X\sim f_{\pi^\star}}\sqrt{\frac{\epsilon}{f_{\pi^\star}}(X) } \\
 &= -\frac{H^2(g,f_{\pi^\star})}{2} + \sum_{j=0}^\infty \sqrt{\epsilon f_{\pi^\star}(j)}. 
\end{align*}
Since $g\in\calN_\epsilon$, we have  $H^2(g,f_{\pi^\star})\ge (s\epsilon_n)^2$.
For the second term, applying Cauchy-Schwarz inequality yields $\sum_{j\in[0,b]}\sqrt{f_{\pi^\star}(j)} \leq \sqrt{b+1}$. Moreover, 
\begin{align*}
    \sum_{j>b} \sqrt{f_{\pi^\star}(j)} &\stepa{\leq} \sum_{j>b} \sqrt{\poi(j,b)} 
    \stepb{\leq} 2 \sum_{j>b/2}\sqrt{\poi(2j,b)} =  2 \sum_{j>b/2}\frac{ b^{j} }{\sqrt{(2j)!}} e^{-b/2}\\
    & \stepc{\lesssim}  \sum_{j>b/2} j^{1/4}\frac{ (b/2)^{j} }{j!} e^{-b/2} 
    \leq \Expect_{X\sim\Poi(\frac{b}{2})} [X^{\frac{1}{4}}]
    \leq \pth{\Expect X}^{\frac{1}{4}}
    \leq b^{^{\frac{1}{4}}},
\end{align*}
where (a) follows from $f_{\pi^\star}(j) \leq \sup_{\theta\in[0,b]} \poi(j,\theta)\leq \poi(j,b)$ for $\pi^\star \in \calP([a,b])$ and $j\geq b$; (b) uses $\poi(2j+1,b)\leq\poi(2j,b)$ for $j\geq b$;
(c) holds by $\frac{(2j)!}{(j!)^2}=\binom{2j}{j}\ge \frac{2^{2j}}{\sqrt{4j}}$.
Hence, we get $\sum_{j=0}^\infty \sqrt{f_{\pi^\star}(j)} \leq c''(\sqrt{b+1})$ for some universal constant $c''>0$.
Then,
\[
\prob{\frac{1}{n}\sum_{i=1}^n \log \frac{g+\epsilon }{f_{\pi^\star}}(X_i)\ge - c_0 \epsilon_n^2}
\le \exp\pth{ n \pth{\frac{c_0 \epsilon_n^2}{2} - \frac{s^2\epsilon_n^2}{2} + c''\sqrt{\epsilon(b+1)} } }.
\]
Moreover, with $\epsilon = n^{-2} (b+1)^{-1}$, we have that $\epsilon_n^2\geq \frac{1}{n} = \sqrt{\epsilon (b+1)}.$ Applying the union bound, there exist absolute constants $c_1,s^\star>0$ such that for any $s>s^\star$,
\begin{align*}
\pbb[E^c]&=\prob{\max_{g\in\calN_\epsilon}\frac{1}{n}\sum_{i=1}^n \log \frac{g+\epsilon }{f_{\pi^\star}}(X_i) \ge - c_0 \epsilon_n^2 } \\
    &\le  \exp\pth{ -n \pth{\frac{s^2\epsilon_n^2}{2} - c''\sqrt{\epsilon(b+1)} - \frac{c_0 \epsilon_n^2}{2}}  + H_\epsilon} \\
    &\le  \exp\pth{- c_1 s^2 m^\star \log  (n(b+1))}.\qedhere
\end{align*}
\end{proof}

In the next two lemmas, suppose $P\in\Delta_{k-1}$ satisfies Assumption \ref{ass:separation}.
Denote $\hat\pi_{\ell}$ and $\pi_{P,\ell}$ as the conditional distribution of $\hat\pi$ and $\pi_P$ on $I_{\ell}\triangleq \{q_\ell\}_{r^\star_t} = [q_\ell-r^\star_t(q_\ell), q_\ell+r^\star_t(q_\ell)]$, respectively.
Let $k_\ell\triangleq\sum_{i=1}^k\indc{p_i\in I_\ell}$. 

\begin{lemma}
\label{lem:approximate-np}
There exist universal constants $C, c, c_0 > 0$ such that if $t\geq\tfrac{C\log k}{n}$ and $s\leq \tfrac{t}{2}$, then, with probability $1-2k\exp(-c_0nt)$,
\begin{align*}
   \sum_{i: p_i \in I_\ell} \log\frac{ f_{\hat \pi_{\ell}} (N_i)}{f_{\pi_{P,\ell}}(N_i)} 
   \geq -k\exp(-cnt), \quad \forall \ell\in[L].
\end{align*}
\end{lemma}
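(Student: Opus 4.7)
The plan is to exploit the global optimality of $\hat\pi$ against a localized perturbation that swaps its $\ell$-th local component with $\pi_{P,\ell}$. First, I would fix a high-probability event under which: (i) $|\hat p_i-p_i|\le r^\star_t(p_i)/2$ for every $i\in[k]$ (from Lemma~\ref{lem:pois_rn} applied to each Poisson coordinate, since $r^\star_t$ is $t$-large); (ii) $\hat\pi$ is supported on $\cup_{\ell=1}^L I_\ell$; and (iii) $|\hat\pi(I_\ell)-k_\ell/k|\le \exp(-cnt)$ for each $\ell$. Items (ii) and (iii) follow from Proposition~\ref{prop:weight_remainder_asymp}: (ii) from part (c) applied to the support of $\pi_P$ (which is contained in $\cup_\ell\{q_\ell\}_{r^\star_s}$, whose suitable fattening lies inside $\cup_\ell I_\ell$ once $s\le t/2$ and the pairwise $r^\star_t$-separation of the $q_\ell$'s is used), and (iii) from parts (a) and (b) applied to each $I_\ell$. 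The overall failure probability is $O(k\exp(-c_0nt))$. On this event one may write $\hat\pi=\sum_\ell w_\ell \hat\pi_\ell$ with $w_\ell=\hat\pi(I_\ell)$, and the pointwise lower bound $f_{\hat\pi}(N_i)\ge c'/(k(\sqrt{N_i}\vee 1))$ from \eqref{eq:npmle-remainder-2} remains in force.

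Next, for a fixed $\ell$, take the test measure $\pi'\triangleq\sum_{j\ne \ell}w_j\hat\pi_j+w_\ell \pi_{P,\ell}$, which is a valid probability measure since $\sum_j w_j=1$. Global optimality gives $\sum_{i=1}^k \log\frac{f_{\hat\pi}(N_i)}{f_{\pi'}(N_i)}\ge 0$. Split the sum by whether $p_i\in I_\ell$ or not. For $i$ with $p_i\notin I_\ell$, the difference $f_{\hat\pi}(N_i)-f_{\pi'}(N_i)=w_\ell(f_{\hat\pi_\ell}(N_i)-f_{\pi_{P,\ell}}(N_i))$ is a pair of Poisson integrals over arguments inside $I_\ell$, while $N_i/n$ sits outside $I_\ell$ thanks to $r^\star_t$-separation combined with (i); Lemma~\ref{lem:pois-ratio-point} then makes each of $f_{\hat\pi_\ell}(N_i),f_{\pi_{P,\ell}}(N_i)$ exponentially small. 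Dividing by the polynomial lower bound on $f_{\hat\pi}(N_i)$ yields $|\log\tfrac{f_{\hat\pi}(N_i)}{f_{\pi'}(N_i)}|=O(k\exp(-cnt))$ per index, so summing over $O(k)$ indices contributes $O(k^2\exp(-cnt))$, which is absorbed into $O(\exp(-c_1nt))$ because $t\ge C\log k/n$ with $C$ taken large enough.

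For $i$ with $p_i\in I_\ell$, write $f_{\hat\pi}(N_i)=w_\ell f_{\hat\pi_\ell}(N_i)+\Delta_i$ and $f_{\pi'}(N_i)=w_\ell f_{\pi_{P,\ell}}(N_i)+\Delta_i$ with common remainder $\Delta_i=\sum_{j\ne\ell}w_j f_{\hat\pi_j}(N_i)$, again exponentially small since each $\hat\pi_j$ lives in $I_j$ separated from $p_i\in I_\ell$. Using the identity $\log\tfrac{a+d}{b+d}=\log\tfrac{a}{b}+\log\tfrac{1+d/a}{1+d/b}$ together with the lower bounds $w_\ell f_{\hat\pi_\ell}(N_i)\gtrsim 1/(k\sqrt{N_i})$ (obtained from $f_{\hat\pi}(N_i)-\Delta_i$ and \eqref{eq:npmle-remainder-2}) and $w_\ell f_{\pi_{P,\ell}}(N_i)\gtrsim 1/(k\sqrt{N_i})$ (from the single term $\poi(N_i,np_i)/k_\ell$ together with $w_\ell\gtrsim k_\ell/k$ via (iii)) shows that the correction is $O(\exp(-cnt))$ per index. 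Assembling the two pieces converts global optimality into $0\le \sum_{i:p_i\in I_\ell}\log\frac{f_{\hat\pi_\ell}(N_i)}{f_{\pi_{P,\ell}}(N_i)}+O(k\exp(-cnt))$, which is exactly the claim; a union bound over $\ell\in[L]$ preserves the failure probability after adjusting constants. The main obstacle will be the delicate bookkeeping of these lower bounds: ensuring that $w_\ell f_{\hat\pi_\ell}(N_i)$ and $w_\ell f_{\pi_{P,\ell}}(N_i)$ stay bounded below by inverse-polynomial factors so that $\Delta_i/a$ and $\Delta_i/b$ are genuinely exponentially small, which is exactly where the condition $t\ge C\log k/n$ (large enough to absorb $\mathrm{poly}(k)$ factors into the exponent) becomes essential.
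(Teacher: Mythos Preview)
Your route is a genuine alternative to the paper's: you compare $\hat\pi$ directly against the perturbation $\pi'=\sum_{j\ne\ell}w_j\hat\pi_j+w_\ell\pi_{P,\ell}$, while the paper inserts the \emph{local constrained NPMLE} $\hat\mu_\ell\triangleq\argmax_{\pi\in\calP(I_\ell)}\sum_{i:p_i\in I_\ell}\log f_\pi(N_i)$ as an intermediate, compares $\hat\pi$ against $\nu_\ell=w_\ell\hat\mu_\ell+(1-w_\ell)\hat\pi_\ell'$, and then uses the trivial optimality $\sum_{i:p_i\in I_\ell}\log\frac{f_{\hat\mu_\ell}(N_i)}{f_{\pi_{P,\ell}}(N_i)}\ge 0$ to pass from $\hat\mu_\ell$ to $\pi_{P,\ell}$. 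The paper's detour buys exactly one thing: it never needs \emph{any} lower bound on $f_{\pi_{P,\ell}}(N_i)$, only on $f_{\hat\pi}(N_i)$ (which comes for free from \eqref{eq:npmle-remainder-2}). Your direct approach can be made to work too, but not quite with the justification you give.

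The specific flaw is your claimed lower bound $w_\ell f_{\pi_{P,\ell}}(N_i)\gtrsim 1/(k\sqrt{N_i})$. Under your event (i) alone this can fail badly: take $p_i$ extremely small (e.g.\ $p_i=e^{-n}/n$) so that $np_i$ is essentially $0$, yet event (i) still permits $N_i=1$ (since $r^\star_t(p_i)\approx t$ and $1/n\le t/2$). Then $\poi(N_i,np_i)\approx e^{-n}$, far below any inverse polynomial in $k$. Fortunately, you do not need this bound. Writing your correction as $\log\frac{1+d/a}{1+d/b}$ with $a=w_\ell f_{\hat\pi_\ell}(N_i)$, $b=w_\ell f_{\pi_{P,\ell}}(N_i)$, $d=\Delta_i$, observe that you only need the \emph{upper} bound
\[
\log\frac{1+d/a}{1+d/b}=\log\frac{b(a+d)}{a(b+d)}=\log\Bigl(1+\frac{(b-a)d}{a(b+d)}\Bigr)\le \frac{d}{a},
\]
which holds regardless of the size of $b$ (when $b\le a$ the left side is nonpositive; when $b>a$ use $b-a\le b$ and $b+d\ge b$). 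Since $a=f_{\hat\pi}(N_i)-\Delta_i\gtrsim 1/(k(\sqrt{N_i}\vee 1))$ and $d=\Delta_i\lesssim \exp(-c_1nt)/(\sqrt{N_i}\vee 1)$, the $\sqrt{N_i}$ cancels and $d/a\lesssim k\exp(-c_1nt)$, which is exactly what you need after summing over $i$. So drop the claimed lower bound on $b$ and invoke only the one-sided inequality; the argument then goes through.
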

\begin{proof}
Define the constrained NPMLE over the interval $I_\ell$ given input $\{N_i: p_i\in I_\ell\}$ as 
\[
\hat \mu_\ell \triangleq \argmax_{\pi\in\calP(I_\ell)} \sum_{i:p_i\in I_\ell} \log f_\pi (N_i).
\]
Let $w_\ell=\hat\pi(I_\ell)$ and $\hat \pi_\ell'$ denote the conditional distribution of $\hat\pi$ on $(I_\ell)^c$.
Then, $\hat\pi = w_\ell\hat \pi_\ell+(1-w_\ell) \hat \pi_\ell'$. 
Letting $\nu_\ell \triangleq w_\ell\hat \mu_\ell + (1-w_\ell) \hat \pi_\ell'$. By the optimality condition \eqref{eq:0th_opt} of $\hat \pi$, we have 
\begin{align*}
   0\leq \sum_{i=1}^k \log \frac{f_{\hat \pi}(N_i)}{f_{\nu_\ell}(N_i)} &=  \sum_{i: p_i \in I_\ell} \log \frac{f_{\hat \pi}(N_i)}{f_{\nu_\ell}(N_i)}+ \sum_{i: p_i \notin I_\ell} \log \frac{f_{\hat \pi}(N_i)}{f_{\nu_\ell}(N_i)}\\
   &\leq  \sum_{i: p_i \in I_\ell} \log \frac{f_{\hat \pi}(N_i)}{w_\ell f_{\hat \mu_{\ell}}(N_i)}
   + \sum_{i: p_i \notin I_\ell} \log \frac{f_{\hat \pi}(N_i)}{ (1-w_\ell) f_{\hat \pi_\ell'}(N_i) }\\
   &= \sum_{i: p_i \in I_\ell} \log\frac{f_{\hat \pi}(N_i)}{w_\ell f_{\hat \pi_{\ell}}(N_i)} 
    + \sum_{i: p_i \in I_\ell} \log\frac{ f_{\hat \pi_{\ell}} (N_i)}{f_{\hat \mu_{\ell}}(N_i)} 
    + \sum_{i: p_i \notin I_\ell} \log \frac{f_{\hat \pi}(N_i)}{ (1-w_\ell) f_{\hat \pi_\ell'}(N_i)}.
\end{align*}
Let $A=\{|\hat p_i - p_i | \leq r^\star_t(p_i)/2,\ \forall i\in [k]\} $ be defined in \eqref{eq:A'} with $t\ge \frac{C\log k}{n}$ such that  $P[A^c]\leq 2k\exp(-c_0nt)$.
Following the derivation in \eqref{eq:npmle-remainder-3}, under the event $A$, we have 
\begin{align}
\label{eq:prf-approx-np-1}
  \sup_{p_i \in I_\ell} \frac{f_{\hat \pi}(N_i)}{w_\ell f_{\hat \pi_{\ell}}(N_i)} \vee \sup_{ p_i \notin I_\ell} \frac{f_{\hat \pi}(N_i)}{ (1-w_\ell) f_{\hat \pi_\ell'}(N_i)} \leq 1+ \exp(-cnt)
\end{align}
with a universal constant $c>0$, which implies that
\begin{align*}
    \sum_{i: p_i \in I_\ell} \log\frac{f_{\hat \pi}(N_i)}{w_\ell f_{\hat \pi_{\ell}}(N_i)} 
    + \sum_{i: p_i \notin I_\ell} \log \frac{f_{\hat \pi}(N_i)}{ (1-w_\ell) f_{\hat \pi_\ell'}(N_i)} \leq k\log (1+ \exp(-cnt))
    \leq k\exp(-cnt).
\end{align*}
Then, by the optimality condition \eqref{eq:0th_opt} of $\hat \mu$, we have 
\[
   \sum_{i: p_i \in I_\ell} \log\frac{ f_{\hat \pi_{\ell}} (N_i)}{f_{\pi_{P,\ell}}(N_i)} \geq \sum_{i: p_i \in I_\ell} \log\frac{ f_{\hat \pi_{\ell}} (N_i)}{f_{\hat \mu_{\ell}}(N_i)} \geq -k\exp(-cnt). \qedhere
\]
\end{proof}

\begin{lemma}
\label{lem:pip_sum_weights}
\[
\sum_{\ell=1}^L |I_\ell|\sqrt{ k_\ell} \lesssim  t\sqrt{k} + \sqrt{Lt}\wedge t^{\frac{1}{3}},
\qquad
\sum_{\ell=1}^L |I_\ell| k_\ell \lesssim tk +  \sqrt{tk}.
\]
\end{lemma}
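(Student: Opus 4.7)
The plan is to combine two consequences of the $r_t^\star$-separation of the $q_\ell$. First, rewriting the separation condition as $q_{\ell+1}-q_\ell > \sqrt{t}(\sqrt{q_\ell}+\sqrt{q_{\ell+1}})+2t$ and factoring the left-hand side yields $\sqrt{q_{\ell+1}}-\sqrt{q_\ell}\geq \sqrt{t}$, so after ordering $\sqrt{q_\ell}\geq(\ell-1)\sqrt{t}$. Hence at most $5$ indices satisfy $q_\ell\leq 16t$ (the \emph{small} clusters), and for every \emph{large} cluster ($q_\ell>16t$) one has $r_t^\star(q_\ell)\leq 3q_\ell/4$, so $p_i\in I_\ell$ implies $p_i\geq q_\ell/4$. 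Without loss of generality, I drop empty clusters so that $k_\ell\geq 1$.

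The comparison $q_\ell\leq 4p_i$ for $p_i\in I_\ell$ (large $\ell$), combined with the disjointness of the $I_\ell$'s (a direct consequence of $r_t^\star$-separation), produces the mass budgets
\[
\sum_{q_\ell>16t} q_\ell \leq 4\sum_i p_i = 4,
\qquad
\sum_{q_\ell>16t} q_\ell k_\ell \leq 4\sum_i p_i = 4,
\]
where the first uses one representative $p_i$ per cluster and the second sums $q_\ell \le 4p_i$ over all $p_i\in I_\ell$. Together with $\sum_{q_\ell\leq 16t} q_\ell \leq 80t\lesssim 1$, the first budget yields $\sum_\ell q_\ell\lesssim 1$; combined with $\sum_\ell q_\ell\geq t\sum_\ell(\ell-1)^2\gtrsim tL^3$, this gives the crucial size bound $L\lesssim t^{-1/3}$.

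For the first inequality, each small cluster contributes $|I_\ell|\sqrt{k_\ell}\leq 10t\sqrt{k}$, totaling $O(t\sqrt{k})$. For large clusters $|I_\ell|\leq 4\sqrt{tq_\ell}$, so $|I_\ell|\sqrt{k_\ell}\leq 4\sqrt{tq_\ell k_\ell}$; Cauchy--Schwarz and the second mass budget give $\sqrt{t}\sum_{q_\ell>16t}\sqrt{q_\ell k_\ell}\leq \sqrt{t}\sqrt{L}\sqrt{\sum q_\ell k_\ell}\lesssim \sqrt{tL}$, while $L\lesssim t^{-1/3}$ additionally yields $\sqrt{tL}\lesssim t^{1/3}$; taking the minimum completes the bound.

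For the second inequality, split $|I_\ell|k_\ell\leq 2\sqrt{tq_\ell}\,k_\ell+2tk_\ell$: the latter sums to $2tk$; the small-cluster part of the former is $\leq 2\sqrt{t}\sqrt{16t}\sum_{q_\ell\leq 16t}k_\ell\lesssim tk$; and for large clusters Cauchy--Schwarz yields $\sum\sqrt{q_\ell}\,k_\ell = \sum\sqrt{q_\ell k_\ell}\sqrt{k_\ell}\leq \sqrt{\sum q_\ell k_\ell}\sqrt{\sum k_\ell}\lesssim \sqrt{k}$, producing the $\sqrt{tk}$ term. The main obstacle is establishing $L\lesssim t^{-1/3}$, which upgrades the generic Cauchy--Schwarz bound $\sqrt{tL}$ to the size-independent $t^{1/3}$; it requires carefully combining the $\sqrt{\cdot}$-scale separation with the mass budget coming from $\sum_i p_i=1$.
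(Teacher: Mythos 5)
Your proof is correct and follows essentially the same route as the paper's: both extract $\sqrt{q_{\ell+1}}-\sqrt{q_\ell}\ge\sqrt{t}$ (hence $q_\ell\gtrsim(\ell-1)^2t$), use $p_i\gtrsim q_\ell$ on large clusters together with $\sum_i p_i=1$ to get the budgets $\sum_\ell q_\ell k_\ell\lesssim 1$ and (number of non-empty clusters) $\lesssim t^{-1/3}\wedge L$, and finish with Cauchy--Schwarz. The only differences are cosmetic (threshold $16t$ and the explicit count of at most five small clusters, versus the paper's constant $\kappa=100$ and separate treatment of $\ell=1$).
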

\begin{proof}
Without loss of generality, let $q_1 < q_2 < \ldots < q_L$. 
By the $r^\star_t$-separation condition under Assumption \ref{ass:separation}, $q_{\ell+1} - (\sqrt{q_{\ell+1} t} + t) \ge q_{\ell} + (\sqrt{q_{\ell} t} + t)$ for all $\ell\ge 1$, implying $\sqrt{q_{\ell+1}} - \sqrt{q_{\ell}}  = \frac{q_{\ell+1}-q_{\ell}}{\sqrt{q_{\ell+1}} + \sqrt{q_{\ell}}} \ge \sqrt{t}$.
It follows that 
\[
q_{\ell}\geq (\ell-1)^2 t,
\qquad
|I_\ell| =  2(\sqrt{q_\ell t} + t) \asymp \sqrt{q_\ell t},
\qquad \forall \ell\ge 2.
\]

If $q_\ell \leq \kappa t$ with $\kappa=100$, then $t\le q_{\ell-1} + r^\star_t(q_{\ell-1}) \leq q_\ell - r^\star_t(q_\ell) \leq q_\ell \leq \kappa t$;
otherwise, if $q_\ell > \kappa t$, then $r^\star_t(q_\ell)=2(\sqrt{q_\ell t}+ t) = 2q_\ell (\sqrt{\tfrac{t}{q_\ell}}+\tfrac{t}{q_\ell}) \leq \tfrac{q_\ell}{2}$.
Therefore, ${q_{\ell}-r^\star_t(q_{\ell})}\asymp{q_\ell}$ for $\ell \ge 2$.
We obtain that 
\[
\sum_{\ell=2}^{L}  q_{\ell} k_\ell
\lesssim \sum_{\ell=2}^{L}  k_\ell (q_{\ell}-r^\star_t{q_{\ell}})
\le \sum_{i=1}^{k} p_{i} \le 1.
\]
Define $\calJ = \{ \ell \in [L] : k_{\ell} \neq 0 \}$.
Applying Cauchy-Schwarz inequality yields $\sum_{\ell=2}^{L} \sqrt{q_\ell k_{\ell}} \lesssim \sqrt{|\calJ|}$ and $\sum_{\ell=2}^{L} \sqrt{q_\ell}k_{\ell}  \lesssim \sqrt{ \sum_{\ell=2}^{L} k_{\ell}}\le \sqrt{k}$.
If $q_1\leq \kappa t$, we have  $|I_1| \asymp \sqrt{q_\ell t} + t \asymp t$. It follows that 
\begin{equation}    
    \label{eq:ub-sum-weights}
    \sum_{\ell=1}^L |I_\ell|\sqrt{ k_\ell} 
    \lesssim t\sqrt{k}+\sqrt{t|\calJ|}, 
    \qquad
    \sum_{\ell=1}^L |I_\ell| k_\ell 
    \lesssim tk+\sqrt{tk}.
\end{equation}
If $q_1> \kappa t$, then ${q_1-r^\star_t(q_1)}\asymp{q_1}$ and $|I_1|\asymp \sqrt{q_1 t}$.
Similarly, we have $ \sum_{\ell=1}^{L} \sqrt{q_{\ell} k_\ell}\lesssim \sqrt{|\calJ|}$ and $ \sum_{\ell=1}^{L} \sqrt{q_{\ell} }k_\ell \lesssim \sqrt{k}$, and thus the upper bounds \eqref{eq:ub-sum-weights} continue to hold.

It remains to upper bound $|\calJ|$. 
Note that 
\[
1 =\sum_{i=1}^k p_i \geq \sum_{\ell \in \calJ \setminus \{1\}} (q_{\ell}-r^\star_t{q_{\ell}}) \asymp  \sum_{\ell \in \calJ \setminus \{1\}} q_{\ell} \gtrsim \sum_{\ell \in \calJ \setminus \{1\}} t(\ell-1)^2 
\gtrsim t(|\calJ|-1)^{3}.
\]
Combining with $|\calJ| \leq L$, we obtain $|\calJ| \lesssim t^{-\frac{1}{3}} \wedge L$ and complete the proof.
\end{proof}

\begin{proof}[Proof of Theorem~\ref{thm:npmle-global}]
Abbreviate $r=r^\star_t$. Denote $I_{\ell}\triangleq \{q_\ell\}_{r}$ and $I \triangleq  \cup_{\ell=1}^L  I_\ell$, where the $I_{\ell}$'s are disjoint under Assumption \ref{ass:separation}.
Let $w_\ell^\star \triangleq \pi_P (I_{\ell})$ and $w_\ell \triangleq \hat \pi (I_{\ell})$. 
Without loss of generality, suppose that all $w_\ell^\star>0$.
The assumption $n\geq \Omega(\frac{k}{\log k})$ implies that $\log n \gtrsim \log k$.
By Proposition~\ref{prop:weight_remainder_asymp}, given any $c_0>0$ and $s=\tfrac{c_0\log n}{n}$, there exists a constant $C$ such that, for $t=C\frac{\log n}{n}$, the following event occurs with probability $1 - \exp(-c'nt)$ for a constant $c'>0$:
\begin{align}
\label{eq:prf-npmle-global-family-event}
A \triangleq \sth{\hat \pi (\cup_{\ell=1}^L I_\ell) = 1, \quad \max_{\ell\in[L]} \abs{w_\ell-w_\ell^\star}\leq \exp(-c'nt) }.
\end{align}
Since $\hat\pi$ and $\pi_{P}$ are supported on $[0,1]$ and thus $W_1(\hat\pi,\pi_{P})\leq 1$, we have 
\begin{align}
\label{eq:global-err-decomp-A}
    \Expect W_{1}(\hat\pi,\pi_{P}) 
    \leq \Expect W_{1}(\hat\pi,\pi_{P})\Indc_A + \exp(-c'nt).
\end{align}
For $\Expect W_{1}(\hat\pi,\pi_{P})\Indc_A$, by the dual representation \eqref{eq:W1_dual}, it suffices to uniformly upper bound $\Expect_{\hat\pi} g- \Expect_{\pi_{P}} g$ for $g\in \calL_1$ under $A$.
Without loss of generality, let $g(0)=0$.

Let $g\in \calL_1$ and $D\ge 1$. 
For each $\ell\in[L]$, by Lemma~\ref{lem:approx-Jackson}, there exists $p_\ell\in\Poly_D$ such that
\begin{align}
\label{eq:prf-npmle-global-family-1}
    |g(x) - p_\ell(x)| \leq c_0 \frac{\sqrt{|I_{\ell}|x} \wedge |I_{\ell}| }{D}, 
    \quad \forall x \in I_{\ell}.
\end{align}
By triangle inequality, $M_\ell \triangleq M(p_\ell,I_{\ell}) \leq M(g,I_{\ell})+2\|p_\ell-g\|_{\infty,I_{\ell}}  \lesssim |I_{\ell}|$.
Applying Lemma~\ref{lem:pois-poly-approx}, there exists $\hat g_\ell (x) = a_\ell + \sum_{j} b_{j,\ell} \poi(j, n x)$ with $\max _{j}\abs{ b_{j,\ell}} \lesssim c_2^D|I_{\ell}|$ such that
\begin{align}
\label{eq:prf-npmle-global-family-2}
    \sup_{x\in I_{\ell}}|p_\ell(x)- \hat g_\ell(x) | 
    \lesssim |I_{\ell}| n\exp\pth{-c_1 nt}.
\end{align}
Define $p(x) \triangleq \sum_{\ell=1}^L p_\ell (x)\indc{x\in I_\ell}$ and $\hat g(x) \triangleq \sum_{\ell=1}^L \hat g_\ell (x) \indc{x\in I_\ell}$.
Then,
\begin{equation}
\label{eq:prf-npmle-global-family-0}
    \Expect_{\pi_{P}} g - \Expect_{\hat\pi} g 
    = \underbrace{\int (p-\hat g)(\diff\pi_{P}-\diff {\hat\pi})}_{\triangleq \calE_1}
    + \underbrace{\int \hat g (\diff\pi_{P}-\diff {\hat\pi})}_{\triangleq \calE_2}
    + \underbrace{\int (g-p) (\diff\pi_{P}-\diff {\hat\pi})}_{\triangleq \calE_3},
\end{equation}
where $\calE_1,\calE_2,\calE_3$ depend on $g$ and $\hat\pi$.
Next we derive upper bounds of $\Expect[\sup_{g\in\calL_1} \calE_i \Indc_A]$.

\paragraph{Bounding $\calE_1$.} 
When $A$ occurs, both $\pi_{P}$ and $\hat\pi$ are supported on $\cup_{\ell=1}^L I_\ell$. 
Hence, 
\begin{equation}
\label{eq:prf-npmle-global-family-3}    
\sup_{g\in\calL_1}\calE_1 \Indc_A
\le 2 \max_{\ell\in[L]}\|p- \hat g\|_{\infty,I_\ell}
=2\max_{\ell\in[L]} \|p_\ell-\hat g_\ell \|_{\infty,I_\ell} 
\lesssim 
n\exp\pth{-c_1 nt}
\triangleq \bar{\calE_1}.
\end{equation}

\paragraph{Bounding $\calE_2$.} 
Denote $\hat\pi_{\ell}$ and $\pi_{P,\ell}$ as the conditional distribution of $\hat\pi$ and $\pi_P$ on $I_\ell$, respectively.
Denote $k_\ell=\sum_{i=1}^k\indc{p_i\in I_\ell}=kw_\ell^\star$.
Under the event $A$, we have 
\begin{align}
\label{eq:global-err-decomp-1}
    \calE_2 
    = \sum_{\ell=1}^L  w_\ell^\star \int \hat g \diff \pi_{P,\ell} - w_\ell \int \hat g \diff \hat\pi_{\ell} 
    \le \sum_{\ell=1}^L w_\ell^\star \abs{\int \hat g_\ell (\diff \pi_{P,\ell}-\diff {\hat\pi_{\ell}})} +  \abs{w_\ell^\star-w_\ell}  \|\hat g\|_{\infty,I_\ell}.
\end{align}
Combining \eqref{eq:prf-npmle-global-family-event}, \eqref{eq:prf-npmle-global-family-1}, and \eqref{eq:prf-npmle-global-family-2} yields $\abs{w_\ell^\star-w_\ell}  \|\hat g\|_{\infty,I_\ell}\lesssim \exp(-c'nt)$.
Additionally,
\begin{align*}
    \abs{\int \hat g_\ell (\diff \pi_{P,\ell}-\diff {\hat\pi_{\ell}})} 
    & \leq \sum_{j=0}^{\infty} \abs{ b_{j,\ell}  (f_{\pi_{P,\ell}}(j)-f_{\hat\pi_{\ell}}(j)) } 
    \le \max_{j}\abs{ b_{j,\ell}} \cdot \| f_{\pi_{P,\ell}}-f_{\hat\pi_{\ell}}\|_1\\
    & \lesssim c_2^D |I_{\ell}| \cdot H(f_{\pi_{P,\ell}},f_{\hat\pi_{\ell}}).
\end{align*}
Denote $I_\ell = [a_\ell,b_\ell]$ and $\epsilon_{\ell}^2 \triangleq \pth{\sqrt{n b_\ell}-\sqrt{n a_\ell} +\sqrt{\log (k_\ell(nb_\ell+1))} }\frac{\log^{5/2} n}{k_\ell} \wedge 1$. 
Since $b_\ell = q_\ell +\sqrt{q_\ell t} + t \asymp (\sqrt{q_\ell} + \sqrt{t})^2$ and $b_\ell-a_\ell = 2r(q_\ell) = 2\sqrt{t} (\sqrt{q_\ell} + \sqrt{t})$, we have $\sqrt{n b_\ell}-\sqrt{n a_\ell} \le \sqrt{n} \frac{b_\ell-a_\ell}{\sqrt{b_\ell}} \lesssim \sqrt{\log n}$.
Furthermore, by $\log k_\ell \le \log k \lesssim \log n $ and $b_\ell \le 1$, we obtain $\epsilon_{\ell}^2\lesssim \frac{\log^3 n}{k_\ell}$.
For $\ell\in [L]$, define
\begin{equation*}
    E_0^{(\ell)} \triangleq \sth{ \frac{1}{k_\ell}\sum_{i: p_i\in I_\ell} \log f_{\hat\pi_{\ell}}(N_i) 
    \geq \frac{1}{k_\ell}\sum_{i: p_i\in I_\ell} \log f_{\pi_{P,\ell}}(N_i) -  \epsilon_{\ell}^2 }, \quad  E_0=\cap_{\ell=1}^L E_0^{(\ell)}.
\end{equation*}
Applying Lemma~\ref{lem:approximate-np} yields $P[E_0]\geq 1- k \exp(-cnt)$.
Then, by Lemma~\ref{lem:pois-hellinger-rate-c}, 
\begin{align*}
    \Expect H(f_{\pi_{P,\ell}},f_{\hat\pi_{\ell}}) 
    = \Expect H(f_{\pi_{P,\ell}},f_{\hat\pi_{\ell}})\Indc_{E_0} + \Expect H(f_{\pi_{P,\ell}},f_{\hat\pi_{\ell}})\Indc_{E_0^c}
    \lesssim \sqrt{\frac{\log^{3} n}{k_\ell}}.
\end{align*}
Consequently, we obtain from \eqref{eq:global-err-decomp-1} that
\begin{align}
\Expect\qth{\sup_{g\in\calL_1} \calE_2 \Indc_A} 
& \lesssim L \exp(-c'nt) + \sum_{\ell=1}^L \frac{k_\ell^\star}{k} c_2^D |I_{\ell}| \cdot \Expect[H(f_{\pi_{P,\ell}},f_{\hat\pi_{\ell}})]  \nonumber \\
& \lesssim L \exp(-c'nt) + \frac{c_2^D  \log^{3/2} n}{k}  \sum_{\ell=1}^L |I_\ell| \sqrt{k_\ell} \nonumber \\
& \stepa{\lesssim} L \exp(-c'nt) + \frac{c_2^D  \log^{3/2} n}{k} \pth{ t\sqrt{k} + \sqrt{Lt}\wedge t^{\frac{1}{3}} }
\triangleq \bar{\calE_2}, \label{eq:prf-npmle-global-family-4}
\end{align}
where (a) applies Lemma~\ref{lem:pip_sum_weights}.

\paragraph{Bounding $\calE_3$.} 
Since $\pi_{P}$ and $\hat\pi$ are supported on $\cup_{\ell=1}^L I_\ell$ under the event $A$, we have
\begin{align*}
\calE_3={\int (g-p_\ell) (\diff\pi_{P}-\diff {\hat\pi})} & \leq  \int |g-p_\ell| (\diff\pi_{P}+\diff {\hat\pi})
= \sum_{\ell=1}^L \int_{I_\ell} |g-p_\ell| (\diff\pi_{P}+\diff {\hat\pi}). 
\end{align*} 
For each $\ell$, if $q_\ell \leq C' t$, then $|I_{\ell}| \lesssim t$. 
Applying \eqref{eq:prf-npmle-global-family-1} yields $|g(x)-p_\ell(x)|\lesssim \frac{\sqrt{|I_{\ell}|x}}{D} \lesssim \frac{\sqrt{t x}}{D}$ for $x\in I_\ell$.
Otherwise, if $q_\ell \geq C' t$, then $|I_{\ell}| \asymp \sqrt{q_\ell t}  \lesssim q_\ell$ and thus $x\gtrsim q_\ell$ for $x\in I_\ell$, which implies that 
$|g(x)-p_\ell(x)|\lesssim \frac{|I_{\ell}|}{D}\asymp \frac{\sqrt{q_\ell t}}{D}\lesssim \frac{\sqrt{t x}}{D}$ by \eqref{eq:prf-npmle-global-family-1}.
Combining both cases yields 
\begin{align*}
\calE_3 \lesssim \frac{\sqrt{t}}{D} \sum_{\ell=1}^L \int_{I_\ell} \sqrt{x} (\diff\pi_{P}+\diff {\hat\pi}) 
= \frac{\sqrt{t}}{D}(\Expect_{\pi_P} \sqrt{X}+\Expect_{\hat\pi} \sqrt{X})
\le \frac{\sqrt{t}}{D}(\sqrt{\Expect_{\pi_P} X}+ \sqrt{\Expect_{\hat\pi} X}).
\end{align*} 
By definition, $\Expect_{\pi_P} X = \frac{1}{k}\sum_i p_i = \frac{1}{k}$.
Note that $f(x)=x$ is a linear function and 1-Lipschitz. 
Then the similar analysis of the error terms $\calE_1$ and $\calE_2$ in \eqref{eq:prf-npmle-global-family-0} continues to hold for $f$, while $\calE_3=0$.
Therefore, $|\Expect_{\hat\pi} X - \Expect_{\pi_{P}} X| \lesssim \bar{\calE_1}+\bar{\calE_2}$.
It follows that 
\begin{align}
\label{eq:prf-npmle-global-family-5}
\sup_{g\in\calL_1}\calE_3\Indc_A 
\lesssim \frac{\sqrt{t}}{D}\pth{\sqrt{\bar{\calE_1}+\bar{\calE_2}} + \sqrt{\frac{1}{k}}}.
\end{align}

\paragraph{Combining the upper bounds.}
Incorporating \eqref{eq:prf-npmle-global-family-0} with \eqref{eq:prf-npmle-global-family-3}, \eqref{eq:prf-npmle-global-family-4}, and \eqref{eq:prf-npmle-global-family-5}, we have for any $D\ge 1$,
\begin{align*}
    \Expect W_{1}(\hat\pi,\pi_{P})\Indc_A \lesssim   \bar{\calE_1}+\bar{\calE_2} + \frac{\sqrt{t}}{D} \pth{\sqrt{\bar{\calE_1}+\bar{\calE_2}} +  \sqrt{\frac{1}{k}}}.
\end{align*}
For $t=C\frac{\log n}{n}$ and $D\lesssim \log n$ such that $c_2^D\lesssim n^{0.1}$,
by the assumption $n\geq \Omega(\frac{k}{\log k})$,
we have $\bar{\calE_1}+ \bar{\calE_2} \lesssim \frac{1}{k}$.
Then we obtain from \eqref{eq:global-err-decomp-A} that
\begin{align}
\label{eq:prf-npmle-global-family-D}
    \Expect W_{1}(\hat\pi,\pi_{P}) \lesssim \frac{c_2^D \log^{3/2} n}{k}\pth{ t\sqrt{k} + \sqrt{Lt}\wedge t^{\frac{1}{3}} } 
    + \frac{\sqrt{t/k}}{D}. 
\end{align}

We are now ready to complete the proof.
Let $C_1>0$ be a universal constant to be chosen. 
We discuss the following cases:

\textit{Case 1: $k\geq C_1(L\wedge n^{1/3}) \log^3n$.} 
Set $D= c_3 \log \frac{k/\log^3n}{L\wedge n^{1/3}}$, 
Since $n\geq \Omega(\frac{k}{\log k})$, we have $D\lesssim \log n$, and with sufficiently small $c_3>0$,
\begin{align*}
    Dc_2^D \lesssim  \sqrt{\frac{1}{t \log^{3}n}} \wedge \sqrt{\frac{k}{(L\wedge t^{1/3})\log^{3}n}} \asymp \frac{\sqrt{kt}}{\log^{3/2} n(\sqrt{k}t + \sqrt{Lt}\wedge t^{\frac{1}{3}})}.
\end{align*}
Then, for sufficiently large $C_1$,
\begin{align*}
\Expect W_{1}(\hat\pi,\pi_{P}) & \lesssim \frac{c_2^D \log^{3/2} n}{k}\pth{ t\sqrt{k} + \sqrt{Lt}\wedge t^{\frac{1}{3}} } + \frac{\sqrt{t}}{D\sqrt{k}} \asymp \frac{\sqrt{t}}{D\sqrt{k}}\asymp \sqrt{\frac{\log n}{kn}}\frac{1}{\log_+(\frac{k/\log^{3}n}{L\wedge n^{1/3}} )}.
\end{align*}

\textit{Case 2: $k< C_1(L\wedge n^{1/3}) \log^3n$.} 
In this case, we apply a simplified argument without using Poisson deconvolution. 
Suppose that $A$ occurs.
Similar to \eqref{eq:global-err-decomp-1}, for any $g\in\calL_1$ with $g(0)=0$,
\[
\int g (\diff \pi_P-\diff {\hat\pi})
\leq \sum_{\ell=1}^L w_\ell^\star \abs{\int g (\diff \pi_{P,\ell}-\diff {\hat\pi_{\ell}})} + \sum_{\ell=1}^L \abs{w_\ell^\star-w_\ell}  \|g\|_{\infty,I_\ell}.
\]
By \eqref{eq:prf-npmle-global-family-event} and $\| g\|_{\infty,I_\ell} \leq 1$, we have $\sum_{\ell=1}^L \abs{w_\ell^\star-w_\ell}  \|g\|_{\infty,I_\ell} \lesssim L \exp(-c'nt)$.
Applying Lemma~\ref{lem:pip_sum_weights} yields 
\begin{align*}
    \sum_{\ell=1}^L w_\ell^\star \abs{\int g (\diff \pi_{P,\ell}-\diff {\hat\pi_{\ell}})} \lesssim \sum_{\ell=1}^L \frac{k_\ell}{k} |I_{\ell}| \lesssim \frac{1}{k}(tk + \sqrt{tk}) = t + \sqrt{\frac{t}{k}}.
\end{align*}
It follows that, under event $A$,
\begin{align*}
    W_{1}(\hat\pi,\pi_{P})  =\sup_{g\in \calL_1} \int g (\diff \pi_P-\diff {\hat\pi}) \lesssim \sqrt{\frac{\log n}{kn}} + \frac{\log n}{n} \asymp \sqrt{\frac{\log n}{kn}}.
\end{align*}
Applying \eqref{eq:global-err-decomp-A}, we have $\Expect W_{1}(\hat\pi,\pi_{P})\lesssim \sqrt{\frac{\log n}{kn}}$. 

Finally, combining the two cases yields the desired result.
\end{proof}

\subsection{Proofs in Section~\ref{sec:truncated_npmle}}
\label{app:prf_truncated_np}

\begin{theorem}
\label{thm:npmle-truncated}
Suppose $\log n \geq \Omega(\log k)$ and $P\in\Delta_{k-1}$ with $\pi_P \in \calP([0,\tfrac{c\log n}{n}])$ for a constant $c>0$.
Let $\hat\pi$ be the NPMLE in \eqref{eq:pois-npmle-def}.
There exist constants $C,c',c_0,C'$ such that, for $t=\tfrac{C\log n}{n}$, with probability $1-\exp(-c'nt)$, 
\begin{align}
    \label{eq:rate_global_ipm_0}
    d_\calF(\pi_{P},\hat\pi) \leq C' \inf_{D\in\naturals}\sup_{g\in\calF}\pth{M(g,[0,t]) \cdot \frac{c_0^D\log^3 n}{\sqrt{k}} + E_D(g,[0,t])}.
\end{align}
Particularly, for any $\epsilon\in (0,1)$ and $\mathcal{F} = \mathcal{F}_{s,\gamma,\boldsymbol{\eta}}$ where either $s < 2\gamma$, or $s = 2\gamma$ with $\eta_{s} = \eta_{s-1} = 0$, there exists $C_\epsilon'>0$ depending on $\epsilon$ such that with probability $1-\exp(-c'nt)$, 
\begin{align}
     \label{eq:rate_global_ipm_1}
     d_\calF(\pi_{P},\hat\pi) \leq C_\epsilon' \pth{\frac{\log n}{n}}^\gamma \pth{(\log n)^{-s+\eta_s\vee \eta_{s-1}}+\frac{n^\epsilon}{\sqrt{k}}}.
\end{align}
\end{theorem}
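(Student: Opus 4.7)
The plan is to extend the Poisson-deconvolution argument behind the proof of Theorem~\ref{thm:npmle-global} to an arbitrary function class $\calF$, taking advantage of the fact that all mass of $\pi_P$ now lies in a single ``small-mass'' cluster $[0, c\log n/n]$ so that no multi-scale clustering is needed. I would begin by applying Proposition~\ref{prop:weight_remainder_asymp}(c) to $S=\mathrm{supp}(\pi_P)$ with radius function $r_t^\star$ and $t = C\log n/n$ chosen large enough relative to $c$; this confines both $\pi_P$ and $\hat\pi$ to $[0,t]$ on an event $A$ of probability at least $1-\exp(-c'nt)$. Next, for fixed $g\in\calF$ and $D\in\naturals$, I would let $p\in\Poly_D$ achieve $\|g-p\|_{\infty,[0,t]} = E_D(g,[0,t])$ and invoke Lemma~\ref{lem:pois-poly-approx} on $[0,t]$ to produce $\hat g(x) = a + \sum_j b_j \poi(j,nx)$ with $\max_j|b_j| \lesssim c_0^D M(p,[0,t])$ and $\|p-\hat g\|_{\infty,[0,t]} \lesssim M(p,[0,t])\cdot n\exp(-c_1 nt)$.

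The triangle decomposition
\[
\int g\,(\diff\pi_P - \diff\hat\pi) = \int(g-p)(\diff\pi_P - \diff\hat\pi) + \int(p-\hat g)(\diff\pi_P - \diff\hat\pi) + \int\hat g\,(\diff\pi_P - \diff\hat\pi)
\]
then reduces the task to three terms, which on $A$ are bounded respectively by $2E_D(g,[0,t])$, by $M(g,[0,t])\cdot n\exp(-c_1 nt)$ (negligible since $nt = C\log n$), and by $2\max_j|b_j|\cdot H(f_{\pi_P},f_{\hat\pi})$ using the identity $\|f_{\pi_P}-f_{\hat\pi}\|_1 \le 2H(f_{\pi_P},f_{\hat\pi})$. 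For the Hellinger term, the optimality of $\hat\pi$ in~\eqref{eq:pois-npmle-def} makes it an approximate NPMLE with zero slack, so Lemma~\ref{lem:pois-hellinger-rate-c} applied with $k$ observations and parameter range $[0,nt]=[0,C\log n]$, together with $\log n\gtrsim\log k$, yields $H(f_{\pi_P},f_{\hat\pi}) \lesssim \log^{3/2}n/\sqrt{k}$ on an event whose complement has polynomial-in-$1/n$ probability and can be absorbed into $\exp(-c'nt)$ by enlarging $C$. Using $M(p,[0,t])\le M(g,[0,t]) + 2E_D(g,[0,t])$ and taking $\sup_{g\in\calF}$ and $\inf_D$ delivers~\eqref{eq:rate_global_ipm_0}.

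For the specialization~\eqref{eq:rate_global_ipm_1} I would then substitute the estimates of Lemma~\ref{lem:calF_s}: $M(g,[0,t]) \lesssim t^\gamma \log^{\eta_0}(1/t)$ and, under the hypothesis $s<2\gamma$ (or $s=2\gamma$ with $\eta_s=\eta_{s-1}=0$) that keeps the relevant supremum in part~(iii) finite, $E_D(g,[0,t]) \lesssim D^{-s} t^\gamma \log^{\eta_s\vee\eta_{s-1}}(1/t)$ for $D>s$. With $t=C\log n/n$, the right-hand side of~\eqref{eq:rate_global_ipm_0} becomes
\[
\pth{\tfrac{\log n}{n}}^\gamma \pth{c_0^D (\log n)^{\eta_0+3}/\sqrt{k} + D^{-s}(\log n)^{\eta_s\vee\eta_{s-1}}},
\]
and choosing $D = \lfloor (\epsilon/\log c_0)\log n \rfloor$ enforces $c_0^D \le n^\epsilon$ while $D^{-s}\asymp(\log n)^{-s}$, yielding~\eqref{eq:rate_global_ipm_1}.

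The chief technical difficulty will be tracking the log factors so that Lemma~\ref{lem:pois-hellinger-rate-c}, applied with the nonstandard parameter range $[0,O(\log n)]$, produces the claimed $\log^3 n/\sqrt{k}$ coefficient. A secondary subtlety is keeping the coefficient of the Hellinger term linear in $M(g,[0,t])$ (rather than in any uniform-over-$\calF$ quantity); otherwise the specialization to $\calF_{s,\gamma,\boldsymbol{\eta}}$ would lose the crucial $(\log n/n)^\gamma$ decay coming from $M(g,[0,t]) \lesssim t^\gamma\log^{\eta_0}(1/t)$.
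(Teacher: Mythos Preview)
Your proposal is correct and follows essentially the same route as the paper's proof: define the good event combining support confinement from Proposition~\ref{prop:weight_remainder_asymp}(c) with the Hellinger bound from Lemma~\ref{lem:pois-hellinger-rate-c}, decompose $\int g\,(\diff\pi_P-\diff\hat\pi)$ into the same three pieces via the Poisson approximation of Lemma~\ref{lem:pois-poly-approx}, and then specialize to $\calF_{s,\gamma,\boldsymbol{\eta}}$ using Lemma~\ref{lem:calF_s} with $D\asymp\log n$. The paper bundles both conditions into a single event $A$ up front rather than intersecting them sequentially as you do, but this is purely presentational.
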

\begin{proof}
Denote $I= [0,t]$ with $t= C\frac{\log n}{n}$.
Define the events 
\begin{align*}
    A \triangleq \sth{\hat \pi (I)=1, \quad H^2(f_{\pi_{P}},f_{\hat\pi}) \leq C_1\frac{\log^3 n}{k} }.
\end{align*}
Applying Proposition~\ref{prop:weight_remainder_asymp} and Lemma~\ref{lem:pois-hellinger-rate-c} yields $\pbb[A^c]\leq  \exp(-c'nt)$.

It remains to uniformly upper bound $\Expect_{\hat\pi} g- \Expect_{\pi_{P}} g$ for $g\in \calF$ under $A$.
Suppose that $A$ occurs.
Let $p \in \Poly_D$ achieve the best uniform approximation error $E_{D}(g, I)$, and denote $M \triangleq M(p,I)$.  
Applying Lemma~\ref{lem:pois-poly-approx}, there exists $\hat g (x) = a + \sum_{j} b_{j} \poi(j, n x)$ satisfying $\|p- \hat g\|_{\infty,I} \lesssim  M n\exp\pth{-c_1 nt}$ and $\max _{j}\abs{ b_{j}}\leq c_2^DM$. 
Then,
\begin{align*}
\Expect_{\pi_{P}} g - \Expect_{\hat\pi} g 
&= \int (p-\hat g)(\diff\pi_{P}-\diff {\hat\pi})
+ \int \hat g (\diff\pi_{P}-\diff {\hat\pi})
+ \int (g-p) (\diff\pi_{P}-\diff {\hat\pi}) \\
&\leq 2\|p- \hat g\|_{\infty,I} +  \sum_{j=0}^{\infty}  b_{j}  (f_{\pi_{P}}(j)-f_{\hat\pi}(j)) + 2 E_D(g,I) \\
&\lesssim  Mn\exp\pth{-c_1 nt} +  \max _{j}\abs{ b_{j}} \cdot \| f_{\pi_{P}}-f_{\hat\pi}\|_1 + E_D(g,I) \\
& \lesssim M(n\exp\pth{-c_1 nt} + c_2^D  H(f_{\pi_{P}},f_{\hat\pi}) )+ E_D(g,I).
 \end{align*}
By triangle inequality, $M \leq M(g,I)+2E_D(g,I)\leq 3M(g,I)$.
When $A$ occurs, we have $H^2(f_{\pi_{P}},f_{\hat\pi}) \lesssim  \frac{\log^3 n}{k}$.
Taking infimum over $D\in\naturals$ and supremum over $g\in\calF$, we obtain \eqref{eq:rate_global_ipm_0}.
Particularly, for $\calF=\calF_{s,\gamma,\boldsymbol{\eta}}$ such that $s < 2\gamma$ or $s= 2\gamma$ with $\eta_{s}=\eta_{s-1}=0$, applying Lemma~\ref{lem:calF_s} yields that for any $g\in\calF$,
\begin{align*}
 M(g,I) &\lesssim  \|x^\gamma \log^{\eta_0} \pth{1+\frac{1}{x}}\|_{\infty,[0,t]} \lesssim t^\gamma \log^{\eta_0} \pth{1+\frac{1}{t}},\\
 E_D(g,I) &\lesssim
   D^{-s}t^{\frac{s}{2}} \|x^{\gamma-\frac{s}{2}} \log^{\eta_s\vee \eta_{s-1}} \pth{1+\frac{1}{x}}\|_{\infty,[0,t]} \lesssim D^{-s}t^{\gamma}\log^{\eta_s\vee \eta_{s-1}} \pth{1+\frac{1}{t}}.
\end{align*}
Set $D=c_0 \log n$  such that $D>s$ and $c_0^D\leq n^{\frac{\epsilon}{2}}$. 
Substituting into \eqref{eq:rate_global_ipm_0}, \eqref{eq:rate_global_ipm_1} then follows.
\end{proof}
Then, we consider the problem of estimating a symmetric functional $G(P)$, including the Shannon entropy $H(P) = \sum_{i=1}^k p_{i} \log \frac{1}{p_{i}}$, power-sum $F_\alpha(P) =  \sum_{i=1}^k p_{i}^\alpha$, $\alpha \in (0,1)$, and the support size $S(P) =  |\{i \in[k] \mid p_i > 0\}|$,  
with the function $g$ as $h(x)=-x\log x$, $f_\alpha(x)=x^\alpha$, and $s(x)=\indc{x>0}$, respectively. 
Let $I=\{0\}_{r_t^\star}$ with $t>0$ to be specified, and
$\tilde H$, $\tilde F_\alpha$, and $\tilde S$ denote the estimators $\tilde G$ defined in \eqref{eq:G-combined} with $g = h$, $f_\alpha$, and $s$.
After truncated by the corresponding upper and lower bounds of each functional, the proposed localized NPMLE estimators are 
\begin{align*}
    \hat{H} &=
     (\tilde H \wedge \log k)  \vee 0, \\
     \hat{F}_\alpha &=
     (\tilde F_\alpha \wedge k^{1-\alpha})  \vee 0, \\
     \hat{S} &=
     (\tilde S \wedge k)  \vee 0. 
\end{align*}
Denote $\calD_k$ as the family of probability distributions whose minimum non-zero mass is at least $\frac{1}{k}$. By definition, $\calD_k\in \Delta_{k-1}$.
The following proposition establishes the convergence rate of the localized NPMLE estimator, which implies Theorem~\ref{thm:np_trunc_H_estimate} and also provides corresponding results for the functionals $F_\alpha$ and $S$.
\begin{proposition}
\label{prop:np_trunc_HFS_estimate}
Suppose that $\log n \gtrsim \log k$, and $P\in\Delta_{k-1}$.
There exist constants $C,C'$ such that with $t=C\frac{\log n}{n}$,
\begin{align}
\Expect|\hat{H}-H(P)| &\leq C'\pth{ \frac{k}{n\log n} + \frac{\log n}{\sqrt{n}}}, \label{eq:np_truncate_rate_H} \\
\Expect|\hat F_{\alpha}-F_{\alpha}(P)| &\leq \begin{cases}
        C'\frac{k}{(n \log n)^{\alpha}}  ,& \alpha \in (0,1/2],\ \log n \asymp \log k, \\
        C'\pth{\frac{k}{(n \log n)^{\alpha}} + \frac{k^{1-\alpha}}{\sqrt{n}}},& \alpha \in (1/2,1),    
        \end{cases}
\label{eq:np_truncate_rate_Falpha}
\end{align}
and for any $P \in \calD_k$,\footnote{Particularly, given $P\in\calD_k$, we instead optimize the NPMLE program \eqref{eq:npmle-trunc-def} under the additional support constraint $\pi \in \calP([0,1]\setminus (0,\frac{1}{k}))$ for support size estimation.}
\begin{align}
 \Expect\left|\hat{S}-S(P)\right| & \leq C' k\exp \left(-\Theta\left(\sqrt{\frac{n \log k}{k}}\right)\right), \quad n\lesssim k\log k. 
\label{eq:np_truncate_rate_S}
\end{align} 
\end{proposition}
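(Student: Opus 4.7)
The plan is to decompose $\tilde G - G(P) = T_1 + T_2$, where
\[
T_1 = |\calJ|\pth{\Expect_{\hat\pi_I} g - \Expect_{\pi_{P,I}} g},\qquad T_2 = \sum_{i\in[k]\setminus\calJ}\pth{\tilde g(\hat p_i) - g(p_i)},
\]
separating the localized NPMLE contribution on small masses from the bias-corrected empirical contribution on large masses. I would first condition on the auxiliary sample $N'$, which determines $\calJ$; by Lemma~\ref{lem:pois_rn} applied with $r=r_t^\star$ and $t=C\log n/n$, there is an event $A$ of probability at least $1-\exp(-c'nt)$ on which every $i\in\calJ$ satisfies $p_i\le c''\log n/n$ and every $i\notin\calJ$ satisfies $p_i\ge c'''\log n/n$. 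On $A$, $\pi_{P,I}\in\calP([0,c''\log n/n])$ and Theorem~\ref{thm:npmle-truncated} applies to $(\hat\pi_I,\pi_{P,I})$ conditionally on $\calJ$.

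For $T_1$, I invoke \eqref{eq:rate_global_ipm_1} with a function class tailored to each $g$. For entropy, $h\in\calF_{2,1,(1,0,0)}$ is the boundary case $s=2\gamma$ with $\eta_s\vee\eta_{s-1}=0$, so multiplying $d_\calF\lesssim(\log n/n)((\log n)^{-2}+n^\epsilon/\sqrt{|\calJ|})$ by $|\calJ|\le k$ and taking $\epsilon$ small yields $|T_1|\lesssim k/(n\log n)+\log n/\sqrt n$. For the power-sum, $f_\alpha\in\calF_{s,\alpha,\boldsymbol{0}}$ with an integer $s$ just below $2\alpha$ (or its non-integer analogue via direct use of Lemma~\ref{lem:calF_s}(iii), using $M(f_\alpha,I)\lesssim t^\alpha$ and $E_D(f_\alpha,I)\lesssim D^{-2\alpha}t^\alpha$) produces the $k/(n\log n)^\alpha$ piece; the hypothesis $\log k\asymp\log n$ for $\alpha\le 1/2$ lets me absorb the $n^\epsilon$ slack. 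For the support size, the constraint $\pi\in\calP([0,1]\setminus(0,1/k))$ reduces the IPM \eqref{eq:rate_global_ipm_0} to uniform polynomial approximation of $\indc{x>0}$ on the two-piece set $\{0\}\cup[1/k,1]$; a Chebyshev construction in $\sqrt{x}$ gives approximation error $\exp(-\Theta(D/\sqrt k))$, and optimizing $D\asymp\sqrt{nk\log k}$ yields the claimed exponential rate.

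For $T_2$, on $A$ a fourth-order Taylor expansion around each $p_i\gtrsim\log n/n$ gives $|\Expect\tilde g(\hat p_i)-g(p_i)|\lesssim \sup_{\xi\asymp p_i}|\xi^2 g'''(\xi)|/(n^2 p_i)$ and $\var(\tilde g(\hat p_i))\asymp p_i(g'(p_i))^2/n$. For entropy this gives $|\Expect T_2|\lesssim k/(n\log n)$ and $\var(T_2)\lesssim\log^2 n/n$ via $\sum p_i\log^2(1/p_i)\le\log^2 n$, so Chebyshev controls $|T_2|\lesssim k/(n\log n)+\log n/\sqrt n$ with exponentially small failure probability; the analogous power-sum calculation produces the $k^{1-\alpha}/\sqrt n$ variance term after Hölder for $\alpha>1/2$. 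For support size, $\tilde s(\hat p_i)=\indc{N_i\ge 1}$ and $T_2=-\sum_{i\notin\calJ}\indc{N_i=0}$ has expectation $O(k\cdot n^{-c})$, which is negligible relative to $T_1$. The final truncation $\hat G=(\tilde G\wedge\bar G)\vee\underline G$ together with $\bar G-\underline G\lesssim\log k$ and the $\exp(-c'nt)$ failure probability converts these high-probability bounds into the mean-absolute-error estimates claimed in \eqref{eq:np_truncate_rate_H}--\eqref{eq:np_truncate_rate_S}.

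The main obstacle I anticipate is the regime where $|\calJ|$ is small, because then the $n^\epsilon/\sqrt{|\calJ|}$ term in \eqref{eq:rate_global_ipm_1} is not dominated by the smoothness-driven $(\log n)^{-s+\eta_s\vee\eta_{s-1}}$ term; the workaround is to fall back on the sharper \eqref{eq:rate_global_ipm_0} and fix the Poisson approximation of Lemma~\ref{lem:pois-poly-approx} at degree $D=c\log n$, so that $c_0^D\le n^\epsilon$ and the bound on $|T_1|$ scales with $\sqrt{|\calJ|}$ rather than $|\calJ|$. A secondary difficulty is the support-size case, which also requires a Hellinger concentration analogous to Proposition~\ref{prop:pois-hellinger-rate} for the support-constrained NPMLE; this can be obtained by rerunning the bracketing-entropy argument of Lemma~\ref{lem:pois-hellinger-rate-c} on the restricted class $\calP([0,1]\setminus(0,1/k))$, for which the complexity bound of Lemma~\ref{lem:pois-finite-approx} tightens considerably due to the gap at $(0,1/k)$.
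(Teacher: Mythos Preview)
Your overall decomposition and the treatment of entropy and power sum follow the paper's approach: localize, invoke Theorem~\ref{thm:npmle-truncated} on $\hat\pi_I$ conditional on $\calJ$, and control the bias-corrected part via the calculations in \cite{WY16,JVHW15}. Your anticipated obstacle about small $|\calJ|$ is also handled exactly as you say---the $n^\epsilon/\sqrt{|\calJ|}$ term multiplies by $|\calJ|$ to give $\sqrt{|\calJ|}\,n^\epsilon\le\sqrt{k}\,n^\epsilon$, which is absorbed by the geometric mean of the two target terms.

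The genuine gap is in the support-size case. Your Chebyshev error $\exp(-\Theta(D/\sqrt k))$ is the approximation rate of $\indc{x>0}$ on $\{0\}\cup[1/k,1]$, but the relevant domain is $\{0\}\cup[1/k,t]$ with $t\asymp\log n/n$, since under the localization event both $\pi_{P,I}$ and the constrained $\hat\pi_I$ are supported there. On the truncated domain the Chebyshev ratio becomes $(1/k)/t\asymp n/(k\log n)$ rather than $1/k$, so the error is $\exp(-\Theta(D\sqrt{n/(k\log n)}))$---vastly better for given $D$. More importantly, your choice $D\asymp\sqrt{nk\log k}$ is infeasible: Lemma~\ref{lem:pois-poly-approx} requires $D\le nt/C\asymp\log n$, and the coefficient factor $c_0^D$ in \eqref{eq:rate_global_ipm_0} would otherwise explode. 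The paper takes $D\asymp\log k\asymp\log n$ (using $n\lesssim k\log k$ together with $\log n\gtrsim\log k$), which on the \emph{truncated} domain gives precisely $\exp(-\Theta(\sqrt{n\log k/k}))$; it invokes the polynomial of \cite[Eq.~(40)]{WY15unseen}, applies \eqref{eq:rate_global_ipm_0} to that polynomial $p$ (for which $E_D(p,I)=0$ and $M(p,I)\lesssim 1$) rather than to $s$ itself, and bounds the residual $\int(s-p)\,\diff(\hat\pi_I-\pi_{P,I})$ using the support constraint on $\hat\pi_I$.

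Your secondary concern about Hellinger concentration for the support-constrained NPMLE is not a real obstacle: since $\pi_{P,I}\in\calP([0,1]\setminus(0,1/k))$ whenever $P\in\calD_k$, the constrained maximizer already achieves likelihood at least that of $\pi_{P,I}$, so the approximate-optimality hypothesis of Lemma~\ref{lem:pois-hellinger-rate-c} is automatic and no tightened entropy bound is needed.
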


 Compared with the existing minimax rates from \cite{JVHW15,WY15unseen} summarized in Table~\ref{tab:hfs_rates}, the localized NPMLE estimator achieves the optimal sample complexity and (near-)optimal convergence rates for all considered functionals.
 
\begin{table}[htb]
\centering
\renewcommand{\arraystretch}{1.3} 
\setlength{\tabcolsep}{6pt} 
\small
\begin{tabular}{
    >{$}c<{$}  
    >{\centering\arraybackslash}m{4.2cm}  
    >{\centering\arraybackslash}m{4.2cm}
    >{\centering\arraybackslash}m{4.2cm}  
}
\toprule
\textbf{$G$} & \textbf{Minimax rate} & \textbf{Localized NPMLE} & \textbf{Regime} \\
\midrule
H &
\makecell{$\frac{k}{n \log n} + \frac{\log k}{\sqrt{n}}$} &
\makecell{$\frac{k}{n \log n} + \frac{\log n}{\sqrt{n}}$} &
\makecell{$n \gtrsim \dfrac{k}{\log k}$} \\

\midrule
F_\alpha &
\makecell{$\frac{k}{(n \log n)^{\alpha}} 
+ \frac{k^{1-\alpha}\indc{\alpha \in (\tfrac{1}{2},1)}}{\sqrt{n}}$} &
\makecell{$\frac{k}{(n \log n)^{\alpha}} 
+ \frac{k^{1-\alpha}\indc{\alpha \in (\tfrac{1}{2},1)}}{\sqrt{n}}$} &
\makecell{$n \gtrsim k^{1/\alpha}/\log k$ \\ 
$\log n \asymp \log k$ (if $\alpha\in (0,\tfrac{1}{2}]$)} \\
\midrule
S &
\makecell{$k\exp\!\Big(-\Theta\!\big(\sqrt{\tfrac{n \log k}{k}}\big)\Big)$} &
\makecell{$k\exp\!\Big(-\Theta\!\big(\sqrt{\tfrac{n \log k}{k}}\big)\Big)
$} &
\makecell{$\dfrac{k}{\log k}\lesssim n\lesssim k\log k$} \\
\bottomrule
\end{tabular}
\caption{Performance of the localized NPMLE compared to minimax rates.}
\label{tab:hfs_rates}
\end{table}

\begin{remark}
     For support size estimation, we impose a lower bound on the nonzero probabilities (\ie, $P\in\calD_k$) to exclude small probability masses that may be indistinguishable from zero; otherwise, consistent estimation would be impossible.
Moreover, when $n\geq \Omega(k\log k)$, the minimax optimal rate is simply achieved by the empirical distribution \cite{WY15unseen}.
\end{remark}

\begin{proof}[Proof of Proposition~\ref{prop:np_trunc_HFS_estimate}]
Denote $I_{\kappa} \triangleq \{0\}_{r^\star_{\kappa t}}$ for $\kappa>0$,
and let $I=I_1$. Define the following  events: 
\begin{align*}
    A_{1} &= \cap_{i=1}^{k}\left\{ \hat p_i' \in I_1 \Rightarrow p_{i} \in I_{2} \right\},\\ 
    A_{2} &= \cap_{i=1}^{k}\left\{  \hat p_i' \notin I_1  \Rightarrow  p_{i} \notin I_{1/2} \right\}, \\
    A_3 &= \cap_{i=1}^{k}\left\{  p_i \in I_{2} \Rightarrow \hat p_i \in I_{3} \right\}.
\end{align*}
Let $A=\cap_{i=1}^3 A_i$.
Recall that $\pi_{P,I}\triangleq \frac{1}{|\calJ|}\sum_{i\in \calJ}\delta_{p_i}$.
Applying Lemma~\ref{lem:pois_rn}  and the union bound, there exists constants $C,c'$ such that $P[A^c]\leq k\exp(-c'nt)$ with $t=C\frac{\log n}{n}$.
For each $G=H, F_{\alpha},S$, define
\begin{align*}
     \calE_1(G) =  |\calJ| (\Expect_{\hat\pi_I} g- \Expect_{\pi_{P,I}} g), \quad  \calE_2(G) =\sum_{i\in[k]\setminus\calJ} \pth{\tilde g(\hat p_i) - g(p_i)}.
\end{align*}
By definition, $|\hat G - G(P)| \leq |\tilde G - G(P)| =|\calE_1(G) + \calE_2(G)|$.
Since $G,\hat G \in [\underline{G},\overline{G}]$, we have 
\begin{align}
\label{eq:trunc-estim-bias-decomp}
    \Expect|\hat G - G(P)| &= \Expect|\hat G - G(P)|\Indc_{A}+ \Expect|\hat G - G(P)|\Indc_{A^c} \nonumber \\
     &\leq  \Expect |\calE_1(G)|\Indc_{A}+ \Expect |\calE_2(G)|\Indc_{A} + (\overline{G}-\underline{G}) \pbb[A^c].
\end{align}
\textbf{Entropy $G=H$.}
Note that $h = -x\log x\in \calF=\calF_{2,1,(1,0,0)}$.
We have $|\calE_1(H)|\leq |\calJ|  d_\calF(\hat\pi_I,\pi_{P,I})$.
Applying Theorem~\ref{thm:npmle-truncated} yields that, conditioning on the event $A_1$, 
\begin{align*}
|\calJ| d_\calF(\hat\pi_I,\pi_{P,I}) \leq C'|\calJ| \frac{\log n}{n}\pth{ \frac{1}{\log^2 n} +\frac{n^{\epsilon}}{\sqrt{ |\calJ|  }}} \leq C' \frac{\log n}{n}\pth{ \frac{k}{\log^2 n} +\sqrt{k}n^{\epsilon} }
\end{align*}
holds with probability  $1- \exp(-c_1nt)$ for some constants  $C',c_1>0$, where $\epsilon=0.1$.
Moreover, by Lemma~\ref{lem:calF_s}, 
$d_\calF(\hat\pi_I,\pi_{P,I})\leq \sup_{g\in \calF} M(g,[0,1]) \lesssim 1$.
It follows that 
\begin{align*}
    \Expect |\calE_1(H)|\Indc_A 
    &\leq \Expect |\calJ| d_\calF(\hat\pi_I,\pi_{P,I})\Indc_{A_1} 
    \lesssim \frac{\log n}{n}\pth{ \frac{k}{\log^2 n} +\sqrt{k}n^{\epsilon} } + k \exp(-c_1nt).
\end{align*}
Mooreover, substituting $\log k$ by $\log n$ in \cite[Eq.~(61)]{WY16} yields that $\Expect \calE_2^2(H) \lesssim ( \frac{k}{n\log n})^2 + \frac{\log^2 n}{n}$. 
Also note that $H\in[0,\log k]$.
Applying \eqref{eq:trunc-estim-bias-decomp}, we have with sufficiently large $C>0$,
\begin{align*}
\Expect|\hat{H}-H(P)| 
&\lesssim \frac{k}{n\log n} + \frac{\sqrt{k}\log n}{n^{1-\epsilon}} + \frac{\log n}{\sqrt{n}} 
\asymp \frac{k}{n\log n}  + \frac{\log n}{\sqrt{n}},
\end{align*}
where the last inequality holds since $(\tfrac{\sqrt{k}\log n}{n^{1-\epsilon}})^2\lesssim \frac{k}{n\log n}\cdot\frac{\log n}{\sqrt{n}}\lesssim (\frac{k}{n\log n}  + \frac{\log n}{\sqrt{n}})^2$. 

\textbf{Power sum $G=F_\alpha$.}
Fix any $\alpha\in(0,1)$.
For $b=\Theta(\frac{\log n}{n})$, we have $M(f_\alpha, [0, b])\leq O((\frac{\log n}{n})^\alpha)$, and by \cite[Lemma 19]{JVHW15}, $E_{D}(f_\alpha, [0,b])\leq O((n\log n)^{-\alpha})$.
Fix any $\epsilon>0$, and choose $D\asymp \log n$ such that  $c_0^D\leq n^{\frac{\epsilon}{2}}$.
in Theorem~\ref{thm:npmle-truncated}. Then, conditioning on $A$, the inequality
\begin{align*}
|\calJ|\abs{\Expect_{\hat\pi_I}f_\alpha - \Expect_{\pi_{P,I}}f_\alpha} &\leq C'|\calJ|\pth{ \frac{1}{(n\log n)^\alpha} + \pth{\frac{\log n}{n}}^\alpha \frac{n^{\epsilon}}{\sqrt{|\calJ|}} }\\
&\leq C'\pth{ \frac{k}{(n\log n)^\alpha} + \pth{\frac{\log n}{n}}^\alpha \sqrt{k}n^{\epsilon} }
\end{align*}
holds with probability  $1- \exp(-c_1nt)$, where $C'$ depends on $\epsilon$.
It follows that 
\begin{align*}
    \Expect |\calE_1(F_\alpha)|\Indc_A & \leq  C' \pth{ \frac{k}{(n\log n)^\alpha} + \pth{\frac{\log n}{n}}^\alpha \sqrt{k} n^{\epsilon} } + k \exp(-c_1nt).
\end{align*}
Let $\tilde f_\alpha$ be defined in \eqref{eq:g_debias} with $g=f_\alpha$.
\cite[Lemma 2]{JVHW15} implies that with sufficiently large $C>0$,\footnote{Compared with $\tilde f_\alpha$, the bias-corrected estimator used in \cite[Lemma 2]{JVHW15} additionally introduces a smooth cutoff function over the interval $(0, c\log n)$. Nevertheless, with sufficiently large $C>0$, it exactly equals $\tilde f_\alpha$ under the event $A$.}
\begin{align*}
     \Expect [\calE_2^2(F_\alpha)\Indc_A]
     &\lesssim \frac{k^2}{n^{2\alpha}(\log n)^{4-2\alpha}}+  \frac{k}{n^{2\alpha}(\log n)^{1-2\alpha}}, \quad \alpha\in (0,\frac{1}{2}],\\
    \Expect [\calE_2^2(F_\alpha)\Indc_A]
    &\lesssim \frac{k^2}{n^{2\alpha}(\log n)^{4-2\alpha}} + \frac{k}{n^{2\alpha}(\log n)^{2-2\alpha}} + \sum_{i=1}^k\frac{p_i^{2\alpha-1}}{n}, \quad \alpha\in (\frac{1}{2},1].
\end{align*} 
When $\alpha\in(0,\frac{1}{2}]$ and $\log n \asymp \log k$, choose $\epsilon>0$ such that $n^\epsilon\leq k^{\frac{1}{4}}$. Applying \eqref{eq:trunc-estim-bias-decomp} 
with sufficiently large $C>0$ yields that  
\begin{align*}
\Expect|\hat{F}_\alpha-F_\alpha(P)| &\lesssim \frac{k}{(n\log n)^\alpha} + \pth{\frac{\log n}{n}}^\alpha \sqrt{k}{n^{\epsilon}} \asymp \frac{k}{(n\log n)^\alpha}.
\end{align*}
When $\alpha\in(\frac{1}{2},1]$, we have $\sum_{i=1}^k p_i^{2\alpha-1}\leq k (\tfrac{1}{k})^{2\alpha-1}=k^{2-2\alpha}$ by Jensen's inequality. 
Setting $\epsilon=\frac{\alpha-1/2}{4}$ yields
\begin{align*}
    \Expect|\hat{F}_\alpha-F_\alpha(P)| &\lesssim \frac{k}{(n\log n)^\alpha} + \pth{\frac{\log n}{n}}^\alpha \sqrt{k}{n^{\epsilon}} + \frac{k^{1-\alpha}}{\sqrt{n}}\asymp \frac{k}{(n\log n)^\alpha}  + \frac{k^{1-\alpha}}{\sqrt{n}},
\end{align*}
where the last inequality holds since $((\frac{\log n}{n})^\alpha \sqrt{k}{n^{\epsilon}})^2 \lesssim \frac{k}{(n\log n)^\alpha}  \cdot \frac{k^{1-\alpha}}{\sqrt{n}}$ for  $\epsilon = \frac{\alpha-1/2}{4}$.

\textbf{Support size $G=S$.}
Suppose that $n\lesssim k\log k$.
\cite[Eq.~(40)]{WY15unseen} implies that there exists $D\asymp \log k$ and $p\in\Poly_D$ such that $p(0)=0$, and for some universal constant $c_2>0$, 
$$ \sup_{x \in I_3\setminus (0,\frac{1}{k})}
\abs{p(x)-s(x)} \leq \exp\pth{-c_2\sqrt{\frac{n\log k}{k}}} .$$

Note that $E_D(p,I_3)=0$ and $M(p,I_3)\lesssim 1$.
Since $n\lesssim k\log k$, there exist $c_3,\epsilon>0$ such that $n^\epsilon \lesssim k^{\frac{1}{4}} \lesssim\sqrt{k}\exp(-c_3\sqrt{\tfrac{n\log k}{k}})$. 
Conditioning on $A_1$, by Theorem~\ref{thm:npmle-truncated}, the event $|\calJ|\abs{\Expect_{\hat\pi_I} p - \Expect_{\pi_{P,I}} p} \leq C'\sqrt{|\calJ|}n^{\epsilon} \leq C'\sqrt{k}n^{\epsilon}$ holds with probability $1- \exp(-c_1nt)$. 
Under $A_3$, similar to Proposition~\ref{prop:npmle-1}, 
$\hat\pi_I$ is supported on $I_3\setminus (0,\frac{1}{k})$. Then,
\begin{align*}
   \Expect |\calE_1(S)|\Indc_A &\leq  \Expect[ |\calJ| |\Expect_{\hat\pi_I} p- \Expect_{\pi_{P,I}} p|\Indc_{A}] +  \Expect[ |\calJ||\Expect_{\hat\pi_I} (s-p)- \Expect_{\pi_{P,I}} (s-p)| \Indc_{A}] \\ 
   &\lesssim  \sqrt{k}{n^\epsilon} +  k\exp(-c_1nt) + 2k  \sup_{x \in I_3\setminus (0,\frac{1}{k}) } \abs{s(x)-p(x)} \\
 &\leq k \exp\pth{- \Theta\pth{\sqrt{\frac{n\log k}{k}}}}.
\end{align*}
Moreover, under the event $A_2$, for $i\not\in \calJ$ and $p_i>0$, we have $\hat p_i>0$. 
Hence, with $g(x)=s(x)$, $\tilde g (\hat p_i)= g(p_i) =1$ for any $i\not\in \calJ$, and thus  $\Expect |\calE_2(S)|\Indc_A =0$.
Applying \eqref{eq:trunc-estim-bias-decomp} with sufficiently large $C>0$, \eqref{eq:np_truncate_rate_S} then follows.
\end{proof}

\subsection{Proofs in Section~\ref{sec:pen-npmle}}
\label{app:prf_pen_np}

\begin{proof}[Proof of Proposition~\ref{prop:pen-npmle}]
(i) Fix any $k_2>k_1\geq k$. Denote  $\hat \pi_1 = \hat \pi_{k_1} $ and 
$\pi_2 = \frac{k_1}{k_2}\hat\pi_1 + (1-\frac{k_1}{k_2}) \delta_0$.
By definition, $f_{\pi_2}(x) = \frac{k_1}{k_2} f_{\hat\pi_1}(x)$ for $x>0$, and $f_{\pi_2}(0)= \tfrac{k_1}{k_2}f_{\hat\pi_1}(0)+ \tfrac{k_2-k_1}{k_2}$. Then, 
\begin{align*}
    & L(\pi_2;N,k_2) - L(\hat \pi_1;N,k_1) \\ =& \sum_{i=1}^{k} \log \frac{ f_{\pi_2} (N_i) }{f_{\hat \pi_1} (N_i)} + (k_2 - k)\log f_{\pi_2}(0)- (k_1 - k)\log f_{\hat \pi_1}(0) + k_2H(\tfrac{k}{k_2})  - k_1 H(\tfrac{k}{k_1}) \\
    =& k \log \frac{k_1}{k_2} + (k_2-k)\log \frac{\tfrac{k_1}{k_2}f_{\hat\pi_1}(0)+ \tfrac{k_2-k_1}{k_2}}{k_2-k} - (k_1-k)\log \frac{\log f_{\hat \pi_1}(0)}{k_1-k} + k_2\log k_2- k_1\log k_1\\
    =& (k_2-k) \log \frac{k_1 f_{\hat \pi_1}(0) + k_2 - k_1}{k_2-k} -  (k_1-k) \log \frac{k_1 f_{\hat \pi_1}(0)}{k_1-k}.
\end{align*}
Let $f(y)=(k_2-k) \log \frac{k_1 y+ k_2 - k_1}{k_2-k} -  (k_1-k) \log \frac{k_1y}{k_1-k}$, $y\in [0,1]$. Taking derivative yields that $f$ attains its minimum $f(y')=0$ at $y'=\tfrac{k_1-k}{k_1}$.
Consequently, (i) follows from $L(\hat \pi_{k_2};N,k_2)\geq L(\pi_2;N,k_2)\geq L(\hat \pi_{1};N,k_1)$.

(ii) By (i), it suffices to prove the statement for all $k'\in \naturals$ such that $k'\geq \hat k>k$.
Denote  $\hat \pi' = \frac{\hat k}{k'}\hat\pi + (1-\frac{\hat k}{k'}) \delta_0$. We have for any $Q\in \calP([0,1])$,
\begin{align*}
\sum_{i=1}^k  \frac{ f_{Q}(N_i) }{ f_{\hat \pi'}(N_i) } + k' f_{Q}(0) \stepa{=} \frac{k'}{\hat k}\pth{\sum_{i=1}^k \frac{ f_{Q}(N_i) }{ f_{\hat \pi}(N_i) } + \hat k f_{Q}(0)}  \stepb{\leq} \frac{k'}{\hat k} \hat k = k',
\end{align*}
where (a) holds by $N_i>0$ for $i\in[k]$, and (b) follows from 
\eqref{eq:1st-opt-pen-pi}.
Then, given $k'\in\naturals$, the first-order optimality condition \eqref{eq:1st_opt} satisfies for $\hat \pi'$. Since Proposition~\ref{prop:npmle-1} implies the uniqueness of such  $\hat \pi'$, $\hat \pi'$ is the Poisson NPMLE given $k'$.
Moreover, applying the derivation in (i) with the fact $f_{\hat \pi}(0)= \frac{\hat k - k}{\hat k}$
yields that $L(\hat \pi';N,k') = L(\hat \pi;N,\hat k)$.
Finally, (ii) follows.
\end{proof}

\section{Experiment Details}
\subsection{Implementation details of the NPMLE} 
\label{app:exp_algo}
We construct a finite grid $\{r_j\}_{j=1}^m$ given the input $N$ as follows.
We set the grid size $m$ range from 500 to 2000, which increases as the sample size $n$ grows.
Denote $\bar N = \max_{i=1}^k N_i$. If $\bar N \leq \frac{1.6\log n}{n}$, then $r_j= \frac{j-1}{m-1}\bar N $ is uniformly placed over $[0, \bar N]$. Otherwise, half of the grid points are uniformly placed over $[0, \frac{1.6\log n}{n}]$ and the remaining half are uniformly distributed over $(\frac{1.6\log n}{n}, \bar N]$.
We optimize the Poisson NPMLE \eqref{eq:pois-npmle-def}  over $\calP(\{r_i\}_{i=1}^m)$. Define $A=(A_{ij})\in\reals^{k \times m}$ with $A_{ij}=\poi(N_i,nr_j)$. Then, \eqref{eq:pois-npmle-def} is reduced to 
\begin{equation*}
    \hat{\pi} = \sum_{j=1}^m \hat{w}_j\delta_{r_j}, \quad  \hat w \in \argmax_{w\in \Delta_{m-1}} \frac{1}{k} \sum_{i=1}^{k} \log \left(\sum_{j=1}^{m} A_{i j} w_{j}\right),
\end{equation*}
which is a finite-dimensional convex  program. 
 Using a Lagrangian multiplier, we can also write the Lagrangian dual problem as 
\begin{equation}
    \label{eq:npmle-dual}
   \max_{v_1,...,v_k>0} \quad \sum_{i=1}^{k} \log v_{i} \quad \text { s.t. } \quad \frac{1}{k} A^{\top} v \leq \mathbf{1}_{m}.
\end{equation}
The optimal solution of the primal and the dual problems, $\{\hat w_j\}_{j=1}^m$ and $\{\hat v_i\}_{i=1}^k$, are related through the following equations (see also \cite[Theorem 2]{KM13}):
\begin{align}
    \label{eq:dual-wv}
    \sum_{j=1}^m  A_{ij} \hat w_j = 1/\hat v_i, \ i\in[k]; \quad 
    \hat w_j=0 \text{ if } \frac{1}{k} \sum_{i=1}^k \hat v_i A_{ij} <1.
\end{align}
The overall procedure is summarized in the following Algorithm~\ref{algo:npmle}. For estimating a specific functional $g$, one can then apply the plug-in formula \eqref{eq:F_hat} to the output of Algorithm~\ref{algo:npmle}.

\begin{algorithm}[H]
    \caption{Solving the NPMLE} 
    \label{algo:npmle}
    \begin{algorithmic}[1]
    \STATE \textbf{Input:} Frequency counts $N_1,\ldots,N_k$; grid size $m$; concentration parameter $n$.
    \STATE \textbf{Step 1:} Construct the grid $\{r_j\}_{j=1}^m$, and compute $A=(A_{ij})$ with $A_{ij}=\poi(N_i,nr_j)$.
    \STATE \textbf{Step 2:} Solve the NPMLE dual problem \eqref{eq:npmle-dual}. 
    \STATE \textbf{Step 3:} Obtain weights $\hat w_j$ via \eqref{eq:dual-wv}. 
    \STATE \textbf{Output:} $\hat\pi = \sum_{j=1}^m \hat w_j \delta_{r_j}$.
    \end{algorithmic}
\end{algorithm}

To compute the localized NPMLE \eqref{eq:npmle-trunc-def}, we set $I=[0, \kappa\tfrac{\log n}{n}]$ with a tuning parameter $\kappa>0$,
which is equivalent to the original formulation $I=\{0\}_{r_t^\star}$ with $t=\Theta(\tfrac{\log n}{n})$. 
Given one sequence of frequency counts $N=(N_1,\ldots,N_k)$, we optimize \eqref{eq:npmle-trunc-def} with $\calJ =\{ i \in [k] : \hat p_i = \frac{N_i}{n} \leq \kappa \cdot \frac{\log n}{n} \}$ (\ie, letting $N'=N$ in \eqref{eq:npmle-trunc-def}).
In our experiments, we set $\kappa = 3.6$.
The localized NPMLE is then combined with the bias-corrected estimator to yield the proposed estimator $\hat G$ in Section~\ref{sec:truncated_npmle} for a given symmetric functional $G$.
The procedure is summarized in Algorithm~\ref{algo:npmle-trunc}.
\begin{algorithm}[H]
\caption{Symmetric functional estimation via the localized NPMLE}
\label{algo:npmle-trunc}
\begin{algorithmic}[1]
\STATE \textbf{Input:} Frequency counts $N_1, \ldots, N_k$; concentration parameter $n$; grid size $m$; truncation threshold $\kappa$; target function $g$; upper and lower bounds $\Bar{G},\underline{G}$.
\STATE \textbf{Step 1:} Apply Algorithm~\ref{algo:npmle} to $\{N_i : i \in \mathcal{J} \}$ with grid size $m$ to obtain $\hat\pi_{\mathcal{J}}$.
\STATE \textbf{Step 2:} For $i \in [k] \setminus \mathcal{J}$, compute the bias-corrected estimate $\tilde{g}(\hat p_i)$ as defined in \eqref{eq:g_debias}.
\STATE \textbf{Step 3:} Combine both components to obtain the final estimator $\tilde G$ in \eqref{eq:G-combined}.
\STATE \textbf{Output:} Functional estimate  $\hat{G} = (\tilde G \wedge \Bar{G})  \vee \underline{G}.$
\end{algorithmic}
\end{algorithm}

For implementing the penalized NPMLE, we add a small regularization term $\tfrac{c_0}{k'^{c_1}}$ to the penalized likelihood \eqref{eq:Ni-pen-likelihood} to identify the smallest minimizer $\hat k$, which serves as an estimation of $k^\star$. In practice, we choose $c_0=10$ and $c_1=1$. The full computational procedure with grid discretization is summarized in Algorithm~\ref{algo:pen-npmle}.

\begin{algorithm}[H]
    \caption{Solving the penalized NPMLE} 
    \label{algo:pen-npmle}
    \begin{algorithmic}[1]
    \STATE \textbf{Input:} Positive frequency counts $N_1,\ldots,N_k$; parameters $m$, $n$, $c_0$, $c_1$. 
    \STATE \textbf{Step 1:} Construct the grid $\{r_j\}_{j=1}^m$, and compute $A=(A_{ij})$ with $A_{ij}=\poi(N_i,nr_j)$.
    \STATE \textbf{Step 2:} Optimize the penalized NPMLE program 
    \begin{align*}
        \max_{k'\geq k, w \in \Delta_m} \ \sum_{i=1}^{k} \log \left(\sum_{j=1}^{m} A_{i j} w_{j}\right) + (k' - k)\log \left(\sum_{j=1}^{m} e^{-nr_j} w_{j}\right) + k'H(\frac{k}{k'}) + \frac{c_0}{k'^{c_1}},
    \end{align*}
    and obtain the solution $\hat k, \hat w$.
    \STATE \textbf{Output:} $\hat k$, $\hat\pi = \sum_{j=1}^m \hat w_j \delta_{r_j}$.
    \end{algorithmic}
\end{algorithm}

\begin{remark}[Approximation error due to discretization]
The discretization procedure introduces numerical error that grows with the grid size. In practice, we increase the grid size $m$ with $n$ to prevent it from dominating the estimation error.
One may also resort to non-grid algorithms to eliminate this discretization error, such as gradient flow-based methods (e.g., \cite{yan2024flow} for Gaussian mixtures). We leave this for future work.
\end{remark}

\subsection{Additional simulation results}
\label{app:add_exp}
This subsection presents additional simulation results following the setup in Section~\ref{sec:simulation}.
 We consider the underlying distribution $P\in\Delta_{k-1}$
 as listed in Table~\ref{tab:distribution_generate}.
\begin{table}[H]
\centering
\renewcommand{\arraystretch}{1.25}
\setlength{\tabcolsep}{8pt}

{\small
\begin{tabular}{
    l
    >{$}l<{$}
}
\toprule
\textbf{Distribution} & \textbf{Definition of } P = (p_1, \ldots, p_k) \\
\midrule
Uniform & p_i = k^{-1},\ i \in [k] \\[2pt]

2-Mixed Uniform &
\makecell[l]{
p_i = \tfrac{2}{5k},\ i = 1,\ldots,\tfrac{k}{2}; \quad
p_i = \tfrac{8}{5k},\ i = \tfrac{k}{2}+1,\ldots,k
} \\[2pt]

Spike-and-uniform &
\makecell[l]{
p_i = \tfrac{1}{2(k-3)},\ i \in [k-3]; \quad
p_{k-2} = p_{k-1} = \tfrac{1}{8},\ 
p_k = \tfrac{1}{4}
} \\[2pt]

Geometric &
p_i \propto (1 - \theta)^i,\quad \theta = 1/k \\[2pt]

Log-series &
p_i \propto \tfrac{(1 - \theta)^i}{i},\quad \theta = 1/k \\[2pt]

Zipf(1) & p_i \propto i^{-1} \\
\bottomrule
\end{tabular}
}
\caption{Underlying distributions used in simulation.}
\label{tab:distribution_generate}
\end{table}

 Firstly, for Shannon entropy estimation, Figure~\ref{fig:fig-ent-2} presents additional results for the remaining distributions listed in Table~\ref{tab:distribution_generate} that complements those in Figure~\ref{fig:fig-ent-1}.

Next, we evaluate the performance of estimators on the support size functional. This experiment focuses on the large-alphabet regime, since in the large-sample regime, the error naturally vanishes as most categories are likely to be observed at least once.
To ensure the problem remains non-trivial, we select the support sizes of the underlying distributions in Table~\ref{tab:distribution_generate} such that the minimum non-zero probability mass is approximately $p_{\min} \approx 10^{-5}$. After generating the frequency counts via multinomial sampling, we pad zeros to the count vector to reach a total length of $k = 10^5$, and apply the NPMLE-based estimators on this extended vector. 
In the implementation, we discard any non-zero grid points smaller than $p_{\min}$ when constructing the grid $\{r_j\}$. 
Figure~\ref{fig:fig-supp-large-k} presents simulation results comparing the NPMLE estimators with several baseline methods, including the Empirical estimator, the Good–Turing estimator (GT)~\cite{GT53}, WY, VV, and PML. The performance is evaluated using the scaled RMSE, obtained by dividing the RMSE by the true support size. The NPMLE-based estimators perform among the best, and the localized NPMLE provides additional improvements in the more challenging heterogenous settings (e)–(f).
\begin{figure}[H]
    \centering
    \subfigure[2-Mixed Uniform]{
    \includegraphics[width=0.3\linewidth]{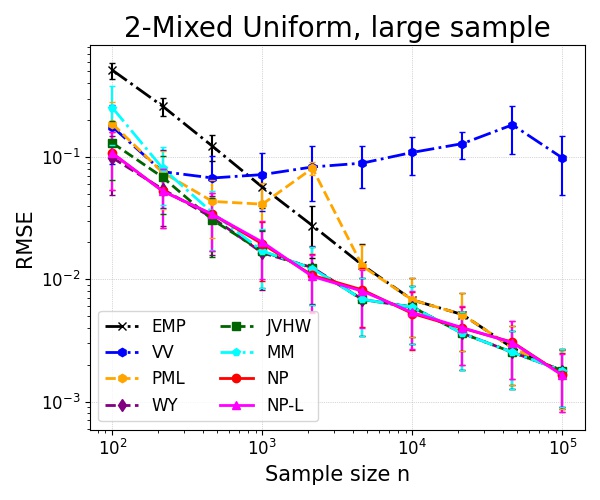}
    }
    \subfigure[Geometric]{
    \includegraphics[width=0.3\linewidth]{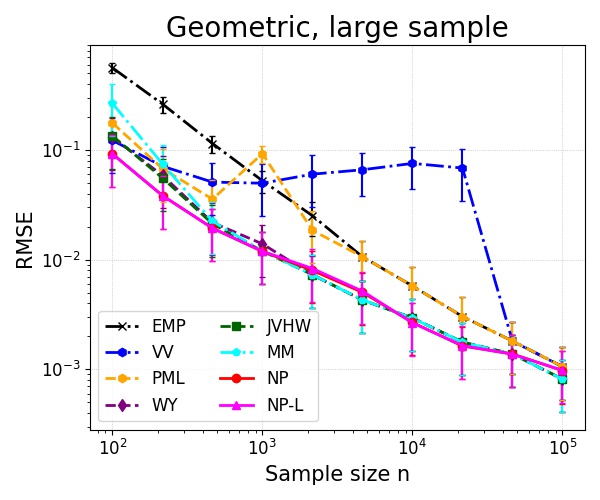}
    }
    \subfigure[Log-Series]{
    \includegraphics[width=0.3\linewidth]{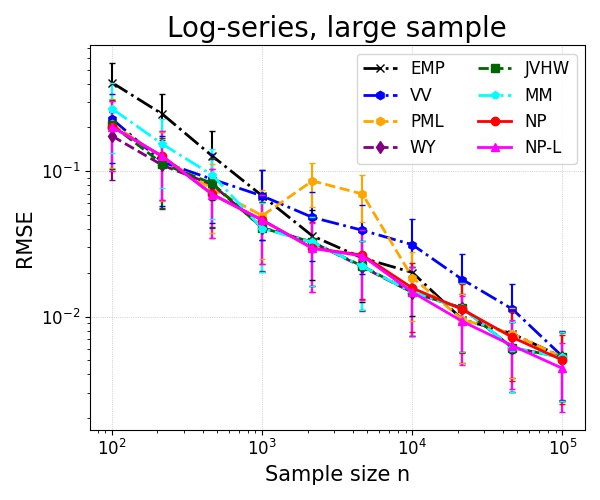}
    }

    \subfigure[2-Mixed Uniform]{
    \includegraphics[width=0.3\linewidth]{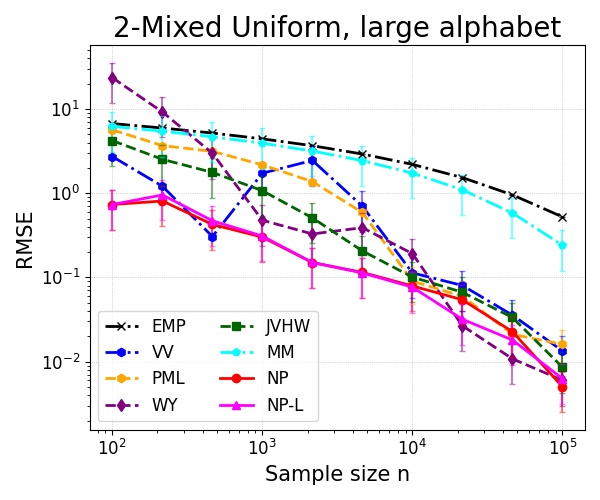}
    }
    \subfigure[Geometric]{
    \includegraphics[width=0.3\linewidth]{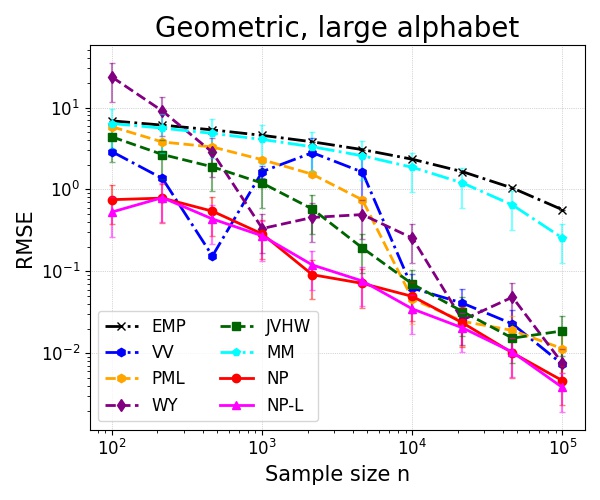}
    }
    \subfigure[Log-Series]{
    \includegraphics[width=0.3\linewidth]{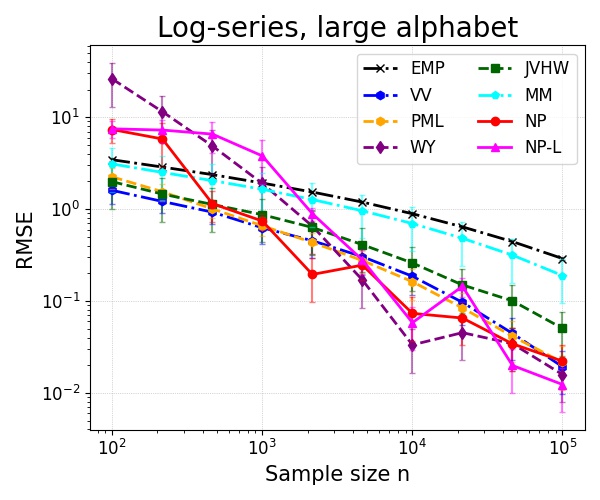}
    }
    \caption{Shannon entropy estimation (continue). (a)-(c) under the large-sample regime, and (d)-(f) under the large-alphabet regime.}
    \label{fig:fig-ent-2}
\end{figure}

\begin{figure}[H]
    \centering
    \subfigure[Uniform]{
    \includegraphics[width=0.3\linewidth]{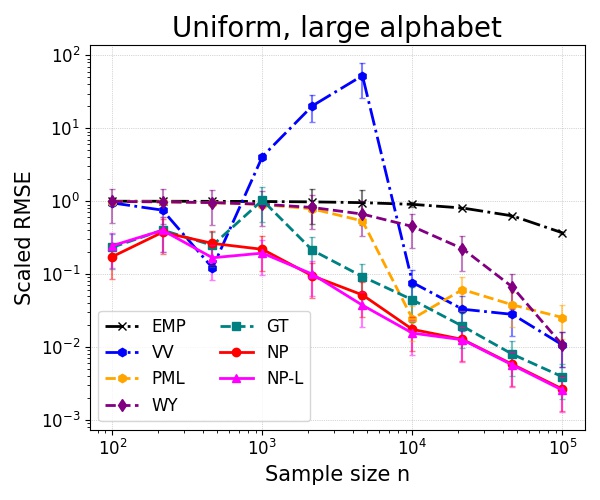}
    }
    \subfigure[2-Mixed Uniform]{
    \includegraphics[width=0.3\linewidth]{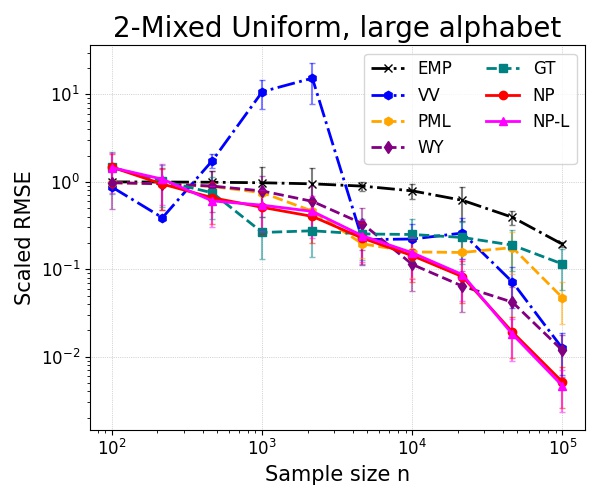}
    }
    \subfigure[Spike-and-uniform]{
    \includegraphics[width=0.3\linewidth]{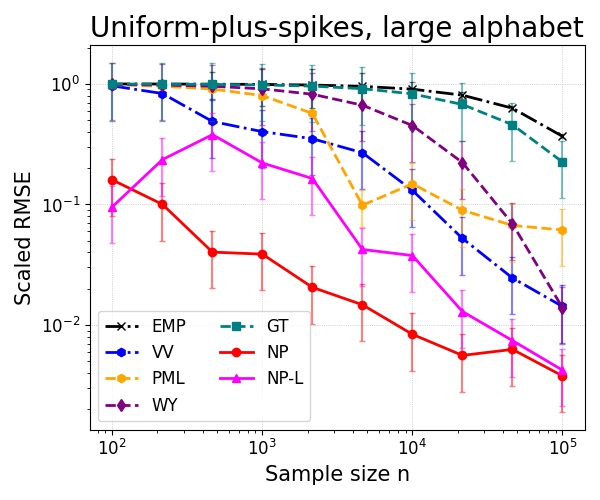}
    }
    \subfigure[Geometric]{
    \includegraphics[width=0.3\linewidth]{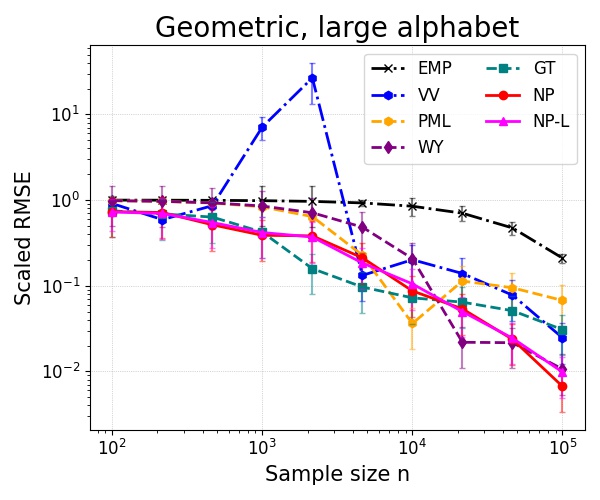}
    }
    \subfigure[Log-Series]{
    \includegraphics[width=0.3\linewidth]{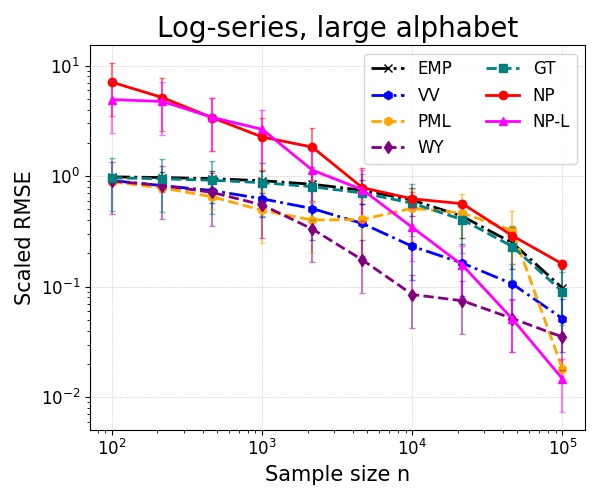}
    }
    \subfigure[Zipf(1)]{
    \includegraphics[width=0.3\linewidth]{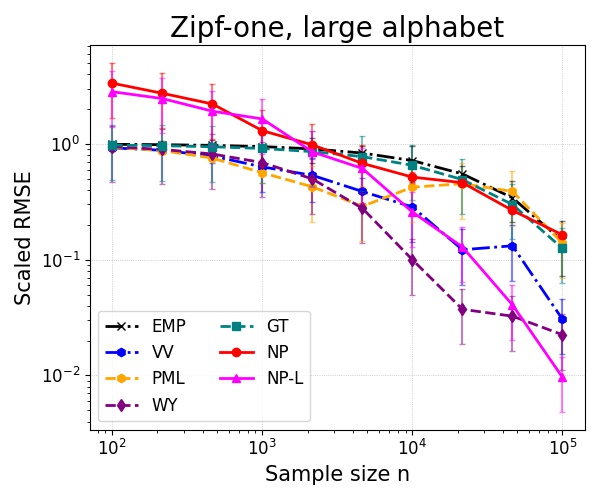}
    }
    \hspace{.0in}
    \caption{Support estimation under the large-alphabet regime.}
    \label{fig:fig-supp-large-k}
\end{figure}

We also provide experimental results for estimating the R\'enyi entropy, which is another important measure in information theory. For any $\alpha > 0$ with $\alpha \neq 1$ and a distribution $P \in \Delta_{k-1}$, the $\alpha$-R\'enyi entropy is defined as
$$ H_\alpha(P) = \frac{\log F_\alpha(P)}{1 - \alpha}, $$
where $F_\alpha(P) = \sum_{i=1}^k f_{\alpha}(p_i) = \sum_{i=1}^k p_i^\alpha$ is the $\alpha$-power sum.

We set $\alpha=0.5$ and estimate $H_\alpha$ using the plug-in estimator $\hat{H}_\alpha \triangleq \frac{\log \hat{F}_\alpha}{1 - \alpha}$, where $\hat{F}_\alpha$ can be obtained via the NPMLE plug-in estimator or the localized NPMLE estimator in Algorithms~\ref{algo:npmle} and~\ref{algo:npmle-trunc}.
The results for both the large-sample and large-alphabet regimes are presented in Figures~\ref{fig:fig-renyi-large-n} and~\ref{fig:fig-renyi-large-k}, respectively, where the NPMLE-based estimators again demonstrate significant advantages over the existing methods.

\begin{figure}[H]
    \centering
    \subfigure[Uniform]{
    \includegraphics[width=0.3\linewidth]{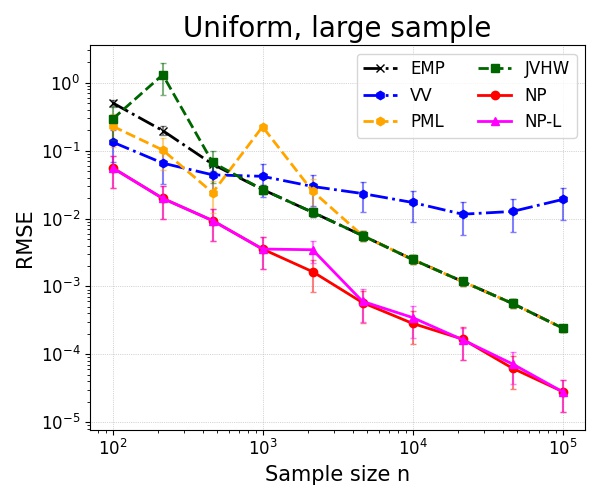}
    }
    \subfigure[2-Mixed Uniform]{
    \includegraphics[width=0.3\linewidth]{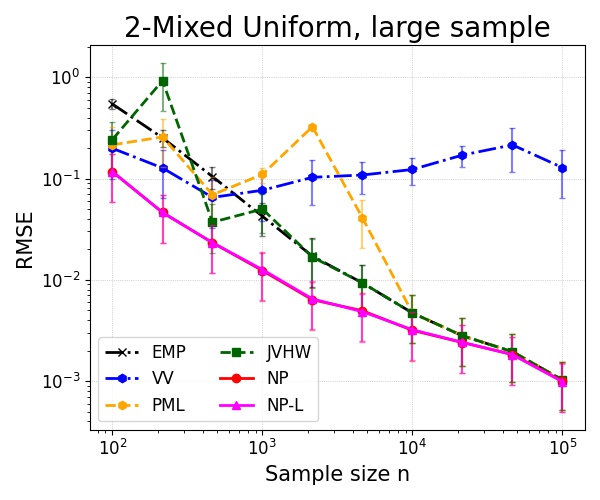}
    }
    \subfigure[Spike-and-uniform]{
    \includegraphics[width=0.3\linewidth]{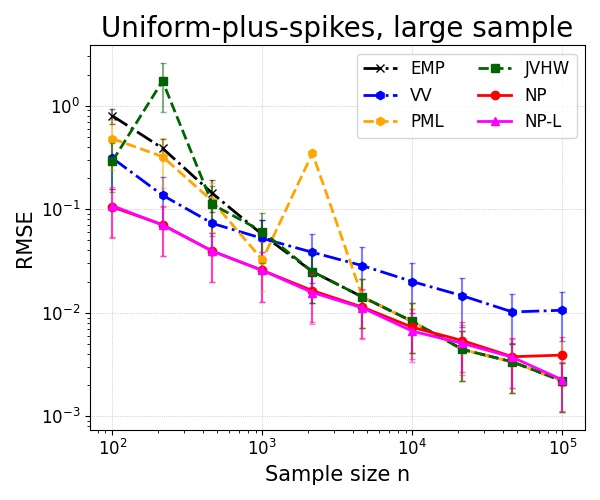}
    }
    \subfigure[Geometric]{
    \includegraphics[width=0.3\linewidth]{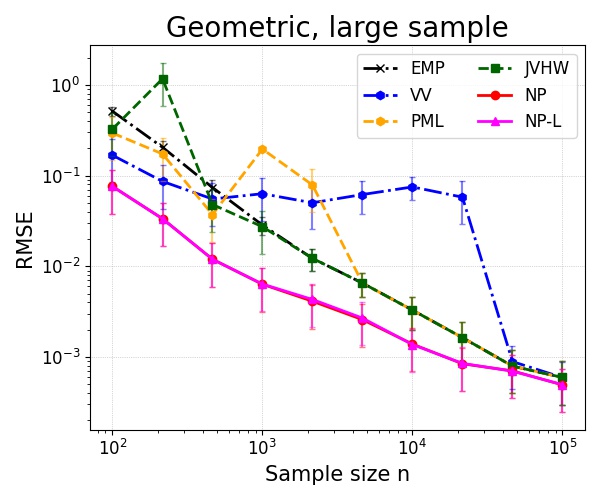}
    }
    \subfigure[Log-Series]{
    \includegraphics[width=0.3\linewidth]{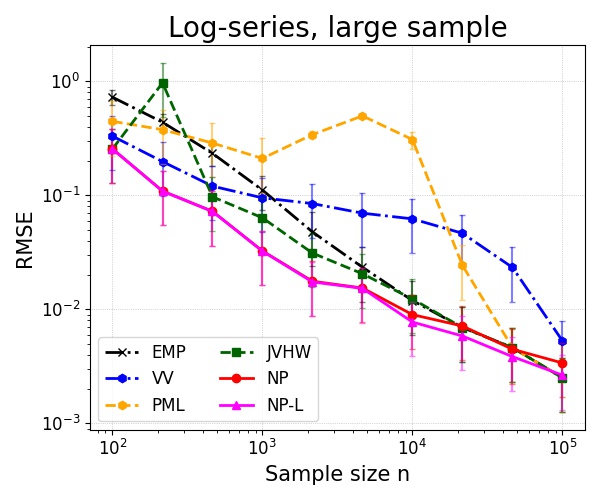}
    }
    \subfigure[Zipf(1)]{
    \includegraphics[width=0.3\linewidth]{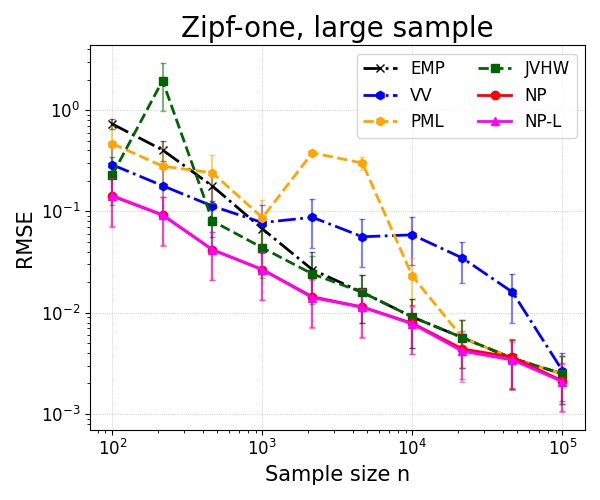}
    }
    \hspace{.0in}
    \caption{0.5-R\'enyi entropy estimation under the large-sample regime.}
    \label{fig:fig-renyi-large-n}
\end{figure}

\begin{figure}[H]
    \centering
    \subfigure[Uniform]{
    \includegraphics[width=0.3\linewidth]{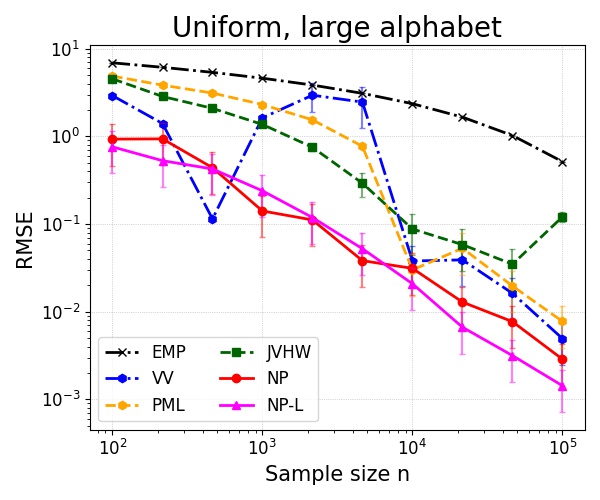}
    }
    \subfigure[2-Mixed Uniform]{
    \includegraphics[width=0.3\linewidth]{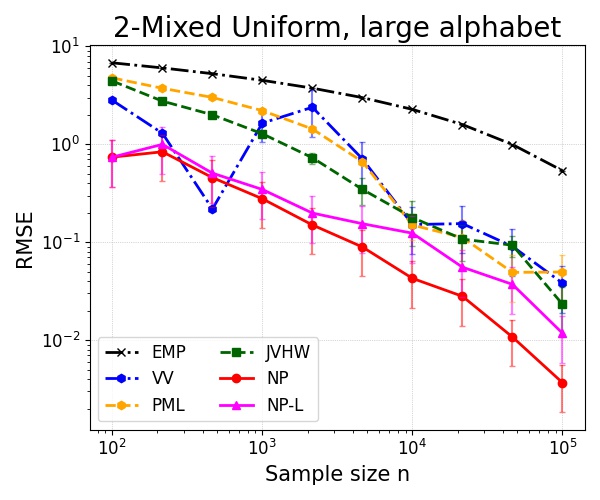}
    }
    \subfigure[Spike-and-uniform]{
    \includegraphics[width=0.3\linewidth]{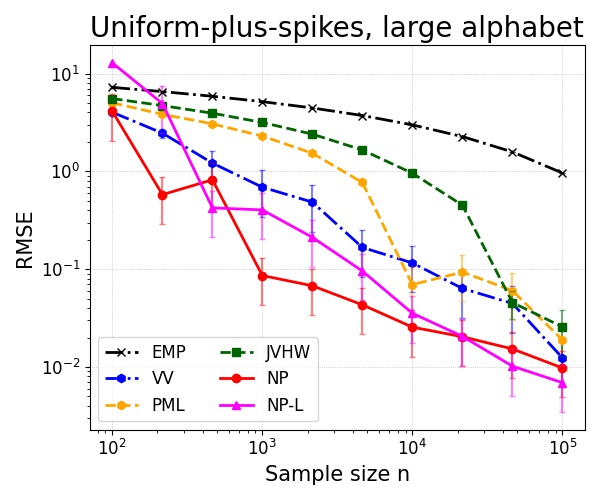}
    }
    \subfigure[Geometric]{
    \includegraphics[width=0.3\linewidth]{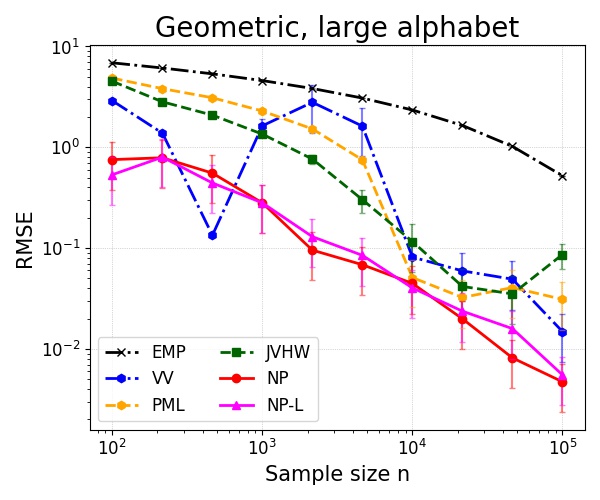}
    }
    \subfigure[Log-Series]{
    \includegraphics[width=0.3\linewidth]{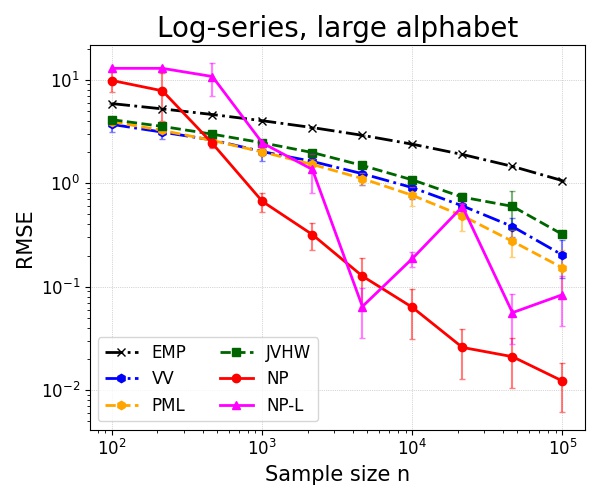}
    }
    \subfigure[Zipf(1)]{
    \includegraphics[width=0.3\linewidth]{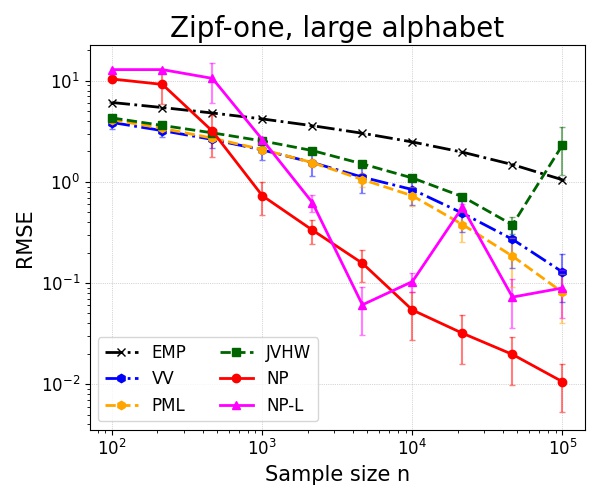}
    }
    \hspace{.0in}
    \caption{0.5-R\'enyi entropy estimation under the large-alphabet regime.}
    \label{fig:fig-renyi-large-k}
\end{figure}

\subsection{Details of experiments on LLMs}
\label{app:llm}

We provide experimental details of the Section~\ref{sec:llm} in this subsection.

\paragraph{General procedures.}
Given each (model, dataset) pair, the experiment proceeds as follows.
\begin{enumerate}
    \item \textit{Content generation.} Randomly select $n_0$ questions from the dataset.
Each question undergoes a 3-stage sampling process:
(1) generate a \textit{reference answer} at low temperature ($T = 0.1$) as the stable baseline\footnote{The reference answer may itself be incorrect due to missing complementary information in the pretraining procedure, which may require additional knowledge or external tools. Nevertheless, we focus solely on the model’s robustness in terms of output uncertainty, while consistently wrong outputs (e.g., arising from training on erroneous data) are tolerated.};
(2) sample $m_1$ \textit{testing answers} at high temperature ($T = 1$) to  obtain a ground-truth label for whether the model hallucinates on the problem;
(3) sample $m_2$ \textit{observed answers} at high temperature for entropy estimation. In our experiment, we set $n_0=200$, $m_1=50$, and $m_2=10$.

\item \textit{Embedding.} Use the \texttt{multilingual-e5-base} model to embed the reference answer and $m_1$ testing answers into 768-dimensional unit vectors, denoted by $v_i^\star$ and $\{v_{i,j}\}_{j=1}^{m_1}$ for the $i\Th$ question.
Semantically similar answers in general yield close embeddings.
The ground-truth label is then defined with a threshold hyperparameter $\gamma\in(0,1)$ as 
\begin{align*}
    u_i = \indc{\frac{1}{m_1}\sum_{j=1}^{m_1} \langle v_i^\star, v_{i,j}\rangle > \gamma},
\end{align*}
where $u_i=0$ indicates hallucination and $u_i=1$ otherwise.  
The hyperparameter $\gamma$ is chosen as the lower $q$-th quantile of the collection of cosine similarities $\{\langle v_i^\star, v_{i,j}\rangle\}_{i\in[n_0],j\in[m_1]}$ across all questions. We set $q=0.35$ to balance the number of positive and negative labels, and clamp the threshold within $[0.75,0.95]$ to ensure its reasonability.

\item \textit{Evaluation.}
Obtain semantic labels of the observed answers via an entailment-based clustering model. Next, apply Shannon entropy estimators to the resulting semantic vector of length $m_2$, and the estimates are then used for the classification task. The performance is evaluated by the ROC and AUC metrics for the binary event.
\end{enumerate}

\paragraph{Entailment algorithm.}
We adopt the bidirectional entailment clustering algorithm from \cite[Algorithm 1]{farquhar2024detecting}, summarized in Algorithm~\ref{alg:bi_entail_clustering}. This method prompts ChatGPT-3.5 to classify the relationship between pairs of answers as ``entailment,'' ``neutral'', or ``contradiction''. 
Two answers are assigned to the same cluster if they mutually entail each other. 
As noted in \cite{farquhar2024detecting}, LLM-based raters achieve performance comparable to human raters.
Semantic clusters are then formed by greedily aggregating answers with equivalent meaning.

\begin{algorithm}[htbp]
\caption{Bi-directional Entailment Clustering}
\label{alg:bi_entail_clustering}
\begin{algorithmic}[1]
\STATE \textbf{Input:} Context $\boldsymbol{x}$, sequences $\{\mathbf{s}^{(2)},\dots,\mathbf{s}^{(M)}\}$, classifier $\mathcal{M}$, initial cluster $C=\{\{\mathbf{s}^{(1)}\}\}$
\FOR{$m = 2$ \textbf{to} $M$}
    \FOR{each $c \in C$} 
        \STATE $\mathbf{s}^{(c)} \gets c_0$
        \STATE $ \texttt{left} \gets \mathcal{M}(\mathbf{s}^{(c)}, \mathbf{s}^{(m)})$
        \STATE $\texttt{right} \gets \mathcal{M}(\mathbf{s}^{(m)}, \mathbf{s}^{(c)})$
        \IF{\texttt{left} = \texttt{entailment} \textbf{and} \texttt{right} = \texttt{entailment}}
            \STATE $c \gets c \cup \mathbf{s}^{(m)}$
        \ENDIF
    \ENDFOR
    \STATE $C \gets C \cup \{\mathbf{s}^{(m)}\}$
\ENDFOR
\STATE \textbf{Output:} $C$
\end{algorithmic}
\end{algorithm}

\paragraph{Prompts.}
We instruct the models to generate answers with the following prompt:
\begin{verbatim}
Answer the question as briefly as possible of no more than 15 words.
Do not add explanations or extra information.
\end{verbatim}
The prompt used for the entailment model is as follows:
\begin{verbatim}
We are evaluating answers to the question "{question}"
Here are two possible answers:
Possible Answer 1: {text1}
Possible Answer 2: {text2}
Does Possible Answer 1 semantically entail Possible Answer 2?
Respond only with entailment, contradiction, or neutral.
Response:
\end{verbatim}

\paragraph{Proposed estimators.}
We compare the performance of NP with two baseline methods EMP and TOK as developed in \cite{farquhar2024detecting}.
For a fair comparison across sequences of varying lengths, we first apply length normalization by taking the arithmetic mean of the log-probabilities of all tokens conditioned on previous tokens. The sequence probabilities are then aggregated according to their semantic labels and normalized into  unit vector, which represents the occurrence probabilities of each semantic category. Finally, TOK is computed as the Shannon entropy of this semantic distribution.
Without using the logit bits, EMP and NP are simply plug-in estimates \eqref{eq:F_hat} based on the empirical histogram and NPMLE given the frequency counts of observed semantic labels, respectively. In particular, NP is implemented with an alphabet size $k = \floor{2.5 m_2}$ to account for unseen semantic categories.

\end{document}